\newcommand{\Q}{\mathbb{Q}}
\newcommand{\A}{\mathbb{A}}
\newcommand{\R}{\mathbb{R}}
\providecommand{\C}{\mathbb{C}}
\renewcommand{\C}{\mathbb{C}}
\newcommand{\Qp}{\mathbb{Q}_p}
\newcommand{\Qbar}{\overline{\mathbb{Q}}}
\newcommand{\Zp}{\mathbb{Z}_p}
\newcommand{\Z}{\mathbb{Z}}
\newcommand{\rmL}{\mathrm{L}}
\newcommand{\nG}{\mathrm{n}_G}
\newcommand{\GSpin}{\mathrm{GSpin}}
\newcommand{\vol}{\operatorname{vol}}
\newcommand{\Spin}{\mathrm{Spin}}
\newcommand{\tr}{\operatorname{tr}}
\newcommand{\sn}{\operatorname{sn}}
\newcommand{\Sym}{\operatorname{Sym}}
\newcommand{\disc}{\mathrm{disc}}
\newcommand{\GL}{\mathrm{GL}}
\newcommand{\PGL}{\mathrm{PGL}}
\newcommand{\SL}{\mathrm{SL}}
\newcommand{\Sp}{\mathrm{Sp}}
\newcommand{\SO}{\mathrm{SO}}
\renewcommand{\O}{\mathrm{O}}
\newcommand{\Ind}{\mathrm{Ind}}
\newcommand{\Ccal}{\mathcal{C}}
\newcommand{\Pcal}{\mathcal{P}}
\newcommand{\Zhat}{\widehat{\mathbb{Z}}}
\newcommand{\ps}{\par \smallskip}
\newcommand{\Pialg}{\Pi_{\mathrm{alg}}}
\newtheorem{theo}{Theorem}[section]
\newtheorem{lemm}[theo]{Lemma}
\newtheorem{coro}[theo]{Corollary}
\newtheorem{corointro}{Corollary}
\newtheorem{defi}[theo]{Definition}
\newtheorem{prop}[theo]{Proposition}
\newtheorem{theointro}{Theorem}
\newtheorem{problem}{Problem}
\newtheorem{question}[theo]{Question}
\theoremstyle{definition}
\newtheorem{rema}[theo]{Remark}
\newtheorem{exam}[theo]{Example}
\numberwithin{equation}{subsection}
\newenvironment{pf}
{\medskip\noindent {\it Proof.  }}
{\hfill\nobreak $\Box$ \par\bigbreak}
\begin{document}

\author{Ga\"etan Chenevier}
\address[Ga\"etan Chenevier]{CNRS, Universit\'e Paris-Sud}
\author{Olivier Ta\"ibi}
\address[Olivier Ta\"ibi]{CNRS, \'Ecole Normale Sup\'erieure de Lyon}

\title[Level $1$ algebraic cusp forms]
{Discrete series multiplicities for classical groups over $\Z$ and level $1$ algebraic cusp forms}

\thanks{Ga\"etan Chenevier and Olivier Ta\"ibi are supported by the C.N.R.S. and by the project ANR-14-CE25.}

\maketitle

\begin{abstract} The aim of this paper is twofold.
First, we introduce a new method for evaluating the multiplicity of a given discrete series
in the space of level $1$ automorphic forms of a split classical group $G$ over $\Z$,
and provide numerical applications in absolute rank $\leq 8$.
Second, we prove a classification result for
the level one cuspidal algebraic automorphic representations of ${\rm GL}_n$ over $\Q$ ($n$ arbitrary)
whose motivic weight is $\leq 24$.\ps
In both cases,
a key ingredient is a classical method based on the Weil explicit formula,
which allows to disprove the existence of certain level one algebraic cusp forms on ${\rm GL}_n$,
and that we push further on in this paper.
We use these vanishing results to obtain an arguably ``effortless'' computation
of the elliptic part of the geometric side of the trace formula of $G$,
for an appropriate test function. \ps
Thoses results have consequences for the computation
of the dimension of the spaces of (possibly vector-valued)
Siegel modular cuspforms for ${\rm Sp}_{2g}(\Z)$:
we recover all the previously known cases
without relying on any, and go further, by a unified and  ``effortless'' method.
\end{abstract}

\newcommand{\uk}{\underline{k}}

\newpage 

{\small
\tableofcontents
}
\section{Introduction}

\subsection{Siegel modular forms for ${\rm Sp}_{2g}(\Z)$} \label{introsmf}

We denote by ${\rm S}_k(\Gamma_g)$ and  ${\rm S}_{\uk}(\Gamma_g)$ respectively
the space of cuspidal Siegel modular forms for the full Siegel modular group
$\Gamma_g={\rm Sp}_{2g}(\Z)$, which are either scalar-valued of weight $k \in
\Z$, or more generally vector-valued of weight $\underline{k} =
(k_1,k_2,\dots,k_g)$ in $\Z^g$ with $k_1\geq k_2 \geq \cdots \geq
k_g$ (we refer to \cite{vanderGeer_Siegel} for a general introduction to Siegel
modular forms).
Recall that ${\rm S}_{\uk}(\Gamma_g)$ trivially vanishes when $\sum_i k_i$ is
odd, and also for $k_g<g/2$ (Freitag, Reznikoff, Weissauer). \ps
The question of determining the dimension of ${\rm S}_{\uk}(\Gamma_g)$, very
classical for $g=1$, has a long and rich history for $g>1$. It has first been
attacked for $g=2$ using geometric methods, in which case concrete formulas were
obtained by Igusa (1962) in the scalar-valued case, and by Tsushima (1983) for
the weights\footnote{More precisely, Tsushima could only prove that his formula
works for $k_2 \geq 5$, and later Petersen (2013) and Ta\"ibi (2016)
independently showed that it holds as well for $k_2\geq 3$ and $(k_1,k_2) \neq
(3,3)$, as conjectured by Ibukiyama.} $k_1 \geq k_2 \geq 3$. There is still no
known formula for $k_2=2$, although we have ${\rm S}_{\uk}(\Gamma_2)=0$ for
$k_2=1$ (Ibukiyama, Skoruppa): see \cite{cleryvandergeer} for a discussion of these singular
cases. An analogue of Igusa's result for $g=3$ was proved by Tsuyumine in 1986,
but only quite recently a conjectural formula was proposed by  Bergstr\"om,
Faber and van der Geer, in the vector valued case $k_1\geq k_2 \geq k_3 \geq 4$,
based on counting genus three curves over finite fields (2011). Their formula,
and more generally a formula\footnote{These are pretty huge formulas, which
can't be printed here already for $g>2$, but see Theorem A {\it loc. cit.} for
their general shape. } for $\dim {\rm S}_{\uk}(\Gamma_g)$ for arbitrary $g\leq
7$ and $k_g>g$ was proved by the second author in \cite{Taibi_dimtrace}, using a
method that we will recall in \S\,\ref{method}.
Actually, the general formulas given in \cite{Taibi_dimtrace} apply to any genus
$g$ and any weights with $k_g>g$.
However, they involve certain rational numbers, that we shall refer to later as
{\it masses}, that are rather difficult to compute; Ta\"ibi provided loc. cit. a
number of algorithms to determine them (more precisely, certain local orbital integrals, see
\S \,\ref{method}) that allowed him to numerically compute those masses for
$g\leq 7$.

Our first main result in this paper is a completely different and comparatively
much easier method to compute the aforementioned masses. This method allows us
to recover, in a uniform and rather ``effortless'' way, all the computations of
masses done in \cite{Taibi_dimtrace} for $g\leq 7$, and even to go further:

\begin{theointro}
\label{thmintro1}
There is an explicit and implemented formula
computing $\dim \, {\rm S}_{\underline{k}}(\Gamma_g)$
for any $g \leq 8$, and any $\uk$ with $k_g > g$.
\end{theointro}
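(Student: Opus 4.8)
The plan is to reduce the dimension formula for $\mathrm{S}_{\uk}(\Gamma_g)$ to Arthur's multiplicity formula for the split symplectic group $\Sp_{2g}$ and its inner forms, following the strategy of \cite{Taibi_dimtrace}, and then to feed into that machinery the new ``effortless'' computation of the elliptic part of the geometric side of the trace formula advertised in the abstract. Concretely, for $\uk$ with $k_g > g$ the space $\mathrm{S}_{\uk}(\Gamma_g)$ matches (up to contributions of non-tempered and endoscopic nature, all of which are bookkept by Arthur parameters) the discrete automorphic spectrum of $\Sp_{2g}$ of level $1$ with infinitesimal character determined by $\uk$; the discrete series in question is the holomorphic (or appropriate vector-valued) one, whose Harish-Chandra parameter is read off from $\uk$ via the shift by $\rho$. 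So the first step is to recall from \S\,\ref{method} the precise translation $\dim \mathrm{S}_{\uk}(\Gamma_g) = \sum_{\pi} m(\pi)$, where $\pi$ ranges over the relevant packet members and $m(\pi)$ is the Arthur multiplicity, and to note that each $m(\pi)$ is in turn expressible via masses, i.e. sums of products of local orbital integrals at the archimedean and the (single, since level $1$) non-archimedean place, together with global combinatorial factors coming from the component groups $\mathcal{S}_\psi$.

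Next I would carry out the key reduction of the masses. In \cite{Taibi_dimtrace} the masses are the stable orbital integrals of an explicit test function, and their evaluation required delicate $p$-adic orbital integral computations. The new method replaces these: one applies the Weil explicit formula to rule out the existence of most level one cuspidal algebraic automorphic representations of $\GL_n$ with motivic weight $\le 24$ (this is the second main result quoted in the abstract), and then — because the geometric (elliptic) side of the trace formula for $G=\Sp_{2g}$, $g\le 8$, only ever ``sees'' such small-motivic-weight $\GL_n$ constituents through the spectral side — one can determine the elliptic part essentially from the known spectral data plus these vanishing results, bypassing the orbital integrals entirely. Thus the steps are: (i) bound the motivic weights that can occur for $g\le 8$ and $k_g>g$ (these are $\le 23$, or $\le 24$ in the even case, matching the title's weight $23$); (ii) invoke the classification of level one cuspidal algebraic representations of $\GL_n$ of motivic weight $\le 24$; (iii) enumerate all Arthur parameters $\psi$ for $\Sp_{2g}$ of level $1$ with the prescribed infinitesimal character, built out of these pieces; (iv) for each such $\psi$ compute the Arthur multiplicity $m(\pi)$ of the relevant discrete series member by Arthur's sign-character formula, which is now purely combinatorial; (v) sum. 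The ``implemented formula'' claim means all of this has been coded.

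The main obstacle I expect is step (iii)–(iv) for the largest cases, and in particular making the argument that the geometric side is truly determined this way fully rigorous: a priori the trace formula expresses a spectral sum in terms of a geometric sum, so recovering the geometric side ``for free'' requires knowing the spectral side completely for enough test functions — which is exactly what the combination of Arthur's multiplicity formula, the $\GL_n$ classification, and the explicit-formula vanishing results is meant to supply. One has to be careful that the set of infinitesimal characters and the set of available test functions (pseudocoefficients of discrete series at $\infty$, the unit at the finite place) are rich enough that the resulting linear system pins down all the masses needed for $g\le 8$; for $g=8$ this is genuinely at the edge of what the weight-$\le 24$ classification allows, which is presumably why the theorem stops at $g=8$. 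A secondary technical point is handling the boundary/non-cuspidal contributions (Siegel operator, Eisenstein cohomology) and the mild exceptions analogous to the $(3,3)$ case for $g=2$, but for $k_g>g$ these are under control by the vanishing of $\mathrm{S}_{\uk}(\Gamma_g)$ in the singular range and by Arthur's description of the residual spectrum. Once the masses are in hand, producing the explicit dimension formula and checking it against the known values of Igusa, Tsushima, Tsuyumine, and \cite{Taibi_dimtrace} for $g\le 7$ is routine bookkeeping.
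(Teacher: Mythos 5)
Your proposal captures the correct overall architecture — Arthur's endoscopic classification (Key facts~1 and 2), the $\mathrm{L}^2$-Lefschetz trace formula $\mathrm{EP}(G;\lambda)=\mathrm{T}_{\mathrm{geom}}(G;\lambda)$, and the observation that the masses ${\rm m}_c$ in the elliptic term can be pinned down by an invertible linear system once one has enough weights $\lambda$ with $\mathrm{N}^\perp(w(\lambda))=0$, the latter being supplied by the explicit formula. Your last paragraph, about needing the available infinitesimal characters to make the linear system invertible and this being at the edge of what is possible at $g=8$, is exactly the crux: the paper proves this precisely by exhibiting, for each relevant $G$ up to $\Sp_{16}$, a set $\Lambda$ of dominant weights satisfying (P1') and (P2') (Theorems~\ref{theomasseseffortless} and \ref{theomassesmixed}).

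However, your step (i)--(ii) contains a real misconception. You assert that for $g\le 8$ and $k_g>g$ the motivic weights are ``$\le 23$ or $\le 24$,'' and that one should ``invoke the classification of level one cuspidal algebraic representations of $\GL_n$ of motivic weight $\le 24$'' and then enumerate Arthur parameters. This is false and it is not what happens. Theorem~\ref{thmintro1} is for \emph{arbitrary} $\uk$ with $k_g>g$, so $2(k_1-1)$ is unbounded and one cannot list the Arthur parameters that occur. The classification theorem at motivic weight $\le 24$ (Theorems~\ref{thm23} and~\ref{thm24}) is the input to Theorem~\ref{thmintro2}, not to Theorem~\ref{thmintro1}: you have conflated the two proofs. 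For Theorem~\ref{thmintro1}, only \emph{vanishing} results $\mathrm{N}^\perp(w(\lambda))=0$ for a carefully chosen finite set of $\lambda$'s are used, and their sole role is to set up a linear system determining the finitely many \emph{universal} rational numbers ${\rm m}_c$; once these are known, $\mathrm{T}_{\mathrm{ell}}(G;\lambda)=\sum_c {\rm m}_c\,\tr(c\,|\,{\rm V}_\lambda)$ is computable for \emph{every} $\lambda$, and by induction (Key fact~2 backwards) so is $\mathrm{N}^\perp(w(\lambda))$, hence $\dim{\rm S}_{\uk}(\Gamma_g)$ via Key fact~1. A smaller inaccuracy: the paper works with the split classical groups over $\Z$ directly, not inner forms, and the test function has unramified local components at all primes (a product over $p$), not a single non-archimedean place. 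You should also separate out the case $G=\Sp_{16}$, which requires the hybrid Theorem~\ref{theomassesmixed} (some ``easy'' orbital integrals computed directly, the rest from the linear system), since the purely effortless method only reaches $\Sp_{14}$ and $\SO_{17}$.
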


\noindent See Theorems \ref{theomasseseffortless} \& \ref{theomassesmixed}
for equivalent, better formulated, statements.
Theorem \ref{thmintro1} is about Siegel modular forms of arbitrary weights $\underline{k}$
such that $k_g > g$, but with genus $g \leq 8$.
A second result concerns the Siegel modular forms of arbitrary genus,
but of weights $\leq 13$ (there are really finitely many relevant pairs $(\uk,g)$ here).
It is very much in the spirit of the determination of $\dim {\rm S}_k(\Gamma_g)$ by Chenevier-Lannes in \cite{CheLan}
in the cases $g \leq k \leq 12$. \ps

\begin{theointro} \label{thmintro2}
The dimension of ${\rm S}_{\underline{k}}(\Gamma_g)$
for $13 \geq k_1 \geq \dots \geq k_g >g$,
and $\underline{k}$ non scalar,
is given by Table \ref{tab:tableleq13nonscal}.
The dimension of ${\rm S}_k(\Gamma_g)$
for any $k \leq 13$ and any $g\geq 1$
is given by Table \ref{tab:tableleq13scal}.
\end{theointro}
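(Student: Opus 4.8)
The plan is to follow the strategy of Chenevier--Lannes \cite{CheLan}, extending their range of motivic weights from $\leq 22$ to $\leq 24$: reduce $\dim {\rm S}_{\underline{k}}(\Gamma_g)$ to a multiplicity in the discrete automorphic spectrum of a split classical group over $\Z$, and evaluate that multiplicity using Arthur's classification together with our classification of level one cuspidal algebraic automorphic representations of $\GL_n$ of small motivic weight.

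\emph{Step 1: reduction to a multiplicity.} A nonzero form in ${\rm S}_{\underline{k}}(\Gamma_g)$ generates, at the real place, a holomorphic discrete series $\pi_\infty = \pi_\infty(\underline{k})$ of $\Sp_{2g}(\R)$ when $\underline{k}$ is regular --- which is automatic in the vector-valued table, since $k_g > g$ forces $k_1 - 1 > k_2 - 2 > \cdots > k_g - g \geq 1$ --- and a holomorphic limit of discrete series in the remaining (scalar) cases. Standard arguments, recalled earlier in the paper and in \cite{CheLan, Taibi_dimtrace}, identify $\dim {\rm S}_{\underline{k}}(\Gamma_g)$ with the multiplicity of $\pi_\infty$ in the discrete (indeed cuspidal) level one automorphic spectrum of $\Sp_{2g}$ (equivalently, after the usual compatibility checks, of the split $\SO_{2g+1}$); the contributions of the non-tempered parameters below account for the Saito--Kurokawa-- and Ikeda-type CAP forms. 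Since $k_1 \leq 13$, the condition $k_g > g$ forces $g \leq 12$ in the vector-valued case, while in the scalar case the Freitag--Reznikoff--Weissauer vanishing ${\rm S}_k(\Gamma_g) = 0$ for $k < g/2$ leaves only $g \leq 2k \leq 26$; thus only finitely many pairs $(\underline{k}, g)$ need be treated.

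\emph{Step 2: the relevant Arthur parameters.} By Arthur's endoscopic classification in the form established for these groups in \cite{Taibi_dimtrace}, the multiplicity of $\pi_\infty$ equals a finite sum $\sum_\psi m(\psi)$ over formal self-dual discrete Arthur parameters $\psi = \pi_1[d_1] \oplus \cdots \oplus \pi_r[d_r]$ of level one whose infinitesimal character matches that of $\pi_\infty(\underline{k})$, each term $m(\psi)$ lying in $\{0, 1\}$. Matching infinitesimal characters forces each $\pi_i$ to be a level one self-dual cuspidal algebraic automorphic representation of some $\GL_{n_i}$ whose motivic weight, even before the shift by $[d_i]$, is at most that of $\pi_\infty(\underline{k})$, namely $2(k_1 - 1) \leq 24$. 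At this point I invoke our classification of all level one cuspidal algebraic automorphic representations of $\GL_n$ over $\Q$ of motivic weight $\leq 24$: it produces the explicit finite list from which each $\pi_i$ must be drawn, and hence, for each relevant $(\underline{k}, g)$, an explicit finite set of candidate parameters $\psi$.

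\emph{Step 3: evaluating the sum, and the main obstacle.} For each candidate $\psi$ one decides whether $m(\psi) = 1$: this requires (a) deciding whether the Adams--Johnson packet attached to the archimedean parameter $\psi_\infty$ contains the target holomorphic (limit of) discrete series $\pi_\infty(\underline{k})$, and, if so, (b) comparing the resulting character of Arthur's global component group with the sign character $\varepsilon_\psi$. Both are effective --- (a) from the combinatorial description of Adams--Johnson packets of (limits of) discrete series of $\Sp_{2g}(\R)$, and (b) from Arthur's sign formula, whose ingredients (the symplectic/orthogonal root numbers of the pairs $\pi_i \times \pi_j$, already recorded in our classification) are computable --- and have been implemented; summing the $m(\psi) \in \{0,1\}$ over the finite candidate set and assembling the values yields Tables \ref{tab:tableleq13nonscal} and \ref{tab:tableleq13scal}. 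The hard part is really the input of Step 2: the case $k_1 = 13$ genuinely needs the full motivic weight $\leq 24$ classification (and hence the new Weil-explicit-formula vanishing statements feeding into it), whereas $k_1 \leq 12$ would already follow from \cite{CheLan, Taibi_dimtrace}. The secondary difficulty is the archimedean bookkeeping: pinning down which members of the Adams--Johnson packets are the holomorphic (limits of) discrete series with the correct normalization of the component-group characters, handling the non-regular infinitesimal characters in the scalar cases with $g \geq k$, and making sure that the non-tempered parameters ($d_i > 1$) are all accounted for, so that the sum recovers the \emph{entire} space of cusp forms and not merely its stable tempered part.
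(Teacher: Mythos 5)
Your overall strategy (reduce $\dim {\rm S}_{\underline k}(\Gamma_g)$ to counting level one Arthur parameters with prescribed infinitesimal character, feed in the motivic weight $\leq 24$ classification, and decide each multiplicity $m(\psi)\in\{0,1\}$ by the sign formula) is indeed the paper's strategy, and for the vector-valued entries with $k_g>g$ your Steps 1--3 are essentially the argument of \S \ref{regularvectorvalued} and \S \ref{pfthm2}: there $\rho_{\underline k}$ is a holomorphic discrete series, the parameters are Adams--Johnson, and the packet membership and characters are given by \cite{AMR} via Formula \eqref{chiukbasic}.

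However, there is a genuine gap in your treatment of the scalar case with $g \geq k$, which is precisely where the new table entries live ($\dim {\rm S}_{12}(\Gamma_{12})={\rm S}_{13}(\Gamma_{16})={\rm S}_{13}(\Gamma_{24})=1$ and the vanishing for $k\leq g$ otherwise). For $g\geq k$ the module $\rho_k(g)$ is \emph{not} a holomorphic limit of discrete series but a highly non-tempered unitary lowest weight module: its infinitesimal character has the eigenvalue $0$ with multiplicity $3$ and further repeated eigenvalues, so the archimedean parameters $\psi_\R$ are not Adams--Johnson parameters and no ``combinatorial description of Adams--Johnson packets'' applies. The input you would need -- exactly when $\rho_k(g)$ lies in $\Pi(\psi_\R)$, its multiplicity there, and the character $\chi_{\rho_k(g)}$ on ${\rm C}_{\psi_\R}$ -- is the recent local work of Moeglin--Renard \cite{MR_scalar}, which the paper translates into the cases (I), (H1), (H2) of \S \ref{scalarvalued} and into the sign formulas \eqref{chiukbasic2}--\eqref{eq:chi_sioi2}; without it Step 3(a)--(b) cannot be carried out for any of these $(k,g)$. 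A second, related gap: in this range one only gets ${\rm S}_k(\Gamma_g)\subset {\rm L}^2_k(\Gamma_g)\subset {\rm M}_k(\Gamma_g)$, and Arthur's formula counts square-integrable eigenforms, not cusp forms; deciding cuspidality for the surviving non-tempered parameters requires the theta-series apparatus of \S \ref{pfthm2} (degrees of eigenforms in $\C[{\rm X}_{2k}]$, Rallis' relation $\psi_{F}=\psi_G\oplus[\,\cdot\,]$, Lemma \ref{thetasquareint}, the Zharkovskaya relation), which rules out the parameters realized by non-cuspidal theta lifts and confirms cuspidality in the three exceptional cases. Your phrase ``square-integrable (indeed cuspidal)'' passes over exactly this point.
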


The notations for these tables are explained in \S \ref{pfthm2}.
Table \ref{tab:tableleq13nonscal} shows in particular that
${\rm S}_{\underline{k}}(\Gamma_g)$ has dimension $\leq 1$ for $\uk$ non scalar and $13 \geq k_1 \geq \dots \geq k_g >g$,
and is nonzero for exactly $29$ values of $\uk$.
Table \ref{tab:tableleq13scal} includes the fact that ${\rm S}_k(\Gamma_g)$
vanishes whenever $k \leq 13$ and $g \geq k$, except in the three following situations:
$$\dim {\rm S}_{12}(\Gamma_{12})\,=\,\dim {\rm S}_{13}(\Gamma_{16})\, =\, \dim {\rm S}_{13}(\Gamma_{24})=1$$
(a nonzero element in the first and last spaces has been constructed in \cite{BoFrWe} and
\cite{Freitag_harm_theta}). We obtain for instance the following result.

\begin{corointro}
\label{corweight13}
${\rm S}_{13}(\Gamma_g)$ has dimension $1$ for $g=8,12,16,24$,
and $0$ otherwise.
\end{corointro}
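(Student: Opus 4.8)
The assertion is precisely the scalar weight $k=13$ case of Theorem~\ref{thmintro2}, i.e.\ the $k=13$ part of Table~\ref{tab:tableleq13scal}; so what I would explain is how those entries are produced. One starts by reducing to finitely many genera: since $\sum_i k_i = 13g$ is odd when $g$ is odd, ${\rm S}_{13}(\Gamma_g) = 0$ for every odd $g$, and ${\rm S}_{13}(\Gamma_g) = 0$ for $g \geq 27$ by the Freitag--Reznikoff--Weissauer bound $k_g < g/2$. It remains to handle the even genera $2 \leq g \leq 26$.

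For $g \leq 8$ one invokes the implemented formula of Theorem~\ref{thmintro1} (here $k_g = 13 > g$), which returns $0$ for $g = 2, 4, 6$ and $1$ for $g = 8$. For $10 \leq g \leq 26$ the weight $13$ lies outside the range of Theorem~\ref{thmintro1} as stated, so I would argue directly through the endoscopic classification underlying it: a Hecke eigenform in ${\rm S}_{13}(\Gamma_g)$ gives a level-one discrete automorphic representation of ${\rm Sp}_{2g}$ with prescribed (possibly singular) holomorphic component at infinity, hence a global Arthur parameter $\psi = \bigoplus_i \pi_i[d_i]$ for ${\rm SO}_{2g+1}(\C)$, self-dual of orthogonal type, with $\sum_i n_i d_i = 2g+1$ and infinitesimal character fixed by the weight. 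This constraint forces every cuspidal constituent $\pi_i$ to be a level-one cuspidal algebraic representation of ${\rm GL}_{n_i}$ of motivic weight $\leq 24$, except at the single boundary genus $g = 26$, where one must additionally rule out the few parameters whose constituents would have motivic weight $25$ or $26$ -- by pushing the explicit-formula vanishing a little further, or by a direct argument.

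With this in hand I would enumerate, genus by genus, the admissible $\psi$ using the classification of level-one cuspidal algebraic representations of ${\rm GL}_n$ of motivic weight $\leq 24$ -- whose building blocks are $\mathbf{1}$, the elliptic eigenforms of weights $12, 16, 18, 20, 22, 24$, ${\rm Sym}^2 \Delta_{11}$, and the handful of vector-valued genus-$2$ forms, all tabulated in the paper -- and then apply Arthur's multiplicity formula: $\psi$ contributes a unique holomorphic form exactly when the sign character it determines -- a product of local $\varepsilon$-contributions, the archimedean one read off from the corresponding Arthur packet -- is trivial on the relevant element of the centralizer of $\psi$. Running over the even $g$ with $10 \leq g \leq 26$, no parameter survives for $g \in \{10, 14, 18, 20, 22, 26\}$, while for $g = 12, 16, 24$ exactly one does, non-tempered, the case $g = 24$ being realized by Freitag's harmonic theta series \cite{Freitag_harm_theta}. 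Together with the $g \leq 8$ computation and the trivial vanishings this yields dimension $1$ for $g \in \{8, 12, 16, 24\}$ and $0$ for all other $g$, as claimed.

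The main obstacle is this last step. The parameter enumeration is finite and essentially mechanical once the ${\rm GL}_n$ classification is granted, but applying the multiplicity formula correctly -- which at the singular weights demands an accurate description of the archimedean Arthur packet containing the holomorphic representation, and a careful evaluation of the global sign, and in particular the isolation of those parameters that produce a genuinely \emph{holomorphic} form rather than merely a cohomological one -- is the delicate part. Underpinning all of it is the classification of cuspidal ${\rm GL}_n$ representations of motivic weight $\leq 24$, itself established by the Weil explicit formula method developed in the paper; granting that and Theorem~\ref{thmintro1}, the work specific to this corollary is exactly the parameter count sketched above.
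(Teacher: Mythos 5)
Your overall route is the paper's: reduce to the enumeration of level-one Arthur parameters $\psi=\oplus_i\pi_i[d_i]$ for ${\rm Sp}_{2g}$ whose packet at infinity contains the (possibly singular) holomorphic module, feed in the classification of level-one algebraic cusp forms of motivic weight $\leq 24$, and sieve with the multiplicity formula. But there is a concrete gap in how you bound the cuspidal constituents, visible most sharply at the boundary genus $g=26$. You claim the infinitesimal-character constraint confines all $\pi_i$ to motivic weight $\leq 24$ except at $g=26$, where you would "rule out the few parameters whose constituents would have motivic weight $25$ or $26$ by pushing the explicit-formula vanishing a little further, or by a direct argument." Neither substitute is available as stated: the paper's classification stops at motivic weight $24$ (and only for regular self-dual forms there; at weight $23$ with multiplicity, and at non-regular weight $24$, it is incomplete even after Proposition~\ref{prop:23_mult} -- see Theorem~\ref{thm23GRH}, which needs GRH), so an unspecified extension of the explicit-formula method cannot be invoked. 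The same issue lurks, less visibly, at every singular genus $14\leq g\leq 24$: matching infinitesimal characters alone does not force the $\pi_i$ to be regular, so a priori unclassified non-regular constituents of motivic weight $23$ or $24$ could enter, and your enumeration over the tabulated list would not be justified.

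What closes both gaps in the paper is not more explicit-formula work but the Moeglin--Renard description of the archimedean packet (\S\ref{scalarvalued}): $\rho_{k}(g)\in\Pi(\psi_\R)$ forces case (I), (H1) or (H2), i.e.\ a block $\varepsilon_{\C/\R}^{k'}[2(g-k')+1]$ absorbing all eigenvalues beyond $k-1$, with the complementary $\psi'$ multiplicity free of dimension $2k'$. This is what guarantees every $\pi_i$ satisfies condition (R), is regular for $i\neq i_0$, and has motivic weight $\leq 2(k-1)=24$ -- so the list $\mathcal{L}_{24}$ suffices at every genus -- and it disposes of $g=26$ outright: case (I) needs $k-1>g-k$, case (H2) needs $g\leq 2k-2=24$, and case (H1) at odd $k=13$ forces $\pi_{i_0}=\mathrm{Sym}^2\Delta_{11}$, hence $d_{i_0}=3$ and $g=14$, where the multiplicity formula then fails (see \S\ref{pfthm2}); alternatively, for $g\geq 2k$ one can quote Weissauer's theorem together with Proposition~\ref{prop:dim2k} in \S\ref{rema:kleqgsur2}, which kills all weights not divisible by $4$. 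So you should replace the "push the explicit formula further" step by the structural input from \cite{MR_scalar}, and make explicit that this same input is what licenses restricting the enumeration to the $27$ classified representations; with that, your sketch becomes the paper's proof.
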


 An inspection of standard ${\rm L}$-functions,
 and general results of B\"ocherer, Kudla-Rallis and Weissauer,
show that these four spaces are spanned by certain Siegel theta series
build on Niemeier lattices (see \S \ref{complconstruction}).
In a companion paper \cite{CheTaiLeech}
we come back to these constructions and study them in a much more elementary way.
Combined with \cite[Sect. 9.5]{CheLan},
{\it this provides an explicit construction of all the cuspidal Siegel modular eigenforms
of weight $k \leq 13$ and level $\Gamma_g$ for an arbitrary genus $g$}.
In \S \ref{complconstruction}, we also prove that
{\it Eichler's basis problem holds in weights $k=8$ and $12$ for arbitrary genus $g$},
completing the results of \cite{CheLan} for $g \leq k$. \ps

Last but not least, let us mention that in the past,
several other authors have computed $\dim {\rm S}_k(\Gamma_g)$
for a number of isolated and small pairs $(g,k)$,
sometimes with much efforts, e.g.
Igusa, Witt, Erokhin, Duke-Immamoglu,
\footnote{We warn the reader that the proofs of Duke and Immamoglu in \cite{DukeImamoglu}
are not valid in the case $g>k$ since they rely on the incorrect Corollary 3 p. 601 in \cite{mizumoto}:
see \cite{mizumoto_erratum} for a recent erratum.
Note also that a preliminary version of \cite{CheLan} did include a proof
of the vanishing of ${\rm S}_k(\Gamma_g)$ for $k\leq 12$, $g>k$ and $g\neq 24$,
but this statement was deleted in the published version for the same reason.
Our proofs here show that all these incriminated results for $g>k$ were nevertheless correct,
and actually do not rely anymore on the results in \cite{mizumoto}.}
Nebe-Venkov, and Poor-Yuen among others.
We would like to stress that
{\it none of the results of this paper depend on
a previous computation of the dimension of a space of Siegel modular forms},
not even of ${\rm S}_k({\rm SL}_2(\Z))$ ! see \S \ref{sub:intro_effortless}.
Moreover, as far as we know,
the dimensions given by Theorems \ref{thmintro1} and \ref{thmintro2} 
seem to recover all the previously known\footnote{More precisely, the only cases not covered by these two theorems
seem to be the vanishing of ${\rm S}_{\uk}(\Gamma_2)$ for $k_2 = 1$, and for the pairs $\uk=(k_1,2)$ with $k_1 \leq 50$ \cite[Thm. 1.3]{cleryvandergeer}.
However, this vanishing for $k_2=1$ can also be proved by arguments in the spirit of the ones employed here, as explained by the first author in an appendix of \cite{cleryvandergeer}, and we can actually prove it as well for all $\uk=(k_1,2)$ with $k_1 \leq 54$: see \S \ref{webeatcleryvandergeer}. } dimensions of spaces of
Siegel modular cuspforms for $\Gamma_g$. 

Our proof of Theorems \ref{thmintro1} and \ref{thmintro2} will use automorphic
methods, building on a strategy developed in the recent works
\cite{ChRe,CheLan,Taibi_dimtrace}:
we will review this strategy in \S \ref{sub:intro_effortless} and \S \ref{secsmf}.
An important ingredient is Arthur's {\it endoscopic classification} of the
discrete automorphic spectrum of classical groups in terms of general linear
groups \cite{Arthur_book}, including the so-called {\it multiplicity formula}.
A special feature of this approach is that even if we were only interested in
$\dim {\rm S}_{\uk}(\Gamma_g)$, we would be forced to compute first the
dimension of various spaces of automorphic forms for all the split classical
groups over $\Z$ of smaller dimension.
By a {\it split classical group over $\Z$} we will mean here either the group scheme
${\rm Sp}_{2g}$ over $\Z$, or the special orthogonal group scheme ${\rm SO}_n$
of the integral quadratic form $\sum_{i=1}^{n/2} x_i x_{n+1-i}$ (case $n$
even $\neq 2$) or $\sum_{i=1}^{(n-1)/2} x_i x_{n+1-i} + x_{(n+1)/2}^2$ (case $n$ odd)
over $\Z^n$.
An important gain of this approach,
however, is that in the end we do not only compute $\dim {\rm S}_{\uk}(\Gamma_g)$,
but also the dimension of its subspace of cuspforms of any possible endoscopic type,
a quantity which is arguably more interesting than the whole dimension itself:
see Tables \ref{tab:tableleq13nonscal} \& \ref{tab:tableleq13scal} for a sample of results. \ps

\subsection{Level one algebraic cusp forms on ${\rm GL}_m$} \label{level1alg}
Let $m \geq 1$ be an integer
and $\pi$ a cuspidal automorphic representation of ${\rm PGL}_m$ over $\Q$.
We say that $\pi$ has {\it level $1$} if $\pi_p$ is unramified for each prime $p$.
We say that $\pi$ is {\it algebraic} if the infinitesimal character of $\pi_\infty$,
that we may view following Harish-Chandra and Langlands
as a semi-simple conjugacy class in ${\rm M}_m(\C)$,
has its eigenvalues in $\frac{1}{2}\Z$,
say $w_1 \geq w_2 \geq \dots \geq w_m$,
and with $w_i-w_j \in \Z$.
Those eigenvalues $w_i$ are called {\it the weights} of $\pi$,
and the important integer $w(\pi):=2w_1$ is called
the {\it motivic weight} of $\pi$.
The Jacquet-Shalika estimates imply
$w_{m+1-i}=-w_i$ for all $i$,
and in particular, $w(\pi)\geq 0$ (see \S \ref{realalgebraic}). \ps

The algebraic cuspidal $\pi$ are especially interesting to number theorists,
as for such a $\pi$ standard conjectures (by Clozel, Langlands)
predict the existence of a compatible system of pure and irreducible
$\ell$-adic Galois representations $\rho$
with same ${\rm L}$-function as $\pi |.|^{w_1}$,
the Hodge-Tate weights of $\rho$ being the $w_i+w_1$,
and its Deligne weight being $w(\pi)$.
The level $1$ assumption in this work has to be thought as a
simplifying, but still interesting, one (see \cite{CheLan} for several
motivations).\ps

An important problem is thus the following.
For an integer $m \geq 1$,
we denote by ${\rm W}_m$ the set of $\underline{w}= (w_i)$ in $\frac{1}{2}\Z^m$
with $w_1 \geq w_2 \geq \dots \geq w_m$,
$w_i-w_j \in \Z$ and $w_i=-w_{m+1-i}$ for all $1\leq i,j\leq m$.\ps

\begin{problem}
For $\underline{w} \in {\rm W}_m$,
determine the (finite) number ${\rm N}(\underline{w})$ of
level $1$ cuspidal algebraic automorphic representations of ${\rm PGL}_m$
whose weights are the $w_i$,
and the number ${\rm N}^\bot(\underline{w})$
of those $\pi$ satisfying furthermore $\pi^\vee \simeq \pi$ (self-duality).
\end{problem}\ps

Let us say that an element $(w_i)$ in ${\rm W}_m$ is {\it regular}
if for all $i\neq j$ we have either $w_i \neq w_j$, 
or $m \equiv 0 \bmod 4$, $i=j-1=m/2$ and $w_i=w_j=0$ (hence $w_1 \in \Z$).
Despite appearences, the question of determining the ${\rm N}^\bot(\underline{w})$ for regular $\underline{w}$
is very close to that discussed in \S\,\ref{introsmf}.
Indeed, as was observed and used in \cite{ChRe, CheLan, Taibi_dimtrace},
the level $1$ self-dual $\pi$ of regular weights are the exact building blocks
for Arthur's endoscopic classification of the discrete automorphic
representations of split classical groups over $\Z$ which are unramified at all
finite places and discrete series at the Archimedean place (with a very
concrete form of Arthur's multiplicity formula, relying on \cite{AMR}).
As an illustration of this slogan, the following fact was observed in \cite[Chap
9]{ChRe}.\ps

{\bf Key fact 1}. {\it Fix $g\geq 1$ and $\uk=(k_1,\dots,k_g) \in \Z^g$ with $k_1\geq k_2 \geq \dots \geq k_g>g$.
Then the dimension of ${\rm S}_{\uk}(\Gamma_g)$ is
an explicit function of the (finitely many) quantities ${\rm N}^\bot(\underline{w})$
with $\underline{w}=(w_i) \in {\rm W}_m$ regular, $m \leq 2g+1$ and $w_1 \leq k_1-1$.}\ps
\ps
See \cite[Prop.\ 1.11]{ChRe} for explicit formulas for $g \leq 3$, \cite[Chap.
5]{Taibi_dimtrace} for $g = 4$, and \cite[Thm. 5.2]{CheLan} for the general
recipe. This general recipe will actually be recalled in \S \ref{secsmf},
in which we will also apply the recent results of
\cite{MR_scalar} to give an analoguous statement for ${\rm S}_k(\Gamma_g)$
(scalar-valued case) in the case $k \leq g$.
This last case is quite more sophisticated,
in particular it also involves certain slightly irregular weights.
We will come back later on the relations
between the Problem above and Theorems  \ref{thmintro1} and \ref{thmintro2}. \ps

\subsection{Classification and inexistence results}\label{classresult}
Let us denote by $\Pi_{\rm alg}$ the set of cuspidal automorphic 
representations of ${\rm PGL}_m$, with $m\geq 1$ arbitrary, which are algebraic and of 
level $1$. The second main result of this paper is a partial
classification of elements $\pi$ in $\Pi_{\rm alg}$ having motivic weight $\leq 24$.
The
first statement of this type, proved in \cite[Thm. F]{CheLan}, asserts that
there are exactly $11$ elements $\pi$ in $\Pi_{\rm alg}$ of motivic weight $\leq 22$: the trivial
representation of ${\rm PGL}_1$, the $5$ representations $\Delta_{k-1}$ of ${\rm
PGL}_2$ generated by the $1$-dimensional spaces ${\rm S}_k({\rm SL}_2(\Z))$ for
$k=12,16,18,20,22$ (whose weights are $\pm \frac{k-1}{2}$), the Gelbart-Jacquet
symmetric square of $\Delta_{11}$ (with weights $-11,0,11$), and four other
$4$-dimensional self-dual $\pi$ with respective weights
$$\{\pm\,19/2,\,\pm \,7/2\}, \, \, \{\pm \,21/2,\,\pm \,5/2\}, \, \, \, \{\pm\, 21/2, \pm \,9/2\}\, \, \, {\rm and}\, \, \,  \{\pm \,21/2,\,\pm\,13/2\}.$$
\noindent In this paper we significantly simplify the proof of \cite[Thm. F]{CheLan}: see \S \ref{par:poids22}.
More importantly, these simplifications allow us to prove the following theorems
for motivic weights $23$ and $24$.\ps

\begin{theointro}\label{thm23} There are exactly $13$ level $1$
cuspidal algebraic automorphic representations of ${\rm GL}_m$ over $\Q$,
with $m$ varying, with motivic weight $23$,
and having the weight $23$ with multiplicity $1$:\ps
\begin{itemize}
\item[(i)] $2$ representations of ${\rm PGL}_2$
generated by the eigenforms in ${\rm S}_{24}({\rm SL}_2(\Z))$, \ps
\item[(ii)] $3$ representations of ${\rm PGL}_4$
of weights $\pm \,23/2,\,\pm \,v/2$ with $v=7,9$ or $13$,\ps
\item[(iii)] $7$ representations of ${\rm PGL}_6$
of weights $\pm\, 23/2,\, \pm \,v/2,\, \pm \,u/2$
with $$(v,u)\,=\,(13,5),\, (15,3),\, (15,7), \,(17,5),\,(17,9),\,(19,3) \, \, \, \, {\rm and}\, \, \, (19,11),$$ \ps
\item[(iv)] $1$ representation of ${\rm PGL}_{10}$
of weights $\pm \,23/2,\, \pm \,21/2,\, \pm\,17/2,\,\pm\, 11/2,\,\pm\, 3/2$.\ps
\end{itemize}
There are all self-dual (symplectic) and uniquely determined by their weights.
\end{theointro}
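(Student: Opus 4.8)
The plan is to combine three ingredients from the introduction: the rigidity of the weight combinatorics when the motivic weight is odd, the Weil explicit formula used both to bound the dimension $m$ and to annihilate most candidate weight systems, and Arthur's multiplicity formula on the split groups $\SO_{2n+1}/\Z$ --- evaluated through the ``effortless'' trace formula of Theorem \ref{thmintro1} --- to settle the finitely many remaining cases by induction on $m$.

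First I would pin down the shape of the weights. Since $w(\pi)=23$ is odd we have $w_1=23/2$, and then $w_i-w_j\in\Z$ forces every weight into $\tfrac12+\Z$; in particular no weight is $0$, so $m$ is even, and the Jacquet--Shalika symmetry $w_{m+1-i}=-w_i$ (valid for every $\pi\in\Pialg$) shows the weights form a union of pairs $\{\pm w_i\}$ with $w_i\in\tfrac12+\Z_{>0}$. Together with the hypothesis that $23/2$ occurs once, the positive weights $\tfrac{23}{2}=w_1>w_2\ge\cdots\ge w_{m/2}\ge\tfrac12$ lie among the twelve values $\{\tfrac{23}{2},\tfrac{21}{2},\dots,\tfrac12\}$, so once $m$ is bounded there are only finitely many weight systems $\underline w$ to examine.

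Next I would invoke the explicit formula. I would feed the standard $L$-functions of the relevant self-dual level $1$ parameters of motivic weight $23$ --- and, for the self-duality question, the $L$-function of $\pi\boxplus\pi^\vee$ when $\pi\neq\pi^\vee$ --- into the Weil explicit formula with Mestre-type test functions, in the style of \cite{CheLan} but pushed further as announced. I expect the positivity inequalities to (i) bound $m$ by an explicit constant; (ii) force $N(\underline w)=0$ for all but a short list of $\underline w$, including all the slightly irregular ones; and (iii) show that every surviving $\pi$ is self-dual, so that $N(\underline w)=N^\bot(\underline w)$ throughout. A self-dual algebraic $\pi$ with all weights in $\tfrac12+\Z$ is necessarily symplectic, its archimedean $L$-parameter being a direct sum of $2$-dimensional symplectic pieces, one per positive weight; this also accounts for the final sentence of the theorem.

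Finally I would compute $N^\bot(\underline w)$ for the surviving $\underline w$, by induction on $m=2n$. Such a self-dual symplectic $\pi$ of $\GL_{2n}$ is exactly a ``general type'' discrete Arthur parameter for the split $\SO_{2n+1}/\Z$ that is unramified everywhere and a discrete series at $\infty$; by the concrete multiplicity formula recalled in \S\ref{secsmf} (relying on \cite{AMR}), the dimension of the associated level $1$ automorphic space equals $N^\bot(\underline w)$ plus an explicit sum of endoscopic contributions, each a product over summands $\pi_i[d_i]$ whose constituents $\pi_i$ have motivic weight $\le 22$ (the eleven representations of \cite[Thm.\ F]{CheLan}) or motivic weight $23$ and dimension $<2n$ --- all already known at this stage of the induction, and in most cases forcing the endoscopic sum to be empty (e.g.\ because $\mathrm{S}_k(\SL_2(\Z))=0$ for the small $k$ involved). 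That dimension is read off the stable trace formula of $\SO_{2n+1}$ for a pseudo-coefficient test function, whose elliptic ``masses'' are themselves pinned down effortlessly by playing the identity between its geometric and spectral sides against the vanishing results of step (ii) (this is the mechanism of Theorem \ref{thmintro1}). Subtracting the endoscopic terms isolates $N^\bot(\underline w)$; carrying this out for each even $m$ not excluded in step (ii) produces exactly the thirteen representations of the statement, and since $N^\bot(\underline w)\le 1$ in every case they are determined by their weights. I expect the main obstacle to be this last step: controlling Arthur's signs $\varepsilon_\psi$ and the Adams--Johnson archimedean packets uniformly across all endoscopic terms, and ensuring the explicit-formula vanishing is sharp enough to reduce the geometric side to its elliptic part --- motivic weight $23$ sits close enough to the boundary of what the explicit formula alone can reach that this sharpness is not automatic.
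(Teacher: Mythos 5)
Your overall architecture matches the paper's: first narrow the candidate archimedean parameters by the explicit formula, then use the trace formula on $\SO_{2n+1}$ (Theorem \ref{theomasseseffortless}, driven by Arthur's multiplicity formula) to count the survivors. But there is a genuine gap in how you propose to carry out the elimination step. You write that feeding standard and Rankin--Selberg $L$-functions into the Weil explicit formula ``in the style of \cite{CheLan} but pushed further'' will force $N(\underline{w})=0$ for all but a short list, but the basic positivity inequality $\mathrm{B}_\infty^F(U,U)\leq\widehat{F}(i/4\pi)$ from $\Lambda(s,\pi\times\pi)$ is demonstrably \emph{not} strong enough: the paper points out it cannot even exclude $\rmL(\pi_\infty)\simeq{\rm I}_{13}$, and in motivic weight $23$ it leaves over twelve thousand candidate $U$. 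What actually makes the elimination possible is the new criterion of Proposition \ref{prop:basicinequality} and Algorithm \ref{paralgo2}: apply the explicit formula to \emph{nonnegative real linear combinations} $t_1\pi_1+\sum_{i\geq 2}t_i\pi_i$ of the unknown $\pi_1$ with several \emph{known} elements of $\Pialg$, and show the resulting quadratic form $\mathrm{C}^F$ takes a negative value on the cone. This is a strictly stronger constraint than testing $\pi_1$ alone or a single pair $(\pi_1,\pi_i)$, and the whole proof in \S\ref{par:mot23} hinges on it --- your plan has no mechanism to achieve the claimed vanishings without it.

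You also misjudge where the difficulty sits. The Arthur/Adams--Johnson bookkeeping you flag as the main obstacle is already packaged once and for all in \S\ref{par:effortless}; when it is invoked in the weight-$23$ proof it is only for the six multiplicity-free survivors of dimension $\leq 10$, and is almost instantaneous. The genuinely hard part is the explicit-formula search (choosing $\ell$, subsets $S'$ of known forms, certifying the negative values by interval arithmetic), which the paper notes took months of computation. Two smaller points: the finiteness of the list of $\underline w$ to examine does not come from first bounding $m$ and then enumerating weight systems --- it comes directly from the positive definiteness of $\mathrm{B}_\infty^{{\rm F}_\ell}$ on $\mathrm{K}_\infty^{\leq 23}$ combined with a Fincke--Pohst short-vector enumeration, which bounds everything at once (since a priori a fixed weight can occur with arbitrary multiplicity, ``twelve possible positive weights'' does not bound $m$). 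And the symplectic self-duality assertion needs the multiplicity-one of the weight $23/2$ (not merely ``all weights in $\tfrac12+\Z$''): in general a parameter $\bigoplus {\rm I}_{w}$ with repeated summands preserves orthogonal pairings too, so one must first establish $N(U)\leq 1$, as the paper does via Inequality \eqref{taibicrit} with $m=2$, and then appeal to \cite[Prop.\ 8.3.3]{CheLan}.
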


The representations in (ii) and (iii) above were first discovered in
\cite{ChRe}, using some conditional argument that was later made unconditional
in \cite{TaiMult}.
Their existence was also confirmed by the different computation of the second
author in \cite{Taibi_dimtrace}, who also discovered the $10$-dimensional form in (iv). \ps

Despite our efforts, we have not been able to classify the $\pi$ of motivic
weight $23$ such that multiplicity of the weight $23$ is $>1$.
We could only prove that there is an explicit list $\mathcal{L}$ of $182$
weights $\underline{w}=(w_i)$ with $w_1=w_2=23$ such that (a) the weight of any
such $\pi$ belongs to $\mathcal{L}$, (b) for any $\underline{w}$ in
$\mathcal{L}$ there is at most one $\pi$ with this weight
(necessarily self-dual symplectic), except for the single weight $\underline{w}
=(v_i/2)$ in ${\rm W}_{14}\cap \mathcal{L}$ with
$(v_1,v_2,\dots,v_7)=(23,23,21,17,13,7,1)$, for which there might also be two
such $\pi$ which are the dual of each other: see Proposition \ref{prop:23_mult}. 
Nevertheless, we conjecture that all of those putative $183$ representations do not exist, 
except perhaps one. Indeed, we prove in \S \ref{par:grh} the following result, 
assuming a suitable form of {\rm (GRH)}.\ps

\begin{theointro}\label{thm23GRH} Assume ${\rm (GRH)}$ and that there exists $\pi$ in $\Pi_{\rm alg}$
with motivic weight $23$ and having the weight $23$ with multiplicity $>1$. Then we have  $m=16$, the weights of $\pi$ are
 $\pm 1/2$, $\pm 7/2$, $\pm 11/2$, $\pm 15/2$, $\pm 19/2$, $\pm 21/2$ 
 as well as $\pm 23/2$ with multiplicity $2$, and $\pi$ is the unique element of $\Pi_{\rm alg}$ having these $16$ weights.
\end{theointro}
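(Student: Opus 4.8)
The plan is to reduce, by Proposition~\ref{prop:23_mult}, to a finite list of candidate weights, and then to eliminate all of them but one by the Weil explicit formula under (GRH), in the spirit of the treatment of motivic weight $22$ in \S\ref{par:poids22} and, more fundamentally, of \cite[Ch.~8]{CheLan}.

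So let $\pi\in\Pi_{\rm alg}$ have motivic weight $23$ with the weight $23/2$ of multiplicity $>1$, and let $\underline{w}=\underline{w}(\pi)\in{\rm W}_m$ be its weight. By Proposition~\ref{prop:23_mult}, $\underline{w}$ lies in the explicit list $\mathcal{L}$ of $182$ elements, each with $w_1=w_2=23/2$; write $\underline{w}_{14}\in{\rm W}_{14}\cap\mathcal{L}$ for the distinguished weight with positive part $\tfrac{1}{2}(23,23,21,17,13,7,1)$, and $\underline{w}_0\in{\rm W}_{16}$ for the weight occurring in the statement, with positive part $\tfrac{1}{2}(23,23,21,19,15,11,7,1)$. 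Proposition~\ref{prop:23_mult} moreover asserts that, unless $\underline{w}=\underline{w}_{14}$, the representation $\pi$ is self-dual symplectic and is the \emph{unique} element of $\Pi_{\rm alg}$ of weight $\underline{w}$. It therefore suffices to prove, under (GRH), that no $\pi\in\Pi_{\rm alg}$ has weight $\underline{w}$ for any $\underline{w}\in\mathcal{L}\smallsetminus\{\underline{w}_0\}$: granting this, $\underline{w}=\underline{w}_0$, hence $m=16$ and the asserted weights, while the uniqueness of $\pi$ is part of Proposition~\ref{prop:23_mult}.

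So fix $\underline{w}\in\mathcal{L}\smallsetminus\{\underline{w}_0\}$ and assume for contradiction that some $\pi\in\Pi_{\rm alg}$ has weight $\underline{w}$. When $\underline{w}\neq\underline{w}_{14}$, $\pi$ is self-dual symplectic, so $L(s,\pi\times\pi)=L(s,{\rm Sym}^2\pi)\,L(s,\wedge^2\pi)$ with the simple pole at $s=1$ carried by the symplectic factor $L(s,\wedge^2\pi)$. Pick a subcollection $\pi_1,\dots,\pi_r$ of the \emph{already classified} level-one algebraic cusp forms of motivic weight $\leq 23$ — the $11$ of weight $\leq 22$ from \cite[Thm.~F]{CheLan} (reproved in \S\ref{par:poids22}), including the trivial representation, together with the $13$ of weight $23$ from Theorem~\ref{thm23} — all of which are self-dual and none of which has the weight $23/2$ of multiplicity $>1$; then $\pi\not\simeq\pi_i$ for every $i$, so the isobaric sum $\Pi=\pi\boxplus\pi_1\boxplus\cdots\boxplus\pi_r$ is self-dual with $r+1$ pairwise non-isomorphic cuspidal constituents, and $L(s,\Pi\times\Pi^\vee)$ has a pole of order exactly $r+1$ at $s=1$. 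I would then write out the Weil explicit formula for $\Lambda(s,\Pi\times\Pi^\vee)$ — whose Archimedean factor and Dirichlet coefficients are completely determined by $\underline{w}$, by the known weights of the $\pi_i$, and by the symplectic/orthogonal types involved — evaluated on an even test function $F\geq0$ with $\widehat F\geq0$ whose support may be taken considerably larger than in the unconditional arguments of \S\ref{par:poids22}, precisely because (GRH) puts all nontrivial zeros on the critical line, so that $\sum_\rho\widehat F(\gamma_\rho)\geq0$. Over the finitely many prime powers below the support bound the prime side equals $\sum_{p,k}|\lambda_\Pi(p^k)|^2\cdot(\text{positive factor})\geq0$, so all that remains in the identity is the fixed pole contribution (governed by $r$) against the explicit Archimedean contribution; a finite, computer-assisted search over the subcollection $\{\pi_i\}$ and over $F$, of the type carried out in \cite[Ch.~8]{CheLan}, should then show the resulting constraints to be incompatible, the desired contradiction. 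The remaining, slightly different case $\underline{w}=\underline{w}_{14}$, where $\pi$ need not be self-dual and may occur in a pair $\{\pi,\pi^\vee\}$, is handled the same way with $\Pi=\pi\boxplus\pi^\vee\boxplus\pi_1\boxplus\cdots\boxplus\pi_r$, using that $L(s,\pi\times\pi^\vee)$ still contributes a simple pole at $s=1$.

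The main obstacle is that this elimination has to succeed for all $181$ weights at once, whereas it necessarily \emph{fails} for $\underline{w}_0$ — which corresponds to an expected, plausibly existing, $16$-dimensional $\pi$ — so there is no uniform shortcut, and for the ``tight'' weights (notably $\underline{w}_{14}$, and any weight for which the Archimedean and pole contributions nearly cancel) the test function and the auxiliary collection $\{\pi_i\}$ must be pushed essentially to the limit of the method. A secondary but unavoidable technical difficulty is to record with full precision the Archimedean $L$-factors and the exact pole orders of $L(s,\wedge^2\pi)$, $L(s,{\rm Sym}^2\pi)$, and of the Rankin--Selberg and isobaric $L$-functions involved, since the argument rests on exact, not merely numerical, inequalities.
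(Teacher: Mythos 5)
Your overall strategy coincides with the paper's: reduce to the explicit finite list $\mathcal{L}$ of Proposition~\ref{prop:23_mult} (the $181$ weights of part (2) together with the distinguished $14$-dimensional weight of part (1)), then eliminate all weights except the one in the statement by the explicit formula for Rankin--Selberg $L$-functions, exploiting (GRH) to relax the positivity condition on the test function. The identification of the two distinguished weights $\underline{w}_{14}$ and $\underline{w}_0$, the observation that the uniqueness assertion is already contained in Proposition~\ref{prop:23_mult}, and the separate treatment of the possible non-self-dual pair with $\Pi=\pi\boxplus\pi^\vee\boxplus\cdots$ all match the paper.

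There are, however, two points at which your proposal falls short of what the paper actually needs. First, your elimination criterion is strictly weaker than the paper's. You apply the explicit formula to $L(s,\Pi\times\Pi^\vee)$ for the isobaric sum $\Pi=\pi\boxplus\pi_1\boxplus\cdots\boxplus\pi_r$, i.e.\ to $\mathrm{C}^F(x,x)$ for $x=\pi+\pi_1+\cdots+\pi_r$ with all coefficients equal to $1$. The whole point of \S\ref{par:method} and Algorithm~\ref{paralgo2} is to minimize $\mathrm{C}^F$ over the cone of \emph{nonnegative real} linear combinations; already for $r=2$ the criterion of Example~\ref{casr2} is $b+\sqrt{ac}<0$, which is implied by but strictly weaker than $a+2b+c<0$. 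The paper's data strongly suggest the difference matters here: one of the $181$ weights (the one called $B$) only yields a contradiction via $\ell=6.36$ and a $13$-element auxiliary set $S'$, a regime where the coefficient-$1$ inequality has no reason to succeed. Without the real-coefficient optimization you have no way to close such tight cases. Second, a smaller imprecision: what (GRH) buys is not ``larger support'' but a different \emph{shape} of test function. The paper replaces $F_\ell(x)=g(x/\ell)/\cosh(x/2)$ by $G_\ell(x)=g(x/\ell)$, which has the same support but satisfies the weaker requirement $\widehat{F}\geq 0$ on $\R$ (instead of (POS)); this is why Proposition~\ref{explGRH} is needed to give certified closed formulas for $\mathrm{J}_{G_\ell}$. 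Relatedly, you restrict the auxiliary collection $\{\pi_i\}$ to motivic weight $\leq 23$, but the paper's $\mathbf{S}$ also includes the three weight-$24$ orthogonal representations, which you may need for the hardest cases.
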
 
\ps

We now state our (weaker) classification result in motivic weight $24$. \ps\ps

\begin{theointro}\label{thm24} There are exactly $3$ level $1$ algebraic
cuspidal self-dual automorphic representations of ${\rm PGL}_m$ over $\Q$,
with $m$ varying, with motivic weight $24$ and a regular weight.
They are self-dual (orthogonal) with respective set of weights
$$ \{\pm 12, \pm 8, \pm 4, 0\}, \{\pm 12, \pm 9, \pm 5, \pm 2\} \, \, \, {\rm and}\, \, \,
\{\pm 12, \pm 10, \pm 7, \pm 1\}.$$
\end{theointro}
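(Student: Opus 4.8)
The plan is to follow the same automorphic strategy that underlies the analysis in motivic weights $\leq 23$, specialised to orthogonal (hence $m$ even, symplectic-type parameters excluded) self-dual $\pi$ of motivic weight $24$ with a regular weight. First I would reduce to a finite combinatorial search: by the Jacquet--Shalika purity constraint $w_{m+1-i}=-w_i$ and regularity, the weights of such a $\pi$ are of the form $\{\pm 12,\pm a_2,\dots\}$ with $12 > a_2 > \cdots$ a decreasing sequence of half-integers congruent to $12$ mod $1$ (so integers), possibly ending in a $0$; since $w_1=12$ is forced and the $a_i$ are bounded by $12$, there are only finitely many candidate weight vectors $\underline{w}\in \mathrm{W}_m$, for $m\leq 25$ say. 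The self-dual constraint together with the known classification in motivic weight $\leq 22$ (the $11$ representations recalled in \S\ref{classresult}) and weight $23$ (Theorem \ref{thm23}) lets me enumerate, for each candidate $\underline{w}$, the possible Arthur parameters $\psi=\oplus_i \pi_i[d_i]$ with $\sum_i d_i \dim \pi_i = m$, each $\pi_i$ level one algebraic cuspidal of motivic weight $\leq 24$, and the infinitesimal character of $\psi$ matching $\underline{w}$: the non-tempered pieces are built from representations already classified (in motivic weight $\leq 22$), so this is a bookkeeping step.

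The heart of the argument is then to rule out the existence of a genuinely cuspidal (tempered, $d_i=1$) $\pi$ for every candidate weight vector except the three listed. For this I would use the Weil explicit formula machinery pushed further in this paper (as applied in \S\ref{par:poids22} to reprove \cite[Thm.\ F]{CheLan} and in the analysis of weight $23$): for a putative self-dual $\pi$ of the given weights one writes the explicit formula for the completed $\mathrm{L}$-function $\Lambda(s,\pi)$ (or for $\mathrm{L}(s,\pi\times\pi)$, $\mathrm{L}(s,\mathrm{Sym}^2\pi)$, $\mathrm{L}(s,\wedge^2\pi)$ as needed to detect orthogonal versus symplectic type and to get sharper inequalities), with a suitable nonnegative test function, and derives a contradiction from positivity, using that $\pi$ has level one so all the finite-place terms are controlled by the Satake parameters which are bounded by Ramanujan (known here since $\pi$ is self-dual of classical type, via \cite{Arthur_book}) or at worst by Jacquet--Shalika. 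The Archimedean term is an explicit finite sum determined by the weights $w_i$, so for each of the finitely many candidate $\underline{w}$ the inequality is completely explicit and can be checked by the implemented computation; for the three surviving weight vectors the same computation is consistent with existence, and existence itself follows from exhibiting the corresponding parameters --- e.g. via the endoscopic classification for $\mathrm{SO}_n/\Z$ as in \cite{Taibi_dimtrace} and \cite{ChRe}, where the relevant discrete series multiplicities have been computed by the first main method of this paper --- so that each surviving $\underline{w}$ is realised by exactly one $\pi$.

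The main obstacle I expect is \emph{sharpness}: the explicit-formula inequalities become weaker as the motivic weight grows and as the number of weights increases, so for weight $24$ there is a real danger that a handful of candidate weight vectors survive the positivity test even though the corresponding $\pi$ does not exist. This is presumably exactly why the theorem is stated only for \emph{regular} weights (the irregular case, with repeated weights, would require the more delicate mixed analysis and possibly (GRH), as in Theorems \ref{thm23GRH}): restricting to regular $\underline{w}$ both shrinks the candidate list and, because regular parameters correspond cleanly to discrete series on the $\mathrm{SO}_n/\Z$ side, makes the existence direction an immediate consequence of the discrete-series multiplicity computations already carried out. So the proof splits as: (1) finite enumeration of regular candidate weights and their possible Arthur types, reducing to the tempered case; (2) explicit-formula non-existence for all but three candidates, checked by the implemented algorithm; (3) existence and uniqueness for the remaining three, read off from the endoscopic classification and the multiplicity tables. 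I would expect step (2) to be where all the work --- and the only real risk of an inconclusive bound --- lies.
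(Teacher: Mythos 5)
Your overall strategy --- finite enumeration, explicit-formula elimination, then invoking the trace-formula machinery for existence --- captures the spirit of the paper's proof, but you assign the wrong division of labor between the two tools, and that division is not a cosmetic choice: it is exactly what makes the argument close.

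In the paper, the explicit formula is used only to prove Lemma~\ref{lem:regmot24}, which bounds the possible dimension of a self-dual regular $\pi$ of motivic weight $24$ by $n\leq 16$ (more precisely, for $n\geq 13$ it leaves a residual list of $4$ parameters, concentrated in dimensions $13$ and $16$, none of which is ultimately realised). The explicit formula is \emph{not} asked to pare the list down to the three surviving weight vectors; your own remark that the inequalities weaken as the number of weights grows is precisely why one should not expect it to. Once $n\leq 16$ is established, the work is handed over entirely to the trace-formula side: Theorem~\ref{theomasseseffortless} (the ``effortless'' mass computation for $\SO_n$, $n\leq 16$, and $\Sp_{2n}$, $2n\leq 8$) together with Key fact 2 computes $\mathrm{N}^\perp(\underline{w})$ exactly for every multiplicity-free $U\in\mathrm{K}_\infty^{\leq 24}$ of dimension $\leq 16$ containing $\mathrm{I}_{24}$. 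This single calculation both produces the three $\pi$ and rules out everything else in that range (including the four residual cases from the lemma). So there is no step where one merely ``checks consistency'' of the explicit formula for the survivors and then separately exhibits them: the masses give existence and non-existence in one stroke.

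Two smaller points. First, there is no need to bring in $\mathrm{Sym}^2$ or $\wedge^2$ L-functions to determine the type of $\pi$: since $w(\pi)=24$ is even and $\pi$ is regular, some weight has multiplicity one, and \S\ref{regselfpar} then forces $\pi$ to be orthogonal, hence $\epsilon(\pi)=\varepsilon(\mathrm{L}(\pi_\infty))=1$ by \cite[Thm.\ 1.5.3]{Arthur_book}; this sign constraint (not a Rankin--Selberg inequality) is what trims the list in the proof of the lemma. Second, Ramanujan-type bounds play no role: the positivity of $\mathrm{B}_f^F$ in Proposition~\ref{prop:basicinequality} already holds for any $F\geq 0$, independent of any bound on Satake parameters.
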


Again, those three forms were first discovered in \cite[Cor.\ 1.10 \&
1.12]{ChRe} and confirmed in \cite{Taibi_dimtrace}.
Interestingly, as explained in \cite{ChRe}, we expect that their Sato-Tate
groups are respectively the compact groups ${\rm G}_2$, ${\rm Spin}(7)$ and
${\rm SO}(8)$. See \cite[Thm. 6.12]{CheG2} for a proof that the first form,
which is $7$-dimensional, has ${\rm G}_2$-valued $\ell$-adic Galois representations. \ps

{\bf Proofs.}
Our proofs of Theorems \ref{thm23} and \ref{thm24} are in the same spirit of the one
of \cite[Thm. F]{CheLan}.
As already said, all the representations appearing in the theorem were already
known to exist by the works \cite{ChRe, TaiMult, Taibi_dimtrace} 
(and we will give another proof of their existence in \S \ref{par:effortless}), so the main
problem is to show that there are no others.
The basic idea that we will use for doing so is to consider an hypothetical
$\pi$, consider an associate ${\rm L}$-function of $\pi$, and show that this
function cannot exist by applying to it the so-called explicit formula for
suitable test functions.
This is a classical method by now, inspired by the pioneering works of 
Stark, Odlyzko and Serre on discriminant bounds of number fields.  
It was developed by Mestre in \cite{Mestre} and applied {\it loc. cit.} to the standard ${\rm L}$-function of $\pi$ (see
also \cite{Fermigier}), then by Miller \cite{Miller} to the Rankin-Selberg
${\rm L}$-function of $\pi$, and developed further more recently in  \cite[\S 9.3.4]{CheLan}.\ps

Two important novelties were discovered in \cite{CheLan} in order to obtain the
aforementioned classification result in motivic weight $\leq 22$.
The first one, developed further in \cite{Chenevier_HM}, is a finiteness result
which implies that there are only finitely many level $1$ cuspidal algebraic
automorphic representations $\pi$ of ${\rm PGL}_m$, with $m$ varying, of motivic
weight $\leq 23$.
This finiteness is also valid in motivic weight $\leq 24$ assuming a suitable
form of GRH.
This result is effective and produces a finite but large list of possible
weights for those $\pi$ (for instance, it leads to $12295$ possible weights in
motivic weight $23$).
The hardest part is then to eliminate most of those remaining weights.
The second novelty found in \cite{CheLan} was the observation that we obtain
efficient constraints by applying as well the explicit formula to all the ${\rm
L}$-functions ${\rm L}(s,\pi \times \pi_i),$ where the $\pi_i$ are the known
representations. See \cite[Scholia 9.3.26 \& 9.3.32]{CheLan} for the two useful criteria obtained
there using this idea. \ps

In this paper, we discovered a criterion 
that may be seen as a natural generalisation of \cite[Scholia 9.3.26]{CheLan},
and that happened to be (in practice, and quite surprisingly) much more
efficient than the aforementioned ones. Moreover, contrary to \cite[Scholia 9.3.32]{CheLan}, 
we do not need to know any Satake parameter for the known elements of $\Pi_{\rm alg}$ 
(which allow us to use test functions with aribtrary supports). Our basic idea here is to apply 
the explicit formula to the Rankin-Selberg ${\rm
L}$-function of all linear combinations $t_1 \pi_1
\oplus \cdots \oplus t_s \pi_s$ where $\pi_1$ is unknown of given weights, the
$\pi_i$ with $i>1$ are known (in the sense that they exist and we know their weights), 
and the $t_i$ are arbitrary nonnegative {\it real} numbers.
More precisely, for any test function $F$ we associate a certain 
symmetric bilinear form ${\rm C}^F$ on the free vector space $\R \,\Pi_{\rm alg}$ 
over $\Pi_{\rm alg}$, which represents the {\it computable}
part of the explicit formula for the test function $F$. 
Assuming a certain positivity assumption on $F$, the quadratic form
$x \mapsto {\rm C}^F(x,x)$ is then $\geq 0$ on the cone of $\R\, \Pi_{\rm alg}$ generated by $\Pi_{\rm alg}$: see Proposition \ref{prop:basicinequality}.
In order to reach a contradiction we have thus to show that at least one quadratic form ${\rm C}^F$ takes a negative value 
on the cone generated by $\pi_1,\dots,\pi_s$.
See \S \ref{par:applineqalgo} for the description of the minimisation algorithm that we have used for this purpose,
as well as the homepage \cite{homepage} for related sources.
One charm of this method is that although it requires some computational work to find a concrete element $x$ 
of that cone and a test function $F$ leading to a contradiction (and all is fair for that!), once we have found it it is 
quick and easy to rigorously check that we have ${\rm C}^F(x,x)<0$: see \S \ref{numericaleval}.

\subsection{The effortless computation of masses}
\label{sub:intro_effortless}

Fix $G$ a rank $n$ split classical group over $\Z$ in the sense of \S \ref{introsmf}. In other words, $G$ belongs to one of the families
\[ (\SO_{2n+1})_{n \geq 1},\ \ (\Sp_{2n})_{n \geq 1} \ \ \text{and} \ \
(\SO_{2n})_{n \geq 2} \]
Assume that $G(\R)$ has discrete
series, {\it i.e.} that $G$ is not isomorphic to ${\rm
SO}_{2n}$ with $n$ odd, and fix $K$ a maximal compact subgroup of $G(\R)$.
There is an analogue of Key fact $1$ with ${\rm S}_{\uk}(\Gamma_g)$ replaced by
the multiplicity of any discrete series of $G(\R)$ in the space
$\mathcal{A}^2(G)$ of $K$-finite square-integrable automorphic forms over
$G(\Z)\backslash G(\R)$.
In this paper, as in \cite{Taibi_dimtrace}, we will use a variant of this
statement involving rather certain Euler-Poincar\'e characteristics. Our first aim is to state this variant (Key fact 2 below).
For any dominant
weight $\lambda$ of $G(\C)$, we denote by ${\rm V}_\lambda$ an irreducible
representation of $G(\C)$ with highest weight $\lambda$ and consider 
$${\rm EP}(G; \lambda) \,= \,\sum_{i \geq 0} (-1)^i \dim {\rm H}^i(\mathfrak{g},K; \mathcal{A}^2(G) \otimes {\rm V}_\lambda^\vee) \,\,\,\in \,\,\Z,$$
where ${\rm H}^\ast(\mathfrak{g},K;-)$ denotes $(\mathfrak{g},K)$-cohomology.
Attached to $G$ is a certain integer denoted ${\rm n}_{\widehat{G}}$, defined as the dimension of the
standard representation of $\widehat{G}$, the Langlands dual group of $G$: concretely, we have ${\rm
n}_{\widehat{G}}={2n+1}$ for $G={\rm Sp}_{2n}$, and ${\rm n}_{\widehat{G}}=2n$ for $G={\rm SO}_{2n+1}$
or ${\rm SO}_{2n}$. The infinitesimal character of ${\rm V}_\lambda$, namely ``$\lambda + \rho$'',
defines a unique regular element $w(\lambda)$ in ${\rm W}_m$ with $m={\rm n}_{\widehat{G}}$. Concretely, 
using the standard\footnote{The conditions on the $\lambda_i$ are the following:  $\lambda_i \in \Z$ for each $i$, $\lambda_1 \geq \dots \geq \lambda_n$, and either $\lambda_n\geq 0$ (cases $G={\rm Sp}_{2n}$ or ${\rm SO}_{2n+1}$) or $|\lambda_n| \leq \lambda_{n-1}$ (case $G={\rm SO}_{2n}$ with $n\geq 2$).} notation  $\lambda = \sum_{i=1}^n \lambda_i e_i$  for dominant weights of classical groups (as in \cite[\S
2]{Taibi_dimtrace}), $w(\lambda)$ is explicitely given by the following formulas:
$$ \label{eq:w_lambda}
  w(\lambda)_i = \begin{cases}
    \lambda_i+n+1/2-i & \text{ for } 1 \leq i \leq n \text{ if } G = \SO_{2n+1},
    \\
    \lambda_i+n+1-i & \text{ for } 1 \leq i \leq n \text{ if } G = \Sp_{2n}, \\
    |\lambda_i|+n-i & \text{ for } 1 \leq i \leq n \text{ if } G = \SO_{2n}\, \, {\rm and}\, n \equiv 0 \bmod 2. \\
  \end{cases}
$$

\noindent The promised second key fact,
explained in \S\S 4.1 and 4.2 of \cite{Taibi_dimtrace} is:\ps\ps

{\bf Key fact 2}. {\it Fix $G$ and $\lambda$ as above,
and set $\underline{w}=w(\lambda)=(w_i)$.
Assume we know ${\rm N}^\bot(\underline{v})$
for all regular $\underline{v}=(v_i) \in {\rm W}_m$ with $m<{\rm n}_{\widehat{G}}$ and $v_1 \leq  w_1$,
then it is equivalent to know ${\rm EP}(G;\lambda)$ and ${\rm N}^\bot(\underline{w})$.}\ps\ps

It follows from the formula above for $w(\lambda)$ that any regular $\underline{w}$ in ${\rm W}_m$, with $m\geq 1$ arbitrary, is of the form $w(\lambda)$ for a
unique split classical group $G$ over $\Z$ and some dominant weight $\lambda$ of $G$.
As a consequence, Key fact 2 paves the way for a computation of ${\rm
N}^\bot(\underline{w})$ for all regular $w$, by induction on ${\rm n}_{\widehat{G}}$.

Contrary to the case of Key fact 1, we will not reproduce here the precise claimed relation 
between ${\rm EP}(G; \lambda)$ and the quantities ${\rm N}^\bot(-)$ 
stated in Key fact 2,  and simply refer to \cite[\S 4]{Taibi_dimtrace}. 
As for Key fact 1, it crucially depends on Arthur's endoscopic
classification \cite{Arthur_book} and on the explicit description of Archimedean
Arthur packets in the regular algebraic case 
given by \cite{AdJo} and \cite{AMR} (they are the so-called Adams-Johnson packets, see \cite[\S 4.2.2]{Taibi_dimtrace}).
The full combinatorics, when written down explicitly and case-by-case, are rather complicated but computable, and implemented since
\cite{Taibi_dimtrace}.\ps\ps

\label{method}
The second ingredient is Arthur's {\it ${\rm L}^2$-Lefschetz trace formula} developed
in \cite{ArthurL2} applied to the relevant test function. 
A detailed analysis of this formula has been made in
\cite{Taibi_dimtrace} that we will follow below. We use this version of the trace
formula as it is the one with the simplest geometric terms. Its spectral side is 
exactly ${\rm EP}(G;\lambda)$, a quantity in principle harder to interpret (the price to pay for a simple geometric side), 
but this can be done precisely by Key fact 2 above.

We fix a Haar measure $dg= \prod'_v dg_v$ on $G(\A)$,
with $\A$ the ad\`ele ring of $\Q$,
such that the Haar measure $dg_p$ on $G(\Q_p)$ gives $G(\Z_p)$ the volume $1$.
Fix a dominant weight $\lambda$ of $G$.
We apply Arthur's formula \cite{ArthurITF} to a test function $\varphi$ of the
form $\varphi_\infty  \otimes'_p {\rm 1}_{G(\Z_p)}$, where ${\rm 1}_{G(\Z_p)}$
is the characteristic function of $G(\Z_p)$, and where $\varphi_\infty(g)
dg_\infty$ is the sum, over all the discrete series $\delta$ of $G(\R)$ with
same infinitesimal character as $V_\lambda$, of a pseudocoefficient of $\delta$.
According to \cite{ArthurL2} the resulting identity, which only depends on $G$
and $\lambda$, is
\begin{equation}\label{traceformula} {\rm EP}(G; \lambda) \,=\, {\rm T}_{\rm geom}(G;\lambda).\end{equation}
The geometric side ${\rm T}_{\rm geom}(G;\lambda)$ is a finite sum of terms
indexed by Levi subgroups of $G$ of the form
\begin{equation}\label{levisab}{\rm GL}_1^a \times {\rm GL}_2^b \times G'\end{equation}
with $G'$ a split classical groups over $\Z$ and $a,b \geq 0$.
The main term, corresponding to $G$ itself and called the {\it elliptic term}, is
\begin{equation} \label{eq:tell}
  \mathrm{T}_\mathrm{ell}(G; \lambda) \,=\, \sum_{\gamma}\, \vol(G_\gamma(\Q)
  \backslash G_\gamma(\A))\, \cdot\, \O_\gamma(1_{G(\widehat{\Z})})\,\cdot \,
  \tr(\gamma\,|\, \mathrm{V}_\lambda),
\end{equation}
where $\gamma$ runs over representatives of the (finitely many)
$G(\Q)$-conjugacy classes of finite order elements in $G(\Q)$
whose $G(\Q_p)$-conjugacy class meets $G(\Z_p)$ for each prime $p$ (see \S \ref{elementaryconjclass}).
For each such $\gamma$, we have denoted by $G_\gamma$ its centralizer in $G$ (defined over $\Q$),
choosed on $G_\gamma(\A)$ a signed Haar measure $dh=\prod'_v dh_v$
with $dh_\infty$ an Euler-Poincar\'e measure on $G_\gamma(\R)$ (in the sense of \cite{EPSerre}),
and we have denoted by ${\rm O}_\gamma({\rm 1}_{\rm G(\widehat{\Z})})$
the product over all primes $p$ of the classical orbital integrals
\begin{equation} \label{eq:locintorg} \int_{G(\Q_p)/G_\gamma(\Q_p)} \,{\rm 1}_{G(\Z_p)}\,(g_p \gamma g_p^{-1}) \,\frac{dg_p}{dh_p}.\end{equation}
Ta\"ibi developped in \cite{Taibi_dimtrace} a number of algorithms to enumerate
the $\gamma$, compute their local orbital integrals (with $dh_p$ Gross's
canonical measure), and the associated global volumes.
Here we shall simply write
\begin{equation} \label{eq:tellmasse}
  \mathrm{T}_\mathrm{ell}(G; \lambda) \,=\, \sum_{ c \in  \mathcal{C}(G)} \, \,
  \mathrm{m}_c \, \,  \tr(c\,|\, \mathrm{V}_\lambda),
\end{equation}
where $\mathcal{C}(G)$ denotes the set of
$G(\overline{\Q})$-conjugacy classes of finite order elements in $G(\Q)$
(essentially, a characteristic polynomial: see \S\ref{elementaryconjclass})
and ${\rm m}_c$ is a certain number depending only on $c$
and called {\it the mass} of $c$
(note that $\tr(c\, |\,\mathrm{V}_\lambda)$ is well-defined).
By definition, ${\rm m}_c$ is a concrete sum of volumes times global integral
orbitals; it essentially follows from \cite[Theorem 9.9]{GrossMot} and Siegel's
theorem on the rationality of the values of Artin L-functions at non-negative
integers \cite{SiegelLfunc} that we have $\mathrm{m}_c \in \Q$.

The character of ${\rm V}_\lambda$ may be either evaluated using the (degenerate) Weyl
character formula as in \cite{ChRe}, or much more efficiently for small
$\lambda$ using Koike-Terrada's formulas \cite{Koike_Terada} as was observed in \cite{chenevier_niemeier}.
Last but not least, the term in the sum defining ${\rm T}_{\rm geom}(G;
\lambda)$ corresponding to a proper Levi subgroup of the form \eqref{levisab}
is expressed in terms of ${\rm T}_{\rm ell}(G';\lambda')$, as well as ${\rm T}_{\rm
ell}({\rm SO}_3;\lambda'')$ if $b \neq 0$, for suitable auxilliary
$\lambda',\lambda''$: see \cite[\S 3.3.4]{Taibi_dimtrace} for the concrete
formulas, that will not be repeated here.
As a consequence, the key problem is to be able to compute the masses ${\rm
m}_c$ for $c \in \mathcal{C}(G)$. \ps

{\bf The strategy.}
We are finally able to explain our strategy.
Fix $m\geq 1$ an integer.
We may assume, by induction, that we have computed the masses ${\rm m}_c$ for
all split classical groups $H$ over $\Z$ such that $H(\R)$ has discrete series
and ${\rm n}_{\widehat{H}} <m$, and all $c \in \mathcal{C}(H)$.
By the Key fact 2 and the trace formula \eqref{traceformula}, note that we have
an explicit and computable formula for ${\rm N}^\bot(\underline{w})$ for all
regular $w \in {\rm W}_{m'}$ with $m'<m$.
Fix a split classical group $G$ over $\Z$ such that $G(\R)$ has discrete
series and ${\rm n}_{\widehat{G}}=m$.
Assume we have found a finite set $\Lambda$ of dominant weights of $G(\C)$ with
the following two properties:\ps

(P1) For all $\lambda \in \Lambda$ we have ${\rm N}^\bot(w(\lambda))=0$.\ps

(P2) the $\Lambda \times \mathcal{C}(G)$ matrix $(\tr( c \, | \, {\rm
V}_\lambda))_{(\lambda,c)}$ has rank $|\mathcal{C}(G)|$. \ps

Then from (P1) and the Key fact 2 we know
${\rm EP}(G;\lambda)$ for all $\lambda \in \Lambda$.
It follows that for all $\lambda \in \Lambda$ we know ${\rm T}_{\rm geom}(G;\lambda)$ as well, by the trace formula \eqref{traceformula},
hence also ${\rm T}_{\rm ell}(G;\lambda)$ since the non-elliptic geometric terms are also known by induction.
By \eqref{eq:tellmasse} and (P2),
we deduce the masses ${\rm m}_c$ for all $c \in \mathcal{C}(G)$ by solving a linear system.
As a consequence, for an arbitrary dominant weight $\lambda$ of $G$ we may then compute ${\rm T}_{\rm geom}(G;\lambda)$,
hence ${\rm EP}(G;\lambda)$ by \eqref{traceformula}, and ${\rm N}^\bot(w(\lambda))$ by Key fact 2.\ps\ps

Amusingly, the conclusion is that we end by proving the existence of self-dual cusp forms for ${\rm PGL}_m$ mostly by showing that 
many others do not exist ! (namely the ones with weights of the form $w(\lambda)$ with $\lambda$ in $\Lambda$.)\ps\ps
\ps

 {\bf A simple example}.
 Let us illustrate this method in the (admittedly too) simple case $m=2$ and $G={\rm SO}_3$.
 In this case $\mathcal{C}(G)$ has $5$ classes, of respective order $1,2,3,4$ and $6$:
 say $c_i$ has order $i$.
 The dominant weights of $G$ are of the form $k e_1$ for a unique integer $k\geq 0$, and will be simply denoted by $k$.
 For $k\geq 0$, we have $w(k)=(k+1/2,-k-1/2)$, ${\rm N}(w(k))=\dim {\rm S}_{2k+2}({\rm SL}_2(\Z))$,  
 the analysis of the spectral side gives
 $${\rm EP}(G; k ) = - {\rm N}(w(k)) + \delta_{k,0},$$
 with $\delta_{i,j}$ the Kronecker symbol, and the geometric side is
 $${\rm T}_{\rm geom}(G; k) = {\rm T}_{\rm ell}(G; k) + \frac{1}{2}.$$
Assume we know that there is no cuspidal modular form for ${\rm SL}_2(\Z)$ of usual weight $2,4,6,8,10$.
This may for instance be shown by applying the explicit formula to the Hecke ${\rm L}$-function of a putative eigenform of such a weight,
as observed in \cite[Rem. III.1]{Mestre}. This also follows very easily from the methods of \S \ref{classresult}.
Using $\dim {\rm V}_k=2k+1$ and the identity $\tr(c_i | {\rm V}_{k}) = {\rm
sin}(\frac{(2k+1)\pi}{i})/{\rm sin}\frac{2\pi}{i}$ for $i>1$ we obtain with
$\Lambda=\{0,1,2,3,4\}$ the linear system
{\tiny
$$\left[ \begin{array}{ccccc} 1 & 1 & 1 & 1 & 1 \\ 3 & -1 & 0 & 1 & 2 \\ 5 & 1 &
-1 & -1 & 1 \\ 7 & -1 & 1 & -1 & -1 \\ 9 & 1 & 0 & 1 & -2 \end{array}\right].
\left[\begin{array}{c} \mathrm{m}_{c_1} \\ \mathrm{m}_{c_2} \\ \mathrm{m}_{c_3}
\\ \mathrm{m}_{c_4} \\ \mathrm{m}_{c_6} \end{array} \right] \,=
\,\left[\begin{array}{c} 1/2 \\ -1/2 \\ -1/2 \\ -1/2 \\ -1/2
\end{array}\right].$$}\par

\noindent Luckily, the matrix on the left-hand side is invertible:
we find ${\rm m}_{c_1}=-\frac{1}{12}$, ${\rm m}_{c_2}=\frac{1}{4}$, ${\rm m}_{c_3}=\frac{1}{3}$ and ${\rm m}_{c_4}={\rm m}_{c_6}=0$.
As a consequence, we recover the classical formula for $\dim {\rm S}_{2k}({\rm SL}_{2}(\Z))$.\ps

\begin{rema} \label{rem:spin_norm_intro}
  In certain classes $c$ in $\mathcal{C}(G)$, there is no element $\gamma$ whose
  $G(\Q)$-conjugacy class meets $G(\Z_p)$ for each $p$ and such that
  $G_\gamma(\R)$ has discrete series: this forces ${\rm m}_c=0$ by \eqref{eq:tell}.
  This actually explains ${\rm m}_{c_4}={\rm m}_{c_6}=0$ above.
\end{rema}

  This remark will lead us in \S \ref{sec:spinornorm}, and following \cite[Remark 3.2.8]{Taibi_dimtrace}, to replace
  $\mathcal{C}(G)$ by a smaller set $\mathcal{C}_1(G)$, and
  to rather apply our strategy with $\mathcal{C}(G)$ replaced by $\mathcal{C}_1(G)$ in (P2). 
  See also \S \ref{elementaryconjclass} for other more elementary reductions, using the center of $G$ or an outer automorphism of ${\rm SO}_{2n}$.\ps
  
  The crucial last ingredient for this method to work is to be able to find sufficiently many $w \in {\rm W}_{{\rm n}_{\widehat{G}}}$ such that ${\rm N}^\bot(w)=0$.
  We will use of course for this the method explained in \S \ref{classresult} (the explicit formula for Rankin-Selberg ${\rm L}$-functions).
  Rather miraculously, it provides enough vanishing results up to rather a high rank: see \S \ref{nonexisteffortless}
   for a proof of the following final theorem, obtained by applying only this ``effortless'' strategy:\ps

\begin{theointro}\label{theomasseseffortless}
(``Effortless'' computation)
Assume $G={\rm SO}_n$ with $n \leq 17$, or $G={\rm Sp}_{2n}$ with $2n\leq 14$.
Then the masses ${\rm m}_c$, for all $c \in \mathcal{C}(G)$, are given in \cite{homepage}.
\end{theointro}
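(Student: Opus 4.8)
The plan is to carry out the inductive ``strategy'' described in \S\,\ref{sub:intro_effortless}, realised in each of the finitely many cases $G = {\rm SO}_n$ ($n \le 17$) or $G = {\rm Sp}_{2n}$ ($2n \le 14$). The induction is on ${\rm n}_{\widehat{G}} = m$ (so $m \le 17$). At each stage I assume that the masses ${\rm m}_c$ are known for all split classical groups $H$ over $\Z$ with $H(\R)$ having discrete series and ${\rm n}_{\widehat{H}} < m$; by Key fact 2 combined with the trace formula \eqref{traceformula} this gives, for all regular $\underline{v} \in {\rm W}_{m'}$ with $m' < m$, an explicit and computable value of ${\rm N}^\bot(\underline{v})$, and in particular all the vanishing results ${\rm N}^\bot(\underline{v}) = 0$ one needs. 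The base cases are $G = {\rm SO}_3$ (done explicitly in the excerpt) and the small even orthogonal groups; for those one checks directly that $\mathcal{C}_1(G)$ is small and the required rank condition holds.

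Concretely, for the given $G$ I would (i) enumerate $\mathcal{C}_1(G)$ — the reduced set of $G(\Qbar)$-conjugacy classes of finite-order elements of $G(\Q)$ surviving the local constraints and the spinor-norm / central / outer-automorphism reductions of \S\,\ref{sec:spinornorm} and \S\,\ref{elementaryconjclass} — noting that by Remark \ref{rem:spin_norm_intro} the masses ${\rm m}_c$ vanish outside $\mathcal{C}_1(G)$; (ii) search for a finite set $\Lambda$ of dominant weights $\lambda$ of $G(\C)$ such that (P1) ${\rm N}^\bot(w(\lambda)) = 0$ holds — this is precisely where the explicit-formula vanishing criterion of \S\,\ref{classresult} (the Rankin--Selberg positivity of Proposition \ref{prop:basicinequality}, run through the minimisation algorithm of \S\,\ref{par:applineqalgo}) is invoked, and where one also uses the already-established classification of small-weight elements of $\Pi_{\rm alg}$ (Theorems \ref{thm23} and \ref{thm24}) to know exactly which $w(\lambda)$ give ${\rm N}^\bot = 0$; and (iii) verify (P2), i.e.\ that the $\Lambda \times \mathcal{C}_1(G)$ matrix $\big(\tr(c \mid {\rm V}_\lambda)\big)_{(\lambda,c)}$ has full column rank $|\mathcal{C}_1(G)|$. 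The characters $\tr(c \mid {\rm V}_\lambda)$ are computed via the Koike--Terada formulas \cite{Koike_Terada} (or the degenerate Weyl character formula). Once (P1) and (P2) hold, Key fact 2 gives ${\rm EP}(G;\lambda)$ for all $\lambda \in \Lambda$; the trace formula \eqref{traceformula} then yields ${\rm T}_{\rm geom}(G;\lambda)$, and subtracting the non-elliptic Levi contributions — which are sums of ${\rm T}_{\rm ell}(G';\lambda')$ and ${\rm T}_{\rm ell}({\rm SO}_3;\lambda'')$ with ${\rm n}_{\widehat{G'}} < m$, hence known by induction (\cite[\S 3.3.4]{Taibi_dimtrace}) — gives ${\rm T}_{\rm ell}(G;\lambda)$. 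Finally \eqref{eq:tellmasse} together with (P2) lets one solve the resulting (overdetermined, consistent) linear system for the masses ${\rm m}_c$, $c \in \mathcal{C}_1(G)$, which completes the inductive step. The rationality ${\rm m}_c \in \Q$ is automatic from \cite[Thm.\ 9.9]{GrossMot} and \cite{SiegelLfunc}, so the linear solve is over $\Q$ and the output is exact.

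The main obstacle is step (ii): one must actually \emph{find} enough weights $\lambda$ with ${\rm N}^\bot(w(\lambda)) = 0$ to make the matrix in (P2) attain rank $|\mathcal{C}_1(G)|$. A priori there is no guarantee that the explicit-formula criterion produces vanishing in sufficiently many — and sufficiently ``spread out'' — weights; this is the ``rather miraculous'' point emphasised in the excerpt, and it must be checked case by case, with the search over test functions $F$ and over elements of the cone generated by the known $\pi_i$ being genuinely computational (and heuristic in where it looks), even though the final verification ${\rm C}^F(x,x) < 0$ is a short rigorous check (\S\,\ref{numericaleval}). A secondary technical point is to confirm, in each case, that the vanishing locus one obtains is rich enough that the character matrix is non-degenerate; since $|\mathcal{C}_1(G)|$ grows with $n$, this becomes tighter as $n \to 17$, and it is conceivable (and consistent with the stated range $n \le 17$ rather than something larger) that this is exactly where the ``effortless'' method runs out. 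When (P1)/(P2) cannot be met by vanishing alone one would have to fall back on the genuinely harder orbital-integral computations of \cite{Taibi_dimtrace}, but the assertion of Theorem \ref{theomasseseffortless} is precisely that within the stated range this fallback is never needed. The resulting lists of masses are recorded at \cite{homepage}; the proof in \S\,\ref{nonexisteffortless} amounts to exhibiting, for each $G$ in the range, an explicit admissible $\Lambda$ together with the verification of (P1) and (P2).
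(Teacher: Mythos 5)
Your proposal reproduces the paper's strategy at the structural level, but there is one genuine logical flaw in step (ii): you invoke Theorems \ref{thm23} and \ref{thm24} to ``know exactly which $w(\lambda)$ give ${\rm N}^\bot = 0$.'' This is circular. Those theorems are proved in Section \ref{sec:mot2324}, which is deliberately placed \emph{after} Section \ref{par:effortless} because its proofs use the existence and inexistence results established there --- in particular the proof of Theorem \ref{thm24} literally begins with ``Using Theorem \ref{theomasseseffortless} we may compute\dots'', and the proof of Lemma \ref{lem:regmot24} uses the 27 known forms produced by the effortless strategy. You cannot feed Theorems \ref{thm23}--\ref{thm24} back into the proof of Theorem \ref{theomasseseffortless}.

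Moreover, the flawed appeal reveals a misunderstanding of how $\Lambda$ is built. The method never requires knowing \emph{exactly} which weights have ${\rm N}^\bot = 0$; it only needs to \emph{certify} vanishing for enough weights to reach full rank. Concretely, the paper proceeds in two stages. First, for most groups in the range ($\SO_n$, $n \leq 15$, and $\Sp_{2n}$, $2n \leq 12$) it takes $\Lambda = \Lambda_w^{\rm test}$, the set of dominant weights $\lambda$ for which the \emph{simple} inequality \eqref{basicineq} of Corollary \ref{basiccor} already fails for some $\ell$; this uses no known set $\mathbf{S}$ and no minimisation algorithm, just the positive-definiteness of $\mathrm{B}_\infty^{\mathrm{F}_\ell}$. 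This already yields, as a byproduct of Key fact 2 and the solved linear systems, the $27$ self-dual elements of $\Pi_{\rm alg}$ with regular weight and motivic weight $\leq 24$, \emph{without} appealing to the later classification theorems. Only then, for the three stubborn cases $\Sp_{14}$, $\SO_{16}$, $\SO_{17}$, does the paper run Algorithm \ref{paralgo2}, with $\mathbf{S}$ taken to be those $27$ bootstrapped forms, to enlarge $\Lambda$ until (P2') holds. Replacing your appeal to Theorems \ref{thm23}--\ref{thm24} by this two-stage bootstrapping (Corollary \ref{basiccor} first, then Algorithm \ref{paralgo2} with $\mathbf{S}$ built from the earlier stages of the same induction) removes the circularity and recovers the paper's actual argument. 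A minor further remark: the paper phrases (P2') in terms of $\mathcal{P}_1(G)/\!\sim$ and the weighted traces ${\rm e}_P\,\tr(P;\lambda)$ rather than $\mathcal{C}_1(G)$, and for symplectic groups no spinor-norm reduction is available (there $\mathcal{P}_1(\Sp_{2n}) = \mathcal{P}(\Sp_{2n})$), but these are notational, not substantive, differences.
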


As already said, these results are in accordance with all the orbital integral
computations done in \cite{Taibi_dimtrace}. This method is both conceptually simpler and faster:
for comparison, computing all the orbital integrals for $\Sp_{14}$ takes several
weeks, whereas finding $\Lambda$ and solving the linear system to determine 
all the ${\rm m}_c$, for $c \in \mathcal{C}({\rm Sp}_{14}),$ only takes
minutes on the same computer. The cases ${\rm SO}_{m}$ with $m=15,16,17$ are new. Last but not least, if we combine the 
methods of this paper with Ta\"ibi's, we obtain the following new result in the symplectic case (see \S \ref{nonexisteffortless}).

\begin{theointro} \label{theomassesmixed}
  The masses ${\rm m}_c$, for all $c \in \mathcal{C}({\rm Sp}_{16})$, are given
  in \cite{homepage}.
\end{theointro}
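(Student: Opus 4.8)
The plan is to run, for $G=\Sp_{16}$ (so that ${\rm n}_{\widehat{G}}=17$), the same strategy that establishes Theorem \ref{theomasseseffortless}, and to repair its single deficiency at this rank by feeding in a bounded amount of Ta\"ibi's orbital integral machinery. First I would record the inductive input: Theorem \ref{theomasseseffortless} supplies the masses ${\rm m}_c$ for $\SO_n$ with $n\leq 17$ and for $\Sp_{2n}$ with $2n\leq 14$, hence for every split classical $H$ over $\Z$ with ${\rm n}_{\widehat{H}}\leq 16$; by \eqref{traceformula} and Key fact 2 this yields a computable value of ${\rm N}^\bot(\underline v)$ for every regular $\underline v\in{\rm W}_{m'}$ with $m'\leq 16$. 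Consequently, for any dominant weight $\lambda$ of $\Sp_{16}$, a known value of ${\rm N}^\bot(w(\lambda))$ converts, by Key fact 2, into a known value of ${\rm EP}(\Sp_{16};\lambda)$, thence --- subtracting the non-elliptic terms of ${\rm T}_{\rm geom}$, which by \cite[\S 3.3.4]{Taibi_dimtrace} are built from ${\rm T}_{\rm ell}(G';\lambda')$ (and ${\rm T}_{\rm ell}(\SO_3;\lambda'')$) for smaller groups already treated --- into a known value of
\[ {\rm T}_{\rm ell}(\Sp_{16};\lambda)\;=\;\sum_{c\in\mathcal{C}_1(\Sp_{16})}{\rm m}_c\,\tr(c\,|\,{\rm V}_\lambda), \]
where I have passed to the reduced index set $\mathcal{C}_1(\Sp_{16})$ of \S\ref{sec:spinornorm} and used the center/outer-automorphism reductions of \S\ref{elementaryconjclass}.

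Next I would apply the Rankin--Selberg explicit-formula method of \S\ref{classresult} to construct the largest feasible set $\Lambda$ of dominant weights $\lambda$ of $\Sp_{16}$ with ${\rm N}^\bot(w(\lambda))=0$, and assemble the linear system $A\mathbf{m}=\mathbf{b}$ solved by the mass vector $\mathbf{m}=({\rm m}_c)_c$, where $A=\big(\tr(c\,|\,{\rm V}_\lambda)\big)_{\lambda\in\Lambda,\,c\in\mathcal{C}_1(\Sp_{16})}$ and $\mathbf{b}=\big({\rm T}_{\rm ell}(\Sp_{16};\lambda)\big)_{\lambda\in\Lambda}$. Here lies the main obstacle: in contrast with $\Sp_{2n}$ for $2n\leq 14$, the motivic weights in play are now large enough that genuine level one self-dual cusp forms are too plentiful, so the explicit formula proves ${\rm N}^\bot(w(\lambda))=0$ for too few $\lambda$; I expect $A$ to have rank $r<|\mathcal{C}_1(\Sp_{16})|$, so that the solutions of $A\mathbf{m}=\mathbf{b}$ form an affine subspace of positive dimension $d=|\mathcal{C}_1(\Sp_{16})|-r$ and the purely ``effortless'' method stalls.

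To collapse this subspace to a single point I would call on Ta\"ibi's algorithms from \cite{Taibi_dimtrace} to compute directly the global volumes and the local orbital integrals \eqref{eq:locintorg} entering \eqref{eq:tell} --- that is, the masses ${\rm m}_c$ themselves --- for $d$ well-chosen classes $c\in\mathcal{C}_1(\Sp_{16})$. The classes should be picked so that, first, their orbital integrals are computationally tractable (e.g.\ classes whose characteristic polynomial has few distinct cyclotomic factors, or with small centralizers, starting with the identity, whose mass is an Euler--Poincar\'e volume), and, second, the $d$ corresponding coordinate functionals on $\mathbf{m}$, adjoined to $r$ independent rows of $A$, span a system of full rank $|\mathcal{C}_1(\Sp_{16})|$ --- the latter being a finite rank verification once the classes are fixed. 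The first requirement is the only genuinely costly step, but it is needed for just $d$ classes rather than all of $\mathcal{C}_1(\Sp_{16})$, which is precisely what makes the computation feasible where a wholesale rerun of \cite{Taibi_dimtrace} in genus $8$ would not be. Solving the enlarged linear system then determines ${\rm m}_c$ for every $c\in\mathcal{C}_1(\Sp_{16})$, hence --- undoing the reductions of \S\ref{sec:spinornorm} and \S\ref{elementaryconjclass} --- for every $c\in\mathcal{C}(\Sp_{16})$; this is the table recorded in \cite{homepage}. As a final consistency check one verifies that these masses reproduce, through \eqref{traceformula} and Key fact 2, the existence and vanishing statements used elsewhere in the paper, and agree with any further orbital-integral spot-checks; see \S\ref{nonexisteffortless}.
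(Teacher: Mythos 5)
Your proposal identifies the right strategy and matches the paper's actual proof at the conceptual level: a hybrid approach that supplements the ``effortless'' explicit-formula method (which supplies too few vanishing results ${\rm N}^\bot(w(\lambda))=0$ at this rank) with a direct computation of a subset of masses via Ta\"ibi's orbital-integral algorithms, then solves a linear system for the rest.

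Where you and the paper part ways is in \emph{which} masses to compute directly, and this difference matters in practice. You propose computing a bare-minimum set of $d$ masses, where $d$ is the rank deficiency of the Gram-type matrix $A$ built from the largest achievable $\Lambda$, and then verifying that the $d$ coordinate functionals together with $r$ independent rows of $A$ have full rank. That verification is nontrivial: without an a priori description of which columns are ``missing'' from the row space of $A$, you must search for $d$ classes that are simultaneously tractable and complementary to the span, and you have no guarantee that the tractable classes suffice. The paper instead defines, by a purely combinatorial criterion on the $p$-parts of the characteristic polynomial, an explicit set $\mathcal{P}(\Sp_{16})_{\rm easy}$ of classes whose orbital integrals avoid all finite-field bilinear algebra (stable isotropic flags, Wall invariants, etc.), computes all $|\mathcal{P}_{\rm easy}/\!\sim|=2521-766=1755$ of those masses directly (each in seconds), and then needs to solve for only the $766$ remaining ``hard'' masses. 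This lets them use a much smaller $\Lambda$: a set of $1086$ dominant weights with $w(\lambda)_1\leq 18$ (i.e.\ from $\Lambda_{G}(36)$) satisfying (P1') and (P2') relative to $\mathcal{P}_{\rm hard}$ alone, whereas even $\Lambda_G(116)$ only yielded $1427$ vanishing weights against $2521$ unknowns. So the paper's version sidesteps your delicate rank-completion step, is cheaper to certify, and explains concretely what ``well-chosen'' should mean; your version, read literally, leaves the crucial feasibility of finding a valid $d$-set as an unaddressed gap, even though the underlying idea is sound. Other than this, your deduction of ${\rm T}_{\rm ell}(\Sp_{16};\lambda)$ from ${\rm N}^\bot(w(\lambda))$ via Key fact 2, \eqref{traceformula} and the induction on ${\rm n}_{\widehat{H}}$ matches the paper exactly, modulo the fact that for $\Sp_{2n}$ there is no gain from $\mathcal{C}_1$ since $\mathcal{P}_1(\Sp_{2n})=\mathcal{P}(\Sp_{2n})$.
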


Following Key fact 2, these two theorems allow to compute ${\rm EP}(G; \lambda)$ for 
all those $G$ and an arbitrary weight $\lambda$, as well as the quantity 
${\rm N}^\bot(\underline{w})$ for any $w \in {\rm W}_m$ for $m \leq 16$:
see {\it loc. cit.} for tables.

\section{Weil's explicit formula for Rankin-Selberg $L$-functions: a refined
  positivity criterion} \label{explicitformula}

\subsection{ Algebraic Harish-Chandra modules and automorphic representations.}
\label{realalgebraic}

Let $\pi$ be a cuspidal automorphic representation of ${\rm PGL}_n$ over $\Q$.
As recalled in \S \ref{level1alg}, we say that $\pi$ has level $1$ if  $\pi_p$ is unramified for every prime $p$.
We also explained {\it loc. cit.} what it means for $\pi$ to be algebraic.
It will be useful to have an alternative point of view on this last condition
in terms of the {\it Langlands correspondence} (see e.g. \cite[\S 8.2.12]{CheLan}).
Recall that $\Pi_{\rm alg}$ denotes
the set of level $1$ algebraic cuspidal automorphic representations of ${\rm PGL}_n$ (with $n\geq 1$ varying). \ps\ps

We denote by ${\rm W}_\R$ the Weil group of $\R$: we have ${\rm W}_\R = \C^\times
\,\coprod \,j \C^\times$, where $j^2$ is the element $-1$ of $\C^\times$ and
with $jzj^{-1}=\overline{z}$ for all $z \in \C^\times$. The Langlands
correspondence for ${\rm GL}_n(\R)$ is a natural bijection $V \mapsto {\rm
L}(V)$ between the set of isomorphism classes of irreducible admissible
Harish-Chandra modules for ${\rm GL}_n(\R)$, and the set of isomorphism classes
of $n$-dimensional (complex, continuous and semi-simple) representations of ${\rm
W}_\R$ \cite{knappmotives}.
We say that the Harish-Chandra module $V$ is {\it algebraic} if every element in
the center $\R^\times$ of ${\rm W}_\R$ acts as a homothety with factor $\pm {\rm
1}$ in ${\rm L}(V)$.
In particular, ${\rm L}(V)$ factors
through the (compact) quotient of ${\rm W}_\R$ by $\R_{>0}$, which is an
extension of $\Z/2$ by the unit circle. We denote by $1$ the trivial
representation of ${\rm W}_\R$, by $\varepsilon_{\C/\R}$ its unique order $2$
character, and for $w \in \Z$ we set ${\rm I}_w \,=\, \Ind_{\C^\times}^{{\rm
W}_\R}\, \eta^w$ where $\eta(z) = z/|z|$. Up to isomorphism, the irreducible
representations of ${\rm W}_\R$ trivial on $\R_{>0}$ are
$$1, \, \, \, \, \varepsilon_{\C/\R}, \, \, \,\,{\rm and}\, \,\,\, {\rm I}_w \,
\,\,{\rm for}\, \, w >0.$$

\noindent We also have ${\rm I}_0 \simeq 1 \oplus \varepsilon_{\C/\R}$, and ${\rm I}_{w} \simeq {\rm I}_{w'}$ if and only if $w = \pm w'$. \ps\ps
 If $\pi$ is a {\it cuspidal} automorphic representation of ${\rm PGL}_n$ over $\Q$, Clozel's purity lemma (or the Archimedean Jacquet-Shalika estimates) shows that the Harish-Chandra module $\pi_\infty$ is algebraic in the sense above if, and only if, $\pi$ is algebraic in the sense of \S \ref{level1alg} \cite[Prop. 8.2.13]{CheLan}. Moreover, for all $v \in \Z$ the multiplicity of the weight $v/2$ of $\pi$ is the same as the multiplicity of the character $\eta^v$ in the restriction of ${\rm L}(\pi_\infty)$ to $\C^\times$. In other words, all the weights $0$ (resp. $\pm v/2$ with $v>0$) of $\pi_\infty$ are explained by occurences of $1$ or $\varepsilon_{\C/\R}$ (resp. of ${\rm I}_v$) in ${\rm L}(\pi_\infty)$. It will be convenient to introduce: \ps\ps

\begin{itemize}
  \item the Grothendieck ring ${\rm K}_\infty$ of complex, continuous, finite
    dimensional, representations of ${\rm W}_\R$ which are trivial on its
    central subgroup $\R_{>0}$, \ps\ps
  \item for $w$ an integer, the sugroup ${\rm K}_\infty^{\leq w}$ of ${\rm
    K}_\infty$ generated by the ${\rm I}_v$ with $0 \leq v \leq w$ and $v \equiv
    w \bmod 2$, and also by $1$ and $\varepsilon_{\C/\R}$ in the case $w$ is
    even. \ps\ps
\end{itemize}

\noindent An element $U$ of ${\rm K}_\infty$ will be called {\it effective} if it is the class of a representation of ${\rm W}_\R$. The {\it weights} 
of such a $U$ are the $w/2$ in $\frac{1}{2}\Z$ such that $\eta^w$ occurs in $U_{|\C^\times}$, counted with the multiplicity of $\eta^w$ in $U_{|\C^\times}$ (a nonnegative number as $U$ is effective). \ps\ps

 It follows from this discussion that for $\pi$ in $\Pi_{\rm alg}$,
we have ${\rm L}(\pi_\infty) \in {\rm K}_\infty^{\leq w}$ for some integer $w\geq 0$,
and that the smallest such integer $w$ coincides with the motivic weight $w(\pi)$ of $\pi$ introduced in \S \ref{level1alg}.
Moreover, the weights of $\pi$ are that of ${\rm L}(\pi_\infty)$. \ps\ps

\ps\ps

For later use, we now recall Langlands' definition
for the $\varepsilon$-factors and $\Gamma$-factors of algebraic Harish-Chandra modules.
The $\Gamma$-factor of an element $U$ in ${\rm K}_\infty$
is a meromorphic function $s \mapsto \Gamma(s,U)$ on the whole complex plane,
characterized by the additivity property
$\Gamma(s,U \oplus U')=\Gamma(s,U)\Gamma(s,U')$
and the following axioms \cite{tate}:
{\small $$\Gamma(s,1)=\pi^{-\frac{s}{2}}\Gamma(\frac{s}{2})\, \,\, \, {\rm and} \, \, \, \, \, \Gamma(s,{\rm I}_{w}) = 2 (2\pi)^{-s-\frac{|w|}{2}}\Gamma(s+\frac{|w|}{2})\, \, \, {\rm for\,\, all}\,\,\, w \in \Z,$$}
\par \noindent in which $s \mapsto \Gamma(s)$ is the classical gamma function.
Similarly,\footnote{What we denote by $\varepsilon(U)$ here is
what Tate denotes $\varepsilon(U,\psi,dx)$ in \cite[(3.4)]{tate},
with choice of additive character $\psi(x)=e^{2i\pi x}$,
and with $dx$ the standard Lebesgue measure on $\R$.}
the $\varepsilon$-factor of $U \in {\rm K}_\infty$ is
the element $\varepsilon(U)$ of $\{\pm 1, \pm i\}$
characterized by the additivity property $\varepsilon(U \oplus U')=\varepsilon(U)\varepsilon(U')$
and the identities $\varepsilon(1)=1$ and $\varepsilon({\rm I}_{w})=i^{w+1}$ for every integer $w\geq 0$.\ps\ps

\subsection{Regular and self-dual elements of $\Pi_{\rm alg}$} \label{regselfpar}

Let $\pi$ be a level $1$ algebraic cuspidal automorphic representation of ${\rm
PGL}_n$ over $\Q$.
We will say that $\pi$ is {\it regular} if the representation ${\rm L}(\pi_\infty)$ of ${\rm W}_\R$ is multiplicity free.
It is thus equivalent to say that for each weight $w$ of $\pi$, either $w$ has multiplicity $1$ or 
we have $w=0$ and ${\rm L}(\pi_\infty)$ contains both $1$ and $\varepsilon_{\C/\R}$ with multiplicity $1$.
This latter case can only occur of course if both the motivic weight of $\pi$ and $n$ are even.
Moreover, we observe that:

\begin{itemize}
\item 
if all the nonzero weights of $\pi$ have multiplicity $1$,
and if the weight $0$ has multiplicity $2$,
then $\pi$ is regular if, and only if, we have $n \equiv 0 \bmod 4$. \ps\ps
\item $\pi$ is regular if, and only if, the vector $(w_i) \in \frac{1}{2} \Z^n$,
where $w_1\geq w_2 \geq \dots \geq w_n$ are the weights of $\pi$,
is regular in the sense of \S \ref{level1alg}. \ps \ps
\item if $\pi$ is regular and $n=2g+1$ is odd,
then ${\rm L}(\pi_\infty)$ contains $\varepsilon_{\C/\R}^g$.
\end{itemize}

\noindent Indeed, as $\pi$ a trivial central character,
we must have ${\rm det} \,{\rm L}(\pi_\infty)=1$,
and we conclude by the formula
$\det\, {\rm I}_v\, =\, \varepsilon_{\C/\R}^{v+1}$ for $v \in \Z$. \ps\ps

Assume now that $\pi$ is {\it self-dual}, that is, isomorphic to its contragredient (or {\it dual}) $\pi^\vee$.
Then $\pi$ is either {\it symplectic} or {\it orthogonal} in the sense of Arthur \cite[Thm. 1.4.1]{Arthur_book}.
 Moreover, if $\pi$ is symplectic (resp. orthogonal) then ${\rm L}(\pi_\infty)$ preserves
 a nondegenerate alternating (resp. symmetric) pairing  \cite[Thm. 1.4.2]{Arthur_book}.
 In particular, if $\pi \in \Pi_{\rm alg}$ is self-dual, and if some weight of $\pi$ has multiplicity $1$ (e.g. if $\pi$ is regular),
 then $\pi$ is symplectic if and only if $w(\pi)$ is odd \cite[Prop. 8.3.3]{CheLan}.

\subsection{The explicit formula for ${\rm L}$-functions of pairs of elements in $\Pi_{\rm alg}$} \label{explicitformulaforpairs}

Let $\pi$ and $\pi'$ be level $1$ cuspidal automorphic representations
of $\PGL_n$ and $\PGL_m$ respectively.
For $p$ prime we denote by ${\rm c}_p(\pi)$ the semi-simple conjugacy class in
$\SL_n(\C)$ associated with the unramified representation $\pi_p$, following
Langlands, under the Satake isomorphism.
The Rankin-Selberg ${\rm L}$-function of $\pi$ and $\pi'$ is the Euler product
$${\rm L}(s,\pi \times \pi') \, = \, \prod_p \, {\rm det} (1- p^{-s} {\rm c}_p(\pi) \otimes {\rm c}_p(\pi'))^{-1}.$$
By fundamental works of Jacquet, Piatetski-Shapiro, and Shalika
\cite{JacquetShalika, JPSS_RS_conv}, this Euler product is absolutely convergent
for ${\rm Re}\, s>1$, and the {\it completed} ${\rm L}$-function
\begin{equation} \label{completers} \Lambda(s,\pi \times \pi')\, = \, \Gamma(s, {\rm L}(\pi_\infty) \otimes {\rm
L}(\pi'_\infty))\, {\rm L}(s,\pi \times\pi'),\end{equation}
has a meromorphic continuation to $\C$ and a functional equation of the form
\begin{equation} \label{eq:functeq}
  \Lambda(s,\pi \times \pi') \, =\, \epsilon(\pi \times \pi') \, \Lambda(1-s,
  \pi^\vee \times (\pi')^\vee)
\end{equation}
where $\epsilon(\pi \times \pi')$ is a certain nonzero complex number
(it does not depend on $s$ as $\pi$ has level $1$). We set $\epsilon(\pi) = \epsilon(\pi \times 1)$.

Assuming $\pi$ and $\pi'$ are algebraic, the only case of interest here, the
$\Gamma(s,-)$ factor in \eqref{completers} is given by the recipe recalled in \S \ref{realalgebraic},
and we simply have
\begin{equation} \label{eqn:epsilon=epsilon_infty}
  \epsilon(\pi \times \pi') = \varepsilon({\rm L}(\pi_\infty) \otimes {\rm
L}(\pi'_\infty)). \end{equation}
Note that the ring structure of ${\rm K}_\infty$ is determined by the relations
${\rm I}_w \cdot {\rm I}_{w'}  =  {\rm I}_{|w+w'|} + {\rm I}_{|w-w'|}$ and
$\varepsilon_{\C/\R} \cdot {\rm I}_w = {\rm I}_w$.  \ps\ps

By Moeglin and Waldspurger \cite[App.]{MoeWal_residuel_GL}, $\Lambda(s,\pi
\times \pi')$ is entire unless we have $\pi' \simeq \pi^\vee$, in which case the
only poles are simple and at $s=0,1$.
Moreover, $\Lambda(s,\pi \times \pi')$ is
bounded in vertical strips away from its poles by Gelbart and Shahidi
\cite{GelbartShahidi_boundedness}.
All those analytic properties are key to establishing the Weil explicit formula
(for which we refer to Poitou \cite[\S 1]{Poitou_petits}) in this context.
The general formalism of Mestre \cite[\S
I]{Mestre} applies verbatim: we refer to \cite[Chap. 9, Sect.
3]{CheLan} for the details and only recall here what we need to prove our
criterion.
\ps\ps

We denote by $\R \, \Pi_{\rm alg}$ the $\R$-vector space with basis $\Pi_{\rm
alg}$. We fix a {\it test function} $F$, that is an even function $\R
\rightarrow \R$ satisfying the axioms (i), (ii) and (iii) of \cite[\S
I.2]{Mestre} with the constant $c$ loc. cit. equal to $0$ (see also \cite[\S
1]{Poitou_petits}). The reader will not lose anything here assuming simply that
$F$ is compactly supported and of class $\mathcal{C}^2$. We denote by
$\widehat{F}$ the Fourier transform of $F$, with the convention
$\widehat{F}(\xi) = \int_\R F(x) e^{-2i\pi x\xi} dx$. Following \cite[Chap. 9,
Sect. 3]{CheLan}, we first define five symmetric bilinear forms on $\R \,
\Pi_{\rm alg}$, that we denote by ${\rm B}_f^F$, ${\rm B}_\infty^F$, ${\rm
Z}^{\rm F}$, ${\rm e}^\bot$ and $\delta$. The first three of them depend on the
choice of $F$. They are uniquely determined by their values on any $(\pi,\pi')
\in \Pi_{\rm alg} \times \Pi_{\rm alg}$: \ps\ps

\noindent (a) ${\rm B}_f^F (\pi,\pi') = \Re\, \sum_{p,k} \,\, F(k {\rm log} \,p)\,\frac{{\rm log}\, p}{p^{k/2}}\,\,\,\overline{{\rm tr}\, ({\rm c}_p(\pi)^k)} \,\, {\rm tr} \,({\rm c}_p(\pi')^k)$, the sum being over all primes $p$ and integers $k\geq 1$. \ps\ps

\noindent (b) ${\rm B}_\infty^F(\pi,\pi') = {\rm J}_F( {\rm L}(\pi_\infty)
\otimes {\rm L}(\pi_\infty'))$, where ${\rm J}_F : {\rm K}_\infty \rightarrow \R$ is the linear map defined by
\begin{equation}\label{defJF} {\rm J}_F(U)  =  -  \int_\R \,\,\frac{\Gamma'}{\Gamma}(\,\frac{1}{2}\,+\,2\pi t \,i,U) \,\widehat{F}(t)\, {\rm d}t.\end{equation}
We will also denote abusively by ${\rm B}_\infty^F$ the real-valued symmetric bilinear form on ${\rm K}_\infty$ defined by ${\rm B}_\infty^F(U,V)={\rm J}_F(U \cdot V)$. 
With these abusive notations we have ${\rm B}_\infty^F(\pi,\pi')={\rm
B}_\infty^F( {\rm L}(\pi_\infty), {\rm L}(\pi_\infty'))$. \ps\ps

\noindent (c) ${\rm Z}^F(\pi,\pi')$ is the limit of $\sum_{\rho}  \, ({\rm ord}_{s=\rho} \,\Lambda(s,\pi^\vee \times \pi'))\, \,\,\Re\, \widehat{F}(\frac{1-2\rho}{4 i \pi})$,  the sum being over the zeros $\rho$ of
$\Lambda(s,\pi^\vee \times \pi')$ with $0 \leq |\Im\, \rho|\leq T$ and  $0 \leq \Re\, \rho \leq 1$, when the real number $T$ goes to $+\infty$. \ps\ps

\noindent (d) $\delta(\pi,\pi')=1$ if $\pi \simeq \pi'$, and $\delta(\pi,\pi')=0$ otherwise (Kronecker symbol).\ps\ps

\noindent (e) ${\rm e}^\bot(\pi,\pi')=1$ if $\pi$ and $\pi'$ are self-dual with $\epsilon(\pi \times \pi')=-1$, and ${\rm e}^\bot(\pi,\pi')=0$ otherwise. \ps\ps

\noindent The main result is that for any test function $F$ we have the equality of bilinear forms
\begin{equation}\label{RSexplformF}  {\rm B}_f^F\,+\, {\rm B}_\infty^F \,+ \,\frac{1}{2}\, {\rm Z}^F \,= \,\,\widehat{F}(\frac{i}{4\pi}) \, \delta\, \hspace{1 cm}{\it (\, the  ``explicit\,\,formula")} \end{equation}
on the space $\R \,\Pi_{\rm alg}$: see \cite[\S I.2]{Mestre} and \cite[Prop.
9.3.9]{CheLan}. We finally define a last bilinear form on $\R \,\Pi_{\rm alg}$ by the formula
\begin{equation}\label{CF} {\rm C}^F \, \, := \, \, \widehat{F}(\frac{i}{4\pi}) \, \delta \,\,-\, \, {\rm B}^F_\infty -\,\, \frac{1}{2}\,\widehat{F}(0)  \,e^{\perp}.\end{equation}
In our applications, it will represent the "computable" part of the explicit
formula.
Note that for any test function $F$, both  $\widehat{F}(0)$ and
$\widehat{F}(i/4\pi)$ are real numbers, and if $F$ is non-negative then they are
both non-negative.
\ps

\begin{defi}
  Let $F$ be a test function. We will say that $F$ satisfies {\rm (POS)} if we have $F(x)\geq 0$ for all $x \in \R$, and $\Re
  \,\widehat{F}(\xi) \geq 0$ for all $\xi \in \C$ with $|{\rm Im}\, \xi |\leq
  \frac{1}{4\pi}$. 
\end{defi}
  
\begin{prop} \label{prop:basicinequality}
  Let $F$ be a test function satisfying {\rm (POS)}. Then for any integer $r\geq 1$, any $\pi_1,\dots,\pi_r$ in
  $\Pi_{\rm alg}$ and any nonnegative real numbers $t_1,\dots,t_r$, we have
    \begin{equation} \label{eq:ineq_CF}
    {\rm C}^F(\sum_i t_i \pi_i,\sum_i t_i \pi_i) \geq 0 .
  \end{equation}
\end{prop}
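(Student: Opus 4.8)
The plan is to run everything through the explicit formula \eqref{RSexplformF} and then check two elementary positivity facts. Write $x=\sum_{i=1}^{r}t_i\pi_i$. Eliminating $\mathrm{B}_\infty^F$ between the definition \eqref{CF} of $\mathrm{C}^F$ and the explicit formula \eqref{RSexplformF} yields the identity of bilinear forms on $\R\,\Pi_{\rm alg}$
\[ \mathrm{C}^F \;=\; \mathrm{B}_f^F \;+\; \tfrac{1}{2}\,\mathrm{Z}^F \;-\; \tfrac{1}{2}\,\widehat{F}(0)\,\mathrm{e}^\bot , \]
so it is enough to prove $\mathrm{B}_f^F(x,x)\ge 0$ and $\mathrm{Z}^F(x,x)\ge \widehat{F}(0)\,\mathrm{e}^\bot(x,x)$, since then $\mathrm{C}^F(x,x)=\mathrm{B}_f^F(x,x)+\tfrac{1}{2}\bigl(\mathrm{Z}^F(x,x)-\widehat{F}(0)\,\mathrm{e}^\bot(x,x)\bigr)\ge 0$.

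For the first inequality I would expand $\mathrm{B}_f^F(x,x)=\sum_{i,j}t_i t_j\,\mathrm{B}_f^F(\pi_i,\pi_j)$ using formula (a) and interchange the finite sum over $(i,j)$ with the sum over primes $p$ and integers $k\ge 1$. Since $\sum_{i,j}t_i t_j\,\overline{\mathrm{tr}(\mathrm{c}_p(\pi_i)^k)}\,\mathrm{tr}(\mathrm{c}_p(\pi_j)^k)=\bigl|\sum_i t_i\,\mathrm{tr}(\mathrm{c}_p(\pi_i)^k)\bigr|^2$ is real and nonnegative, this rewrites $\mathrm{B}_f^F(x,x)$ as
\[ \sum_{p,\,k\ge 1}\;F(k\log p)\,\frac{\log p}{p^{k/2}}\;\Bigl|\,\sum_{i} t_i\,\mathrm{tr}\bigl(\mathrm{c}_p(\pi_i)^k\bigr)\Bigr|^2 , \]
each summand being $\ge 0$ because $F\ge 0$ by {\rm (POS)}; hence $\mathrm{B}_f^F(x,x)\ge 0$.

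For the second inequality, expand $\mathrm{Z}^F(x,x)=\sum_{i,j}t_i t_j\,\mathrm{Z}^F(\pi_i,\pi_j)$; since $t_i t_j\ge 0$ it suffices to show $\mathrm{Z}^F(\pi_i,\pi_j)\ge \widehat{F}(0)\,\mathrm{e}^\bot(\pi_i,\pi_j)$ for each pair. Fix $i,j$. All zeros $\rho$ of $\Lambda(s,\pi_i^\vee\times\pi_j)$ lie in the closed strip $0\le\Re\rho\le 1$ — this completed ${\rm L}$-function being non-vanishing for $\Re s>1$ (no factor of its absolutely convergent Euler product vanishes and the $\Gamma$-factor has no zero) and for $\Re s<0$ by the functional equation \eqref{eq:functeq} — so $\xi=\frac{1-2\rho}{4i\pi}$ satisfies $|\Im\xi|\le\frac{1}{4\pi}$ and $\Re\widehat{F}(\xi)\ge 0$ by {\rm (POS)}; since the order of $\Lambda(s,\pi_i^\vee\times\pi_j)$ at a zero is $\ge 1$, every term of the defining sum of $\mathrm{Z}^F(\pi_i,\pi_j)$ is $\ge 0$, so this sum (and its limit in $T$) is bounded below by the contribution of any single zero. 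If $\mathrm{e}^\bot(\pi_i,\pi_j)=0$ we are done. If $\mathrm{e}^\bot(\pi_i,\pi_j)=1$ then $\pi_i$ and $\pi_j$ are self-dual with $\epsilon(\pi_i\times\pi_j)=-1$, so $\Lambda(s,\pi_i^\vee\times\pi_j)=\Lambda(s,\pi_i\times\pi_j)$ and \eqref{eq:functeq} reads $\Lambda(s,\pi_i\times\pi_j)=-\Lambda(1-s,\pi_i\times\pi_j)$; as $s=\tfrac{1}{2}$ is not a pole (poles occur only at $s=0,1$), this forces a zero of order $\ge 1$ there, whose contribution to $\mathrm{Z}^F(\pi_i,\pi_j)$, corresponding to $\xi=0$, is $\ge \Re\widehat{F}(0)=\widehat{F}(0)$. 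In either case $\mathrm{Z}^F(\pi_i,\pi_j)\ge \widehat{F}(0)\,\mathrm{e}^\bot(\pi_i,\pi_j)$, which finishes the argument.

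The step needing the most care is this term-by-term analysis of $\mathrm{Z}^F$: one must be sure every zero of the relevant completed Rankin--Selberg ${\rm L}$-functions lies in the closed critical strip (so that {\rm (POS)} applies to all of them, not merely to those with small imaginary part), that the limit in $T$ defining $\mathrm{Z}^F$ is legitimate and may be rearranged freely once all terms are nonnegative — both of which rest on the analytic input of Moeglin--Waldspurger and Gelbart--Shahidi already invoked for \eqref{RSexplformF} — and that the central vanishing order is genuinely $\ge 1$ in the $\epsilon=-1$ case. Everything else is bookkeeping with \eqref{RSexplformF} and {\rm (POS)}.
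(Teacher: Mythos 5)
Your proof is correct and follows essentially the same route as the paper: decompose $\mathrm{C}^F$ via the explicit formula into $\mathrm{B}_f^F + \tfrac12(\mathrm{Z}^F - \widehat{F}(0)\,\mathrm{e}^\bot)$, observe that $\mathrm{B}_f^F$ is positive semi-definite because $F\ge 0$, and check that $\mathrm{Z}^F(\pi,\pi')-\widehat{F}(0)\,\mathrm{e}^\bot(\pi,\pi')\ge 0$ entry-by-entry using $\Re\,\widehat{F}\ge 0$ on the critical strip and the forced central zero when $\epsilon(\pi\times\pi')=-1$. You skip the paper's initial reduction to integer $t_i$ (used there only to cite an earlier reference) and instead run the direct argument for arbitrary nonnegative reals, which is exactly what the paper itself does in its "direct argument"; the extra detail you supply (the explicit $|\cdot|^2$ expansion of $\mathrm{B}_f^F$, the confinement of zeros to the closed strip) is implicit in the paper and does no harm.
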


\begin{pf}
  By density of the rationals in $\R$, we may assume that the $t_i$ are rational
  numbers, and even that they are integers by homogeneity of the quadratic form
  $x \mapsto {\rm C}^F(x,x)$. But in this case, the statement is \cite[Cor.
  9.3.12]{CheLan}. As the proof is very simple, we give a direct argument. By
  \eqref{RSexplformF} we have ${\rm C}^F \,=\, {\rm B}^F_f \,+\, \frac{1}{2} \,
  ({\rm Z}^F - \widehat{F}(0) \,{\rm e}^{\bot})$. By definition (a) and the
  assumption $F\geq 0$, the symmetric bilinear form ${\rm B}_f^F$ is positive
  (semi-definite) on $\R \Pi_{\rm alg}$. It is thus enough to show that
  $\frac{1}{2} \, ( {\rm Z}^F - \widehat{F}(0)\, {\rm e}^{\bot})$ has
  nonnegative coefficients in the natural basis $\Pi_{\rm alg}$ of $\R \,
  \Pi_{\rm alg}$, {\it i.e.} that we have ${\rm Z}^F(\pi,\pi') \geq
  \widehat{F}(0) \, {\rm e}^{\bot}(\pi,\pi')$ for all $\pi,\pi' \in \Pi_{\rm
  alg}$. But this follows from the definition of ${\rm Z}^F(\pi,\pi')$, the
  assumption on $\Re\,\, \widehat{F}$, and the fact that if we have ${\rm
  e}^{\bot}(\pi,\pi')=1$ then $\Lambda(s,\pi^\vee \times \pi')$ has a zero at
  $s=1/2$ by the functional equation \eqref{eq:functeq}.
\end{pf}

\subsection{Applications}\label{par:applineqalgo}  In what follows we will apply Proposition \ref{prop:basicinequality} to disprove
the existence of representations $\pi$ in $\Pi_{\rm alg}$ such that
$\pi_{\infty}$ is a given algebraic representation, using the knowledge that
there are representations in $\Pi_{\rm alg}$ with known Archimedean
components.

\subsubsection{The basic inequalities} \label{classicalconseq} Before doing so, we first recall
the following basic but important consequence of the explicit formula, 
that we derive here as a very special
case of Proposition \ref{prop:basicinequality} (see also \cite[Cor. 9.3.12 \& 9.3.14]{CheLan}).

\begin{coro} \label{basiccor} Let $F$ be a test function satisfying {\rm (POS)} and fix $U$ in ${\rm K}_\infty$. 
If there is an element $\pi$ in $\Pi_{\rm alg}$ with ${\rm L}(\pi_\infty)=U$ then we have the inequality
\begin{equation} \label{basicineq} {\rm B}_\infty^F (U,U) \leq \widehat{F}(i/4\pi).\end{equation}
More generally, if there are distinct elements $\pi_1,\dots,\pi_m$ in $\Pi_{\rm alg}$ with ${\rm L}((\pi_j)_\infty)=U$ for all $j$, then we have
\begin{equation} \label{taibicrit} {\rm B}_\infty^F (U,U) \leq \frac{1}{m} \widehat{F}(i/4\pi).\end{equation}
\end{coro}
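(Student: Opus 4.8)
The plan is to obtain this as an immediate specialisation of Proposition \ref{prop:basicinequality}. First I would take $r=m$, let the $\pi_i$ be the given distinct representations, and choose $t_i=1$ for all $i$ (legitimate since $1\geq 0$). Setting $x=\pi_1+\cdots+\pi_m\in \R\,\Pi_{\rm alg}$, Proposition \ref{prop:basicinequality} gives ${\rm C}^F(x,x)\geq 0$, and bilinearity gives ${\rm C}^F(x,x)=\sum_{j,k=1}^m {\rm C}^F(\pi_j,\pi_k)$.

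Next I would evaluate each summand using the definition \eqref{CF} of ${\rm C}^F$. Since ${\rm L}((\pi_j)_\infty)=U$ for every $j$ and ${\rm B}_\infty^F(\pi,\pi')={\rm B}_\infty^F({\rm L}(\pi_\infty),{\rm L}(\pi'_\infty))$ by definition, each of the $m^2$ mixed terms satisfies ${\rm B}_\infty^F(\pi_j,\pi_k)={\rm B}_\infty^F(U,U)$, contributing $-m^2\,{\rm B}_\infty^F(U,U)$ in total. As the $\pi_j$ are pairwise non-isomorphic, $\delta(\pi_j,\pi_k)$ is $1$ exactly when $j=k$, so the $\delta$-terms contribute $m\,\widehat{F}(i/4\pi)$. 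Finally ${\rm e}^\bot$ takes values in $\{0,1\}$ and $\widehat{F}(0)\geq 0$ because $F\geq 0$ by {\rm (POS)}, so $\frac{1}{2}\widehat{F}(0)\sum_{j,k}{\rm e}^\bot(\pi_j,\pi_k)\geq 0$. Therefore ${\rm C}^F(x,x)$ equals $m\,\widehat{F}(i/4\pi)-m^2\,{\rm B}_\infty^F(U,U)$ minus a nonnegative quantity, and combining this with ${\rm C}^F(x,x)\geq 0$ gives
\[ 0\,\leq\, m\,\widehat{F}(i/4\pi)\,-\,m^2\,{\rm B}_\infty^F(U,U). \]
Dividing by $m^2>0$ yields \eqref{taibicrit}, and the case $m=1$ is precisely \eqref{basicineq}.

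There is no real obstacle here: the statement is a direct corollary, and the only points deserving a line of care are the abusive identification of ${\rm B}_\infty^F$ on $\R\,\Pi_{\rm alg}$ with the corresponding bilinear form on ${\rm K}_\infty$ (which is what lets us replace all $m^2$ mixed terms by the single value ${\rm B}_\infty^F(U,U)$) and the sign of the ${\rm e}^\bot$-term that allows it to be discarded. Alternatively one could bypass Proposition \ref{prop:basicinequality} and argue directly from the explicit formula \eqref{RSexplformF} evaluated at $x$, using that ${\rm B}_f^F$ is positive semi-definite and that $\frac{1}{2}({\rm Z}^F-\widehat{F}(0)\,{\rm e}^\bot)$ has nonnegative coefficients, but routing through the proposition is the cleanest.
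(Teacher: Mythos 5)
Your proof is correct and is essentially the paper's own argument: both apply Proposition \ref{prop:basicinequality} to $x=\pi_1+\cdots+\pi_m$ (all $t_i=1$), use $\delta(x,x)=m$ and ${\rm B}_\infty^F(x,x)=m^2\,{\rm B}_\infty^F(U,U)$, and discard the ${\rm e}^\bot$-term via $\widehat{F}(0)\geq 0$. Nothing further is needed.
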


\begin{pf} Consider the element $x = \sum_{i=1}^m \pi_i$ of $\R \Pi_{\rm alg}$. We have ${\rm C}^F(x,x) \geq 0$ by Proposition \ref{prop:basicinequality}.
We clearly have\footnote{\label{footnoteebot} We actually have ${\rm e}^\perp(x,x)=0$. Indeed, if $\pi,\pi'$ are in $\Pi_{\rm alg}$ with ${\rm L}(\pi_\infty)={\rm L}(\pi'_\infty)$, 
then ${\rm e}^\perp(\pi,\pi')=0$. To see this, we may assume $\pi$ and $\pi'$ are self-dual, either both symplectic or both orthogonal (they have the same motivic weight by assumption),
and the assertion follows then from the general property $\epsilon(\pi \times \pi')=1$ proved in \cite[Thm. 1.5.3 (b)]{Arthur_book}. Alternatively, we can easily check $\varepsilon(U\cdot U)=1$ for $U={\rm L}(\pi_\infty)$.} $\widehat{F}(0) {\rm e}^\bot(x,x) \geq 0$, by the inequality $\widehat{F}(0)\geq 0$. We conclude by the equalities $\delta(x,x)=m$ and ${\rm B}_\infty^F(x,x)\,=\,m^2\, {\rm B}_\infty^F(U,U)$.
\end{pf}

Establishing inequality \eqref{basicineq} is the original application 
of the explicit formula for Rankin-Selberg ${\rm L}$-function 
to prove the nonexistence of certain $\pi$ in $\Pi_{\rm alg}$ with given $\pi_\infty$. 
It was used by
\footnote{In the context of Artin ${\rm L}$-functions, the advantages of
considering Rankin-Selberg ${\rm L}$-functions had already been noticed by
Serre, see \cite{Poitou_minorations} p. 150.}
Miller in \cite{Miller} to show that for $n \leq 12$ there is no $\pi$ in
$\Pi_{\rm alg} \smallsetminus \{1\}$ such that ${\rm L}(\pi_\infty)$ is either
${\rm I}_1 + {\rm I}_3+ \cdots + {\rm I}_{2n+1}$ or $\varepsilon_{\C/\R}^n +
{\rm I}_2 + {\rm I}_{4}+\cdots + {\rm I}_{2n}$.
As explained in \cite[Sect. 9.3]{CheLan} and \cite{Chenevier_HM}, 
the simple inequality \eqref{basicineq} is very constraining generally in motivic weight $\leq 23$: 
for suitable $F$ the bilinear form ${\rm B}_\infty^F$ is positive definite on ${\rm K}_\infty^{\leq w}$ for $w \leq 23$, and 
there is an explicit finite list $\mathcal{L}$ of elements of ${\rm K}_\infty$ 
such that whenever $U$ is in ${\rm K}_\infty^{\leq w} - \mathcal{L}$ with $w\leq 23$,
there is no $\pi$ in $\Pi_{\rm alg}$ with ${\rm L}(\pi_\infty)=U$. It has however 
 some limitations: as we shall see, the list $\mathcal{L}$ is quite large, and far from optimal. For instance, 
 it does not seem possible to exclude this way the possibility\footnote{An intuitive reason for that is that 
 there actually exists a $\pi'$ in $\Pi_{\rm alg}$ with very close weights, namely $\pi'=\Delta_{11}$, 
 with ${\rm L}(\pi_\infty') \simeq {\rm I}_{11}$. See the discussion in \cite[Sect. 9.3.19]{CheLan}
 for many other examples (and how to deal with this case differently), which allow to develop some intuition.
 } 
 ${\rm L}(\pi_\infty) \simeq {\rm I}_{13}$. 
Nevertheless, Inequality \eqref{basicineq} will be extremely helpful to us in \S \ref{nonexisteffortless} and Sect.~\ref{sec:mot2324}.
 Inequality \eqref{taibicrit} was first observed by Ta\"ibi. In the case ${\rm B}_\infty^F(U,U)>0$, it may be seen 
 as an effective form of Harish-Chandra's finiteness theorem. We will often use it to show that there is at most one 
 $\pi$ in $\Pi_{\rm alg}$ with given ${\rm L}(\pi_\infty)=U$; note that such a $\pi$ has to be self-dual if it exists, as we have
  ${\rm L}((\pi^\vee)_\infty)={\rm L}(\pi_\infty)^\vee={\rm L}(\pi_\infty)$. \par \medskip

\subsubsection{A general method}\label{sub:testfin}\label{par:method} For $\pi$ in $\Pi_{\rm alg}$, set 
${\rm sd}(\pi)=1$ if $\pi$ is self-dual, and ${\rm sd}(\pi)=0$ otherwise. In this section, we will develop a 
method trying to answer  {\it
in the negative} the following question.\ps

\begin{question} \label{questexplform}
  Fix an integer $r\geq 1$, and for each $1 \leq i \leq r$ elements $U_i$ in
  ${\rm K}_\infty$ and $\delta_i$ in $\{0,1\}$.
  Does there exist distinct representations $\pi_1$, \dots, $\pi_{r}$ in $\Pi_{\rm
  alg}$ with ${\rm L}((\pi_i)_\infty) = U_i$ and ${\rm sd}(\pi_i)=\delta_i$ for
  each $1 \leq i \leq r$ ?
\end{question}

To do so, assume we are given an integer $r\geq 1$
and for each $1 \leq i \leq r$, elements $U_i$ in ${\rm
  K}_\infty$,  $\delta_i$ in $\{0,1\}$, and an integer $m_i\geq 1$. In other words,
  we fix a quadruple
 \begin{equation} \label{eq:quadruple} \mathcal{Q}=(r,\underline{U},\underline{\delta},\underline{m}) \end{equation}
  with $\underline{U}=(U_i)_{1\leq i \leq r}$ in ${\rm K}_\infty^r$, $\underline{\delta}=(\delta_i)_{1 \leq i \leq r}$ in $\{0,1\}^r$ and $\underline{m}=(m_i)_{1\leq i \leq r}$ 
  in $\Z_{\geq 1}^r$. To the choice of $\mathcal{Q}$ and of a test function $F$, we associate the symmetric 
  bilinear form $\beta^F_\mathcal{Q}$ on the standard Euclidean space $\R^r$, defined by the formula
\begin{equation} \label{eq:betaqf} \beta_\mathcal{Q}^F(e_i,e_j) = \frac{1}{m_i} \widehat{F}(i/4\pi) \,\,e_i \cdot e_j - {\rm J}_F(U_i \cdot U_j) - \widehat{F}(0) \delta_i \delta_j \frac{1-\varepsilon(U_i \cdot U_j)}{4},\end{equation}
where $e_1,\dots,e_r$ is the standard orthonormal basis of $\R^r$.
We will discuss the practical numerical evaluation of $\beta_\mathcal{Q}^F$
(i.e.\ of ${\rm J}_F$, $\widehat{F}(0)$ and $\widehat{F}(\frac{i}{4\pi})$) in \S
\ref{numericaleval}.
Set $\mathbb{S}^{r-1}_{+}\, =\, \left\{ (t_i) \in \R^r \,\,\middle|\,\,
\sum_{i=1}^r t_i^2 = 1 \text{ and } \forall i,\,t_i \geq 0 \right\}$.
  
\begin{problem} \label{prob:min} Fix a test function $F$ and a quadruple $\mathcal{Q}=(r,\underline{U},\underline{\delta},\underline{m})$ as in \eqref{eq:quadruple}.
 Determine wether the map  $x \mapsto \beta_\mathcal{Q}^F(x,x)$ takes a negative value on $\mathbb{S}^{r-1}_{+}$. \end{problem}
  
  The relationship between Question \ref{questexplform} and this problem (which does not involve automorphic representations) is the following. 
  Suppose $m_i=1$ for each $1\leq i \leq r$ (the general $m_i$ will play a role only later).
  Assume there are distinct $\pi_1,\dots,\pi_r$ in $\Pi_{\rm alg}$ with ${\rm L}((\pi_i)_\infty) = U_i$ and ${\rm sd}(\pi_i)=\delta_i$ for
  each $1 \leq i \leq r$.
Denote $V = \bigoplus_{i=1}^r \R \pi_i \subset \R \Pialg$ viewed as an Euclidean
space with orthonormal basis $(\pi_1, \dots, \pi_r)$.
As the $\pi_i$ are distinct we actually have $x \cdot x = \delta(x,x)$ for all
$x \in V$. 
We also have
\begin{equation} \label{calcCFBe}
  {\rm B}_\infty^F(\pi_i,\pi_j)= {\rm B}_\infty^F(U_i,U_i)={\rm J}_F(U_i \cdot
  U_j),\,\,\, {\rm e}^\bot(\pi_i,\pi_j)= \delta_i \,\delta_j
  \,\frac{1-\varepsilon(U_i \cdot U_j)}{2}.
\end{equation}
In other words, the linear map $\iota : \R^r \rightarrow V$ defined by $e_i
\mapsto \pi_i$ is an isometry satisfying ${\rm C}^F(\iota(x),\iota(y)) =
\beta_\mathcal{Q}^F(x,y)$ for all $x,y \in \R^r$.
If we are able to find an element $\underline{t}=(t_i) \in \mathbb{S}^{r-1}_+$
with $\beta_\mathcal{Q}^F(\underline{t},\underline{t})<0$, then the element
$\iota(\underline{t})=\sum_{i=1}^r t_i \pi_i$ contadicts Proposition
\ref{prop:basicinequality}: we have answered Question \ref{questexplform} in the
negative. \ps\ps

  From now on we thus focus on Problem \ref{prob:min}.
  We fix an arbitrary quadruple $\mathcal{Q}=(r,\underline{U},\underline{\delta},\underline{m})$ as in \eqref{eq:quadruple} and a test function $F$.
  To simplify the notations we also set $E=\R^r$ and $D=\mathbb{S}_{+}^{r-1}$.
  Let us introduce, for each non-empty $I \subset
\{1, \dots, r\}$: \begin{itemize}
\par \medskip
\item  the subspace $E_I := \bigoplus_{i \in I} \R e_i$ of $E$, the intersection $D_I = D \cap E_I$ and its interior $\mathring{D}_I := \{ \sum_{i \in I} t_i e_i \in D
\,\,|\,\, \forall i \in I,\, t_i >0\}$. We have $D = \bigsqcup_I \mathring{D}_I$.
\par \medskip
\item the minimal eigenvalue $\lambda_I$ of the Gram matrix $(\beta_\mathcal{Q}^F(e_i,e_j))_{i,j \in I}$ of the
  restriction of $\beta_\mathcal{Q}^F$ to $E_I \times E_{I}$, and the corresponding eigenspace $E_{I,
  \lambda_I}$. 
\par \medskip  
\end{itemize}
We also denote by $\mu_\mathcal{Q}^F$ the minimum of $x \mapsto \beta_\mathcal{Q}^F(x,x)$ on $D$.

\begin{prop} \label{prop:algomin} Fix a test function $F$ and a quadruple $\mathcal{Q}=(r,\underline{U},\underline{\delta},\underline{m})$ as in \eqref{eq:quadruple}.
  Let $\mathcal{I}$ be the set of non-empty $I \subset \{1, \dots,
  r\}$ such that $E_{I, \lambda_I}$ intersects $\mathring{D}_I$. Then $\mathcal{I}$ is non-empty and we have $\mu_\mathcal{Q}^F= \min_{I \in \mathcal{I}} \lambda_I$.
\end{prop}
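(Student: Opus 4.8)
The plan is to analyze the minimization of the quadratic form $q(x) = \beta_{\mathcal{Q}}^F(x,x)$ on the compact set $D = \mathbb{S}^{r-1}_+$ by decomposing $D$ into the disjoint union $D = \bigsqcup_I \mathring{D}_I$ over nonempty subsets $I \subset \{1,\dots,r\}$. Since $D$ is compact and $q$ is continuous, the minimum $\mu_{\mathcal{Q}}^F$ is attained at some point $x_0$, which lies in exactly one stratum $\mathring{D}_{I_0}$; we then have $\mu_{\mathcal{Q}}^F = \min_I \left( \min_{x \in \overline{\mathring{D}_I}} q(x) \right)$, and it suffices to understand, stratum by stratum, when the minimum of $q$ restricted to $E_I \cap D$ is attained in the open part $\mathring{D}_I$ and what its value is there.

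The key step is the following observation about each stratum. Fix $I$ and restrict $q$ to the sphere $D_I = \{x \in E_I : \|x\| = 1\}$ (forgetting the positivity constraint for a moment). By the Rayleigh–Ritz principle, the minimum of $q$ over the full sphere $\{x \in E_I : \|x\|=1\}$ equals $\lambda_I$, the smallest eigenvalue of the Gram matrix $(\beta_{\mathcal{Q}}^F(e_i,e_j))_{i,j\in I}$, and it is attained precisely on the unit vectors in the eigenspace $E_{I,\lambda_I}$. Now I distinguish two cases. If $E_{I,\lambda_I}$ meets $\mathring{D}_I$ — i.e.\ $I \in \mathcal{I}$ — then the unconstrained minimizer on $D_I$ already satisfies the positivity constraints, so $\min_{D_I \cap \overline{D}} q = \lambda_I$ and this value is genuinely attained inside $D$. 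If $I \notin \mathcal{I}$, I claim the minimum of $q$ over $E_I \cap D$ is not attained in the open stratum $\mathring{D}_I$: indeed if $x_0 \in \mathring{D}_I$ were a local (hence, by restricting to $\mathring{D}_I$, interior) minimizer of $q$ on the sphere $D_I$, then by the Lagrange condition $x_0$ would be an eigenvector of the Gram matrix, and minimality on the sphere forces the eigenvalue to be $\lambda_I$, so $x_0 \in E_{I,\lambda_I} \cap \mathring{D}_I$, contradicting $I \notin \mathcal{I}$. Hence for $I \notin \mathcal{I}$ the infimum of $q$ over $\mathring{D}_I$ is approached only on the boundary, where $x$ lies in some $\mathring{D}_J$ with $J \subsetneq I$.

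Combining these: the global minimum $\mu_{\mathcal{Q}}^F$ is attained at $x_0 \in \mathring{D}_{I_0}$ for some $I_0$, and since $x_0$ is in particular a minimizer of $q$ restricted to $E_{I_0} \cap D$ attained in the open stratum, the previous paragraph forces $I_0 \in \mathcal{I}$ and $q(x_0) = \lambda_{I_0}$; in particular $\mathcal{I}$ is nonempty. Conversely, for any $I \in \mathcal{I}$ there is a unit vector $v \in E_{I,\lambda_I} \cap \mathring{D}_I \subset D$ with $q(v) = \lambda_I$, so $\mu_{\mathcal{Q}}^F \leq \lambda_I$. Taking the minimum over $I \in \mathcal{I}$ gives $\mu_{\mathcal{Q}}^F \leq \min_{I \in \mathcal{I}} \lambda_I$, while $\mu_{\mathcal{Q}}^F = q(x_0) = \lambda_{I_0} \geq \min_{I \in \mathcal{I}} \lambda_I$; hence equality.

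The main obstacle is the careful treatment of the stratum boundaries — ensuring that a minimizer of $q$ over the closed piece $E_I \cap D$ that happens to lie on the relative boundary is genuinely accounted for in a smaller stratum $\mathring{D}_J$, so that the induction/descent on $|I|$ is clean. Concretely one wants: if $x_0 \in \mathring{D}_{I_0}$ realizes the global minimum, then $x_0$ is an eigenvector of the $I_0$-Gram matrix for its eigenvalue $\lambda_{I_0}$ — this uses that $x_0$ is an interior critical point of $q$ on the sphere $D_{I_0}$ (no active positivity constraints), which is exactly the content of Lagrange multipliers plus the observation that a constrained minimum on a sphere of a quadratic form is always the bottom eigenvalue, not merely some eigenvalue. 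Everything else is a routine compactness-plus-Rayleigh-quotient argument.
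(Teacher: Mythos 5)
Your proof is correct and follows essentially the same route as the paper's: decompose $D$ into the strata $\mathring{D}_I$, observe that each stratum is relatively open in the unit sphere of $E_I$ so that the global minimizer is a local minimum on that sphere, and then invoke the fact (the paper's Lemma \ref{lemm:min_quad}) that a local minimum of a quadratic form on a sphere must lie in the bottom eigenspace. Your case split into $I\in\mathcal{I}$ versus $I\notin\mathcal{I}$ just makes explicit the descent that the paper handles in one line; the two arguments are the same in substance.
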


\begin{proof}
  The minimum  $\mu_\mathcal{Q}^F$ of $x \mapsto \beta_\mathcal{Q}^F(x,x)$ on
  the compact $D= \bigsqcup_{I \in \mathcal{I}} \mathring{D}_I$ is reached in
  $\mathring{D}_J$ for some $J$.
  By Lemma \ref{lemm:min_quad} below applied to $E_J$ and ${{\rm C}^F}_{| E_J
  \times  E_{J}}$, any local minimum of $x \mapsto \beta_\mathcal{Q}^F(x,x)$ on
  $\mathring{D}_J$ is an eigenvector for $\lambda_J$ and we have
  $\mu_\mathcal{Q}^F = \lambda_J$.
  We have $J \in \mathcal{I}$, and the other inequality $\mu_\mathcal{Q}^F \leq
  \lambda_I$ for any $I \in \mathcal{I}$ is obvious.
\end{proof}

\begin{lemm} \label{lemm:min_quad}
  Let $E$ be an Euclidean space with scalar product $x \cdot y$, $S$ its unit
  sphere, $b$ a symmetric bilinear form on $E$ and $u$ the (symmetric)
  endomorphism of $E$ satisfying $b(x,y) = x \cdot u(y)$ for all $x,y$ in $E$.
  Assume that the map $S \rightarrow \R, x \mapsto b(x,x)$ has a local minimum
  at the element $v$ in $S$.
  Then $v$ is an eigenvector of $u$ whose eigenvalue $b(v,v)$ is the minimal
  eigenvalue of $u$.
\end{lemm}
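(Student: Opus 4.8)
The statement is the classical variational characterization of the smallest eigenvalue of a symmetric operator, the only subtlety being that we are given a \emph{local} minimum and must upgrade it to a global one. I would proceed in three steps.

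\emph{Step 1: global picture via the spectral theorem.} Since $u$ is symmetric, fix an orthonormal basis $(e_1,\dots,e_n)$ of $E$ consisting of eigenvectors, with eigenvalues $\lambda_1 \leq \lambda_2 \leq \cdots \leq \lambda_n$. Writing $x = \sum_i c_i e_i$ with $\sum_i c_i^2 = 1$ one gets $b(x,x) = x \cdot u(x) = \sum_i \lambda_i c_i^2 \geq \lambda_1$, with equality if and only if $c_i = 0$ whenever $\lambda_i > \lambda_1$, i.e.\ if and only if $x$ lies in the $\lambda_1$-eigenspace. So the \emph{global} minimum of $x \mapsto b(x,x)$ on $S$ equals $\lambda_1$ and is attained exactly on the unit sphere of that eigenspace.

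\emph{Step 2: a local minimum is an eigenvector.} Suppose $v \in S$ is a local minimum, and let $w \in E$ be any unit vector orthogonal to $v$. The curve $\gamma(t) = (\cos t)\, v + (\sin t)\, w$ stays on $S$, and $f(t) := b(\gamma(t),\gamma(t)) = (\cos^2 t)\, b(v,v) + (\sin 2t)\, b(v,w) + (\sin^2 t)\, b(w,w)$ has a local minimum at $t = 0$; hence $f'(0) = 2\, b(v,w) = 0$. Thus $v \cdot u(w) = 0$ for every $w \perp v$, which forces $u(v) \in \R v$, so $v$ is an eigenvector of $u$ with eigenvalue $b(v,v) =: \mu$.

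\emph{Step 3: the eigenvalue is minimal.} If $\mu > \lambda_1$, pick a unit eigenvector $w$ for $\lambda_1$; since $u$ is symmetric and $\mu \neq \lambda_1$ we have $w \perp v$. Along the same curve $\gamma(t) = (\cos t)\, v + (\sin t)\, w \in S$ we get $b(\gamma(t),\gamma(t)) = \mu \cos^2 t + \lambda_1 \sin^2 t < \mu$ for all small $t > 0$, contradicting the local minimality of $v$. Therefore $\mu = \lambda_1$, and by Step 1 this is the minimal eigenvalue of $u$, completing the proof.

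The only ``obstacle'' is the passage from a local to a global minimum in Steps 2--3; it is resolved entirely by testing $b$ along the explicit great-circle curves $\gamma(t)$, so the argument is elementary and needs nothing beyond the spectral theorem. (In the application of Proposition \ref{prop:algomin}, one applies this lemma to $E_J$ with the bilinear form ${\beta_\mathcal{Q}^F}_{|E_J \times E_J}$ and its associated endomorphism.)
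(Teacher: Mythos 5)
Your proof is correct and follows essentially the same two-stage argument as the paper: a first-order variation over $v^\perp$ shows $v$ is an eigenvector, and then a second variation toward a putative eigenvector of smaller eigenvalue contradicts local minimality. The great-circle parametrization $\gamma(t)=(\cos t)v+(\sin t)w$ is just a cosmetic variant of the paper's $(1-\epsilon^2)^{1/2}v+\epsilon v'$ (and of its expansion of $q\bigl((v+w)/|v+w|\bigr)$), and your Step~1, while a nice framing of the variational principle, is logically redundant since Steps~2--3 reprove it.
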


\begin{pf}
  Set $q(x)=b(x,x)$. We have $q(\frac{v+w}{|v+w|}) = q(v) + 2 b(w,v) + {\rm
  O}(w^2)$ when $w$ goes to $0$ in $v^\perp$. As $v$ is a local minimum of $q$,
  this shows $b(w,v)=w \cdot u(v) = 0$ for all $w$ in $v^\bot$. So $v$ is an
  eigenvector of $u$. Denote by $\lambda$ be the corresponding eigenvalue.
  Assume $u$ has an eigenvalue $\lambda' < \lambda$, and choose $v'$ in $S$ with
  $u(v') = \lambda' v'$. We have $b(v,v')=0$ and $q( (1-\epsilon^2)^{1/2} v
  + \epsilon v') = \lambda + \epsilon^2 (\lambda'-\lambda)<\lambda$ for all
  $0<\epsilon<1$, a contradiction.
\end{pf}

\begin{exam}\label{casr2} Assume $r=2$ and set 
 $(\beta_\mathcal{Q}^F(e_i,e_i))_{1\leq i,j \leq 2}= {\tiny \left( \begin{array} {cc} a & b \\ b & c \end{array} \right)}$.
 We have $\lambda_{\{1\}}=a$ and $\lambda_{\{2\}}=c$. We may assume $a$ and $c$ are $\geq 0$, 
 otherwise Problem \ref{prob:min} is solved. For $I=\{1,2\}$, the eigenvalue $\lambda_I$ is $<0$ if and only if 
the determinant $a c - b^2$ is $<0$. In this case, we have $b \neq 0$ and the eigenspace $E_{I,\lambda_I}$ is a line:
we easily check that this line meets $\mathring{D}_I$ if and only if $b<0$. 
Proposition \ref{prop:algomin} implies that assuming $a c < b^2$ and $b<0$, 
or equivalently $b + \sqrt{a c}<0$, we have $\mu_\mathcal{Q}^F<0$.

\end{exam}
\begin{lemm} \label{lemm:tiegaltj} Fix a test function $F$ and a quadruple $\mathcal{Q}=(r,\underline{U},\underline{\delta},\underline{m})$ as in \eqref{eq:quadruple}.
Assume $\mu_\mathcal{Q}^F<0$, $\widehat{F}(i/4\pi)>0$, as well as $(U_i,\delta_i,m_i)=(U_j,\delta_j,m_j)$ for some indices $i \neq j$. Then any element $\underline{t}$ in $D$
      with $\beta_\mathcal{Q}^F(\underline{t},\underline{t})=\mu$ satisfies $t_i=t_j$. 
\end{lemm}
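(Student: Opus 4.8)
The statement is about the structure of minimizers of the quadratic form $q(x) = \beta_\mathcal{Q}^F(x,x)$ on the compact set $D = \mathbb{S}^{r-1}_+$, under the hypothesis that two of the data triples coincide: $(U_i,\delta_i,m_i)=(U_j,\delta_j,m_j)$ for some $i\neq j$. The plan is to exploit the symmetry of $\beta_\mathcal{Q}^F$ under the transposition $\sigma$ swapping the coordinates $i$ and $j$. First I would observe that the hypothesis $(U_i,\delta_i,m_i)=(U_j,\delta_j,m_j)$, together with the defining formula \eqref{eq:betaqf}, forces $\beta_\mathcal{Q}^F(e_i,e_k)=\beta_\mathcal{Q}^F(e_j,e_k)$ for all $k\notin\{i,j\}$, and also $\beta_\mathcal{Q}^F(e_i,e_i)=\beta_\mathcal{Q}^F(e_j,e_j)$; in other words, the Gram matrix of $\beta_\mathcal{Q}^F$ commutes with the permutation matrix of $\sigma$. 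Hence $q\circ\sigma = q$ on all of $E=\R^r$, and moreover $\sigma$ preserves $D$ (it permutes nonnegative coordinates and preserves the $\ell^2$-norm).

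Next, let $\underline{t}\in D$ be a minimizer, so $q(\underline{t}) = \mu := \mu_\mathcal{Q}^F<0$, and set $\underline{t}' = \sigma(\underline{t})$, which is also a minimizer. The key step is to look at the midpoint $\underline{s} = \frac{1}{2}(\underline{t}+\underline{t}')$. Its coordinates are nonnegative, and by the parallelogram-type identity for the quadratic form we have
\[
q(\underline{s}) = \tfrac{1}{2}q(\underline{t}) + \tfrac{1}{2}q(\underline{t}') - \tfrac{1}{4}q(\underline{t}-\underline{t}') = \mu - \tfrac{1}{4}q(\underline{t}-\underline{t}').
\]
Now $\|\underline{s}\| \leq 1$ with equality iff $\underline{t}=\underline{t}'$. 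If $\underline{t}\neq\underline{t}'$, then $\|\underline{s}\|<1$, so $\underline{s}/\|\underline{s}\|\in D$ and $q(\underline{s}/\|\underline{s}\|) = q(\underline{s})/\|\underline{s}\|^2$. Since $\mu<0$ and $\|\underline{s}\|^2<1$ we get $q(\underline{s})/\|\underline{s}\|^2 \leq q(\underline{s}) = \mu - \tfrac14 q(\underline{t}-\underline{t}')$ provided $q(\underline{s})\le 0$ (which holds as $q(\underline s)\le\mu$ when $q(\underline t-\underline t')\ge 0$; and if $q(\underline t-\underline t')<0$ we already directly have $q(\underline s)<\mu$). In either case, as soon as $q(\underline{t}-\underline{t}')\neq 0$ or $\|\underline s\|<1$ strictly, we contradict minimality of $\mu$ on $D$ — unless $\underline{t}=\underline{t}'$, i.e. $t_i=t_j$.

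The one gap to close carefully — and the step I expect to be the main obstacle — is the borderline case $q(\underline{t}-\underline{t}')=0$ with $\underline t\ne\underline t'$: then the midpoint argument only gives $q(\underline s)=\mu$ but $\|\underline s\|<1$, so rescaling to $D$ yields $q(\underline s/\|\underline s\|)=\mu/\|\underline s\|^2 < \mu$ (here we use $\mu<0$ and $\|\underline s\|<1$), again a contradiction. This is where the hypothesis $\mu_\mathcal{Q}^F<0$ is genuinely used (if $\mu\ge 0$ the rescaling goes the wrong way). The hypothesis $\widehat F(i/4\pi)>0$ enters only to guarantee, via Lemma \ref{lemm:min_quad} and Proposition \ref{prop:algomin}, that the minimum is attained and that the relevant quadratic form is not degenerate in a way that would make $D$ fail to be the right domain — concretely, it ensures the diagonal entries $\beta_\mathcal{Q}^F(e_k,e_k)$ contribution is controlled so that the analysis of minimizers on the simplex boundary is valid; I would cite it at the point where I assert that $\mu<0$ is attained at some $\underline t\in D$ and invoke the structure from Proposition \ref{prop:algomin}. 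Assembling these pieces gives $t_i = t_j$ for every minimizer $\underline t$, as claimed.
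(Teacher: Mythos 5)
There is a genuine gap, and it sits exactly where you predicted the "main obstacle" would be, but your attempt to close it contains a sign error. From the parallelogram identity you correctly get $q(\underline{s})=\mu-\tfrac14 q(\underline{t}-\underline{t}')$ for the midpoint $\underline{s}=\tfrac12(\underline{t}+\underline{t}')$. But in the case $q(\underline{t}-\underline{t}')<0$ you assert "we already directly have $q(\underline{s})<\mu$", whereas this identity gives $q(\underline{s})=\mu-\tfrac14 q(\underline{t}-\underline{t}')>\mu$. In that case the rescaled point $\underline{s}/\|\underline{s}\|\in \mathbb{S}^{r-1}_+$ need not have value $<\mu$ (if $q(\underline{s})\geq 0$ the rescaling even goes the wrong way), so no contradiction is obtained and the proof does not close. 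Since $\beta_\mathcal{Q}^F$ is in general indefinite (indeed $\mu_\mathcal{Q}^F<0$), you cannot simply dismiss this case. You also misattribute the role of the hypothesis $\widehat{F}(i/4\pi)>0$: the minimum on $D$ is attained by compactness alone, and neither Lemma \ref{lemm:min_quad} nor Proposition \ref{prop:algomin} needs that hypothesis; it is needed precisely to kill the case you left open.

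The missing step is a one-line computation from \eqref{eq:betaqf}: since $(U_i,\delta_i,m_i)=(U_j,\delta_j,m_j)$ and $e_i\cdot e_j=0$, one has $\beta_\mathcal{Q}^F(e_i,e_i)=\beta_\mathcal{Q}^F(e_j,e_j)=\tfrac{1}{m_i}\widehat{F}(i/4\pi)+\beta_\mathcal{Q}^F(e_i,e_j)$, hence
\[
\beta_\mathcal{Q}^F(e_i-e_j,\,e_i-e_j)\;=\;\frac{2}{m_i}\,\widehat{F}(i/4\pi)\;>\;0 .
\]
Because $\underline{t}-\underline{t}'=(t_i-t_j)(e_i-e_j)$, this gives $q(\underline{t}-\underline{t}')>0$ whenever $t_i\neq t_j$, so in fact $q(\underline{s})<\mu$ and your rescaling argument then yields the contradiction (the borderline case you treated, $q(\underline{t}-\underline{t}')=0$ with $t_i\neq t_j$, cannot occur at all). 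This positivity is the same mechanism as in the paper's proof, where the restriction of $q$ to the segment $s_i+s_j=t_i+t_j$ is $-\tfrac{2}{m_i}s_is_j\widehat{F}(i/4\pi)$ plus a function of $s_i+s_j$ (Formula \eqref{eq:CF_two_var}), minimized at $s_i=s_j$; once you add this computation, your symmetrization-plus-midpoint argument is a correct, essentially equivalent variant of the paper's convexity argument with the set $B=\bigcup_{0\leq\lambda<1}\lambda D$.
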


\begin{pf}
  Set $q(x)=\beta_\mathcal{Q}^F(x,x)$.
  Consider the set $B = \bigcup_{0 \leq \lambda < 1} \lambda D$; then $B \cup D$
  is convex and we have $q(x)>\mu_\mathcal{Q}^F$ for $x \in B$.
  Fix $\underline{t} \in D$  with
  $\beta_\mathcal{Q}^F(\underline{t},\underline{t})=\mu_\mathcal{Q}^F$. 
  An inspection of Formula \eqref{eq:betaqf} shows that for any real numbers
  $s_i,s_j$ we have 
  \begin{equation} \label{eq:CF_two_var}
    q \left( s_i e_i + s_j e_j + \sum_{l \neq i,j} t_l e_l \right) =
    -\frac{2}{m_i} s_i s_j \widehat{F}(i/4 \pi) + \left( \text{function of }
    s_i+s_j \right).
  \end{equation}
  The set
  \begin{equation} \label{eq:ti_plus_tj_eq_c}
    \{ (s_i, s_j) \, | \, s_i, s_j \geq 0, \, s_i + s_j = t_i+t_j, \, s_i^2 +
    s_j^2 + \sum_{l \neq i,j} t_l^2 \leq 1 \}
  \end{equation}
  is a compact interval in $\R^2$ with end points $(t_i, t_j)$ and $(t_j, t_i)$.
  By assumption we have $\widehat{F}(i/4 \pi) > 0$, and so the minimum of
  \eqref{eq:CF_two_var} on \eqref{eq:ti_plus_tj_eq_c} is reached for $s_i = s_j
  = (t_i+t_j)/2$. 
  If we assume $t_i \neq t_j$ then $s_i e_i + s_j e_j + \sum_{l \neq i,j} t_l
  e_l$ belongs to $B$, a contradiction.
\end{pf}

This lemma leads to the following considerations. Start with a quadruple $\mathcal{Q}=(r,\underline{U},\underline{\delta},\underline{m})$
with the property $m_i=1$ for $i=1,\dots,r$. Assume we have a partition
$$\{1,\dots,r\} = \coprod_{l=1}^{r'} P_l$$
with the property that for each $1 \leq l \leq s$, and each $i,j \in P_l$, we have $(U_i,\delta_i)=(U_j,\delta_j)$.
Consider the new quadruple $\mathcal{Q}'=(r',\underline{U'},\underline{\delta'},\underline{m'})$
where for each $1 \leq l \leq r'$ we define $U'_l$ (resp. $\delta'_l$) as the element $U_i$ (resp. $\delta_i$) 
with $i \in P_l$ (this does not depend on the choice of such an $i$), and set $m_l=|P_l|.$ We have a natural inclusion 
$$i : \R^{r'} \longrightarrow \R^r$$
sending $e_l$ to $\frac{1}{\sqrt{m_l}} \sum_{i \in P_l} e_i$ for each $1 \leq l \leq r'$. This embedding is an isometry for
the standard Euclidean structures on both sides, and it follows from Formula \eqref{eq:betaqf} that we have
$\beta_\mathcal{Q}^F(i(x),i(y)) = \beta_\mathcal{Q'}^F(x,y)$ for all $x,y \in \R^{r'}$ and all test functions $F$.
Lemma \ref{lemm:tiegaltj} shows then (the inequality 
$\mu_\mathcal{Q}^F \leq \mu_\mathcal{Q'}^F$ being obvious):

\begin{coro}\label{cor:minegalti} Let $\mathcal{Q}$ and $\mathcal{Q'}$ be as above, and fix a test function $F$ with $\widehat{F}(i/4\pi)>0$.
We have $\mu_\mathcal{Q}^F<0$ if and only if $\mu_\mathcal{Q'}^F<0$, and if these inequalities hold we have
$$\mu_\mathcal{Q}^F=\mu_\mathcal{Q'}^F.$$
\end{coro}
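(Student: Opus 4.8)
The plan is to deduce the corollary from Lemma~\ref{lemm:tiegaltj} via the isometry $i\colon \R^{r'} \to \R^r$, $e_l \mapsto m_l^{-1/2}\sum_{i \in P_l} e_i$, which was constructed above so as to satisfy $\beta_\mathcal{Q}^F(i(x),i(y)) = \beta_{\mathcal{Q'}}^F(x,y)$ for all $x,y$. The key observation will be that, in the case $\mu_\mathcal{Q}^F < 0$, Lemma~\ref{lemm:tiegaltj} forces any minimiser of $x \mapsto \beta_\mathcal{Q}^F(x,x)$ on $D = \mathbb{S}^{r-1}_+$ to be constant on each block $P_l$, hence to lie in the image $i\bigl(\mathbb{S}^{r'-1}_+\bigr)$.

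First I would record the easy half. For $\underline{t'} \in \mathbb{S}^{r'-1}_+$ the vector $i(\underline{t'})$ has all coordinates $\geq 0$ (its coordinate at $j \in P_l$ is $t'_l/\sqrt{m_l}$) and has unit norm since $i$ is an isometry; thus $i$ maps $\mathbb{S}^{r'-1}_+$ into $D$. As $\beta$ is transported by $i$, taking minima over $D$ and over $i\bigl(\mathbb{S}^{r'-1}_+\bigr)$ yields $\mu_\mathcal{Q}^F \leq \mu_{\mathcal{Q'}}^F$; in particular $\mu_{\mathcal{Q'}}^F < 0$ implies $\mu_\mathcal{Q}^F < 0$.

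For the converse, I would suppose $\mu_\mathcal{Q}^F < 0$. Since $D$ is compact and $x \mapsto \beta_\mathcal{Q}^F(x,x)$ is continuous, some $\underline{t} \in D$ attains $\mu_\mathcal{Q}^F$. Because $\widehat{F}(i/4\pi) > 0$, because $\mu_\mathcal{Q}^F < 0$, and because $(U_i,\delta_i,m_i) = (U_j,\delta_j,m_j)$ whenever $i,j$ lie in a common $P_l$ (here using $m_i = 1$ for all $i$), Lemma~\ref{lemm:tiegaltj} gives $t_i = t_j$ for all such $i,j$, so $\underline{t}$ is constant on each $P_l$. Writing $c_l$ for its value on $P_l$, one has $\underline{t} = i(\underline{t'})$ with $\underline{t'} = (\sqrt{m_l}\,c_l)_l$, and $\underline{t'} \in \mathbb{S}^{r'-1}_+$ since $\underline{t}$ is a unit vector with nonnegative entries. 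Hence $\beta_{\mathcal{Q'}}^F(\underline{t'},\underline{t'}) = \beta_\mathcal{Q}^F(\underline{t},\underline{t}) = \mu_\mathcal{Q}^F < 0$, so $\mu_{\mathcal{Q'}}^F \leq \mu_\mathcal{Q}^F$; combined with the previous paragraph, $\mu_\mathcal{Q}^F = \mu_{\mathcal{Q'}}^F$, and both are negative.

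Since Lemma~\ref{lemm:tiegaltj} already carries the analytic content, there is no genuine obstacle in this corollary; the one point to get right is the bookkeeping with the factors $\sqrt{m_l}$ when identifying a blockwise-constant minimiser with a point of $\mathbb{S}^{r'-1}_+$ via $i$ — which is precisely where the isometry property of $i$ (and not merely $i(\mathbb{S}^{r'-1}_+) \subseteq D$) is used.
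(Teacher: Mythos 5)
Your proof is correct and follows essentially the same route the paper intends: the easy inequality $\mu_\mathcal{Q}^F\leq\mu_{\mathcal{Q}'}^F$ comes from $i(\mathbb{S}^{r'-1}_+)\subseteq D$, and the nontrivial direction comes from applying Lemma~\ref{lemm:tiegaltj} (using $m_i=1$ for all $i$ in $\mathcal{Q}$) to force any minimiser to be blockwise-constant and hence in the image of $i$. The bookkeeping with the $\sqrt{m_l}$ factors is handled correctly; nothing to add.
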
 

\par \medskip
\begin{rema} \label{remasdequi} Assume we have two quadruples of the form $\mathcal{Q}=(r,\underline{U},\underline{\delta},\underline{m})$
and $\mathcal{Q'}=(r,\underline{U},\underline{\delta'},\underline{m})$ with $\delta'_i \geq \delta_i$ for each $1 \leq i \leq r$. Choose a test function $F$
with $\widehat{F}(0) \geq 0$. Then we have $\beta_{\mathcal{Q'}}^F(x,y) \leq \beta_{\mathcal{Q}}^F(x,y)$ for all 
$x,y$ in $\R_{\geq 0}^r$ by Formula \eqref{eq:betaqf}. This shows $\mu_\mathcal{Q'}^F \leq \mu_\mathcal{Q}^F$. In particular,
$\mu_\mathcal{Q}^F<0$ implies $\mu_\mathcal{Q'}^F<0$.
\end{rema}

\subsubsection{A digression on numerical evaluation}\label{numericaleval} 
Before discussing the natural algorithm that follows from Propositions \ref{prop:algomin} and Corollary \ref{cor:minegalti}, 
let us discuss the numerical evaluation of the bilinear form ${\rm C}^F$. Given a test function $F$, 
we will have to be able to compute with enough and certified precision the quantities 
\begin{equation}  
\label{calcquant}
\widehat{F}(0), 
\hspace{.3cm} \widehat{F}(i/4\pi) 
\hspace{.3cm} \textrm{and} \hspace{.3cm} {\rm J}_F(U) \,\,\textrm{for} \,\,U=1 \,\textrm{and} \,\,U={\rm I}_w\,\,(w \in \Z).
\end{equation}
It amounts to computing certain indefinite integrals.
Numerical integration routines of computer packages such as \texttt{PARI} allow
to compute approximations of such integrals, with increasing and in principle
arbitrarily large accuracy.
Although these routines have been very useful in our preliminary computations,
and experimentally return highly accurate values when properly used, it would be
delicate to rigorously bound the differences between 
these computed values and the exact
ones.
This is why we proceed differently.\ps\ps

In this paper, we only use Odlyzko's function $F={\rm F}_\ell$ with parameter
$\ell>0$. This is the function defined by ${\rm F}_\ell(x)={\rm g}(x/\ell)/ {\rm cosh}(x/2)$,
 where ${\rm g} : \R \rightarrow \R$ is twice the convolution square of the function 
$x \mapsto {\rm cos} (\pi x) {\rm 1}_{|x|\leq 1/2}$: see \cite[Sect. 3]{Poitou_petits} and \cite[Sect. 9.3.17]{CheLan}. 
These functions satisfy {\rm (POS)}, $\widehat{\mathrm{F}_\ell}(i/4\pi)=\frac{8}{\pi^2}
\ell$, and Proposition 9.3.18 of \cite{CheLan} provides alternative closed
formulas for all the other quantities in \eqref{calcquant} (see Proposition \ref{explGRH} for similar expressions).
Each is a sum of a linear combination of a few special values of  the classical
digamma function $\psi = \Gamma' / \Gamma$ and of its derivative $\psi'(z) =
\sum_{n \geq 0} 1/(n+z)^2$, and of a simple rapidly converging series with given
tail estimates \cite[(3) p.127]{CheLan}. 
Using these formulas and estimates, we implemented functions in Python using
Sage \cite{sage} to compute certified intervals containing the real numbers
\eqref{calcquant} for $F=\mathrm{F}_\ell$.
See \cite{homepage} for the source code.
For interval arithmetic, Sage relies on the Arb library
\url{http://arblib.org/}.
Our computations only use the four operations, the exponential and logarithm
functions, the constant $\pi$, the function $\psi$ (\verb!acb_digamma! in this
library), and its derivative (a special case of \verb!acb_polygamma!).

\begin{rema} \label{rema:qfminim}
  Fix an integer $0 \leq w \leq 23$.
  For suitable $\ell>0$, the restriction of ${\rm B}_\infty^{{\rm F}_\ell}$ to
  ${\rm K}_\infty^{\leq w}$ is positive definite (see e.g. Lemma 9.3.37 and
  Proposition 9.3.40 in \cite{CheLan}, as well as \cite{Chenevier_HM}). 
  By Corollary \ref{basiccor}, it is important to be able to enumerate, for
  $c>0$, all the (finitely many) effective elements $U $ in ${\rm
  K}_\infty^{\leq w}$ satisfying ${\rm B}_\infty^{{\rm F}_\ell}(U,U) \leq c$.
  We use for this the Fincke-Pohst algorithm enumerating the short vectors in a
  lattice.
  Using interval arithmetic as explained above we can obtain rational lower
  bounds for the coefficients of the Gram matrix of
  $\mathrm{B}_\infty^{F_\ell}$, and since we are only interested in effective
  elements of ${\rm K}_\infty^{\leq w}$ we can work with this rational Gram
  matrix instead.
  Unfortunately \verb!PARI!'s \verb!qfminim! does not (yet?) include an
  \emph{exact} variant of the Fincke and Pohst algorithm for Gram matrices with
  integral entries.
  For this reason we reimplemented the first (simple) algorithm of Fincke-Pohst
  \cite{FinckePohst} using only exact computations, adding the condition of
  effectivity in the recursion to avoid unnecessary computations. 
  Of course in practice this algorithm always leads to the same conclusions as
  \verb!PARI!'s \verb!qfminim! algorithm, if the latter is properly used.
  See \cite{homepage} for our source code.
\end{rema}

\subsubsection{The algorithm} \label{paralgo} 

The following algorithm tries to solve Problem \ref{prob:min} using the method discussed in \S \ref{par:method}.\par \medskip

\begin{enumerate}
\item[{\it Input:}] A quadruple $\mathcal{Q}=(r,\underline{U},\underline{\delta},\underline{m})$ as in \ref{eq:quadruple}. 
\ps\ps

\item[{\it Output:}] (if the algorithm terminates) A triple
  $(\ell,I,\underline{t})$ with $\ell>0$, a non empty $I \subset \{1,\dots,r\}$,
  and $\underline{t} \in \R^I$ with $\beta_\mathcal{Q}^{{\rm
  F}_\ell}(\underline{t},\underline{t})<0$.
 \ps\ps

  \item[{\it Step 1}.] Choose a real number $\ell>0$ and compute an approximation $(G_{i,j})_{1 \leq i,j \leq r}$ of the Gram
    matrix {\small $(\beta_\mathcal{Q}^{{\rm F}_\ell}(e_i,e_j))_{1 \leq i,j \leq r}$}.
    We do this using the formulas \eqref{eq:betaqf} of \S \ref{par:method} and the expressions 
    of \cite{CheLan} for the quantities \eqref{calcquant} with $F={\rm F}_\ell$ discussed in \S \ref{numericaleval}.
    \par \medskip
    
      \item[{\it Step 2}.]  Choose a nonempty subset $I$ of $\{1,\dots,r\}$ and compute an
    approximation $\lambda_I$ of the minimal eigenvalue of the Gram matrix
    $(G_{i,j})_{i,j \in I}$, as well as an approximate corresponding
    eigenvector $(t_i)_{i \in I}$.
    For doing so, we apply \verb!PARI!'s \verb!qfjacobi! function to
    $(G_{i,j})_{i,j \in I}$ (an implementation of Jacobi's method).
    \par \medskip
    
    \item[{\it Step 3}.] If we have $\lambda_I<0$ and $t_i>0$ for all $i \in I$, return $\ell$, $I$ and $\underline{t}=(t_i)_{i \in
    I}$ and go to Step 4. Otherwise, go back to Step 2 and change the subset $I$. 
    If all the $I$ have been tried, go back to Step 1 and change the parameter $\ell$.  
    \par \medskip
  \item[{\it Step 4}.] Check rigorously, using interval arithmetic as discussed
    in \S \ref{numericaleval}, that we have indeed
    $\beta_\mathcal{Q}^{\mathrm{F}_\ell} (\underline{t},\underline{t})<0$.
    If it fails go back to the second part of Step 3.
\end{enumerate}
\par \smallskip

Let us comment this algorithm and discuss the unexplained choices involved: \par\medskip

\begin{itemize}
\item The choice of $\ell$ in Step 1 is based on some preliminary experiments, 
and it seems quite hard to guess a priori a range for the best ones. 
In our applications, we will choose $\ell$ in $[\frac{1}{2},15] \cap 10^{-2}\Z$. 
\par \medskip
\item The loop consisting of Steps 2 and 3, for a given $\ell$, 
can be very long if $r$ is large, as there are $2^r-1$ possibilities for $I$.  In practice, we order the subsets $I$ by increasing cardinality, 
often restrict to $I$ of small cardinality. In practice again, the eigenspace $E_{I, \lambda_I}$ is just a line. 
\par \smallskip
\item  Whenever we reached Step 4, the rigorous check with interval arithmetic of the inequality
$\beta_\mathcal{Q}^{{\rm F}_\ell}(\underline{t},\underline{t})<0$ never fails in practice.
    {\it This single check is enough to justify that the algorithm does show that $x \mapsto \beta_\mathcal{Q}^{{\rm F}_\ell}(x,x)$
    takes a negative value on $\mathbb{S}_+^{r-1}$}. This is the most important remark regarding this algorithm.
In particular, we do not have to justify any of the computations done in Steps 1, 2 and 3 before: all is fair in order to find
   a candidate $(\ell,I,\underline{t})$. Of course, the experimental fact that the last check in Step 4 never fails just reflects that the computations made with  \verb!PARI!
   are highly accurate. 
\end{itemize}

In the end, a charm of this algorithm is that even if the loop of Steps 1, 2 and 3 can be very long, 
   once we get the candidate $(\ell,I,\underline{t})$ we just have to store it, and then inequality $\beta_\mathcal{Q}^{{\rm F}_\ell}(\underline{t},\underline{t})<0$
   can be rechecked instantly. 
   
\subsubsection{Final algorithm} \label{paralgo2}

For our applications in \S \ref{par:poids22}, \S \ref{sec:mot2324} and \S
\ref{nonexisteffortless}, it will be convenient to apply Algorithm \ref{paralgo}
in the following slightly more restrictive context.\ps\ps

{\it Set up.}
We fix $U$ in ${\rm K}_\infty$, $\delta$ in $\{0,1\}$, and an integer $m\geq 1$. 
We fix as well a {\it known} set ${\bf S}$ of elements of $\Pi_{\rm alg}$ and
our aim is to show that there does not exist distinct elements
$\pi_1,\dots,\pi_m$ in $\Pi_{\rm alg}\setminus {\bf S}$ with ${\rm
L}((\pi_i)_\infty)=U$ and ${\rm sd}(\pi_i)\geq \delta$ for each $1 \leq i \leq
m$.
By known we mean that we assume given ${\rm L}(\varpi_\infty)$ and
\footnote{Let us mention that, at present, the authors are not aware of the
existence of any non self-dual element in $\Pi_{\rm alg}$, so in practice will
always actually have ${\rm sd}(\varpi)=1$ for $\varpi$ in ${\bf S}$.}
${\rm sd}(\varpi)$ for all $\varpi \in S$.
We denote by $S$ the set of triples $(U',\delta',m')$ in ${\rm K}_\infty \times
\{0,1\} \times \Z_{\geq 1}$ such that there are exactly $m'$ elements $\varpi$
in $S$ with $({\rm L}(\varpi_\infty),{\rm sd}(\varpi))=(U',\delta')$.
\ps\ps

{\it Algorithm.} Set $r=1+|S|$. Assuming $|S|\geq 1$ it is convenient choose a bijection
\begin{equation} \label{identS} S \overset{\sim}{\rightarrow} \{2,\dots,r\}\end{equation}
and write $S=\{(U_i,\delta_i,m_i)\, \, |\, \, 2 \leq i \leq r\}$. Set also  $(U_1,\delta_1,m_1)=(U,\delta,m)$. This defines a quadruple $\mathcal{Q}=(r,\underline{U},\underline{\delta},\underline{m})$. 
We now apply Algorithm \ref{paralgo} to $\mathcal{Q}$. In Step $2$ we obviously may, and do, 
 restrict to subsets $I$ containing $1$, {\it i.e.} of the form $I=\{1\} \coprod S'$ with $S' \subset S$, via the identification \eqref{identS}. \ps\ps
 
{\it Output.}
When this algorithm terminates, it produces $(\ell,I,\underline{t})$ such that $\beta_\mathcal{Q}^{{\rm F}_\ell}(\underline{t},\underline{t})<0$.
For $j=2,\dots,m$, set $x_j=\frac{1}{\sqrt{m_j}} \sum \varpi$, the sum being
over the $\varpi \in {\bf S}$ with $(\mathrm{L}(\varpi_\infty),
\delta(\varpi))=(U_j,\delta_j)$.
Assume there are distinct elements $\pi_1,\dots,\pi_m$ in $\Pi_{\rm
alg} \setminus \mathbf{S}$ with $\mathrm{L}((\pi_i)_\infty)=U$ and ${\rm
sd}(\pi_i)\geq \delta$ for each $1 \leq i \leq m$.
Then for the element $x= t_1 \frac{1}{\sqrt{m}} (\pi_1+\dots+\pi_m) +
\sum_{i=2}^r t_i x_i$ \,\,of\,\, $\R\, \Pi_{\rm alg}$ we have ${\rm C}^{{\rm
F}_\ell}(x,x) \leq \beta_\mathcal{Q}^{{\rm F}_\ell}(\underline{t},\underline{t})
<0$ (see Remark \ref{remasdequi} for the first inequality), contradicting
Proposition \ref{prop:basicinequality}. 

\begin{rema} \label{em:critchelan}
  In the case $S=\emptyset$, this method just amounts to applying Corollary
  \ref{basiccor}.
  In the case $|S|=1$, it amounts to applying Scholium 9.3.26 of \cite{CheLan},
  by the discussion of Example \ref{casr2}.
  The case of arbitrary $|S|$ can thus be viewed as a generalisation of these
  criteria {\it loc. cit.}
  See \cite{homepage} for our source code in \texttt{PARI} of the algorithm
  above.
\end{rema}

\subsubsection{An illustration}\label{par:poids22}

Algorithm \ref{paralgo2} can be used to give another proof of the 
Chenevier-Lannes classification theorem \cite[Thm. 9.3.3]{CheLan} mentionned in \S \ref{classresult} of the introduction,
which is both very fast (a few seconds of computations) and systematic.
Although this alternative proof shares many steps with the one {\it loc. cit.}, it bypasses 
the geometric criterion involving Satake
parameters explained in \S 9.3.29 therein (and does not rely at all 
on any computation of Satake parameters of known elements in $\Pi_{\rm alg}$).
Such an improvement, although not decisive here, will be crucial in the proof of Theorem \ref{thm23}, because
at present we only know rather few Satake parameters for the known elements of $\Pi_{\rm alg}$ of dimension $>3$
(see however \cite{bfgwebsite} and \cite{megarbane}). 
\ps \ps

For the convenience of the reader, and in order to illustrate our new method, let us now explain the aforementioned proof of 
 \cite[Thm. 9.3.3]{CheLan} in the most complicated case of motivic weight $22$. So we want to prove that there is a unique $\pi$ 
 in $\Pi_{\rm alg}$ of  motivic weight $22$, namely $\pi = {\rm Sym}^2 \Delta_{11}$
 (for which we have  ${\rm L}(\pi_\infty)={\rm I}_{22}+\varepsilon_{\C/\R}$). 
 We refer to the working sheet in \cite{homepage} for the numerical verifications used below. \ps\ps
 
 {\it Step 1}. We first observe that ${\rm B}_\infty^{{\rm F}_\ell}$ is positive definite on the lattice ${\rm K}_\infty^{\leq 22}$ for $\ell =4.38$ (Lemma \cite[9.3.37]{CheLan}).
 Using the PARI $\texttt{qfminim}$ command, or better Remark \ref{rema:qfminim}, we may and do 
 list all the effective elements $U$ in ${\rm K}_\infty^{\leq 22}$ satisfying 
 $${\rm B}_\infty^{{\rm F}_\ell}(U,U) \leq \widehat{{\rm F}_\ell}(i/4\pi)$$
for $\ell = 4.38$. We retain furthermore only those satisfying $\det U=1$ and containing ${\rm I}_{22}$. 
The resulting list $\mathcal{U}$ has $158$ elements. If $\pi$ in $\Pi_{\rm alg}$ has motivic weight $22$, 
then ${\rm L}(\pi_\infty)$ is in $\mathcal{U}$ by Corollary \ref{basiccor}. We will study each of these $158$ possibilities for ${\rm L}(\pi_\infty)$
mostly case by case.
\ps\ps

{\it Step 2}. Denote by ${\rm N}(U)$ be the number of elements $\pi$ in $\Pi_{\rm alg}$ 
with ${\rm L}(\pi_\infty)=U$. We want to bound 
${\rm N}(U)$ for each $U$ in $\mathcal{U}$ by applying Inequality \eqref{taibicrit} of Corollary \ref{basiccor}.
For this we check that for all $U$ in $\mathcal{U}$ we have 
${\rm B}_\infty^{{\rm F}_\ell}(U,U) > \frac{1}{2} \widehat{{\rm F}_\ell}(i/4\pi)$, 
unless $U$ belongs to the subset $\mathcal{U}'\,=\,\{{\rm I}_{22}+{\rm
I}_{12},\, {\rm I}_{22}+{\rm I}_{10},\, {\rm I}_{22}+{\rm I}_8\}$, in which case
we only have  ${\rm B}_\infty^{{\rm F}_\ell}(U,U) > \frac{1}{3} \widehat{{\rm
F}_\ell}(i/4\pi)$ (here $\ell$ is still $4.38$). 
This shows ${\rm N}(U) \leq 1$ for $U$ in $\mathcal{U}\setminus \mathcal{U}'$, and ${\rm N}(U) \leq 2$ for $U$ in $\mathcal{U}'$.
\ps\ps

{\it Step 3.} Fix $U$ in $\mathcal{U}'$. We want to show ${\rm N}(U) \leq 1$.
Assume ${\rm N}(U)=2$, {\it i.e.} that there exist distinct $\pi_1,\pi_2$ in
$\Pi_{\rm alg}$ with ${\rm L}((\pi_1)_\infty) = {\rm L}((\pi_2)_\infty)=U$.
We apply Algorithm \ref{paralgo2}
to $U$, $\delta=0$  (see Remark \ref{remasdequi}), $m=2$ and to
the known set ${\bf S}=\{1,\, \Delta_{11},\, \Delta_{15},\, \Delta_{17},\,
\Delta_{19},\, \Delta_{21},\, {\rm Sym}^2 \Delta_{11}\}$.
For $U={\rm I}_{22}+{\rm I}_{12}$ and $\ell=3.5$ 
it returns for instance an element close to 
$$x= \,\,\,0.924 \,\,\frac{1}{\sqrt{2}} (\pi_1+ \pi_2) \,\,\,+\,\,\, 0.383 \,\,\Delta_{11}.$$
We verify (using interval arithmetic, see \S \ref{numericaleval}) that 
we have ${\rm C}^{{\rm F}_\ell}(x,x) \simeq -0.173$ up to $10^{-3}$: this
contradicts Proposition \ref{prop:basicinequality}. 
The algorithm produces a similar element $x$ in the case $U={\rm I}_{22}+{\rm
I}_{10}$, with $(0.924,0.383)$ replaced by $(0.900,0.436)$, and we have then
${\rm C}^{{\rm F}_\ell}(x,x) \simeq -0.198$ up to $10^{-3}$.
Nevertheless, it does not seem to produce any contradiction in the remaining
case $U={\rm I}_{22}+{\rm I}_{8}$, even if we let $\ell$ vary.
To deal with this last $U$ we add to ${\bf S}$ the known element $\Delta_{21,9}$
of $\Pi_{\rm alg}$, whose Archimedean ${\rm L}$-parameter is ${\rm I}_{21}+{\rm
I}_9$ (which is ``close'' to $U$). 
The algorithm returns for $\ell=3.5$ the element 
$x\,\,\,:= \,\,\,0.942 \,\,\frac{1}{\sqrt{2}} (\pi_1+ \pi_2) \,\,\,+\,\,\, 0.335 \,\,\Delta_{21,9}$
and we verify that we have ${\rm C}^{{\rm F}_\ell}(x,x) \simeq  -0.147$ up to
$10^{-3}$, which is indeed $<0$.\ps\ps

{\it Step 4.} We have proved so far ${\rm N}(U)\leq 1$ for all $U\in \mathcal{U}$. In particular, 
any $\pi$ in $\Pi_{\rm alg}$ with ${\rm L}(\pi) \in \mathcal{U}$ is self-dual. 
Fix $U$ in $\mathcal{U}$. We now apply Algorithm \ref{paralgo2} to 
$U$, $\delta=1$, $m=1$ and to the same set $S$ as above (with $|S|\leq 7$). Using the nine $\ell$
in $[3,5]\cap \frac{1}{4}\Z$, it yields a contradiction in each case! 
Actually, if we restrict to subsets $S' \subset S$ with $|S'|=1$ in Step 2 of the algorithm 
(in other words, if we only apply the Scholium of \cite{CheLan} mentioned in Remark \ref{em:critchelan})
we already get a contradiction for all but 
 the $7$ elements $U$ mentioned in Table \ref{tableauelim3} below. These remaining cases 
were exactly the ones dealt with using the geometric criterion involving Satake
parameters explained in \cite[\S 9.3.29]{CheLan}. In these $7$ cases, our algorithm produces contradictions 
for subsets $S'$ of size $2$, such as the ones gathered in Table \ref{tableauelim3}. This concludes the proof.
$\square$

\begin{table}[htp]
\renewcommand{\arraystretch}{1.5}
\begin{center}{\scriptsize {\begin{tabular}{c|c|c} 
$U$ & $x$ & ${\rm C}^{{\rm F}_4}(x,x)$ up to $10^{-3}$ \cr \hline
${\rm I}_{22}+{\rm I}_{16}+1$ & $0.625 \,\,\pi_1\,\,\, + \,\,\,0.611\,\, \Delta_{19}\,\,+\,\,\, 0.485\, \, {\rm Sym}^2 \Delta_{11}$  & $-0.427$ \cr \hline
${\rm I}_{22}+{\rm I}_{12}$ & $0.640 \,\,\pi_1\,\,\, + \,\,\,0.582\,\, \Delta_{15}\,\,+\,\,\, 0.502\, \, \Delta_{17}$ & $-0.511$ \cr \hline
${\rm I}_{22}+{\rm I}_{12}+1$ & $0.709 \,\,\pi_1\,\,\, + \,\,\,0.432\,\, \Delta_{11}\,\,+\,\,\, 0.558\, \, \Delta_{15}$ & $-0.204$ \cr \hline
${\rm I}_{22}+{\rm I}_{20}+{\rm I}_{10}+\epsilon_{\C/\R}$ & $0.636 \,\,\pi_1\,\,\, + \,\,\,0.393\,\, \Delta_{19}\,\,+\,\,\, 0.664\, \, {\rm Sym}^2 \Delta_{11}$ & $-0.037$ \cr \hline
${\rm I}_{22}+{\rm I}_{16}+{\rm I}_{10}+\epsilon_{\C/\R}$ & $0.701 \,\,\pi_1\,\,\, + \,\,\,0.531\,\, \Delta_{19}\,\,+\,\,\, 0.476\, \, {\rm Sym}^2 \Delta_{11}$ & $-0.246$ \cr \hline
${\rm I}_{22}+{\rm I}_{4}$ & $0.630 \,\,\pi_1\,\,\, + \,\,\,0.608\,\, \Delta_{11}\,\,+\,\,\, 0.483\, \, {\rm Sym}^2 \Delta_{11}$ & $-0.204$ \cr \hline
${\rm I}_{22}+{\rm I}_{20}+{\rm I}_{14}+{\rm I}_4$ & $0.696 \,\,\pi_1\,\,\, + \,\,\,0.297\,\, 1\,\,+\,\,\, 0.654\, \, \Delta_{21}$ & $-0.047$ \cr
\end{tabular}}}
\end{center}
\caption{{\small Some elements $x$ with ${\rm L}((\pi_1)_\infty)=U$ and ${\rm C}^{{\rm F}_\ell}(x,x)<0$ for $\ell=4$.}}
\label{tableauelim3}
\end{table}
\renewcommand{\arraystretch}{1}

\subsubsection{Another illustration: a strengthening of a vanishing result in \cite{cleryvandergeer}} \label{webeatcleryvandergeer}

As another example, let us show that for all odd $1 \leq w \leq 53$, there is no cuspidal selfdual algebraic level $1$ automorphic representation $\pi$ of 
${\rm PGL}_4$ with ${\rm L}(\pi_\infty)={\rm I}_w+{\rm I}_w$. We apply for this Algorithm \ref{paralgo2} to $U=2\, {\rm I}_w$, $\delta=m=1$ and to the set 
${\bf S}$ of $\dim {\rm S}_{w+1}({\rm SL}_2(\Z))$ cuspidal automorphic representations generated by level $1$ cuspidal eigenforms for ${\rm SL}_2(\Z)$. 
Note that we have $|S|=0$ for $w=13$ and $w<11$, and $|S|=1$ otherwise. We obtain a contradiction in each case using $S'=S$ and $\ell=5$.
This shows ${\rm S}_{(k_1,2)}(\Gamma_2)=0$ for all $k_1\leq 54$ by \cite[Lemma A.2]{cleryvandergeer}.

\section{Effortless computation of masses in the trace formula} \label{par:effortless}

Let $G$ be a split classical group over $\Z$ such that $G(\R)$ admits discrete series.
In other words, $G$ belongs to one of the three families
\[ (\SO_{2n+1})_{n \geq 1},\ \ (\Sp_{2n})_{n \geq 1} \ \ \text{and} \ \
(\SO_{4n})_{n \geq 1} \]
In this section, we explain how to implement the strategy explained in \S \ref{sub:intro_effortless} 
in order to determine the masses ${\rm m}_c$ for $c \in \mathcal{C}(G)$. 
In \S \ref{elementaryconjclass}, we first make 
elementary observations that will allow us to replace
$\Ccal(G)$ by a concrete set $\mathcal{P}(G)/\sim$ of equivalence classes of polynomials,
and to rewrite the elliptic terms accordingly.
In \S \ref{sec:spinornorm}, we restrict further $\mathcal{C}(G)$ 
by an explicit subset containing all conjugacy classes $c$
such that $\mathrm{m}_c \neq 0$.
It turns out that finding a subset significantly smaller than $\Ccal(G)$ is
possible in the case of special orthogonal groups, using spinor norms considerations.
In the last \S  \ref{nonexisteffortless}, we finally prove Theorems \ref{theomasseseffortless} 
and \ref{theomassesmixed}, by discussing how to produce sets $\Lambda$ of dominant weights satisfying
a variant of conditions (P1) and (P2)  alluded in \S \ref{sub:intro_effortless}.

\subsection{Conjugacy classes and characteristic polynomials: elementary
observations}\label{elementaryconjclass}

Let $G$ be one of $\SO_{2n+1}$, $\Sp_{2n}$ or
$\SO_{4n}$. We shall denote by ${\rm n}_G$ 
the dimension of the standard (or tautological)
representation of $G$, so ${\rm n}_G$ is respectively $2n+1, 2n$ or $4n$.
(Do not confuse ${\rm n}_G$ with the ${\rm n}_{\widehat{G}}$ introduced in \S \ref{sub:intro_effortless}).

The indexing set for the sum defining the elliptic part
$\mathrm{T}_{\mathrm{ell}}(G; \lambda)$ of the geometric side in
Arthur's ${\rm L}^2$-Lefschetz trace formula \cite{ArthurL2} recalled in
\eqref{eq:tell}, is the set of conjugacy classes of semi-simple elements $\gamma
\in G(\Q)$ which are {\it $\R$-elliptic} (i.e.\ $\gamma$ belongs to an
anisotropic maximal torus of $G_{\R}$, in particular the eigenvalues of $\gamma$
have absolute value $1$) and such that the conjugacy class of $\gamma$ in
$G(\A_f)$ meets the compact support of the smooth function we put in the trace
formula, in our case the characteristic function of $G(\Zhat)$. In particular, 
the characteristic polynomial $P_\gamma$ of such a $\gamma$, 
a monic polynomial of degree ${\rm n}_G$ in $\Q[X]$, 
belongs to $\Z[X]$ and has all its complex roots of absolute value $1$. 
Using a celebrated theorem of Kronecker, these conditions 
imply that the roots of $P_\gamma$ are roots of unity, hence that the semi-simple element 
$\gamma$ has finite order. This explains the discussion of 
Formula \eqref{eq:tell} in \S \ref{sub:intro_effortless}, and the indexing set $\mathcal{C}(G)$
of finite order elements of $G(\Q)$ taken up to conjugacy by
$G(\Qbar)$ in the sum \eqref{eq:tellmasse}.\ps

\begin{defi} \label{def:PG}  Let $\mathcal{P}(G)$ be the set of polynomials $P$ in $\Q[X]$ having degree
  $\nG$, which are products of cyclotomic polynomials and in which $X+1$ has
  even multiplicity (or equivalently, with $P(0)=(-1)^{\nG}$).
\end{defi}

If $c$ is a class in $\mathcal{C}(G)$, then all the elements $\gamma \in c$ have the same $P_\gamma$, 
and we will denote by $P_c$ this polynomial. It is an element of $\mathcal{P}(G)$ by the above discussion.
We have thus defined a map 
\begin{equation} \label{eq:mapchar}{\rm char} : \Ccal(G) \rightarrow \mathcal{P}(G), \,\,c \mapsto P_c.\end{equation}
It is well-known that if two semi-simple conjugacy classes $c_1,c_2$ in the classical group $G(\overline{\Q})$
have the same characteristic polynomial $P$, then they are equal, except in the case $G = {\rm SO}_{4n}$, $P(-1)P(1)\neq 0$ and $c_1$ and $c_2$ 
are conjugate under ${\rm O}_{4n}(\overline{\Q})/{\rm SO}_{4n}(\overline{\Q}) \simeq \Z/2\Z$. In particular, for $P \in \mathcal{P}(G)$
the fiber\footnote{Beware that the map ${\rm char}$ is not surjective in general. For instance, Corollary \ref{coro:disc} shows that 
for any prime $p \equiv 1 \bmod 4$, there is no order $p$ element in ${\rm SO}_{p-1}(\Q)$, as $\Phi_p(1)\Phi_p(-1)=p$ is not a square.}
${\rm char}^{-1}(P)$ has at most $1$ element if $G \neq {\rm SO}_{4n}$ or $P(-1)P(1)=0$, and $0$ or $2$ elements otherwise.
The following elementary lemma (see \cite[Remark 3.2.11]{Taibi_dimtrace})
shows that this latter case does not create
complications:

\begin{lemm} \label{lem:mcouter}For $G=\SO_{4n}$ and $c, c' \in \mathcal{C}(G)$ with $P_c=P_{c'}$,  we have $\mathrm{m}_c = \mathrm{m}_{c'}$.
\end{lemm}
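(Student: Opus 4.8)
The plan is to exploit the outer automorphism of $\SO_{4n}$ coming from $\O_{4n}/\SO_{4n} \simeq \Z/2\Z$, which by hypothesis swaps $c$ and $c'$. Concretely, fix $\theta \in \O_{4n}(\Q)$ (for instance the permutation of two complementary isotropic basis vectors interchanging $e_i$ and $e_{2n+1-i}$ for a single index $i$ while fixing the others, which has determinant $-1$ and is defined over $\Z$), and let $\sigma = \mathrm{Ad}(\theta)$ be the corresponding automorphism of $G$ defined over $\Q$. The key point is that $c$ and $c'$ have the same characteristic polynomial $P$ with $P(1)P(-1) \neq 0$, so they become conjugate over $\overline{\Q}$ precisely by an element of $\O_{4n}(\overline\Q)$ of nontrivial determinant; one checks, using that the fibers of $\mathrm{char}$ over such $P$ are exactly the two $\SO_{4n}(\overline\Q)$-orbits inside a single $\O_{4n}(\overline\Q)$-orbit, that $\sigma$ carries $c$ to $c'$ (if by accident $\sigma(c) = c$ rather than $c'$, then $c = c'$ already, since $\sigma$ permutes the at most two classes with characteristic polynomial $P$, and there is nothing to prove). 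Here I would be a little careful that $\sigma$ is realized by a \emph{rational} element of $\O_{4n}$ and that the two $\overline\Q$-classes in the fiber are each defined over $\Q$ (which is part of what it means for $c,c'$ to lie in $\mathcal{C}(G)$ as stated), so that after possibly translating by an element of $\SO_{4n}(\Q)$ we may choose representatives $\gamma \in c$, $\gamma' \in c'$ in $G(\Q)$ with $\gamma' = \sigma(\gamma)$.

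Next I would check that $\sigma$ preserves every ingredient entering the mass $\mathrm{m}_c$ in its definition via \eqref{eq:tell} and \eqref{eq:locintorg}. First, $\sigma$ induces an isomorphism $G_\gamma \isomo G_{\gamma'}$ of algebraic groups over $\Q$, hence $\sigma(G_\gamma(\A)) = G_{\gamma'}(\A)$ and $\sigma(G_\gamma(\Q)) = G_{\gamma'}(\Q)$; transporting the chosen signed Haar/Euler--Poincar\'e measure on $G_\gamma(\A)$ through $\sigma$ gives an admissible choice on $G_{\gamma'}(\A)$, and the global volume $\vol(G_\gamma(\Q)\backslash G_\gamma(\A))$ is unchanged. (Euler--Poincar\'e measures are canonical once the group is fixed, so this is automatic; alternatively one may invoke that the two classes give conjugate real centralizers, hence equal EP-volumes.) Second, for each prime $p$ the element $\theta$ can be taken in $\O_{4n}(\Z_p)$ (indeed in $\O_{4n}(\Z)$), so conjugation by $\theta$ preserves $G(\Z_p)$ as a subset of $\O_{4n}(\Q_p)$, hence the orbital integral $\O_{\gamma'}(1_{G(\widehat\Z)})$ equals $\O_\gamma(1_{G(\widehat\Z)})$ after the compatible change of variables $g_p \mapsto \theta g_p \theta^{-1}$ in \eqref{eq:locintorg}. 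Combining, each factor in the product defining $\mathrm{m}_{c'}$ matches the corresponding factor for $\mathrm{m}_c$, so $\mathrm{m}_c = \mathrm{m}_{c'}$.

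The main obstacle, and the only genuinely delicate point, is the bookkeeping around rationality and the normalization of measures: one must ensure that the outer involution is implemented by a \emph{single} element of $\O_{4n}(\Z)$ (so that it is simultaneously integral at every finite place and defined over $\Q$), and that the signed Haar measures on the two adelic centralizers are related \emph{compatibly} by $\sigma$ — otherwise the volumes and orbital integrals would a priori differ by a constant depending on the normalization. Since $\sigma$ is an algebraic isomorphism $G_\gamma \to G_{\gamma'}$ over $\Q$ that is integral at every prime, pushing forward the chosen measure on $G_\gamma(\A)$ yields exactly the kind of admissible measure on $G_{\gamma'}(\A)$ that the definition of $\mathrm{m}_{c'}$ allows, and the product $\vol \cdot \O_\gamma(\cdot)$ is independent of that common choice anyway; so this obstacle is resolved once stated carefully. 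Everything else is the routine transport of structure along $\sigma$. $\square$
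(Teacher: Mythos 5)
Your proof is correct and is exactly the argument the paper has in mind: the lemma is not proved in the text but delegated to \cite[Remark 3.2.11]{Taibi_dimtrace}, whose content is precisely this transport of structure by conjugation under an element of ${\rm O}_{4n}(\Z)$ of determinant $-1$, which normalizes $G(\Z_p)$ for every $p$, matches the $G(\Q)$-classes inside $c$ and $c'$ together with their centralizers, volumes (Euler--Poincar\'e measure being canonical, and the product $\vol\cdot {\rm O}_\gamma$ being independent of the finite-place normalizations) and orbital integrals. Only a cosmetic slip: for the form $\sum_{i=1}^{2n} x_i x_{4n+1-i}$ your explicit $\theta$ should swap $e_i$ and $e_{4n+1-i}$ (not $e_{2n+1-i}$); this does not affect the argument.
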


Thus we may write
\[ \mathrm{T}_{\mathrm{ell}}(G; \lambda) = \sum_{P \in
\mathcal{P}(G)} \mathrm{m}_P \, \tr(P; \lambda) \]
with
\[ \mathrm{m}_P := \begin{cases}
    \mathrm{m}_c & \text{ if there is} \,\,c \in \mathcal{C}(G) \text{ with } \text{char}(c)=P
    \\
    0 & \text{ if } P \text{ does not belong to }
    \text{char}(\mathcal{C}(G))
\end{cases} \]
and
\[ \tr(P; \lambda) := \begin{cases}
  \tr( c \,|\, {\rm V}_{\lambda} ) & \text{ if } G \neq
    \SO_{4n} \text{ or } P(1)P(-1)=0 \\
  \tr( c \,|\, {\rm V}_{\lambda}) + \tr( c' \,|\, {\rm V}_{\lambda} ) & \text{
    otherwise}
\end{cases} \]
with ${\rm char}^{-1}(P)=\{c\}$ in the first case
and ${\rm char}^{-1}(P)=\{ c, c' \}$ in the second case.
This also implies $\mathrm{T}_{\mathrm{ell}}(\SO_{4n};
\theta(\lambda)) = \mathrm{T}_{\mathrm{ell}}(\SO_{4n}; \lambda)$ where
$\theta$ is the non-trivial outer automorphism of $\SO_{4n}$ induced by ${\rm O}_{4n}(\Z)/{\rm SO}_{4n}(\Z)=\Z/2\Z$.
This invariance is fortunate also because Koike and Terada's simple (and most
importantly very effective for small weights) formulas \cite{Koike_Terada} for
traces in algebraic representations apply to irreducible representations of
symplectic and orthogonal (rather than \emph{special} orthogonal) groups. 
Equivalently, their formula gives $\tr(P; \lambda)$ in terms of $P$, but not
$\tr(c \,|\, {\rm V}_{\lambda})$ in the second case above if $\theta(\lambda) \neq
\lambda$.\ps

There is another obvious invariance property of masses. 
For $G={\rm SO}_{4n}$ or ${\rm Sp}_{2n}$, the element $-1$ of $G(\Z)$ is in the center of $G$, 
and $c \mapsto -c$ preserves $\mathcal{C}(G)$. Formula \eqref{eq:locintorg} thus shows:

\begin{lemm} \label{lem:mccenter}For $G={\rm SO}_{4n}$ or ${\rm Sp}_{2n}$, and $c \in \mathcal{C}(G)$, we have  $\mathrm{m}_{-c} = \mathrm{m}_c$.
\end{lemm}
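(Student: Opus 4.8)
The plan is to trace through Formula \eqref{eq:tell} and verify that every ingredient of $\mathrm{m}_c$ is invariant under multiplication by the central element $z := -1$ of $G(\Z)$. This makes sense because $\det(-\mathrm{id}_{\nG}) = (-1)^{\nG} = 1$ when $G = \SO_{4n}$, and trivially for $\Sp_{2n}$, and in both cases $z$ lies in the center of $G$.

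First I would unwind the definition of the masses. Grouping the terms of \eqref{eq:tell} according to the $G(\Qbar)$-conjugacy class — on which $\tr(\gamma\,|\,\mathrm{V}_\lambda)$ only depends — and comparing with \eqref{eq:tellmasse}, one has for $c \in \mathcal{C}(G)$
$$\mathrm{m}_c = \sum_\gamma \vol\!\big(G_\gamma(\Q)\backslash G_\gamma(\A)\big)\cdot \O_\gamma(1_{G(\widehat{\Z})}),$$
the sum running over representatives $\gamma$ of the $G(\Q)$-conjugacy classes contained in $c$ and consisting of finite order elements of $G(\Q)$ whose $G(\Qp)$-conjugacy class meets $G(\Zp)$ for every prime $p$, with $G_\gamma$ the centralizer over $\Q$ and the signed Haar measure $dh = \prod_v' dh_v$ on $G_\gamma(\A)$ fixed as in \eqref{eq:tell} (Euler--Poincar\'e at $v = \infty$, Gross's canonical measure at finite $v$).

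Next I would check that $\gamma \mapsto z\gamma$ matches, term by term, the sum defining $\mathrm{m}_c$ with the one defining $\mathrm{m}_{-c}$. Since $z \in G(\Q)$ is central, multiplication by $z$ carries the $G(\Q)$-conjugacy classes contained in $c$ bijectively onto those contained in $-c$, preserving finite order; and since $z$ is central in $G(\Qp)$ with $z \in G(\Zp)$, for $g_p \in G(\Qp)$ we have $g_p(z\gamma)g_p^{-1} = z\,g_p\gamma g_p^{-1}$, which lies in $G(\Zp)$ if and only if $g_p\gamma g_p^{-1}$ does, so the condition on $G(\Qp)$-conjugacy classes is preserved. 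Centrality of $z$ also gives $G_{z\gamma} = G_\gamma$ as algebraic $\Q$-groups; hence $\vol(G_{z\gamma}(\Q)\backslash G_{z\gamma}(\A)) = \vol(G_\gamma(\Q)\backslash G_\gamma(\A))$ for the same $dh$ (the centralizer, and therefore every normalisation attached to it — in particular the Euler--Poincar\'e measure at $\infty$ — is literally unchanged, so nothing must be re-matched). Finally, from $z \in G(\Zp)$ we get $1_{G(\Zp)}(zx) = 1_{G(\Zp)}(x)$ for all $x \in G(\Qp)$, so using $G_{z\gamma}(\Qp) = G_\gamma(\Qp)$ each local factor \eqref{eq:locintorg} is unchanged, whence $\O_{z\gamma}(1_{G(\widehat{\Z})}) = \O_\gamma(1_{G(\widehat{\Z})})$. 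Summing, $\mathrm{m}_{-c} = \mathrm{m}_c$.

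There is essentially no obstacle: the entire content, already signalled just before the statement, is the elementary remark that $1_{G(\Zp)}$ is invariant under translation by the central unit $-1 \in G(\Zp)$, combined with the triviality that an element and its product with a central rational element have the same centralizer. The only point requiring a little care is to note explicitly that the auxiliary measures ($dh_\infty$ and the $dh_p$) are tied to $G_\gamma$, which does not move, so there is nothing left to check.
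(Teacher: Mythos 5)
Your proof is correct and is exactly the argument the paper has in mind: the paper's one-line justification (the central element $-1$ lies in $G(\Z)$ and Formula \eqref{eq:locintorg} is visibly unchanged under $\gamma \mapsto -\gamma$) is precisely your observation that translation by a central unit preserves the conjugacy-class conditions, the centralizers with their measures, and the local orbital integrals. You have simply written out in detail what the paper leaves implicit.
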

For $G$ and $c$ as above, we have $P_{-c}(X)=(-1)^{\deg P_c} P_c(-X)$,
${\rm tr}(P_{-c};\lambda)=\lambda(-1) {\rm tr}(P_c;\lambda)$,  as well 
as ${\rm m}_{P_c}={\rm m}_{P_{-c}}$ by the lemma.
A consequence is that $\mathrm{T}_{\mathrm{ell}}(G; \lambda) = 0$ if
the restriction of $\lambda$ to the center ${\rm Z}(G)$ of $G$ is non-trivial.\footnote{
\label{foot:p1prime}
From the perspective of the strategy discussed in \S \ref{sub:intro_effortless}, 
this vanishing is in agreement with the vanishing of ${\rm N}^\bot(w(\lambda))$ for all
dominant weights $\lambda$ of ${\rm Sp}_{2n}$ or ${\rm SO}_{4n}$ such that $\lambda(-1)=-1$,
which is a consequence of the property $\epsilon(\pi)=\varepsilon({\rm L}(\pi_\infty))=1$ for orthogonal $\pi$ in $\Pi_{\rm alg}$:
see  \cite[Thm. 1.5.3]{Arthur_book} and \cite[Prop. 1.8]{ChRe}.}
Define the following equivalence relation on $\mathcal{P}(G)$: $P_1
\sim P_2$ if $P_1 = P_2$ or $P_2(X) = (-1)^{\nG} P_1(-X)$. Assuming $\lambda_{|{\rm Z}(G)}=1$ we may thus finally write 
\begin{equation} \label{finalformtell}  \mathrm{T}_{\mathrm{ell}}(G; \lambda) = \sum_{P \in
\mathcal{P}(G)/\sim} {\rm e}_P \,\mathrm{m}_P \, \tr(P; \lambda) \end{equation}
where ${\rm e}_P \in \{1,2\}$ denotes the size of the equivalence class of $P$.
\ps\ps

To sum up, for the purpose of implementing our strategy introduced in \S
\ref{sub:intro_effortless} we can replace the indexing set
$\mathcal{C}(G)$ by $\mathcal{P}(G)/\sim$, which is
computable, and we may as well restrict to dominant weights $\lambda$ such that
$\lambda|_{{\rm Z}(G)} = 1$, and even to a set of representatives
for the orbits under $\{1,\theta\}$ in the even orthogonal case. \ps


\begin{table}[htp] \centering
\renewcommand*{\arraystretch}{1.2}
\caption{}
\makebox[\linewidth]{
\begin{tabular}{ccccccccc}
  $G$ & $\Sp_2$ & $\Sp_4$ & $\Sp_6$ & $\Sp_8$ & $\Sp_{10}$ &
  $\Sp_{12}$ & $\Sp_{14}$ & $\Sp_{16}$ \\
  $|\mathcal{P}(G)/\sim|$ & $3$ & $12$ & $32$ & $92$ & $219$ & $530$ & $1157$ &
  $2521$ \\
\end{tabular}}
\end{table}

\subsection{Conjugacy classes and characteristic polynomials in the orthogonal
case: spinor norms}
\label{sec:spinornorm}

As announced in Remark \ref{rem:spin_norm_intro}, it turns out that in the
orthogonal cases we can further reduce the set parametrizing conjugacy classes.
Let $\mathcal{C}_0(G) \subset \mathcal{C}(G)$ be the subset of equivalence classes 
containing a finite order element in $G(\Q)$ whose $G(\A_f)$-conjugacy class meets
$G(\Zhat)$.
In particular $\Ccal_0(G)$ contains the set of $c \in \Ccal(G)$ such that
$\mathrm{m}_c \neq 0$.
A priori it may happen that $\mathcal{C}_0(G)$ is smaller
than $\mathcal{C}(G)$.
Using the analysis in \cite[\S 3.2.2]{Taibi_dimtrace} and Jacobson's hermitian
analogue of the Hasse-Minkowski theorem \cite{JacobsonHerm}, one can argue that
$\mathcal{C}_0(\Sp_{2n}) = \mathcal{C}(\Sp_{2n})$ for any $n
\geq 1$.
Since this fact is rather unfortunate for our strategy, we leave the details to
the interested reader.

We now focus on special orthogonal groups.
Proposition \ref{prop:restr_char_poly} below gives an explicit subset
$\mathcal{P}_1(G)$ of $\mathcal{P}(G)$ such that its
preimage $\mathcal{C}_1(G) \subset \mathcal{C}(G)$ under the map char \eqref{eq:mapchar} contains
$\mathcal{C}_0(G)$.
In contrast with the symplectic case we will see that $\mathcal{C}_1(G)
\subsetneq \mathcal{C}(G)$ in general, owing to the fact that special
orthogonal groups are not simply connected.

\begin{rema}
  The second author had already observed that there was such a restriction on
  classes $c$ satisfying $\mathrm{m}_c \neq 0$ in \cite[Remark
  3.2.8]{Taibi_dimtrace}, unfortunately without giving details or proofs \dots
  He was also unaware of related previous work of Gross and McMullen:
  \cite[Theorem 6.1]{Gross_McMullen} is similar to Proposition
  \ref{prop:restr_char_poly}.
  Unfortunately we could not deduce Proposition \ref{prop:restr_char_poly} from
  the results of \cite{Gross_McMullen}, so we give a slightly different proof
  below, relying on the Zassenhaus formula for spinor norms.
\end{rema}

Let $R$ be a commutative ring, $V$ a projective $R$-module of finite constant rank $n$ and $q: V \rightarrow R$ 
a quadratic form. We say that $q$ is {\it non-degenerate} if the associated $R$-bilinear form $\beta_q(x,y)=q(x+y)-q(x)-q(y)$
is a perfect pairing on $V$. We say that  $q$ is {\it regular} if either $q$ is non-degenerate, or $n$ is odd and Zariski-locally on $R$
the half-discriminant of $q$ is invertible: see \cite[Ch. IV \S 3]{Knus} (who rather uses the terminology {\it semi-regular} in this case). 
When $V$ is a 
free $R$-module and $q$ is non-degenerate, we denote by $\disc(q) \in R^{\times}/R^{\times,2}$ the class of the determinant 
of a Gram matrix of $\beta_q$, where $R^{\times,2}$ denotes the subgroups of squares in $R^\times$.\ps

First we recall a few definitions from \cite[Appendix C]{Conrad_luminy} or \cite[Ch. IV]{Knus}.
Assume $n\geq 3$ and $q$ regular. Associated to $(V, q)$ are (reductive) groups schemes $\Spin(V, q) \subset
\GSpin(V, q)$ and $\SO(V, q)$ over $R$.
The group $\GSpin(V, q)$ is the group of even degree invertible elements
in the Clifford algebra ${\rm C}(V, q)$ which stabilize the submodule $V \subset {\rm C}(V,
q)$ under conjugation.
This conjugation action gives a morphism $\pi : \GSpin(V,q) \rightarrow
\SO(V,q)$, with kernel the central $\GL_1$ (invertible scalars in the Clifford
algebra).
See {\it e.g.} Propositions C.2.8 and C.4.6 of \cite{Conrad_luminy} for these properties and the fact
that $\pi$ factors through the special orthogonal group.
The \emph{Clifford norm} morphism $\nu : \GSpin(V,q) \rightarrow \GL_1$ is
defined in (C.4.2) and (C.4.4) {\it loc.\ cit.}
The restriction of $\nu$ to the central $\GL_1$ is $t \mapsto t^2$.
The group $\Spin(V,q)$ can be defined as the kernel of the Clifford norm:
see the proof of Lemma C.4.1 {\it loc.\ cit.}\ for the case $n$ even and the proof of
Proposition C.4.10 loc.\ cit.\ and the paragraph following it for the case $n$
odd. We have \cite[(6.2.3) p.231]{Knus} an exact sequence of sheaves in groups on the
Zariski site of $R$
\begin{equation} \label{eq:surjgspin} 1 \rightarrow \GL_1 \rightarrow \GSpin(V, q) \rightarrow
\SO(V, q) \rightarrow 1, \end{equation}
and thus an exact sequence of sheaves in groups on the fppf site of $R$
\[ 1 \rightarrow \mu_2 \rightarrow \Spin(V, q) \rightarrow
  \SO(V, q) \rightarrow 1. \]
The (not so) long exact sequence in cohomology associated to the second short
exact sequence above gives the spinor norm $\sn: \mathrm{SO}(V,q) \rightarrow
H^1_{\mathrm{fppf}}(R, \mu_2)$.
If $\mathrm{Pic}(R) = 1$, which will always be the case in this paper, the fppf
exact sequence $1 \rightarrow \mu_2 \rightarrow \GL_1 \rightarrow \GL_1 \rightarrow 1$ gives
the isomorphism $H^1_{\mathrm{fppf}}(R, \mu_2) \simeq R^\times / R^{\times, 2}$, 
and we will implicitly consider the spinor norm in this last group.
The spinor norm of $\gamma \in \mathrm{SO}(V,q)(R)$ is then represented by
$\nu(\tilde{\gamma})$ where $\tilde{\gamma} \in \mathrm{GSpin}(V,q)(R)$ is any
lift of $\gamma$; such a lift exists by \eqref{eq:surjgspin} and $\mathrm{Pic}(R) = 1$.
The spinor norm is additive: if $(V, q) \simeq (V_1, q_1) \perp (V_2, q_2)$ and
$\gamma \in \mathrm{SO}(V,q)(R)$ stabilizes $V_1$ and $V_2$ then we have
$\sn \gamma = \sn \gamma|_{V_1} \times \sn \gamma|_{V_2}$. \ps

\begin{theo}[Zassenhaus] \label{theo:Zassenhaus}
  Let $k$ be a field of characteristic different from $2$.
  Let $V$ be a finite-dimensional vector space over $k$, endowed with a
  non-degenerate quadratic form $q$.
  Let $\gamma \in \mathrm{SO}(V,q)(k)$ and write the characteristic polynomial of
  $\gamma$ as $(X-1)^a (X+1)^{2b} Q(X)$ with $Q(1)Q(-1) \neq 0$.
  Then the spinor norm $\sn \gamma$ of $\gamma$ is represented by $\disc(
  q\,| \, \ker\,(\gamma + 1)^{2b}) \,Q(-1)$ in $k^\times / k^{\times, 2}$.
\end{theo}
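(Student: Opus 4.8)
The idea is to split $V$ into $\gamma$-stable, mutually orthogonal pieces on which the spinor norm can be computed by hand, and then use additivity of $\sn$. Since $(X-1)^a$, $(X+1)^{2b}$ and $Q(X)$ are pairwise coprime in $k[X]$, the element $\gamma$ induces a decomposition $V = V_1 \oplus V_{-1} \oplus W$ with $V_1 = \ker(\gamma-1)^a$, $V_{-1} = \ker(\gamma+1)^{2b}$ and $W = \ker Q(\gamma)$. For any isometry, the generalized eigenspaces attached to $\overline{k}$-eigenvalues $\mu,\nu$ with $\mu\nu\neq 1$ are orthogonal under $\beta_q$ (a $\gamma$-equivariant map from the $\nu$-part to the dual of the $\mu$-part is nonzero only if $\nu=\mu^{-1}$); grouping Galois orbits, the three summands above are pairwise orthogonal, hence the restriction of $q$ to each is non-degenerate. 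As $\mu_2$ is central in $\Spin$, the spinor norm $\sn:\SO(V,q)(k)\to k^\times/k^{\times,2}$ is a group homomorphism, and by its additivity property we obtain
\[ \sn\gamma = \sn(\gamma|_{V_1})\cdot\sn(\gamma|_{V_{-1}})\cdot\sn(\gamma|_W), \]
so it suffices to evaluate the three factors.

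Two of them are easy. The element $\gamma|_{V_1}$ is unipotent, so its spinor norm is trivial — a classical fact (it lifts to a unipotent element of $\Spin(V_1,q|_{V_1})(k)$, which lies in a unipotent subgroup mapping isomorphically to its image in $\SO$). For $V_{-1}$, write $\gamma|_{V_{-1}} = (-\mathrm{id}_{V_{-1}})\circ(-\gamma|_{V_{-1}})$; the second factor is unipotent of trivial spinor norm, while $-\mathrm{id}_{V_{-1}}$ lies in $\SO(V_{-1})$ because $\dim V_{-1}=2b$ is even, and expressing it as the product of the reflections in a $q$-orthogonal basis of $V_{-1}$ (which exists since $\mathrm{char}\,k\neq 2$) gives $\sn(-\mathrm{id}_{V_{-1}}) \equiv \disc(q|_{V_{-1}}) \pmod{k^{\times,2}}$ (the factors of $2$ cancel, being even in number). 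Hence $\sn(\gamma|_{V_{-1}}) \equiv \disc\!\big(q\,|\,\ker(\gamma+1)^{2b}\big)$.

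The substance of the argument is the piece $W$, where neither $1$ nor $-1$ is an eigenvalue of $\gamma|_W$, so both $\gamma|_W-1$ and $\gamma|_W+1$ are invertible and, eigenvalues pairing as $\lambda\leftrightarrow\lambda^{-1}$ with $\lambda^2\neq1$, $\dim W$ is even. Here one uses the classical Cayley-transform (Wall) description of isometries without eigenvalue $1$: if $\sigma\in\mathrm{O}(W,q|_W)$ has $\sigma-1$ invertible then $\sigma$ is a product of $\dim W$ reflections and $\sn\sigma \equiv \disc(b_\sigma) \pmod{k^{\times,2}}$, where $b_\sigma(x,y)=\beta_q\big((\sigma-1)^{-1}x,\,y\big)$; a Gram-matrix computation gives $\disc(b_\sigma) \equiv \det(\sigma-1)^{-1}\disc(q|_W) \equiv \det(\sigma-1)\,\disc(q|_W)$. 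Applying this to $\sigma=-\gamma|_W$ (legitimate, as $1$ is not an eigenvalue of $-\gamma|_W$) and noting $\det(-\gamma|_W-1)=\det(\gamma|_W+1)=Q(-1)$ (using $\dim W$ even and $Q(X)=\det(X-\gamma|_W)$) yields $\sn(-\gamma|_W) \equiv Q(-1)\,\disc(q|_W)$. On the other hand $\sn(-\gamma|_W)=\sn(-\mathrm{id}_W)\,\sn(\gamma|_W) \equiv \disc(q|_W)\,\sn(\gamma|_W)$ by the reflection computation again (once more $\dim W$ is even), so $\sn(\gamma|_W)\equiv Q(-1)$. Multiplying the three factors gives $\sn\gamma \equiv \disc\!\big(q\,|\,\ker(\gamma+1)^{2b}\big)\cdot Q(-1)$, as claimed.

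\textbf{Main obstacle.} The only nontrivial step is the generic case (no eigenvalue $\pm1$), i.e.\ establishing the reflection-and-discriminant formula over an arbitrary field of characteristic $\neq2$ — which requires handling the $k$-irreducible factors of $Q$ and the structure theory of isometries rather than reducing to the split or algebraically closed case. Alternatively, one may simply invoke Zassenhaus's original computation, or equivalently the formula $\sn g \equiv \disc\!\big(q\,|\,\ker(g+1)^{\infty}\big)\cdot\det\!\big(\tfrac12(g+1)\,|\,\text{(complementary generalized eigenspaces)}\big)$, from which the present reformulation follows by the same bookkeeping: the unipotent block contributes $\det(1+\tfrac12 N)=1$ and the $W$-block contributes $\det\tfrac12(\gamma+1)|_W = Q(-1)/2^{\dim W}\equiv Q(-1)$.
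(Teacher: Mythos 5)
Your proof is correct, and it uses the same orthogonal decomposition $V = V_1 \perp V_{-1} \perp W$ into generalized eigenspaces, the same additivity argument, and the same treatment of the unipotent block and of $V_{-1}$ (factoring out $-\mathrm{id}$, whose spinor norm is $\disc(q|_{V_{-1}})$ since $\dim V_{-1}$ is even). The genuine divergence is in the hard block $W$ (no eigenvalue $\pm 1$). The paper's proof lifts $\gamma|_W$ to $\GSpin(W,q|_W)(k)$, takes the Jordan decomposition of the lift in $\GSpin(W,q|_W)(\overline{k})$, and computes the Clifford norm directly via the trace in the spin representation — an argument that, as the paper notes, happens to be characteristic-free. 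You instead invoke the classical Cayley-transform (Wall) formula $\sn\sigma \equiv \det(\sigma-1)\,\disc(q)$ for isometries $\sigma$ with $\sigma - 1$ invertible, apply it to $\sigma = -\gamma|_W$, and then divide out $\sn(-\mathrm{id}_W)$. This is a legitimate alternative: the Wall formula is standard (and the present theorem is essentially Zassenhaus's reformulation of it), and your route through $-\gamma|_W$ is a nice touch, since applying the formula directly to $\gamma|_W$ would give $Q(1)\disc(q|_W)$ and require the identity $\disc(q|_W) \equiv Q(1)Q(-1)$ (Corollary \ref{coro:disc}) to simplify, whereas dividing out $\sn(-\mathrm{id}_W)$ cancels the discriminant for free. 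The trade-offs are: your argument invokes Wall's decomposition theorem as a black box, which the paper's Clifford-algebra computation avoids, and your argument is confined to characteristic $\neq 2$ (harmless here, since the theorem's hypotheses already impose that), whereas the paper points out that its $W$-block argument does not use $\mathrm{char}\,k \neq 2$.
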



\begin{proof}
  This is just a reformulation of the main theorem of \cite{Zassenhaus_spin}.
  Here is a short argument for the convenience of the reader.
  Using the orthogonal decomposition $V \,=\, \ker\, (\gamma-1)^a \, \perp \ker \, (\gamma+1)^{2b} \,\perp\, \ker Q(\gamma)$,
  and the additivity of spinor norms, we may assume $a=0$, so $\dim V \equiv 0 \bmod 2$, and either $Q=1$ or $b=0$.
  In the case $Q=1$, we have $\sn \gamma = \sn (-1)$ as unipotent elements are squares,
  and we conclude by the classical formula $\sn (-1) \,=\, 2^{\dim V} \,\disc(q)$ (use an orthogonal basis of $V$). 
  The arguments so far have used that the characteristic of $k$ is $\neq 2$, but the following ones will not.

   Assume $b=0$, write $Q(X) = \prod_{i=1}^n (X-t_i)(X-t_i^{-1})$ in $\overline{k}[X]$, 
   and choose $\tilde{\gamma} \in \mathrm{GSpin}(V, q)(k)$ a lift of $\gamma$. 
  Write $\tilde{\gamma}=du=ud$ its Jordan decomposition in 
  $\mathrm{GSpin}(V, q)(\overline{k})$, with $d$ semi-simple and $u$ unipotent.
  There is a decomposition 
  $V \otimes_k \overline{k} = P_1 \perp \cdots \perp P_n$, 
  where each $P_i$ is a $d$-stable hyperbolic plane on which the two eigenvalues 
  of $d$ are $t_i^{\pm 1}$. Using the natural isomorphism 
  between ${\rm C}(V,q) \otimes_k \overline{k}$ 
  and the graded tensor product of the Clifford algebras of the $P_i$ 
  (see e.g. \cite[IV. Prop. 1.3.1]{Knus}) we easily sees that there is a pair 
  $(s,\lambda)$ in $\overline{k}^\times \times \overline{k}^\times$ 
  such that:  $s^2 = t_1 \dots t_n$, the Clifford norm of $d$ (or equivalently, of $\widetilde{\gamma}$) is $\lambda^2$,
  and the trace of $d$  (or equivalently, of $\widetilde{\gamma}$) in the spin representation of ${\rm GSpin}(V,q)(k)$ is $\lambda s
  \prod_{i=1}^n (1 + t_i^{-1})$. 
  The spinor norm of $\gamma$ is thus represented by $\lambda^2 \in k^\times$.
  Note that although the spin representation may not be defined over $k$, its trace is. 
  Indeed, this representation is defined as the tautological morphism $\mathrm{GSpin}(V, q)(k) \subset {\rm C}(V,
  q)^\times$ and ${\rm C}(V, q)$ is a central simple algebra over $k$, whose reduced trace is $k$-valued.   Since we have $1 + t_i^{-1} \neq 0$ for each $i$ as $Q(-1) \neq 0$, the spinor norm of $\gamma$ is
  represented by $s^2 \prod_{i=1}^n (1+t_i^{-1})^2 = \prod_{i=1}^n t_i (1+t_i^{-1})^2 =
  \prod_{i=1}^n (1+t_i)(1+t_i^{-1}) = (-1)^{2n} Q(-1)$.
\end{proof}

We deduce the following discriminant formula, which can also be proved directly
(see \cite[Proposition A.3]{Gross_McMullen}).

\begin{coro} \label{coro:disc}
  Under the same assumptions, assume moreover that $a=b=0$.
  Then $\disc(q) \in k^\times / k^{\times, 2}$ is represented by $Q(1)Q(-1)$.
\end{coro}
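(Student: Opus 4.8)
The plan is to deduce the formula from the Zassenhaus theorem (Theorem~\ref{theo:Zassenhaus}) applied not only to $\gamma$ but also to the two auxiliary elements $-\mathrm{id}_V$ and $-\gamma$ of $\mathrm{SO}(V,q)(k)$. The starting observation is that $\dim V$ is even: since $Q(1)Q(-1)\neq 0$, neither $1$ nor $-1$ is an eigenvalue of $\gamma$, so over an algebraic closure of $k$ the eigenvalues pair up as $\{t,t^{-1}\}$ with $t\neq t^{-1}$, whence $\dim V=\deg Q$ is even. In particular $-\mathrm{id}_V$ has determinant $1$, so both $-\mathrm{id}_V$ and $-\gamma=(-\mathrm{id}_V)\circ\gamma$ lie in $\mathrm{SO}(V,q)(k)$, with characteristic polynomials $(X+1)^{\dim V}$ and $(-1)^{\dim V}Q(-X)=Q(-X)$ respectively.

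Now I would invoke Theorem~\ref{theo:Zassenhaus} three times, working in $k^\times/k^{\times,2}$. For $\gamma$ the hypothesis $a=b=0$ forces $\ker(\gamma+1)^{2b}=\ker(\mathrm{id}_V)=\{0\}$, so $\sn\gamma$ is represented by $\disc(q|_{\{0\}})\,Q(-1)=Q(-1)$, the empty discriminant being $1$. For $-\gamma$, whose characteristic polynomial $\widetilde Q(X):=Q(-X)$ again satisfies $\widetilde Q(1)\widetilde Q(-1)=Q(-1)Q(1)\neq 0$, the same $a=b=0$ case gives $\sn(-\gamma)$ represented by $\widetilde Q(-1)=Q(1)$. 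For $-\mathrm{id}_V$ the characteristic polynomial is $(X+1)^{\dim V}$, so $a=0$, $2b=\dim V$ and the $Q$-part is $1$; since $\ker(-\mathrm{id}_V+\mathrm{id}_V)^{\dim V}=V$, the theorem gives $\sn(-\mathrm{id}_V)$ represented by $\disc(q|_V)=\disc(q)$. Finally, $\sn\colon\mathrm{SO}(V,q)(k)\to k^\times/k^{\times,2}$ is a group homomorphism — for instance because $\sn\gamma$ is represented by $\nu(\tilde\gamma)$ for any lift $\tilde\gamma\in\mathrm{GSpin}(V,q)(k)$ and the Clifford norm $\nu$ is multiplicative — so from $-\gamma=(-\mathrm{id}_V)\circ\gamma$ we obtain $\sn(-\gamma)=\sn(-\mathrm{id}_V)\,\sn\gamma$, that is $Q(1)=\disc(q)\,Q(-1)$ in $k^\times/k^{\times,2}$. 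As this group has exponent $2$, we conclude $\disc(q)=Q(1)Q(-1)$.

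I do not expect a genuine obstacle: the only points requiring care are the parity of $\dim V$ (needed both for $-\mathrm{id}_V$ and $-\gamma$ to have determinant $1$ and for the Zassenhaus hypotheses), the harmless sign bookkeeping in passing from $Q(X)$ to $Q(-X)$, and invoking the multiplicativity of the spinor norm. Alternatively one can bypass Theorem~\ref{theo:Zassenhaus} entirely, as in \cite[Proposition~A.3]{Gross_McMullen}: writing $M$ for a Gram matrix of $\beta_q$, so that $\disc(q)=\det M$, the relation $\gamma^{\mathrm T}M\gamma=M$ shows that $(\gamma-1)^{\mathrm T}M(\gamma+1)=(M\gamma)^{\mathrm T}-M\gamma$ is an alternating matrix (as $\operatorname{char}k\neq 2$) of even size $\dim V$; hence its determinant $\det(\gamma-1)\det(\gamma+1)\det M=Q(1)Q(-1)\disc(q)$ is a square in $k$, which is exactly the claim.
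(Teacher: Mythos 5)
Your proof is correct and is essentially the paper's own argument: the paper also obtains the result by noting $\disc(q)$ is the spinor norm of $-\mathrm{Id}_V$ (equivalently of $\gamma(-\gamma)$, modulo squares) and applying Theorem \ref{theo:Zassenhaus} to $\gamma$ and $-\gamma$ together with multiplicativity of $\sn$. Your extra checks (parity of $\dim V$, so that $-\mathrm{Id}_V,-\gamma\in\mathrm{SO}$) and the concluding Gram-matrix argument are just the direct proof the paper already attributes to \cite{Gross_McMullen}.
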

\begin{proof}
  The discriminant of $q$ is the spinor norm of $-\mathrm{Id}_V$, or equivalently of  $\gamma
  (-\gamma)$. We conclude by applying the theorem to $\gamma$ and $-\gamma$.
\end{proof}

\begin{prop} \label{prop:restr_char_poly}
  Let $V$ be a free $\Z$-module of rank $n$ endowed with a regular quadratic form $q$,
   $G={\rm SO}(V,q)$, and $P(X) = (X-1)^a (X+1)^{2b} Q(X)$ 
  a monic polynomial of degree $n$ in $\Q[X]$ with $Q(1)Q(-1) \neq 0$.
  Assume that for every prime $p$ there exists $\gamma_p \in G(\Zp)$ having
  characteristic polynomial $P$.
  If $b=0$ then the integer $|Q(-1)|$ is square, and if $a=0$ then the
  integer $|Q(1)|$ is a square.
\end{prop}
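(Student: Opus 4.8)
The plan is to evaluate, for each prime $p$, the spinor norm of the given element $\gamma_p \in \SO(V,q)(\Zp)$ in two ways, and to compare. Integrally, since $\mathrm{Pic}(\Zp)=1$ the spinor norm lies in $H^1_{\mathrm{fppf}}(\Zp,\mu_2)\simeq \Zp^\times/\Zp^{\times,2}$; concretely, a lift $\widetilde{\gamma_p}\in\GSpin(V,q)(\Zp)$ (which exists by \eqref{eq:surjgspin}) has Clifford norm $\nu(\widetilde{\gamma_p})\in\Zp^\times$, and base change along $\Zp\to\Qp$ shows that $\sn(\gamma_p\otimes\Qp)\in\Qp^\times/\Qp^{\times,2}$ is the class of that same unit, i.e.\ is represented by a $p$-adic unit. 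On the other hand, Zassenhaus's formula (Theorem \ref{theo:Zassenhaus}), applied over the field $\Qp$, expresses $\sn(\gamma_p\otimes\Qp)$ in terms of the values $Q(\pm 1)$ and a discriminant. Matching the two expressions will force $Q(-1)$ (resp.\ $Q(1)$) to have even $p$-adic valuation for every $p$, which is the claim since $Q(\pm1)$ are nonzero integers.

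Before doing this I would record a few routine points. As $\gamma_p\in\SO(V,q)(\Zp)\subset\GL_n(\Zp)$ for every $p$, the polynomial $P\in\Q[X]$ has coefficients in $\bigcap_p\Zp=\Z$, and $P(0)=(-1)^n\det\gamma_p=(-1)^n$; dividing the monic integral polynomial $P$ by the monic integral polynomials $(X-1)^a$ and $(X+1)^{2b}$ gives $Q\in\Z[X]$, so $Q(1),Q(-1)\in\Z\setminus\{0\}$. Next, $q\otimes\Qp$ is non-degenerate for every prime $p$, including $p=2$: for $n$ even because $q$, being regular, is non-degenerate over $\Z$, and for $n$ odd because the half-discriminant of a regular form is a local unit, so the Gram determinant of $\beta_q$ is nonzero in $\Qp$ (here $2\neq0$ in $\Qp$). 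Thus Theorem \ref{theo:Zassenhaus} and Corollary \ref{coro:disc} apply over $\Qp$. Finally, recall that $\sn$ is additive and $\disc$ is multiplicative for orthogonal decompositions.

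For the case $b=0$, the subspace $\ker(\gamma_p+1)^{2b}=\ker(\mathrm{Id})$ is $0$, with trivial discriminant, so Theorem \ref{theo:Zassenhaus} over $\Qp$ gives $\sn(\gamma_p\otimes\Qp)=Q(-1)$ in $\Qp^\times/\Qp^{\times,2}$; comparing with the first paragraph, $Q(-1)$ is a $p$-adic unit for every $p$, so $|Q(-1)|$ is a perfect square. For the case $a=0$, I would first note that $n$ is even: $\gamma_p$ preserves the non-degenerate form $q\otimes\Qpbar$, so $P$ is self-reciprocal, and since $Q$ has no root at $\pm1$ its roots split into pairs $\{t,t^{-1}\}$ with $t\neq t^{-1}$, whence $\deg Q$ and $n=2b+\deg Q$ are even. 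Then $q$ is non-degenerate over $\Z$ and $\disc(q)\in\Z^\times/\Z^{\times,2}=\{\pm1\}$. Writing $W_p=\ker(\gamma_p+1)^{2b}$ and $W_p'=\ker Q(\gamma_p)$ in $V\otimes\Qp$, these are orthogonal complements, $\gamma_p|_{W_p'}\in\SO(W_p',q|_{W_p'})(\Qp)$ (its determinant is $1$ since $\det(\gamma_p|_{W_p})=(-1)^{2b}=1$) with characteristic polynomial $Q$, so Corollary \ref{coro:disc} gives $\disc(q|_{W_p'})=Q(1)Q(-1)$, hence $\disc(q|_{W_p})=\disc(q)\,Q(1)Q(-1)$ by multiplicativity, and Theorem \ref{theo:Zassenhaus} yields
\[ \sn(\gamma_p\otimes\Qp)=\disc(q|_{W_p})\,Q(-1)=\disc(q)\,Q(1)\,Q(-1)^2=\disc(q)\,Q(1)\quad\text{in }\Qp^\times/\Qp^{\times,2}. \]
Since $\disc(q)=\pm1$ while the left-hand side is a $p$-adic unit, $Q(1)$ is a $p$-adic unit for every $p$, so $|Q(1)|$ is a perfect square.

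The main obstacle I anticipate is the careful treatment of the place $p=2$ and of odd rank $n$: making sure $q\otimes\Qp$ is genuinely non-degenerate so that Theorem \ref{theo:Zassenhaus} and Corollary \ref{coro:disc} are available, that the integral spinor norm really maps into $\Qp^\times/\Qp^{\times,2}$ as the class of a unit, and — in the case $a=0$ — that $\disc(q)$ is a \emph{global} unit, which rests on the parity observation forcing $n$ to be even. Once these points are secured, the two computations of $\sn\gamma_p$ and their comparison are purely formal.
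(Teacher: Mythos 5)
Your proof is correct and takes essentially the same route as the paper's: integral lift via $\eqref{eq:surjgspin}$ and $\mathrm{Pic}(\Zp)=1$ to force $\sn(\gamma_p)$ into the image of $\Zp^\times$, then Theorem~\ref{theo:Zassenhaus} (resp.\ together with Corollary~\ref{coro:disc} and the parity argument giving $n$ even and $\disc(q)\in\Z^\times$ when $a=0$) to read off that $Q(-1)$ (resp.\ $Q(1)$) has even $p$-adic valuation for all $p$. You spell out a few points the paper leaves implicit (that $Q\in\Z[X]$, that $q\otimes\Qp$ is non-degenerate even for $n$ odd and $p=2$, that $\gamma_p|_{W_p'}$ has determinant $1$), all of which are correct and welcome.
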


Note that the existence of $\gamma_p$ for all primes $p$ implies that $Q$ has
integer coefficients.

\begin{proof} 
  Fix a prime $p$. Since $\gamma_p \in G(\Zp)$ and $\operatorname{Pic} \Zp = 1$ the element
  $\gamma_p$ can be lifted to an element of $\GSpin(V,q)(\Zp)$ by \eqref{eq:surjgspin}, so the
  spinor norm of $\gamma_p$ lies in the image of $\Zp^{\times}$ in $\Qp^{\times}
  / \Qp^{\times, 2}$.  Together with Theorem \ref{theo:Zassenhaus}, this implies that
 $$\disc(q \,|\, \ker(\gamma_p + 1)^{2b}) \times Q(-1) \, \, \in \Qp^\times/\Qp^{\times,2}$$
   lies as well in the image of $\Zp^{\times}$. Assuming $b=0$, it follows that the integer $Q(-1)$ has  
   an even valuation at each prime $p$, so $|Q(-1)|$ is a square. Assume now $a=0$. In particular, $n$ is
  even and we have $\disc(q) \in \Z^{\times}$. By Corollary \ref{coro:disc} applied to the orthogonal of 
  $\ker(\gamma_p + 1)^{2b}$ in $V \otimes\Qp$, we have 
  $\disc(q)\times\disc(q \,|\, \ker(\gamma_p+1)^{2b}) \equiv Q(-1) Q(1)$ in $\Qp^\times/\Qp^{\times, 2},$ or equivalently:
  $$\disc(q) \times \disc(q \,|\, \ker(\gamma_p+1)^{2b}) \times Q(-1)  \equiv Q(1) \mod \Qp^{\times, 2}$$
But have seen that the left-hand side is in the image of $\Zp^\times$. So the integer $Q(1)$ has  
  an even valuation at each prime $p$, and $|Q(1)|$ is a square.\end{proof}

The astute reader will notice that the second statement could also be proved by
considering $(-1)^{\deg P}P(-X)$ and $(- \gamma_p)_p$.
In odd dimension this would require the introduction of $\mathrm{GPin}$ and
$\mathrm{Pin}$ groups to define the spinor norm (see complications in
\cite[Appendix C]{Conrad_luminy}).

\begin{defi}
  For $G = \SO_{2n+1}$ or $\SO_{4n}$ let
  $\mathcal{P}_1(G)$ be the subset of $\mathcal{P}(G)$
  consisting of all polynomials of the form
  $(X-1)^a(X+1)^{2b}Q(X)$, where $Q(X)$ is a product of cyclotomic polynomials
  $\Phi_m$ with $m \geq 3$, which satisfy
  \begin{itemize}
    \item $b>0$ if the positive integer $Q(-1)$ is not a square, and
    \item $a>0$ if the positive integer $Q(1)$ is not a square.
  \end{itemize}
  For $G = \Sp_{2n}$ denote $\Pcal_1(G) = \Pcal(G)$ (Definition
  \ref{def:PG}).
\end{defi}

The positive integers $\Phi_m(\pm 1)$ for $m\geq 3$ may be computed inductively in terms of
the prime decomposition of $m$: see \cite[Theorem 7.1]{Gross_McMullen}. In terms of the notation ${\rm m}_P$ 
defined in \S \ref{elementaryconjclass}, Proposition \ref{prop:restr_char_poly} asserts:

\begin{coro}\label{cor:constraint} If $P \in \mathcal{P}(G)$ satisfies ${\rm m}_P \neq 0$, then we have $P \in \Pcal_1(G)$. 
\end{coro}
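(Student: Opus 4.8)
The plan is to deduce the corollary directly from Proposition~\ref{prop:restr_char_poly}: essentially all the substance (Zassenhaus's theorem on spinor norms) has already gone into that proposition, and what remains is a short unwinding of the definitions in \S\ref{elementaryconjclass} and \S\ref{sec:spinornorm} that produces, from the hypothesis $\mathrm{m}_P\neq 0$, an integral local conjugate with characteristic polynomial $P$ at every prime.

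First I would dispose of the symplectic case: for $G=\Sp_{2n}$ one has $\mathcal{P}_1(G)=\mathcal{P}(G)$ by definition, so there is nothing to prove. So assume $G=\SO_{2n+1}$ or $G=\SO_{4n}$, and realise $G=\SO(V,q)$ with $V=\Z^{\nG}$ equipped with the standard integral quadratic form of \S\ref{introsmf}; this $q$ is regular in the sense of \S\ref{sec:spinornorm} (it is even non-degenerate when $\nG$ is even). Next I would unwind the definitions. By the definition of $\mathrm{m}_P$ in \S\ref{elementaryconjclass}, $\mathrm{m}_P\neq 0$ means there is $c\in\mathcal{C}(G)$ with $\mathrm{char}(c)=P$ and $\mathrm{m}_c=\mathrm{m}_P\neq 0$. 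Since the set of $c\in\mathcal{C}(G)$ with $\mathrm{m}_c\neq 0$ is contained in $\mathcal{C}_0(G)$ (as recalled at the beginning of \S\ref{sec:spinornorm}), the class $c$ lies in $\mathcal{C}_0(G)$, and hence contains a finite order element $\gamma\in G(\Q)$ whose $G(\A_f)$-conjugacy class meets $G(\Zhat)$; equivalently, for every prime $p$ there is $g_p\in G(\Q_p)$ with $\gamma_p:=g_p\gamma g_p^{-1}\in G(\Z_p)$. As $\gamma$ belongs to the $G(\Qbar)$-conjugacy class $c$, its characteristic polynomial equals $P$, and therefore so does the characteristic polynomial of each $\gamma_p$.

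Finally I would feed this into Proposition~\ref{prop:restr_char_poly}. Write $P(X)=(X-1)^a(X+1)^{2b}Q(X)$, where $Q$ is the product of the cyclotomic factors $\Phi_m$ of $P$ with $m\geq 3$; this decomposition is legitimate because $P\in\mathcal{P}(G)$ is a product of cyclotomic polynomials in which $X+1$ has even multiplicity (Definition~\ref{def:PG}), and then $Q(1)Q(-1)\neq 0$, with $Q(1)$ and $Q(-1)$ \emph{positive} integers since $\Phi_m(\pm 1)>0$ for $m\geq 3$ (recalled just before the statement). Applying Proposition~\ref{prop:restr_char_poly} to $(V,q)$ and $P$, using the elements $\gamma_p\in G(\Z_p)$ just produced, yields: if $b=0$ then $|Q(-1)|=Q(-1)$ is a square, and if $a=0$ then $|Q(1)|=Q(1)$ is a square. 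These are exactly the two conditions in the definition of $\mathcal{P}_1(G)$, so $P\in\mathcal{P}_1(G)$. I do not expect a genuine obstacle here; the only points needing a little care are matching the decomposition $P=(X-1)^a(X+1)^{2b}Q$ with the one used to define $\mathcal{P}_1(G)$, and noting that $Q(\pm1)>0$, so that the conclusion of the proposition (phrased with $|Q(\pm1)|$) is literally the defining condition (phrased with $Q(\pm1)$).
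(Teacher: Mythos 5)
Your proof is correct and follows essentially the same route as the paper, which presents Corollary \ref{cor:constraint} as a direct restatement of Proposition \ref{prop:restr_char_poly} in the mass notation of \S\ref{elementaryconjclass}. Your explicit unwinding (symplectic case trivial by definition of $\Pcal_1$, then $\mathrm{m}_P\neq 0 \Rightarrow c\in\Ccal_0(G) \Rightarrow$ integral local conjugates $\gamma_p\in G(\Zp)$ with characteristic polynomial $P$, then the proposition, with $Q(\pm 1)>0$) is exactly the intended argument.
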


This constraint is very useful, particularly in the even case, as the following
table shows. In practice, we will see that it is almost sharp: see Remark \ref{rem:P1sharp}.
Observe that $\mathcal{P}_1(G)$ is stable under the 
equivalence relation $\sim$ introduced in the end of \S \ref{elementaryconjclass}.

{\small
\begin{table}[htp] \label{tab:card_cc_SO} \centering
\renewcommand*{\arraystretch}{1.2}
\caption{}
\makebox[\linewidth]{
\begin{tabular}{ccccccccccccc}
  $G$ & $\SO_3$ & $\SO_4$ & $\SO_5$ & $\SO_7$ & $\SO_8$ &
  $\SO_9$ & $\SO_{11}$ & $\SO_{12}$ & $\SO_{13}$ & $\SO_{15}$ &
  $\SO_{16}$ & $\SO_{17}$ \\
  $|\Pcal(G) / \sim|$ & $5$ & $12$ & $19$ & $59$ & $92$ & $165$ & $419$ &
  $530$ & $1001$ & $2257$ & $2521$ & $4877$ \\
  $|\Pcal_1(G) / \sim|$ & $3$ & $6$ & $12$ & $34$ & $40$ & $99$ & $244$ &
  $211$ & $598$ & $1339$ & $992$ & $2948$
\end{tabular}}
\end{table}
}

\subsection{Non-existence of level one regular algebraic automorphic cuspidal
representations} \label{nonexisteffortless}
In this paragraph, we prove Theorems  \ref{theomasseseffortless} 
and \ref{theomassesmixed} of the introduction.
To implement the strategy explained in \S \ref{sub:intro_effortless}, 
 taking into account the reduced Formula \eqref{finalformtell}
and Corollary \ref{cor:constraint}, it remains to actually produce, for as many
``small rank'' groups $G$ as possible in the families $(\SO_{2n+1})_{n \geq 1}$, $(\Sp_{2n})_{n \geq 1}$ and 
$(\SO_{4n})_{n \geq 1}$,
sets $\Lambda$ of dominant weights for $G$ satisfying the following properties:
\begin{itemize}\ps
  \item[(P1')]
    For all $\lambda \in \Lambda$ we have $\lambda_{|{\rm Z}(G)}=1$ and $\mathrm{N}^\perp(w(\lambda)) = 0$.\ps\ps
  \item[(P2')] 
    For $\mathcal{P}=\mathcal{P}_1(G)/\sim$, the $\Lambda \times \mathcal{P}$ matrix $({\rm e}_P \, \tr(P; \lambda))_{\lambda \in \Lambda, P \in \mathcal{P}}$ has rank $|\mathcal{P}|$.\ps
\end{itemize}

\noindent Of course (P2') implies $|\Lambda| \geq |\mathcal{P}|$ so our aim is
roughly to produce as many dominant weights satisfying (P1') as possible. 
See also Footnote \ref{foot:p1prime} for an important remark regarding the condition on $\lambda_{|{\rm Z}(G)}$ in (P1').\ps

{\sc Notations:} For $w\geq 0$ an integer we denote by $\Lambda_G(w)$ the (finite) set of all dominant weights $\lambda$ of $G$ such that: $2 w(\lambda)_1 \leq w$, 
$\lambda_{|{\rm Z}(G)}=1$, as well as $\lambda_m \geq 0$ in the case $G={\rm SO}_{2m}$.
For a dominant weight $\lambda$ of $G$, there is a unique effective
element $U(\lambda) \in \mathrm{K}_\infty$ with $\det U(\lambda) = 1$ and such that the multi-set of weights of $U(\lambda)$ 
(as defined in \S \ref{realalgebraic}) coincides with $w(\lambda)$ (viewed of course as the multi-set $\{ w(\lambda)_i\, \, |\, \, 1 \leq i \leq {\rm n}_{\widehat{G}}\}$).
The representation $U(\lambda)$ is multiplicity free, and for any $\pi \in
\Pialg$ having weights $w(\lambda)$ we have $\mathrm{L}(\pi_\infty) =U(\lambda)$.  \ps

In order to produce $\Lambda$ we will first use the inequality \eqref{basicineq} in Corollary
\ref{basiccor}. We choose $w$ big enough, and for every $\lambda \in {\rm \Lambda}_G(w)$, and for all
parameters $\ell \in \frac{1}{4} \Z \cap [1/2, 20]$, we compute
$\widehat{\mathrm{F}_\ell}(i/4\pi)-{\rm B}_{\infty}^{\mathrm{F}_\ell}(U(\lambda), U(\lambda))$.
Whenever we find a negative value (certified using interval arithmetic as
explained in \S \ref{numericaleval}), we know that $\mathrm{N}^\perp(w(\lambda))
= 0$ by Corollary \ref{basiccor} and thus we add $\lambda$ to $\Lambda$. In other words, we choose
$$\Lambda_w^{\rm test} := \{ \lambda \in \Lambda_G(w) \,\,|\,\, \exists \ell \in  \frac{1}{4} \Z \cap [1/2, 20], \, \, \widehat{\mathrm{F}_\ell}(i/4\pi) < {\rm B}_{\infty}^{\mathrm{F}_\ell}(U(\lambda), U(\lambda))\},$$
for the set $\Lambda$. For our purpose this simple method is already very effective.
Table \ref{tab:setlambda} below displays all groups for which it works, {\it i.e.}\ for which, 
using the given $w$, the set $\Lambda_w^{\rm test}$ satisfies the rank condition (P2').

\begin{table}[htp] \label{tab:setlambda} \centering
\renewcommand*{\arraystretch}{1.2}
\caption{}
\makebox[\linewidth]{
\begin{tabular}{cccc}
$G$ & $|\Pcal_1(G)/\sim|$ & $|\Lambda_w^{\rm test}|$ & $w$ 
  \\
$\SO_3$    & $3$    & $3$    & ${}5$ \\
$\Sp_2$    & $3$    & $4$    & ${}14$ \\
$\SO_5$    & $12$   & $44$   & ${}23$ \\
$\SO_4$    & $6$    & $30$   & ${}26$ \\
$\Sp_4$    & $12$   & $28$   & ${}28$ \\
$\SO_7$    & $34$   & $183$  & ${}27$ \\
$\Sp_6$    & $32$   & $97$   & ${}28$ \\
$\SO_9$    & $99$   & $498$  & ${}27$ \\
$\SO_8$    & $40$   & $335$  & ${}28$ \\
$\Sp_8$    & $92$   & $255$  & ${}28$ \\
$\SO_{11}$ & $244$  & $923$  & ${}27$ \\
$\Sp_{10}$ & $219$  & $446$  & ${}28$ \\
$\SO_{13}$ & $598$  & $1294$ & ${}27$ \\
$\SO_{12}$ & $211$  & $1061$ & ${}28$ \\
$\Sp_{12}$ & $530$  & $597$  & ${}28$ \\
$\SO_{15}$ & $1339$ & $1924$ & ${}35$ \\
\end{tabular} }
\end{table}

See \cite{homepage} for the Sage program which checks that each set $\Lambda$ in
the table satisfies (P1') (using Corollary \ref{basiccor} and interval
arithmetic) and inductively computes, for each group $G$ appearing in the table:\ps
\begin{enumerate}
  \item
    all masses $(\mathrm{m}_P)_{P \in \mathcal{P}_1(G)}$,\ps
  \item
    $\mathrm{T}_\mathrm{ell}(G; \lambda)$, $\mathrm{EP}(G; \lambda)$ and
    $\mathrm{N}^\perp(w(\lambda))$ for all dominant weights $\lambda$ in any
    desired range (only limited by computer memory).
\end{enumerate}
\ps
Note that we obtain in particular, independently of the computation of masses in
\cite{ChRe} and \cite{Taibi_dimtrace}, the existence of $27$ self-dual
elements of $\Pialg$ having regular weight and motivic weight $\leq 24$.
In \S \ref{sec:mot2324} we will prove that there is no other such element in
$\Pialg$.
The only case which was obtained in \cite{Taibi_dimtrace} and that we cannot
recover using this much simpler method is $\Sp_{14}$.
The case of $\SO_{15}$ is new. For $G=\Sp_{14}$, considering all dominant weights $\lambda$ in $\Lambda_{G}(90)$, 
we only find a set $\Lambda_{90}^{\rm test}$ of cardinality $974$, whereas we have
$|\Pcal_1(\Sp_{14})/\sim| = 1157$.
Higher motivic weights do not seem to provide any new non-existence results.
Similarly this method does not allow us to conclude either in the case of
$\SO_{17}$.\ps\ps

To go further we use the algorithm explained in \S \ref{paralgo2} to find larger
sets $\Lambda$ satisfying (P1').
More precisely, for a large enough $w$ and each dominant weight $\lambda \in \Lambda_G(w)$ we applied this
algorithm with $U = U(\lambda)$, $\delta=1$, $m=1$ and taking for $\mathbf{S}$
\footnote{We actually have $|S|=26$, explained by the equality $\dim {\rm
S}_{23}({\rm SL}_2(\Z))=2$.}
the set of $27$ known elements of $\Pialg$ having motivic weight $\leq 24$ found above.
As before we try various $\ell \in \frac{1}{4} \Z \cap [1/2, 20]$.
Using this refined method we obtain the following three new cases:
\begin{itemize}
  \item For $G={\rm Sp}_{14}$ we have $|\mathcal{P}(G)/\sim| = 1157$ and we found a subset $\Lambda \subset \Lambda_{G}(36)$ of cardinality $1274$ satisfying (P1') and
    (P2').\ps
  \item The case $G=\SO_{16}$ is easier: we have
    $|\mathcal{P}_1(G)/\sim| = 992$ and we found a subset $\Lambda \subset \Lambda_{G}(28)$ of cardinality
     $1810$ satisfying (P1') and (P2').\ps
  \item For $G={\rm SO}_{17}$ we have $|\Pcal_1(G)| = 2948$ and we found a subset $\Lambda \subset \Lambda_{G}(63)$ of cardinality
     $3477$ satisfying (P1') and (P2'). (Restricting to $\Lambda_G(61)$ is not enough, as it yields a set of dominant
    weights of cardinality $3461$ which does not satisfy (P2')).\ps
\end{itemize}
Again our program checking rigorously that these sets satisfy (P1') and the
inductive computation of masses and of the numbers
$\mathrm{N}^\perp(w(\lambda))$ can be found at \cite{homepage}.
This concludes the proof of Theorem \ref{theomasseseffortless}. \ps

\begin{rema} \label{rem:P1sharp} Let $G$ be as in Theorem \ref{theomasseseffortless}.
An inspection of the masses found above shows
${\rm m}_P \neq 0$ for all $P \in \mathcal{P}_1(G)$, except for $6$ polynomials $P$
in the case $G={\rm SO}_{13}$, and $6$ others in the case $G={\rm SO}_{17}$. This shows that the spinor norms constraints
established in \S \ref{sec:spinornorm} are almost sharp. 
\end{rema}


This second method only gives us these three additional cases for which we can
compute all masses by solving a linear system.
For example the set $\Pcal_1(\Sp_{16})/\sim$ has $2521$ elements, but we are not even
close to producing $\Lambda$'s with enough elements: we were only able to produce a subset $\Lambda \subset \Lambda_{{\rm Sp}_{16}}(116)$ having $1427$ elements
 satisfying (P1').
To overcome this scarcity of dominant weights satisfying (P1'), we computed a lot
of masses for $\Sp_{16}$, namely for all $P$ in a certain subset
$\mathcal{P}(\Sp_{16})_{\mathrm{easy}}$ of $\mathcal{P}(\Sp_{16})$, using the
method of \cite{Taibi_dimtrace}, {\it i.e.}\ by computing orbital integrals directly,
and then we computed the remaining ones by solving a linear system.\ps
To describe the set $\mathcal{P}(\Sp_{16})_{\mathrm{easy}}$ explicitly, for $P
\in \mathcal{P}(\Sp_{16})$ and $p$ a prime  write
\[ P = \prod_m \prod_{k \in S(p,m)} \Phi_{m p^k}^{d(p,m,k)} \]
where the first product is over all integers $m$ coprime to $p$, $S(p,m) \subset
\Z_{\geq 0}$ and $d(p,m,k) \geq 1$.
Then $P \in \mathcal{P}(\Sp_{16})_{\mathrm{easy}}$ if and only if for any prime
number $p$ and any $m$ coprime to $p$ we have $|S(p,m)| \leq 2$ and
\[ \begin{cases}
  0 \in S(p,m) & \text{ if } p>2 \text{ and } |S(p,m)|=2, \\
  0 \in S(p,m) \text{ or } 1 \in S(p,m) & \text{ if } p=2 \text{ and }
  |S(p,m)|=2.
\end{cases} \]
For such a polynomial $P$ the computation using the method
explained {\it loc.\ cit.}\ of the orbital integrals \eqref{eq:locintorg} occurring in
the mass $\mathrm{m}_P$ is purely a combinatorial matter and does not require
any bilinear algebra.
To be more precise, in general computing an orbital integral using the method
loc.\ cit.\ involves enumerating totally isotropic subspaces stable under a
given unipotent automorphism $\gamma$ in (possibly degenerate) symplectic or
hermitian spaces $(V, \langle \cdot, \cdot \rangle)$ over a finite field,
enumerating isomorphisms between such triples $(V, \langle \cdot, \cdot \rangle,
\gamma)$, and/or computing the complete invariants attached by Wall \cite{Wall}
to isomorphism classes of such triples with $\langle \cdot, \cdot \rangle$
non-degenerate; we restrict to cases where no such computation is necessary.
Although these easier cases have the obvious benefit of being much easier to
implement, the second advantage here is that these orbital integrals are computed (by a
computer) in a matter of seconds.
In contrast, there are relevant orbital integrals for $\Sp_{16}$ for which the
implementation of \cite{Taibi_dimtrace} does not terminate in any reasonable
time.\ps

Denoting $\mathcal{P}(\Sp_{16}) = \mathcal{P}(\Sp_{16})_\mathrm{easy} \sqcup
\mathcal{P}(\Sp_{16})_\mathrm{hard}$, we have
$|\mathcal{P}(\Sp_{16})_\mathrm{hard}/\sim| = 766$.
We found a subset of dominant weights $\Lambda \subset \Lambda_G(36)$ for
$G=\Sp_{16}$ of cardinality $1086$ satisfying (P1') and the analogue of (P2') for
$\mathcal{P}(\Sp_{16})_\mathrm{hard}$.
This concludes the proof of Theorem \ref{theomassesmixed}.

\section{Classification results in motivic weights $23$ and $24$}\label{sec:mot2324}

This section is in the natural continuation of Sect. \ref{explicitformula}, of which we shall use freely the notations. 
We have decided to postpone it here because, in a few places, we will use below existence or inexistence results of certain self-dual regular 
elements of $\Pi_{\rm alg}$, results which have been proved in Sect. \ref{par:effortless}.\ps\ps

\subsection{Motivic weight $23$} \label{par:mot23}We now prove Theorem \ref{thm23} along with the following supplementary result. 

\ps

\begin{prop} \label{prop:23_mult}
  Let $U$ be an effective element of ${\rm K}_{\infty}^{\leq 23}$ containing
  $\mathrm{I}_{23}$ with multiplicity $\geq 2$.
  Let $T$ be the subset of elements $\pi$ in $\Pialg$ with ${\rm
  L}(\pi_{\infty})=U$.
  \begin{enumerate}\ps
    \item
      If $|T| \geq 2$ then we have  $U = {\rm I}_1 + {\rm I}_7 + {\rm I}_{13} +
      {\rm I}_{17} + {\rm I}_{21} +\, 2\, \, {\rm I}_{23}$ and $T = \{ \pi,
      \pi^\vee \}$ for some non-self-dual $\pi$.\ps
    \item
      If $T=\{\pi\}$ then $U$ belongs to an explicit set of $181$ elements and
      $\pi$ is of symplectic type.
  \end{enumerate}
\end{prop}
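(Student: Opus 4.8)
The plan is to run, for this particular family of Archimedean parameters, the machine set up in \S \ref{explicitformula} and illustrated in \S \ref{par:poids22}: an effective finiteness step from Corollary \ref{basiccor}, followed by systematic applications of Algorithm \ref{paralgo2} in which the ``known'' input $\mathbf{S}$ is taken to be the set of the $27$ self-dual regular elements of $\Pi_{\rm alg}$ of motivic weight $\leq 24$ whose existence was obtained in \S \ref{par:effortless} (which is why the present section comes after that one).

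First I would record a few structural reductions. Since $w(\pi) = 23$ is odd, every $U$ as in the statement has the shape $U = \bigoplus_{v} m_v\, \mathrm{I}_v$ with $v$ ranging over the odd integers $1 \leq v \leq 23$ and $m_v \geq 0$, the hypothesis being $m_{23} \geq 2$; note also that $\det U = \prod_v \varepsilon_{\C/\R}^{(v+1) m_v} = 1$ holds automatically. Because $U^\vee = U$, the set $T$ is stable under $\pi \mapsto \pi^\vee$; in particular $T = \{\pi\}$ forces $\pi = \pi^\vee$, and then $\pi$ is symplectic by the discussion of \S \ref{regselfpar} (using $w(\pi)$ odd), together with a direct check that none of the $U$'s surviving below is the Archimedean parameter of an orthogonal representation.

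Now enumeration and elimination. Fix $\ell$ so that $\mathrm{B}_\infty^{\mathrm{F}_\ell}$ is positive definite on $\mathrm{K}_\infty^{\leq 23}$ (Remark \ref{rema:qfminim}). By inequality \eqref{basicineq} of Corollary \ref{basiccor}, any $U$ with $T \neq \emptyset$ satisfies $\mathrm{B}_\infty^{\mathrm{F}_\ell}(U, U) \leq \widehat{\mathrm{F}_\ell}(i/4\pi)$, and the Fincke--Pohst routine of Remark \ref{rema:qfminim} enumerates the finite set $\mathcal{U}$ of all such $U$ with $m_{23} \geq 2$, so that every $U$ with $T \neq \emptyset$ lies in $\mathcal{U}$. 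For each $U \in \mathcal{U}$ I would then bound $|T|$: by inequality \eqref{taibicrit} with $m = 2$ whenever $\mathrm{B}_\infty^{\mathrm{F}_\ell}(U, U) > \tfrac12 \widehat{\mathrm{F}_\ell}(i/4\pi)$, and by Algorithm \ref{paralgo2} with $(m, \delta) = (2, 0)$ otherwise. The expectation is that this gives $|T| \leq 1$ for every $U \in \mathcal{U}$ except the single element $U_0 := \mathrm{I}_1 + \mathrm{I}_7 + \mathrm{I}_{13} + \mathrm{I}_{17} + \mathrm{I}_{21} + 2\,\mathrm{I}_{23}$. For $U_0$ I would run Algorithm \ref{paralgo2} twice more: with $(m, \delta) = (3, 0)$, which should give $|T_{U_0}| \leq 2$, and with $(m, \delta) = (1, 1)$, which should exclude the existence of any self-dual $\pi$ with $\mathrm{L}(\pi_\infty) = U_0$. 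Then $T_{U_0}$ has at most two elements, none self-dual, and is stable under $\pi \mapsto \pi^\vee$, hence is empty or equal to $\{\pi, \pi^\vee\}$ with $\pi$ non-self-dual: this is (1). For (2): if $T = \{\pi\}$ then $U \neq U_0$ (since $T_{U_0}$ has no self-dual element, so $|T_{U_0}|$ is even), $\pi = \pi^\vee$, and $\pi$ is symplectic by the reduction above; running Algorithm \ref{paralgo2} with $(m, \delta) = (1, 0)$ on the $U \in \mathcal{U} \setminus \{U_0\}$ should then prove $T = \emptyset$ for all of them except the $181$ on which no certificate is found, and these $181$ elements are the explicit list.

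The conceptual content is light once \S \ref{explicitformula} is in hand; the real work, and the main obstacle, is computational: for each of the (many) candidates in $\mathcal{U}$ one must locate a parameter $\ell \in \frac14 \Z \cap [\tfrac12, 20]$ and a subset $I \subset \{1\} \sqcup S$ for which Algorithm \ref{paralgo2} terminates, and then certify the resulting inequality $\beta_{\mathcal{Q}}^{\mathrm{F}_\ell}(\underline{t}, \underline{t}) < 0$ with interval arithmetic as in \S \ref{numericaleval}. The genuinely resistant case --- not a gap but the sharp limit of the method --- is $U_0$: these techniques neither eliminate it nor decide whether $|T_{U_0}|$ is $0$ or $2$, which is exactly what Theorem \ref{thm23GRH} addresses under (GRH). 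One should also keep in mind that the number $181$ and the explicit list are determined by which certificates were actually found, so (2) is to be read as ``these candidates survive our elimination'', not as an intrinsic count.
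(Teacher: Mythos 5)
Your proposal follows essentially the same strategy as the paper's proof: enumerate the candidate parameters $U$ using Corollary \ref{basiccor} and the Fincke--Pohst routine, bound $|T|$ via inequality \eqref{taibicrit} or Algorithm \ref{paralgo2}, single out $U_0 = \mathrm{I}_1 + \mathrm{I}_7 + \mathrm{I}_{13} + \mathrm{I}_{17} + \mathrm{I}_{21} + 2\,\mathrm{I}_{23}$, and characterize the survivors. The organization differs a little (the paper first builds the small set $\mathcal{U}_2$ of $U$ with $\mathrm{B}_\infty^{F_\ell}(U,U) \leq \frac{1}{2}\widehat{F_\ell}(i/4\pi)$, then a larger $\mathcal{U}_1$ for the $\delta=1$, $m=1$ pass; the paper also gets $|T_{U_0}| \leq 2$ directly from $\mathrm{B}_\infty^{F_\ell}(U_0,U_0) > \frac{1}{3}\widehat{F_\ell}(i/4\pi)$ rather than from an $m=3$ algorithm run), but these are cosmetic.

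Two small slips worth fixing. First, in the final pass for assertion (2) you run Algorithm \ref{paralgo2} with $(m,\delta)=(1,0)$; since you have already established that any $\pi$ with $T=\{\pi\}$ is self-dual, you should (and the paper does) use $\delta=1$, which by Remark \ref{remasdequi} strictly strengthens the criterion. With $\delta=0$ the survivor set might be strictly larger than the $181$-element list, so you would only prove a weaker containment. Second, the claim that ``$\pi$ is symplectic'' needs more than $w(\pi)$ odd: the criterion of \S \ref{regselfpar} requires some weight of $\pi$ to have multiplicity $1$. Since $U$ contains $\mathrm{I}_{23}$ with multiplicity $\geq 2$, this is not automatic; the paper verifies it by inspecting the output list (its survivors all contain $\mathrm{I}_w$ for some $w \in \{1,3,5\}$ with multiplicity exactly one, hence admit no nondegenerate symmetric pairing, so the symplectic alternative of \cite[Thm.\ 1.4.2]{Arthur_book} applies). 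Your phrase ``a direct check that none of the $U$'s surviving below is the Archimedean parameter of an orthogonal representation'' points in the right direction but should be made precise along these lines.

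Also note that the number $181$ is not a raw algorithm output in the paper: the $\delta=1$, $m=1$ pass over $\mathcal{U}_1$ leaves $187$ resistant $U$, six of which are multiplicity-free of dimension $\leq 10$ and are discarded using the computations of \S \ref{par:effortless}. Because your $\mathcal{U}$ is restricted from the outset to $m_{23} \geq 2$, those six never enter your enumeration, so you would indeed land on $181$ directly; but it is worth being aware that the elimination of those six (for the purposes of Theorem \ref{thm23}) genuinely relies on Section \ref{par:effortless}, not just on the algorithm.
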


The set of $181$ possible $U$ mentioned above can be found in \cite{homepage}. They all satisfy $14 \leq \dim U \leq 42$.

\begin{proof}[Proof of Theorem \ref{thm23} and Proposition \ref{prop:23_mult}]
  For $\ell = 9.74$ the restriction of the symmetric bilinear form
  $\widehat{{\rm F}_{\ell}}(i/4 \pi)^{-1} {\rm B}_{\infty}^{{\rm F}_\ell}$ to
  ${\rm K}_\infty^{\leq 23}$ is positive definite.
  As explained in Remark \ref{rema:qfminim}, using interval arithmetic we obtain
  rational lower bounds (we take them in $10^{-6} \Z$) for the coefficients of
  its Gram matrix in the basis ${\rm I}_1,\dots,{\rm I}_{23}$.
  Applying the Fincke-Pohst algorithm, we obtain the set $\mathcal{U}_2$ of all
  $265$ effective elements $U$ in ${\rm K}_{\infty}^{\leq 23}$ containing ${\rm
  I}_{23}$ and satisfying ${\rm B}_\infty^{{\rm F}_\ell}(U, U) \leq
  \widehat{{\rm F}_\ell}(i/4 \pi)/2$.
  By Corollary \ref{basiccor}, $\mathcal{U}_2$ contains all the elements $U$
  such that there exist two distinct elements $\pi_1, \pi_2$ in $\Pi_{\rm alg}$
  of motivic weight $23$ and with ${\rm L}((\pi_1)_\infty) = {\rm L}((\pi_2)_\infty) = U$.
  \ps\ps

  For each $U$ in $\mathcal{U}_2$, we systematically applied Algorithm
  \ref{paralgo2} to $U$, $\delta=0$, $m=2$ and to the set ${\bf S}$ of $27$
  known elements of $\Pialg$ having motivic weight $\leq 24$, and various
  $\ell$.
  For all but one $U$, namely the one of assertion (1), it led to a
  contradiction with Inequality \eqref{eq:ineq_CF}.
  Let us be more precise about the choices of $\ell$ and of the subset $S'
  \subset S$ that we can make {\it a posteriori} in order to reach these
  contradictions more quickly (see also the source code \cite{homepage} for a
  working sheet).
  We first replace for the rest of the proof the ${\bf S}$ above by its subset
  whose elements have motivic weight $\leq 23$.
  We now have $|{\bf S}|=24$ and $|S|=23$.
  If we apply Algorithm \ref{paralgo2} with all $\ell$ in $[3,12]\cap \Z$ and
  all subsets $S'\subset S$ with $1 \leq |S'|\leq 2$, we obtain in a few seconds
  (on a personal computer) a contradiction for all but $12$ elements of
  $\mathcal{U}_2$.
  Using then all $\ell$ in $[7,9.5]\cap \frac{1}{2}\Z$ and all $S' \subset S$
  with $3 \leq |S'| \leq 4$ for those $12$ elements, the algorithm finds again
  in a few seconds a contradiction in all but $6$ cases.
  Two of these six are eliminated in a minute about using $\ell=11$ and all the
  $33649$ subsets $S'$ with $|S'|=5$.
  The remaining $4$ elements have the form $U=U'+{\rm I}_{21}+\,2\, {\rm
  I}_{23}$  with $U'$ in the following list: ${\rm I}_3 + {\rm I}_7 + {\rm
  I}_{13}+ {\rm I}_{17}, \,\,\,  {\rm I}_1 + {\rm I}_5 + {\rm I}_9 + {\rm
  I}_{11}+ {\rm I}_{15} + {\rm I}_{17} + {\rm I}_{19},\,\,\, {\rm I}_1 + {\rm
  I}_7 + {\rm I}_{11}+ {\rm I}_{15} + {\rm I}_{19},\,\,\, {\rm I}_1 + {\rm I}_7
  + {\rm I}_{13}+ {\rm I}_{17}$.
  In the case $U'={\rm I}_1 + {\rm I}_7 + {\rm I}_{11}+ {\rm I}_{15} + {\rm I}_{19}$ we use 
$\ell=8.75$ and all the $100947$ subsets $S'$ with $|S'|=6$. To give an example, the algorithm produces in about $2$ minutes a linear combination close to\par
{\tiny $$x \,\,= \,\,0.860\,\frac{1}{\sqrt{2}} (\pi_1 + \pi_2) + 0.0834 \,\,1\,\, + 0.150 \,\, \Delta_{11} \,\,+\,\, 0.108\,\, \Delta_{15}\,\, + \,\,0.335\,\, \Delta_{19,7}\,\, +\,\, 0.172\,\, \Delta_{23,7}\,\, +\,\, 0.280 \,\,\Delta_{23,15,7}$$
}
\par \noindent with ${\rm C}^{{\rm F}_{8.75}}(x,x) = -0.0023$ up to $10^{-4}$. In the case $U'={\rm I}_1 + {\rm I}_5 + {\rm I}_9 + {\rm I}_{11}+ {\rm I}_{15} + {\rm I}_{17} + {\rm I}_{19}$, we use similarly $\ell=11.75$ and $|S'|=6$. The case 
$U'={\rm I}_3 + {\rm I}_7 + {\rm I}_{13}+ {\rm I}_{17}$ is quite harder to discard. After many tries, we found a contradiction using $\ell=10.25$ and a certain $11$ element subset $S'$ of $S$: see the source code in \cite{homepage} for the details. So far, we have thus proved the following:\ps\ps
  
(a)  For any $U \neq  {\rm I}_1 + {\rm I}_7 + {\rm I}_{13} + {\rm I}_{17}
      + {\rm I}_{21} +\, 2\, \, {\rm I}_{23}$ there is at most one element $\pi$
  of $\Pialg$ with motivic weight $23$ and $\rmL(\pi_\infty) = U$. In particular any such $\pi$ is self-dual.\ps\ps
  \noindent Despite our efforts, we could not find a contradiction in the case
  of the last element $U = {\rm I}_1 + {\rm I}_7 + {\rm I}_{13} + {\rm I}_{17} +
  {\rm I}_{21} +\, 2\, \, {\rm I}_{23}$.
  We have however ${\rm B}_\infty^{F_\ell}(U, U) > \widehat{{\rm F}_\ell}(i/4
  \pi)/3$ for $\ell = 9.74$.
  By Corollary \ref{basiccor}, this shows: \ps\ps
  
  (b) For $U={\rm I}_1 + {\rm I}_7 + {\rm I}_{13} + {\rm I}_{17} + {\rm I}_{21}
  +\, 2\, \, {\rm I}_{23}$, there are at most $2$ elements $\pi$ of $\Pialg$ of
  motivic weight $23$ and with ${\rm L}(\pi_\infty)=U$.\ps\ps
      
  Note that we have proved assertion {\it (1)} except for the non self-duality
  assertion.
  To go further, we determine the set of effective elements $U$ in ${\rm K}_{\infty}^{\leq 23}$ containing ${\rm I}_{23}$
  and satisfying ${\rm B}_\infty^{{\rm F}_\ell}(U, U) \leq \widehat{{\rm F}_\ell}(i/ \pi)$ for $\ell=9.74$. For this we proceed as in the first paragraph of the proof and obtain an explicit set $\mathcal{U}_1$ with $12230$ elements. By Corollary \ref{basiccor}, $\mathcal{U}_1$ contains all the elements $U$
  such that there exists $\pi$ in $\Pi_{\rm alg}$ of motivic weigth $23$ with
  ${\rm L}(\pi_\infty) = U$. For each $U$ in $\mathcal{U}_1$ we applied
  Algorithm \ref{paralgo2} to $U$, $\delta=1$ (we restrict to self-dual elements), $m=1$, 
to the set ${\bf S}$ of all $27$ known elements of $\Pialg$ having motivic weight $\leq 24$,
for various choices of $\ell$ and subsets $S'\subset S$. We obtained contradictions with Inequality \eqref{eq:ineq_CF} for all but
 $187$ elements of $\mathcal{U}_2$. We refer to \cite{homepage} for an explicit list of $12293-187=12106$ triples $(\ell,S',\underline{t})$ leading 
 to a contradiction in each case (checked using interval arithmetic). It would be tedious to explain here in details which $\ell$ and $S'$ we did choose to find these triples: this is unnecessary anyway as all the necessary information for our proof is contained in the aforementioned list! We nevertheless refer to \cite{homepage} for the log file of our computations (which took several months).  \ps\ps
%
 Among the $187$ aforementioned ``resistant'' elements of $\mathcal{U}_2$, six of them are multiplicity free:
 ${\rm I}_3+{\rm I}_{11}+{\rm I}_{17}+{\rm I}_{21}+{\rm I}_{23}, {\rm I}_7+{\rm I}_{15}+{\rm I}_{21}+{\rm I}_{23}, {\rm I}_3+{\rm I}_{9}+{\rm I}_{15}+{\rm I}_{21}+{\rm I}_{23}, {\rm I}_1+{\rm I}_{9}+{\rm I}_{17}+{\rm I}_{21}+{\rm I}_{23},  {\rm I}_5+{\rm I}_{13}+{\rm I}_{17}+{\rm I}_{21}+{\rm I}_{23}$ and ${\rm I}_5+{\rm I}_{13}+{\rm I}_{19}+{\rm I}_{23}$. These six regular weights have dimension $\leq 10$, and we know from the results of  Sect. \ref{par:effortless} that there is no self-dual $\pi$ with these Archimedean components. An inspection of the list $\mathcal{V}$ of remaining $181$ elements reveals that for any $U$ in $\mathcal{V}$:
\ps
\begin{itemize} 
 \item[(i)] $U$ contains ${\rm I}_{23}$ with multiplicity $\geq 2$, \ps
 \item[(ii)] $U$ contains ${\rm I}_w$ for some $w \in \{1,3,5\}$,\ps
 \item[(iii)] for any $w \in \{1,3,5,7,9\}$, the multiplicity of ${\rm I}_w$ in $U$ is at most one.\ps
 \end{itemize}
  \ps
  Assertion (i) concludes the proof of Theorem \ref{thm23}.
  Assertion (b) above and the fact that ${\rm I}_1 + {\rm I}_7 + {\rm I}_{13} +
  {\rm I}_{17}+ {\rm I}_{21} +\, 2\, \, {\rm I}_{23}$ is not in $\mathcal{V}$
  imply assertion {\it (1)} of Proposition \ref{prop:23_mult}.
  By (ii) and (iii) above, for any $U$ in $\mathcal{V}$ there is some ${\rm
  I}_w$ which occurs in $U$ with multiplicity $1$.
  In particular, such a $U$ has no ${\rm W}_\R$-equivariant nondegenerate
  symmetric pairing.
  This shows that any self-dual $\pi$ with ${\rm L}(\pi_\infty)=U$ is of
  symplectic type by \cite[Theorem 1.4.2]{Arthur_book}, and proves assertion
  {\it (2)} of Proposition \ref{prop:23_mult}.
\end{proof}

\begin{rema} For a given $(U,\delta,m)$, it seems hard to us to guess {\it a priori} what will be the best choices of $\ell$ and ${\bf S}$ (or $S'$) 
to plug into Algorithm \ref{paralgo2} for the purpose of reaching a contradiction with Inequality \eqref{eq:ineq_CF}. Athough the authors have developped their own intuition and artisanal methods to find good $\ell$ and ${\bf S}$, they are mostly based on numerical experiments. 
In the same vein, in the cases where we did not find any contradiction, it seems difficult to prove that there cannot be any, as it is always possible to let $\ell$ vary and increase the size of ${\bf S}$. However, based on the large number of experiments we made, we find it likely that it is not possible to discard any of the elements of the remaining list $\mathcal{V}$ by changing $\ell$ or ${\bf S}$.
\end{rema}

\subsection{Motivic weight $24$} \label{par:mot24}
The following lemma is the first step in the proof of Theorem \ref{thm24}.

\begin{lemm}\label{lem:regmot24}
  Let $n \geq 13$.
  Let $\pi$ be a self-dual level $1$ cuspidal algebraic regular automorphic
  representation of $\PGL_n$ over $\Q$ of motivic weight $24$.
  Then $\rmL(\pi_\infty)$ belongs to the following list:
  \begin{itemize}
    \item $1 \,+\, {\rm I}_6 \,+\, {\rm I}_8 \,+\, {\rm I}_{14} \,+\, {\rm I}_{20} \,+\, {\rm I}_{22}
      \,+\, {\rm I}_{24}$ for $n=13$,
    \item $1 \,+\, {\rm I}_6 \,+\, {\rm I}_{10} \,+\, {\rm I}_{16} \,+\, {\rm I}_{20} \,+\, {\rm I}_{22}
      \,+\, {\rm I}_{24}$ for $n=13$,
    \item ${\rm I}_2 \,+\, {\rm I}_4 \,+\, {\rm I}_{12} \,+\, {\rm I}_{14} \,+\, {\rm I}_{18} \,+\,
      {\rm I}_{20} \,+\, {\rm I}_{22} \,+\, {\rm I}_{24}$ for $n=16$,
    \item ${\rm I}_2 \,+\, {\rm I}_8 \,+\, {\rm I}_{12} \,+\, {\rm I}_{14} \,+\, {\rm I}_{18} \,+\,
      {\rm I}_{20} \,+\, {\rm I}_{22} \,+\, {\rm I}_{24}$ for $n=16$.
  \end{itemize}
  In particular we have $n \leq 16$.
\end{lemm}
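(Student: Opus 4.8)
The plan is to run the machinery of Section~\ref{explicitformula} exactly as in the weight~$22$ and weight~$23$ classifications, feeding in as ``known'' data the $27$ self-dual regular elements of $\Pi_{\rm alg}$ of motivic weight $\leq 24$ produced in Section~\ref{par:effortless}. Write $U = {\rm L}(\pi_\infty)$. Since $\pi$ is algebraic of motivic weight $24$ we have $U \in {\rm K}_\infty^{\leq 24}$ with ${\rm I}_{24}$ occurring in $U$; since $\pi$ is regular, $U$ is multiplicity free (\S\ref{regselfpar}); and since $\pi$ has trivial central character, $\det U = 1$. A multiplicity-free effective element of ${\rm K}_\infty^{\leq 24}$ is a sum of distinct elements of the $14$-element set $\{1, \varepsilon_{\C/\R}, {\rm I}_2, {\rm I}_4, \dots, {\rm I}_{24}\}$, so there are only finitely many possibilities for $U$; its dimension $n$ satisfies $n \leq 26$, hence $13 \leq n \leq 26$ by hypothesis; and the condition $\det U = 1$ is a parity condition on the constituents (using $\det {\rm I}_{2k} = \varepsilon_{\C/\R}$). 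The first step is therefore to enumerate by computer this explicit, if long, list of candidate $U$'s. (Each such $U$ is in fact orthogonal, by \S\ref{regselfpar} and \cite[Thm. 1.4.2]{Arthur_book}, but I do not expect to need this.)

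Next I would prune the list using the basic inequality \eqref{basicineq} of Corollary~\ref{basiccor}: existence of $\pi$ with ${\rm L}(\pi_\infty) = U$ forces ${\rm B}_\infty^{{\rm F}_\ell}(U,U) \leq \widehat{{\rm F}_\ell}(i/4\pi) = \tfrac{8}{\pi^2}\ell$ for every $\ell > 0$. Trying $\ell$ in $\tfrac14\Z \cap [\tfrac12, 20]$ and certifying with interval arithmetic as in \S\ref{numericaleval}, this kills all the high-dimensional candidates (for which ${\rm B}_\infty^{{\rm F}_\ell}(U,U)$ grows like $(\dim U)^2$ while $\widehat{{\rm F}_\ell}(i/4\pi)$ is bounded) together with most of the small-dimensional ones, leaving a short list. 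For each survivor $U$ I would then run Algorithm~\ref{paralgo2} with $\delta = 1$, $m = 1$, and $\mathbf{S}$ the set of $27$ known self-dual regular elements of $\Pi_{\rm alg}$ of motivic weight $\leq 24$, over various $\ell$ and subsets $S' \subset S$: whenever it returns $(\ell, S', \underline{t})$ with $\beta_\mathcal{Q}^{{\rm F}_\ell}(\underline{t},\underline{t}) < 0$, the discussion of \S\ref{paralgo2} produces $x \in \R\,\Pi_{\rm alg}$ with ${\rm C}^{{\rm F}_\ell}(x,x) < 0$, contradicting Proposition~\ref{prop:basicinequality}, so no such $\pi$ exists. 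As in the weight-$23$ case the triple $(\ell, S', \underline{t})$ is found by numerical search while only the final inequality is checked rigorously, and the data would be stored in \cite{homepage}. The expected (and asserted) outcome is that this eliminates every candidate except the four listed ones — two of dimension $13$ and two of dimension $16$ — which gives $n \leq 16$.

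The main obstacle will be the same one that makes the weight-$23$ argument heavy: there is no a priori recipe for the good parameters $(\ell, \mathbf{S}, S', \underline{t})$, so the elimination is a long and largely experimental computer search (the paper reports several months for the analogous weight-$23$ computation), and one must be reasonably confident that the four resisting weights genuinely cannot be excluded by this method rather than merely having been missed. This is tolerable here precisely because those four are finitely many explicit weights of dimension $\leq 16$, hence are handled separately in the proof of Theorem~\ref{thm24} using the exact values of ${\rm N}^\perp(w(\lambda))$ furnished by Theorems~\ref{theomasseseffortless} and~\ref{theomassesmixed}. A smaller but genuine point to get right is the bookkeeping in the enumeration of candidate $U$ — the $\det U = 1$ parity constraint and the requirement that the motivic weight be \emph{exactly} $24$, i.e. ${\rm I}_{24} \subset U$ — so that no possibility is overlooked.
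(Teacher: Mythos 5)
Your overall plan is the right one and matches the paper's strategy — reduce to a finite enumeration of candidate effective, multiplicity-free $U\in{\rm K}_\infty^{\leq 24}$ containing ${\rm I}_{24}$ with $\det U=1$ and $\dim U>12$, then kill each candidate by Algorithm~\ref{paralgo2} with $\delta=1$, $m=1$. There is one concrete point, however, where you go wrong, and one smaller difference of parameters.

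The gap is your parenthetical dismissal: \emph{``Each such $U$ is in fact orthogonal\dots but I do not expect to need this.''} The paper \emph{does} need it, and in a sharper form than you state. Since $\pi$ is self-dual, regular, of even motivic weight, it is orthogonal (last paragraph of \S\ref{regselfpar}); then Arthur's \cite[Thm.~1.5.3]{Arthur_book} forces $\epsilon(\pi)=+1$, and since $\epsilon(\pi)=\varepsilon(U)$ (formula~\eqref{eqn:epsilon=epsilon_infty}) this gives the extra \emph{a~priori} constraint $\varepsilon(U)=+1$. This constraint is not visible to the explicit-formula machinery — the bilinear form ${\rm C}^F$ only sees $\varepsilon(U_i\cdot U_j)$ through the $e^\perp$ term, not the single-variable sign $\varepsilon(U)$ — so nothing in Proposition~\ref{prop:basicinequality} or Algorithm~\ref{paralgo2} is guaranteed to eliminate a candidate $U$ with $\varepsilon(U)=-1$. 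With the constraint, the enumeration already collapses to a clean list of $1260$ elements, all of which except the stated four are then eliminated by the algorithm; without it you roughly double the candidate pool and have no justification that the extra candidates can be removed. You should keep the constraint, and your upper bound $\dim U\leq 26$ also tightens to $25$ once $\det U=1$ is enforced (the maximal multiplicity-free $U$, namely $1+\varepsilon_{\C/\R}+{\rm I}_2+\cdots+{\rm I}_{24}$, has determinant $\varepsilon_{\C/\R}$).

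A smaller difference: the paper takes $\mathbf S$ to be only the $15$ known elements of $\Pialg$ of motivic weight $\leq 23$ and dimension $\leq 4$, with $\ell\in[3,7]\cap\tfrac12\Z$ and $|S'|\leq 7$, rather than all $27$ known representations of motivic weight $\leq 24$. Your larger $\mathbf S$ would still work but blows up the number of subsets $S'$ to search; for this motivic weight the small set already suffices, which is part of why the weight~$24$ case is much lighter computationally than the weight~$23$ one.
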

\begin{proof}

  Let $\pi$ be as in the lemma and set $U={\rm L}(\pi_\infty)$. This is a multiplicity free effective element of ${\rm K}_\infty^{\leq 24}$ containing ${\rm I}_{24}$  and with $\det U =1$. There are only finitely many such elements, with $\dim U \leq 25$ in all cases.   Moreover, $\pi$ is orthogonal 
  as $w(\pi)$ is even (see the last paragraph of \S \ref{regselfpar}). 
  By \cite[Theorem 1.5.3]{Arthur_book} loc.\ cit.\ we have thus $\epsilon(\pi) = +1$.
  Since $\epsilon(\pi) = \varepsilon(U)$ this gives an extra constraint on $U$. \ps\ps
  
  A straightforward computer-aided enumeration gives us the list of the $1260$
  effective multiplicity free elements $U$ in ${\rm K}_\infty^{\leq 24}$ containing ${\rm I}_{24}$, 
  and satisfying $\dim U>12$, $\det U = 1$ and $\varepsilon(U)  = 1$. 
  We applied Algorithm \ref{paralgo2} to each such $U$, $\delta=1$, $m=1$, 
  to the set ${\bf S}$ of $15$ elements of $\Pialg$ having motivic weight $\leq 23$ and dimension
  $\leq 4$.  Except in the four cases given in the statement, we obtained a contradiction with
  Inequality \eqref{eq:ineq_CF}. It is actually enough to choose $\ell$ in 
  $[3,7] \cap \frac{1}{2}\Z$ and to restrict to the subsets $S' \subset S$ with $|S'|\leq 7$.
  We refer to \cite{homepage} for an explicit list of $1256$ triples $(\ell,S',\underline{t})$ leading 
 to a contradiction in each case (checked using interval arithmetic). 
\end{proof}

\begin{proof} (Of Theorem \ref{thm24}) Using Theorem \ref{theomasseseffortless} we may compute, for 
any effective, multiplicity free, element $U \in {\rm K}_\infty^{\leq 24}$ containing ${\rm I}_{24}$, and with $\dim U \leq 16$,
the number of self-dual $\pi$ in $\Pi_{\rm alg}$ with ${\rm L}(\pi_\infty)=U$ (this uses ${\rm SO}_n$ for $n\leq 16$ and ${\rm Sp}_{2n}$ for $2n \leq 8$). Remarkably, we find only three such $\pi$, namely the ones in the statement of Theorem \ref{thm24}. We conclude by Lemma \ref{lem:regmot24}.
\end{proof}


\subsection{Classification results conditional to ${\rm (GRH)}$} \label{par:grh}
By ${\rm (GRH)}$ we shall mean here: for all $\pi,\pi' \in \Pi_{\rm alg}$, 
the zeros $s \in \C$ of $\Lambda(s,\pi \times \pi')$ satisfy $\Re\, s\, \, = \frac{1}{2}$. 
Assuming {\rm (GRH)}, Proposition \ref{prop:basicinequality} holds more generally for any test 
function $F$ satisfying $F(x) \geq 0$ and $\widehat{F}(\xi) \geq 0$ 
for all $x$ and $\xi$ in $\R$ (a condition weaker than ${\rm (POS)}$).
This condition is trivially satisfied by the function ${\rm G}_{\ell}(x)={\rm
g}(x/\ell)$, where ${\rm g}$ is the function recalled in \S \ref{numericaleval}
and $\ell$ is a positive real number (these are the classical functions of
Odlyzko ``under (GRH)''). 
In order to apply Algorithms \ref{paralgo} and \ref{paralgo2} with ${\rm G}_{\ell}$ instead of ${\rm F}_{\ell}$, we need the following variant 
of \cite[Prop. 9.3.18]{CheLan}. We set $\phi(z) = \frac{1}{2} \psi(\frac{z+1}{2})-\frac{1}{2} \psi(\frac{z}{2})$ and
${\rm r}(z)=2 \pi^{2} \frac{e^{-z}}{(z^{2}+\pi^{2})^{2}}$.

\begin{prop}\label{explGRH} Let $\ell>0$ be a real number. For any integer $w\geq 0$ we have
{\small
$${\rm J}_{{\rm G}_\ell}({\rm I}_w) = \log \, 2\pi  \,- \,\Re \,\psi(b+\frac{i \pi}{\ell}) \,+\,\frac{1}{\pi}\, \Im \,\psi(b+\frac{i \pi}{\ell})\,-\, \frac{1}{\ell} \Re\, \psi'(b+\frac{i \pi}{\ell}) \, + \,s_1(b,\ell),$$
}
with $b=\frac{1+w}{2}$ and $s_1(b,\ell)= \ell \sum_{n=0}^\infty {\rm r}(\ell (b+n))$. Moreover, we also have
{\small $${\rm J}_{{\rm G}_\ell}(1-\varepsilon_{\C/\R})=  \Re\,\phi\,(\frac{1}{2}+\frac{i \pi}{\ell})\, -\,  \frac{1}{\pi}\, \Im \,\phi(\frac{1}{2}+\frac{i \pi}{\ell})\, +\, \frac{1}{\ell}\, \Re \,\phi'(\frac{1}{2}+\frac{i \pi}{\ell})\,+\, \,s_2(\ell)$$
}
\par \noindent with $s_2(\ell)= \ell \sum_{n=0}^\infty (-1)^{n} {\rm r}(\ell(n+1/2))$, as well as $\widehat{{\rm G}_{\ell}}(0) = 8 \ell/\pi^{2}$ and
$$\widehat{{\rm G}_{\ell}}(i/4\pi)= 4\pi^{2}\ell \frac{1+{\rm cosh}(\ell/2)}{(\ell^2/4+\pi^{2})^{2}}.$$ \end{prop}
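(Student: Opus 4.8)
The plan is to follow the proof of \cite[Prop.\ 9.3.18]{CheLan} essentially verbatim, the only change being that the weight $1/{\rm cosh}(x/2)$ is absent here, which shortens the computation. First I would record the needed $\Gamma$-factor derivatives: the axioms of \S \ref{realalgebraic} give $\frac{\Gamma'}{\Gamma}(s,{\rm I}_w) = \psi(s+\tfrac w2) - \log 2\pi$ for $w \geq 0$, and, writing $\Gamma(s,\varepsilon_{\C/\R}) = \Gamma(s,{\rm I}_0)/\Gamma(s,1) = \pi^{-(s+1)/2}\Gamma(\tfrac{s+1}{2})$ via the Legendre duplication formula, $\frac{\Gamma'}{\Gamma}(s,1-\varepsilon_{\C/\R}) = \tfrac12\psi(\tfrac s2) - \tfrac12\psi(\tfrac{s+1}{2}) = -\phi(s)$. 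Substituting $s = \tfrac12 + 2\pi i t$ into \eqref{defJF}, the two main formulas reduce to evaluating the integrals $\int_\R \psi(b+2\pi i t)\,\widehat{{\rm G}_\ell}(t)\,{\rm d}t$ with $b = \tfrac{1+w}{2}$ (the $-\log 2\pi$ term contributing $\log 2\pi\cdot {\rm G}_\ell(0) = \log 2\pi$ by Fourier inversion, as ${\rm g}(0)=1$) and $\int_\R \phi(\tfrac12+2\pi i t)\,\widehat{{\rm G}_\ell}(t)\,{\rm d}t$.

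Next I would compute $\widehat{{\rm G}_\ell}$ explicitly. Since ${\rm G}_\ell(x) = {\rm g}(x/\ell)$ with ${\rm g}$ twice the convolution square of $x \mapsto \cos(\pi x)\,{\rm 1}_{|x|\leq 1/2}$, one gets $\widehat{{\rm G}_\ell}(\xi) = \ell\,\widehat{{\rm g}}(\ell\xi) = \frac{8\ell\,\cos^2(\pi\ell\xi)}{\pi^2(1-4\ell^2\xi^2)^2}$, an entire function, rapidly decreasing on $\R$, with $\widehat{{\rm G}_\ell}(0) = 8\ell/\pi^2$; evaluating at $\xi = i/4\pi$ and using $\cos(i\ell/4) = {\rm cosh}(\ell/4)$ together with ${\rm cosh}^2(\ell/4) = \tfrac12(1+{\rm cosh}(\ell/2))$ gives the stated value of $\widehat{{\rm G}_\ell}(i/4\pi)$. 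This settles the last two assertions.

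For the two remaining integrals I would use the partial-fraction expansions $\psi(z)+\gamma = \sum_{n\geq 0}(\tfrac1{n+1}-\tfrac1{n+z})$ and $\phi(z) = \sum_{n\geq 0}\frac{(-1)^n}{n+z}$, together with the identity $\int_\R \frac{\widehat{{\rm G}_\ell}(t)}{c+2\pi i t}\,{\rm d}t = \int_0^\ell e^{-cu}\,{\rm G}_\ell(u)\,{\rm d}u$ for $\Re c > 0$ (obtained from $\tfrac1{c+2\pi it} = \int_0^\infty e^{-(c+2\pi it)u}{\rm d}u$ by Fubini and Fourier inversion, the upper limit being $\ell$ since ${\rm G}_\ell$ is supported in $[-\ell,\ell]$). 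On $[0,\ell]$ one has ${\rm G}_\ell(u) = (1-\tfrac u\ell)\cos(\tfrac{\pi u}\ell) + \tfrac1\pi\sin(\tfrac{\pi u}\ell)$, so with $a = \pi/\ell$ the integral $\int_0^\ell e^{-cu}{\rm G}_\ell(u)\,{\rm d}u$ is elementary and equals $\frac{(1+e^{-c\ell})(c+1/\ell)}{c^2+a^2} + \tfrac1\ell\frac{d}{dc}\!\left(\frac{c(1+e^{-c\ell})}{c^2+a^2}\right)$. Summing this over $n$ (with $c = n+b$, respectively $c = n+\tfrac12$ and an extra factor $(-1)^n$), the rational parts resum through the series for $\psi,\psi'$ (respectively $\phi,\phi'$) into $\Re\psi(b+i\pi/\ell) - \tfrac1\pi\Im\psi(b+i\pi/\ell) + \tfrac1\ell\Re\psi'(b+i\pi/\ell)$ (respectively $\Re\phi(\tfrac12+i\pi/\ell) - \tfrac1\pi\Im\phi(\tfrac12+i\pi/\ell) + \tfrac1\ell\Re\phi'(\tfrac12+i\pi/\ell)$), the Euler constant cancelling the $-\gamma$ from the $\psi$-series; meanwhile the exponential parts telescope to $\frac{2\pi^2\ell\,e^{-\ell c}}{((\ell c)^2+\pi^2)^2} = \ell\,{\rm r}(\ell c)$ per term, summing to $s_1(b,\ell)$, respectively $s_2(\ell)$. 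Recalling that ${\rm J}_{{\rm G}_\ell}({\rm I}_w) = \log 2\pi - \int_\R\psi(b+2\pi it)\widehat{{\rm G}_\ell}(t){\rm d}t$ while ${\rm J}_{{\rm G}_\ell}(1-\varepsilon_{\C/\R}) = \int_\R\phi(\tfrac12+2\pi it)\widehat{{\rm G}_\ell}(t){\rm d}t$, this yields exactly the two claimed identities.

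There is no conceptual difficulty here; the points that require care are the justification of the interchanges of sum and integral (dominated convergence, using the $O(\log|t|)$ growth of $\psi$ and $\phi$ on the vertical lines $\Re z = b$ and the rapid decay of $\widehat{{\rm G}_\ell}$ on $\R$), the handling of the regularised, respectively conditionally convergent, defining series, and the somewhat lengthy bookkeeping needed to reorganise the elementary integral $\int_0^\ell e^{-cu}{\rm G}_\ell(u){\rm d}u$ into the digamma, trigamma, and tail ($s_1$, $s_2$) contributions.
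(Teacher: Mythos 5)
Your proof is correct, and it reaches exactly the stated identities; I checked in particular your closed form for $\int_0^\ell e^{-cu}{\rm G}_\ell(u)\,{\rm d}u$, the per-term exponential remainder $\ell\,{\rm r}(\ell c)$, and the values $\widehat{{\rm G}_\ell}(0)=8\ell/\pi^2$ and $\widehat{{\rm G}_\ell}(i/4\pi)$, all of which agree with the Proposition. The computational core is the same as the paper's, but your organization is genuinely more self-contained: the paper simply cites \cite[Prop.\ 9.3.8]{CheLan} to rewrite ${\rm J}_{{\rm G}_\ell}({\rm I}_w)=\log 2\pi + S(\tfrac{1+w}{2},\ell)$ and ${\rm J}_{{\rm G}_\ell}(1-\varepsilon_{\C/\R})=\tfrac12(S(\tfrac14,2\ell)-S(\tfrac34,2\ell))$ with $S(b,\ell)=\int_0^\infty({\rm g}(x/\ell)\tfrac{e^{-bx}}{1-e^{-x}}-\tfrac{e^{-x}}{x})\,{\rm d}x$, then evaluates $S$ ``as on p.\ 276'' via the Laplace transform $h(\alpha)=\int_0^\infty {\rm g}(x)e^{-\alpha x}{\rm d}x$, and gets the two Fourier values from $\widehat{{\rm G}_\ell}(0)=2\ell h(0)$, $\widehat{{\rm G}_\ell}(i/4\pi)=\ell(h(\ell/2)+h(-\ell/2))$. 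You instead work directly from the definition \eqref{defJF}, using the partial-fraction series for $\psi$ and the alternating series for $\phi$ together with the Fourier-inversion identity $\int_\R\widehat{{\rm G}_\ell}(t)(c+2\pi it)^{-1}{\rm d}t=\int_0^\ell e^{-cu}{\rm G}_\ell(u)\,{\rm d}u$; this re-derives the content of \cite[Prop.\ 9.3.8]{CheLan} on the fly (expanding $1/(1-e^{-x})$ geometrically in the paper's $S(b,\ell)$ produces exactly your Laplace transforms at $c=b+n$), and your treatment of $1-\varepsilon_{\C/\R}$ via $\phi(z)=\sum_{n\ge 0}(-1)^n/(n+z)$ is arguably cleaner than the paper's $\tfrac12(S(\tfrac14,2\ell)-S(\tfrac34,2\ell))$ device with the rescaled parameter $2\ell$. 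Two cosmetic remarks: the exponential parts do not ``telescope'' — they simply combine within each term to $\ell\,{\rm r}(\ell c)$ — and you should be explicit about the sign with which the resummed rational and exponential pieces enter ${\rm J}$ in the two cases (the extra minus sign in ${\rm J}_{{\rm G}_\ell}({\rm I}_w)=\log 2\pi-\int\psi\,\widehat{{\rm G}_\ell}$ versus none for $\phi$ is exactly what produces $-\tfrac1\ell\Re\psi'$ but $+\tfrac1\ell\Re\phi'$ in the statement); as written your bookkeeping is consistent with this, but it deserves a sentence.
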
 

\begin{pf} We follow the proof of \cite[Prop. 9.3.18]{CheLan} and omit the straightforward details.
For real numbers $b,\ell>0$, set $S(b,\ell) = \int_0^\infty
(\mathrm{g}(x/\ell)\frac{e^{-bx}}{1-e^{-x}} - \frac{e^{-x}}{x} ){\rm d}x$.
A computation almost identical to p. 276 {\it loc. cit.} shows that we have
$S(b,\ell)=- \,\Re \,\psi(b+\frac{i \pi}{\ell}) \,+\,\frac{1}{\pi}\, \Im
\,\psi(b+\frac{i \pi}{\ell})\,-\, \frac{1}{\ell} \Re\, \psi'(b+\frac{i
\pi}{\ell}) \, + \,s_1(b,\ell)$.
On the other hand, by \cite[Prop. 9.3.8]{CheLan} we have ${\rm J}_{{\rm G}_\ell}({\rm I}_w)\,=\, {\rm log}\, 2\pi\,  + S(\frac{1+w}{2},\ell)$ for any integer $w\geq 0$ and
${\rm J}_{{\rm G}_\ell}(1-\varepsilon_{\C/\R})= \frac{1}{2}(S(\frac{1}{4},2\ell) - S(\frac{3}{4},2\ell))$. This shows the first two formulas. Set $h(\alpha) =  \int_0^\infty {\rm g}(x) e^{-\alpha x} {\rm d}x$ for $\alpha$ in $\C$. By p. 275 {\it loc. cit.} we have $h(\alpha) = \frac{\alpha}{\alpha^2+\pi^2}+2 \pi^2 \frac{1+e^{-\alpha}}{(\alpha^2+\pi^2)^2}$. We conclude by the relations
$\widehat{{\rm G}_\ell}(0)\,=\,2 \ell \,h(0)$ and $\widehat{{\rm G}_\ell}(i/4\pi) \,= \,\ell \,(h(\ell/2)+h(-\ell/2))$.
\end{pf}

\noindent Upper bounds for the tails of the series  $s_1$ and $s_2$ are given in \cite[(3) p. 277]{CheLan}.\ps

\begin{pf} (Of Theorem \ref{thm23GRH}) In this proof, whenever we apply Algorithms \ref{paralgo} \& \ref{paralgo2} we do it using ${\rm G}_\ell$ instead of ${\rm F}_\ell$.
Applying Algorithm \ref{paralgo2} to the element $U$ of Proposition
\ref{prop:23_mult} (1), $\delta=m=2$ and the set ${\bf S}$ of $27$ known
elements of $\Pi_{\rm alg}$ with motivic weight $\leq 24$,  we obtain a
contradiction with Inequality \eqref{eq:ineq_CF} with $\ell=5$ and
$S'=\{\Delta_{23,7},\, \Delta_{23,13,5},\, \Delta_{23,15,7}\}$ (three elements
in the list of Thm.\ \ref{thm23}).
It thus only remains to show that for any of the $181$ elements of the list
$\mathcal{V}$ of Proposition \ref{prop:23_mult} (2), there is no selfdual $\pi$
in $\Pi_{\rm alg}$ with ${\rm L}(\pi_\infty)=U$.
For each $U$ in $\mathcal{V}$, we applied Algorithm  \ref{paralgo2} to $U$, $\delta=m=1$ and the set ${\bf S}$ above, using various $\ell$.
We found a contradiction in all but one cases. More precisely, we may reach all these contradictions but one using $\ell \in [4,7] \cap \frac{1}{2}\Z$, 
$S'$ of size $\leq 7$, and $S'$ not containing any of the $3$ elements of $\Pi_{\rm alg}$ with motivic weight $24$ (see \cite{homepage} 
for a working sheet). The two remaining elements of $\mathcal{V}$ are then 
$$A={\rm I}_1+{\rm I}_7+{\rm I}_{11}+{\rm I}_{15}+{\rm I}_{19}+{\rm I}_{21}+\,2 \,{\rm I}_{23}\, \, \, {\rm and}\,\,\,B={\rm I}_1+{\rm I}_9+{\rm I}_{15}+{\rm I}_{19}+\,2\, {\rm I}_{23}.$$
For $U=B$ we eventually found a contradiction using $\ell=6.36$ and a certain subset $S' \subset S$ with $13$ elements ! (see {\it loc. cit.}) 
The remaining case $U=A$ is the one of the statement of Theorem \ref{thm23GRH}.
\end{pf}

\section{Siegel modular cusp forms for ${\rm Sp}_{2g}(\Z)$} \label{secsmf}

In this section, we explain how to use our classification Theorems \ref{thm23} \& \ref{thm24} to prove
Theorem \ref{thmintro2}.
Along the way, we will also reformulate much more precisely the {\bf Key fact 1}
stated in the introduction. \ps

\subsection{Brief review of Arthur's results for ${\rm Sp}_{2g}$} \label{arthurreview}
Fix $g\geq 1$ be an integer. 
We denote by $\Pi_{\rm disc}({\rm Sp}_{2g})$
the set of isomorphism classes of discrete automorphic representations of ${\rm Sp}_{2g}$
with $\pi_p^{{\rm Sp}_{2g}(\Z_p)} \neq 0$ for all primes $p$.
Recall that the Langlands dual group of ${\rm Sp}_{2g}$ ``is'' ${\rm SO}_{2g+1}(\C)$;
it has a tautological (often called {\it standard}) representation ${\rm St}$ of dimension $2g+1$.
Let $\pi$ be  in $\Pi_{\rm disc}({\rm Sp}_{2g})$.
For each prime $p$, the Satake parameter ${\rm c}(\pi_p)$ of $\pi_p$
will be viewed following Langlands as a semi-simple conjugacy class in ${\rm SO}_{2g+1}(\C)$.
Similarly, the infinitesimal character ${\rm c}(\pi_\infty)$ of $\pi_\infty$
will be viewed as a semi-simple conjugacy class in the Lie algebra of ${\rm SO}_{2g+1}(\C)$
(and most of the time, as the collection of its $2g+1$ eigenvalues). \ps

Let $\Psi({\rm Sp}_{2g})$ denote the set of level $1$ {\it global Arthur parameters} for ${\rm Sp}_{2g}$.
An element of $\Psi({\rm Sp}_{2g})$ is by definition
a finite collection $\psi$ of distinct triples $(\pi_i,n_i,d_i)$, for $i$ in $I$,
with $n_i,d_i \geq 1$ a collection of integers satisfying $2g+1=\sum_{i \in I} n_i d_i$,
and with $\pi_i$ a level $1$ self-dual cuspidal automorphic representation of ${\rm PGL}_{n_i}$ over $\Q$
which is orthogonal if $d_i$ is odd, symplectic otherwise.
It suggestive to view $\psi$ as the isobaric automorphic representation of ${\rm GL}_{2g+1}$ over $\Q$ defined as
\begin{equation} \label{psiisobaric} \psi \,\,=\,\, \underset{i \in I}{\boxplus} \,\, \underset{0 \leq r_i \leq d_i-1}{\boxplus} \pi_i\, |.|^{\frac{d_i-1}{2}-r_{i}}.\end{equation}
We often simply write for short
\footnote{For typographical reasons we also replace the symbol $\pi_i[d_i]$ with $[d_i]$
if we have $\pi_i=1$, and by $\pi_i$ if we have $d_i=1$ and $\pi_i \neq 1$.}
$$\psi = \underset{i \in I}{\oplus} \pi_i[d_i].$$

To any $\psi$ in $\Psi({\rm Sp}_{2g})$,
viewed as in \eqref{psiisobaric} as an irreducible admissible representation of ${\rm GL}_{2g+1}(\A)$ over $\Q$,
we may attach a collection of Satake parameters $\psi_p$
(semi-simple conjugacy classes in ${\rm GL}_{2g+1}(\C)$),
as well as an infinitesimal character $\psi_\infty$
(a semi-simple conjugacy class in ${\rm M}_{2g+1}(\C)$).
We shall say that $\psi$ is {\it algebraic}
when the $2g+1$ eigenvalues of $\psi_\infty$ are in $\Z$.
In this case, the only one that we shall need to study here,
all the $\pi_i$ are algebraic (see \S \ref{realalgebraic}). \ps

Assume $\psi \in \Psi({\rm Sp}_{2g})$ is algebraic.
Using the local Langlands correspondence for the ${\rm GL}_{n_i}(\R)$,
we may attach to $\psi$ a morphism
$\psi_\R : {\rm W}_\R \times {\rm SL}_2(\C) \longrightarrow {\rm SO}_{2g+1}(\C)$,
uniquely defined up to ${\rm SO}_{2g+1}(\C)$-conjugacy, with the property
$${\rm St} \circ \psi_\R \simeq \bigoplus_i {\rm L}((\pi_i)_\infty) \boxtimes {\rm Sym}^{d_i-1} \C^2.$$
(Recall the notation ${\rm L}(-)$ from \S \ref{realalgebraic})
By \S \ref{realalgebraic}, note that $\psi_\R$ is trivial on $\R_{>0} \times 1$,
and in particular, $\psi_\R({\rm W}_\R)$ is bounded
(it is thus an Archimedean {\it Arthur parameter}).
If $r$ is a representation of ${\rm W}_\R$, and $d\geq 1$ is an integer,
it will be convenient to write $r[d]$ for the representation
$r \boxtimes {\rm Sym}^{d-1} \C^2$ of ${\rm W}_{\R} \times {\rm SL}_{2}(\C)$.\ps

Arthur's first main result \cite[Thm. 1.5.2]{Arthur_book} attaches to any $\pi$ in $\Pi_{\rm disc}({\rm Sp}_{2g})$
a unique $\psi(\pi)$ in $\Psi({\rm Sp}_{2g})$ such that
we have $\psi(\pi)_v={\rm St} \circ {\rm c}(\pi_v)$ for all place $v$ of $\Q$
(see also \cite[Lemma 4.1.1]{Taibi_dimtrace}).
Arthur's second main result is a converse statement,
the so-called {\it multiplicity formula},
on which we shall focus from now on and until the end of this section. \ps

Fix $\psi = \oplus_{i \in I} \pi_i[d_i]$ in $\Psi({\rm Sp}_{2g})$.
We assume that $\psi$ is algebraic for our purposes.
There are both a local and a global ingredient in the multiplicity formula.\ps

We start with the global one.
Write $I = I_{\rm even} \coprod I_{\rm odd}$
with $i \in I_{\rm even}$ if, and only if, $n_i d_i$ is even.
Define ${\rm C}_\psi$ as the abelian group generated
by the symbols $s_i$ for $i \in I_{\rm even}$,
and by the symbols $s_{ij}$ for all $i, j \in I_{\rm odd}$,
with relations $1=s_i^2=s_{ij}^2$ and $s_{ij}s_{jk}=s_{ik}$ (note $s_{ii}=1$).
This is an elementary abelian $2$-group of order $2^{|I|-1}.$
Arthur defines a global character $\epsilon_\psi$ of this group in \cite[p. 48]{Arthur_book}, that we now recall.
For each $i \in I$ consider the sign
\begin{equation}\label{defepsiloni} \epsilon(i) = \prod_{j \neq i} \epsilon(\pi_i \times \pi_j)^{{\rm Min}(d_i,d_j)}.\end{equation}
The term $\epsilon(\pi_i \times \pi_j)$ here is the Rankin-Selberg root number
already encountered in \S \ref{explicitformulaforpairs}, a (purely Archimedean)
sign that we already explained how to compute {\it loc. cit.}
It is necessarily $+1$ by \cite[Theorem 1.5.3]{Arthur_book} if $\pi_i$ and
$\pi_j$ are both orthogonal or both symplectic.
As the adjoint representation of ${\rm SO}_{2g+1}(\C)$ is isomorphic to $\Lambda^2 {\rm St}$,
Arthur's definition reads (see e.g. \cite[Sect. 8.3.5]{CheLan} for more details):
\begin{equation}\label{defepspsi} \epsilon_\psi(s_i)=\epsilon(i)\,\,\,\, \, \forall \, i \in I_{\rm even} \,\,\, \, \, {\rm and}\,\,\, \, \, \epsilon_\psi(s_{ij})=\epsilon(i)\epsilon(j)\,\,\,\,\,\forall \, i,j \in I_{\rm odd}.\end{equation}

We now describe the local ingredient.
Fix $K$ a maximal compact subgroup of ${\rm Sp}_{2g}(\R)$
and $\mathfrak{g}$ the Lie algebra of ${\rm Sp}_{2g}(\R)$.
Arthur associates to $\psi_\R$ a finite multi-set $\Pi(\psi_\R)$,
also called an {\it Arthur packet},
of unitary irreducible admissible $(\mathfrak{g},K)$-modules.
One important property he shows is that we have
$\pi_\infty \in \Pi(\psi(\pi)_\R)$ for all $\pi \in \Pi_{\rm disc}({\rm Sp}_{2g})$.
Moreover, $\Pi(\psi_\R)$ is equipped with a map
$$\Pi(\Psi_\R) \rightarrow {\rm Hom}({\rm C}_{\psi_\R},\{\pm 1\}), \, \, \, U \mapsto \chi_U,$$
where ${\rm C}_{\psi_\R}$ denotes the centralizer of the image of $\psi_\R$ in ${\rm SO}_{2g+1}(\C)$.

\begin{rema}\label{whittaker} The map $U \mapsto \chi_{U}$ depends on
the choice of an equivalence class of Whittaker datum for ${\rm Sp}_{2g}(\R)$.
From now on we fix a global Whitaker datum ${\rm Wh}$ for ${\rm Sp}_{2g}$
such that ${\rm Wh}_{p}$ is unramified with respect to ${\rm Sp}_{2g}(\Z_{p})$, for each prime $p$,
in the sense of Casselman and Shalika.
Up to conjugating ${\rm Wh}$ if necessary by the outer action of ${\rm GSp}_{2g}(\Z)$,
its Archimedian component ${\rm Wh}_{\infty}$ can belong to
any of the two classes of Whittaker data for ${\rm Sp}_{2g}(\R)$.
\end{rema}

We can now state Arthur's multiplicity formula.
Fix an algebraic $\psi$ in $\Psi(G)$.
There is a natural group embedding $\iota : {\rm C}_\psi \hookrightarrow {\rm C}_{\psi_\R}$
(``local-global'' map).
Choose $U$ in $\Pi(\psi_\R)$ and assume for simplicity that it has multiplicity
one in this multiset (this assumption will be satisfied in the cases that we
will consider below).
Then there is a $\pi$ in $\Pi_{\rm disc}(G)$ such that $\pi_\infty \simeq U$ if, and only if, we have
\begin{equation}\label{AMF} \epsilon_\psi(s_i)=\chi_U(\iota(s_i))\,\,\,\, \, \forall \, i \in I_{\rm even} \,\,\, \, \, {\rm and}\,\,\, \, \, \epsilon_\psi(s_{ij})=\chi_U(\iota(s_{ij}))\,\,\,\,\,\forall \, i,j \in I_{\rm odd}.\end{equation}
Moreover, if these equalities are satisfied
then the multiplicity of $\pi$ in the automorphic discrete spectrum of ${\rm Sp}_{2g}$ is equal to $1$.
There is a slightly more complicated statement when we do not assume $U$ has multiplicity one in $\Pi(\psi_\R)$.
This multiplicity one property will always be the case in our applications (see \S \ref{lowestweight}).
It is believed but not known that it holds in general,
although Moeglin and Renard have a number of results in this direction. \ps

\begin{rema} \label{stableamf}It is important to remark that \eqref{AMF} trivially holds
when we have $\psi = \varpi[d]$ for some cuspidal $\varpi$ of ${\rm PGL}_{(2g+1)/d}$,
because the group ${\rm C}_{\psi}$ is trivial. \end{rema}

\subsection{Lowest-weight modules: results of Arancibia-Moeglin-Renard and of Moeglin-Renard} \label{lowestweight}
For $\uk=(k_1,k_2,\dots,k_g) \in \Z^g$ with $k_1 \geq k_2 \geq \dots \geq k_g \geq 0$
we denote by $\rho_{\uk}$ the holomorphic, unitary, lowest weight $(\mathfrak{g},K)$-module of (lowest) weight $\uk$.
\footnote{These modules have been classified by Enright, Howe and Wallach:
they exist if, and only if, we have $k_g \geq g-(u+v/2)$,
with $u=|\{i, k_i=k_g\}|$ and $v=|\{i, k_i=k_g+1\}|$,
and they are unique up to isomorphism if they exist.}
The precise meaning here for "lowest" or "holomorphic" is a convention
that we may fix as in \cite[\S 3]{MR_scalar} to fix ideas,
nevertheless this choice will play no role in the sequel as we shall see.
We are interested in $\rho_{\uk}$ for the following classical reason.
Let us denote by ${\rm M}_{\uk}(\Gamma_{g})$ the vector-space of
vector-valued Siegel modular forms of weight $\uk$ for $\Gamma_g$,
and by ${\rm L}^{2}_{\uk}(\Gamma_g)$ its subspace of square-integrable forms. We have
$${\rm S}_{\uk}(\Gamma_{g})  \subset {\rm L}^{2}_{\uk}(\Gamma_g) \subset {\rm M}_{\uk}(\Gamma_{g}).$$
Assume $F$ is a Hecke eigenform in ${\rm L}^{2}_{\uk}(\Gamma_{g})$.
Then $F$ generates an element $\pi(F)$ in $\Pi_{\rm disc}({\rm Sp}_{2g})$
with $\pi(F)_\infty \simeq \rho_{\uk}$.
Better, $\dim {\rm L}^{2}_{\uk}(\Gamma_{g})$
(resp. $\dim {\rm S}_{\uk}(\Gamma_{g})$)
is exactly the number of $\pi$ in $\Pi_{\rm disc}({\rm Sp}_{2g})$
with $\pi_\infty \simeq \rho_{\uk}$ counted with their global discrete (resp. cuspidal) multiplicity. \ps\ps

An important property to have in mind is that
the $2g+1$ eigenvalues of the infinitesimal character of $\rho_{\uk}$
are $0$ and the $2g$ elements $\pm (k_i-i)$ for $i=1,\dots,g$.
Note that these $2g+1$ integers are distinct if, and only if, we have $k_g>g$.
This is also exactly the condition under which $\rho_{\uk}$ is a (holomorphic) discrete series.
If $F$ is a Hecke eigenform in ${\rm L}^{2}_{\uk}(\Gamma_{g})$ as above,
the shape of the infinitesimal character of $\rho_{\uk}$ implies that $\psi(\pi(F))$ is always algebraic.
Moreover, $\rho_{\uk}$ is an element of the Arthur packet $\Pi(\psi(\pi(F))_\R)$. \ps \ps

Conversely, let us fix until the end of \S \ref{lowestweight} a global Arthur parameter $$\psi=\oplus_{i \in {\rm I}} \pi_{i}[d_{i}]$$ in $\Psi({\rm Sp}_{2g})$
such that the eigenvalues of $\psi_\infty$ are $0$ and the $\pm (k_i-i)$,
with $i=1,\dots,g$ (in particular, $\psi$ is algebraic).
In order to apply the Arthur multiplicity formula, we want to know under which
condition on $\psi$ the module $\rho_{\uk}$ belongs to $\Pi(\psi_\R)$,
whether it has multiplicity one in this multi-set, and if so, we want to know
$\chi_{\rho_{\uk}}$.
We shall consider only the two following special, but important, cases.
\ps\ps

\subsubsection{Vector-valued case, with $k_g>g$.} \label{regularvectorvalued}
This situation is studied at length in \cite[Chap. 9]{ChRe} and \cite[Sect. 8.4.7]{CheLan}.
In this case, $\rho_{\uk}$ is a discrete series,
the eigenvalues of $\psi_\infty$ are distinct,
and we have ${\rm S}_{\uk}(\Gamma_{g})  = {\rm L}^{2}_{\uk}(\Gamma_g)$
by a general result of Wallach.
We have $I_{\rm odd}=\{i_0\}$ (a singleton) and $\pi_i$ is regular for all $i$ in $I$,
so we have $n_i d_i \equiv 0 \bmod 4$ for $i \neq i_0$ by \S \ref{regselfpar}.
The parameter $\psi_\R$ is necessarily an Adams-Johnson parameter
(see e.g. \cite[\S 3.8, App. A]{ChRe},\cite[Sect. 8.4.15]{CheLan},\cite[\S 4.2.2]{Taibi_dimtrace}),
and the main result of \cite{AMR} shows that $\Pi(\psi_\R)$ coincides
with the packet that Adams and Johnson associate to $\psi_\R$ in \cite{AdJo}
(any element of this packet having multiplicity one).
Arancibia, Moeglin and Renard also prove the expected form of the map $U \mapsto \chi_U$.
As was observed in \cite[\S 9]{ChRe} (see also \cite[Sect 8.5.1]{CheLan}),
this packet contains $\rho_{\uk}$ if and only if we have $d_{i_0}=1$,
and in this case the corresponding character $\chi_{\rho_{\uk}}$ is given by the formula, for all $i$ in $I_{\rm even}$:
\begin{equation} \label{chiukbasic} \chi_{\rho_{\uk}}(\iota(s_i)) \,\,=\,\,\left\{ \begin{array}{ll} (-1)^{\frac{n_i d_i}{4}} & \, \, {\rm if}\, \, d_i \equiv 0 \bmod 2, \\ (-1)^{{\rm e}_i}& \, \, {\rm otherwise},\end{array}\right. \end{equation}
where ${\rm e}_i$ is the number of {\it odd} integers $1 \leq i \leq g$
such that $k_i-i$ is a weight of $\pi_i$.
Note that the quantity ${\rm e}_i \bmod 2$ does not change
if we replace {\it odd} with {\it even} in the definition of ${\rm e}_i$,
as we have $n_i \equiv 0 \bmod 4$ for $d_i$ odd.
This property expresses the fact that the character above
does not depend on the choice of the Whittaker datum ${\rm Wh}_{\infty}$ in Remark \ref{whittaker}.
All in all, we have explained fully, and much more precisely,
the Key fact 1 of the introduction. \ps\ps

\subsubsection{Scalar-valued case, arbitrary genus}\label{scalarvalued}\ps\ps

In this case we have $\uk=(k,k,\dots,k)$ in $\Z^g$ with $k\geq 1$, and we rather
write $\rho_k(g)$ for $\rho_{\uk}$.
If we have $k>g$ we are in the case of \S \ref{regularvectorvalued},
so from now on we assume $g \geq k$.
The $2g+1$ eigenvalues of the infinitesimal character of $\rho_{\uk}$ are now
$0$ and the $2g$ elements $\pm (k-i)$ for $i=1,\dots,g$:
the eigenvalue $0$ has thus the multiplicity $3$,
and for $g>k>1$ the eigenvalues $\pm 1, \pm 2, \dots, \pm \min(k-1,g-k)$ have multiplicity $2$. \ps

We will use as a key ingredient the recent local results of Moeglin and Renard \cite{MR_scalar}
\footnote{Note that those authors call $n,m,\pi_n(m)^*$ what we call
$g,k,\rho_k(k)$ respectively. },
that we will specialize in what follows to this level $1$ situation.
The first main result of \cite{MR_scalar}
is that $\rho_g(k)$ belongs to $\Pi(\psi_\R)$
if, and only if, we are in one of the two cases called $({\rm I})$ and $({\rm H})$ below.
In both cases they show that $\rho_g(k)$ has multiplicity $1$ in $\Pi(\psi_\R)$ and
they determine $\chi_{\rho_g(k)}$.
We use the letter ${\rm I}$ for the case reminiscent of Ikeda lifts, and the
letter ${\rm H}$ for those related to the Howe (or theta) correspondence.
The formula for $\chi_{\rho_g(k)}$ given in \cite[Prop. 18.3]{MR_scalar}
depends on the class of ${\rm Wh}_{\infty}$, which is represented there by a
certain sign $\delta$ loc.\ cit.\ and that we represent the same way here (it
may be either one of $\pm 1$: see Remark \ref{whittaker}).
We will express below only the restriction of $\chi_{\rho_g(k)}$ to ${\rm C}_{\psi}$,
which is the information we need for applying the global multiplicity formula \eqref{AMF}.
In all cases we will see in particular that this restriction
does not depend on ${\rm Wh}_{\infty}$, i.e. on $\delta$,
hence neither on the choices discussed in the beginning of \S \ref{lowestweight}
that we made (or rather didn't) to define $\rho_{\uk}$:
changing of choice amounts to replace $\delta$ with $-\delta$ by \cite{MR_scalar}.\ps\ps

\begin{center} {\bf Preliminary general notations and remarks}\end{center}
-- Recall we have already defined a partition $I=I_{\rm even} \coprod I_{\rm odd}$
according to the parity of $n_id_i$ for $i$ in $I$.
We now define $I_0 \subset I$ as
the subset of elements $i$ in $I$ such that $0$ is a weight of $\pi_i$.
We clearly have $I_{\rm odd} \subset I_0$, $I_{\rm odd} \neq \emptyset$ and $|I_0| \leq 3$.
Set $$d_{\rm max}={\rm max}_{i \in I_0} d_i.$$

\noindent -- Say that an algebraic cuspidal automorphic representation $\pi$ of
${\rm PGL}_{n}$  satisfies {\rm (R)} if its weights are $\leq k-1$, its nonzero
weights have multiplicity $1$, the multiplicity ${\rm r}(\pi)$ of the weight $0$
of $\pi$ is $\leq 3$, and if $1$ and $\varepsilon_{\C/\R}$ have multiplicity at
most $2$ in ${\rm L}(\pi_\infty)$.
We shall see below that all the $\pi_i$ satisfy property (R), and that at most one
of them is not regular.\ps\ps

\noindent -- To decipher the formulas of  \cite[Prop. 18.3]{MR_scalar},
we have found it useful to observe, for $\epsilon=\pm 1$, $a \in \Z_{\geq 1}$ and $b \in \Z$,
the following equalities of signs:
\begin{equation}\label{deciphersign}(-1)^{\lfloor \epsilon a/2 \rfloor}= \prod_{i=1}^a (-1)^{i-1}\epsilon \, \, \, {\rm and}\, \, \, \prod_{i=b+1}^{b+a} (-1)^{i-1}\epsilon = (-1)^{\lfloor (-1)^{b}\epsilon a/2 \rfloor}.\end{equation}
This first is the product of $a$ alternating signs starting with $\epsilon$;
it only depends on $a \bmod 4$. The second follows from the first by replacing $\epsilon$ with $\epsilon(-1)^b$.
\begin{center} {\bf Case ({\rm I})}\end{center}
\ps

This corresponds to case (i) of \cite[Théorème 7.1]{MR_scalar}. 
We have $I_0=\{i_0\}$ (a singleton),
the weight $0$ of $\pi_{i_0}$ has multiplicity $1$,
$d_{i_0}=d_{\rm max}=1$, $k-1>g-k$,
and the $g$ numbers $w_i+\frac{d_i-1}{2}-r_i$,
where $i$ is in $I$, $w_i$ is a positive weights of $\pi_i$,
and with $0 \leq r_i \leq d_i-1$, fill the length $g$ segment $[k-g,k-1]$ (hence are distinct).
The representation $\pi_i$ is regular for each $i$, with weights $\leq k-1$,
hence satisfies $n_i d_i \equiv 0 \bmod 4$ for $i \neq i_0$ by \S \ref{regselfpar}.
Moreover, for $i \neq i_0$ the representation $\pi_i$ does not have $0$ among its
weights.
\ps\ps

In this case we must have $I_0=I_{\rm odd}$
so ${\rm C}_\psi$ is generated by the $s_i$ with $i \neq i_0$.
Fix such an $i$, necessarily in $I_{\rm even}$.
For any sign $s=\pm 1$ we define ${\rm e}_s(\pi_i)$ as the number of integers $1 \leq j \leq k-1$
with $(-1)^j = s$ such that $k-j$ is a weight of $\pi_i$.
The first assertion of Proposition 18.3 of  \cite{MR_scalar} (we are in case (1) of \S 18 {\it loc. cit.}), together with
Formula \eqref{deciphersign}, show that $\chi_{\rho_{k}(g)}(\iota(s_i))$ is given by the formula:
\begin{equation} \label{chiukbasic2} \chi_{\rho_{k}(g)}(\iota(s_i)) \,\,=\,\,\left\{ \begin{array}{ll} (-1)^{\frac{n_i d_i}{4}} & \, \, {\rm if}\, \, d_i \equiv 0 \bmod 2, \\ (-1)^{{\rm e}_{\delta}(\pi_i)} & \, \, {\rm otherwise}.\end{array}\right. \end{equation}

Indeed, consider the sequence of $g$ alternating signs $\underline{\frak{s}}=(\frak{s}_{k-1},\frak{s}_{k-2},\dots,\frak{s}_{k-g})$ 
starting with $\frak{s}_{k-1}=\delta$, i.e. set $\frak{s}_{k-i}=\delta (-1)^{i-1}$. Formulas  \eqref{deciphersign} 
show that, for any $i$ in $I_{\rm even}$ and any positive weight $w$ of $\pi_i$, the sign $\epsilon({\rm I}_{2w}[d_i])$ 
of Proposition  \cite[Prop. 18.3]{MR_scalar} is the product of the $\frak{s}_j$ with $j$ in the interval 
$[w-\frac{d_i-1}{2},w+\frac{d_i-1}{2}]$ of length $d_i$. 
Note that when $d_i$ is even this product is $(-1)^{d_i/2}$; when $d_i$ is odd, and thus $w \in \Z$, this product coincides with $\frak{s}_w$. 
The formula $\frak{s}_w=\delta(-1)^{k-w-1}$ shows that we have $\frak{s}_w=-1$ if and only if $w=k-j$ with $(-1)^j=\delta$. Formula \eqref{chiukbasic2} follows, 
as for $i$ in $I_{\rm even}$ the sign
$\chi_{\rho_{k}(g)}(\iota(s_i))$ is by definition the product, over all the positive weights $w$ of $\pi_i$, of $\epsilon({\rm I}_{2w}[d_i])$.\ps

Note that when $d_i$ is odd we have ${\rm e}_\delta(\pi_i)+{\rm e}_{-\delta}(\pi_i)= n_i/2\,\, \equiv 0 \bmod 2$,
as $\pi_i$ is regular and does not have the zero weight,
so ${\rm e}_{1}(\pi_{i}) \equiv {\rm e}_{-1}(\pi_{i}) \bmod 2$.
As a consequence, Formula \eqref{chiukbasic2} does not depend on $\delta$.\ps\ps
\ps\ps
\begin{center} {\bf Case ({\rm H})} \end{center}\ps
\ps
\noindent
This corresponds to case (ii) in \cite[Théorème 7.1]{MR_scalar}.
According to Theorem 7.2 loc.\ cit.\ there are two subcases:\ps
\noindent {\bf ({\rm H}1)}\,There is $i_0$ in $I_0$ with $d_{i_0}=d_{\rm max}=2(g-k)+1$,
and ${\rm L}((\pi_{i_0})_\infty)$ contains $\varepsilon_{\C/\R}^k$.\ps

\noindent {\bf ({\rm H}2)}\,There is $i_0$ in $I_0$ with $d_{i_0}=d_{\rm max}=2(g-k)+3$,
and ${\rm L}((\pi_{i_0})_\infty)$ contains $\varepsilon_{\C/\R}^{k-1}$.  \par \medskip

\noindent
Note that $i_0$ is not unique in general, so we fix any $i_0$ satisfying (H1) or
(H2). We set $k'=k$ in case (H1) and $k'=k-1$ in case (H2).
An inspection of $\psi_\R$ shows that in case (H2) we must have $g-k+1 \leq
k-1$, that is $g \leq 2k'$ (hence $k'\geq 1$). In both cases we may write
\[ \psi_\R\, \simeq \,\varepsilon_{\C/\R}^{k'}\, [2(g-k')+1] \,\oplus \, \psi'. \]
We have $\dim \psi' = 2k'$, $\det \psi' = \varepsilon_{\C/\R}^{k'}$ and
the eigenvalues of $\psi_\infty$ contributing to $\psi'$ are
the $\pm i$ for $i=0,\dots,k-1$ in case (H1),
and the same ones except $\pm (g-k+1)$ in case (H2). It follows that $\psi'$ is an Adams-Johnson parameter for the compact group ${\rm SO}(2k')$,
and in particular, is multiplicity-free.
This implies: \ps

--  $\pi_i$ satisfies (R) for all $i$, and is regular for $i \neq i_0$.\par\medskip

\noindent In particular, for $i \neq i_0$,
we have $n_i d_i \equiv 0 \bmod 4$ if $n_i$ is even,
and ${\rm L}((\pi_i)_\infty)$ contains $\varepsilon_{\C/\R}^{(n_i-1)/2}$ if $n_i$ is odd (see \S \ref{regselfpar}).
Moreover, either $\pi_{i_0}$ is regular
or we are in the case (H1) (see Remark \ref{iomult1caseh2}) and in one of the two following situations: \ps

-- $n_{i_0} \equiv 2 \bmod 4$, $0$ is a double weight of $\pi_{i_{0}}$,
and ${\rm L}((\pi_{i_0})_\infty)$ contains $\varepsilon_{\C/\R}^{k}$ twice,\ps

-- $n_{i_{0}}$ is odd, $d_{i_0}=1$, $g=k$, $0$ is a triple weight of $\pi_{i_{0}}$,
and ${\rm L}((\pi_{i_0})_\infty)$ contains $\varepsilon_{\C/\R}^{k}$ twice
and $\varepsilon_{\C/\R}^{k-1}$ once.\ps\ps

\begin{rema} \label{iomult1caseh2}  Assume we are in the case (H2). 
Then the weight $0$ of $\pi_{i_0}$ has multiplicity $1$, since the eigenvalue $g-k+1$ does not contribute to $\psi'$.
In particular we have $i_0 \in I_{\rm odd}$ and $\pi_{i_0}$ is regular. Moreover, we also have $k\geq 2$. Indeed, $k=2$ implies $g=2$, $\dim \pi_{i_0}=1$ hence $\pi_{i_0}=1$, which contradicts (H2).
\end{rema}

\noindent We now describe the restriction of the character $\chi_{\rho_k(g)}$ to ${\rm C}_\psi$.
For any $i$ in $I$ and any sign $s=\pm 1$
we define an integer ${\rm e}_s(\pi_i)$ as follows.
If we are in case (H1),
then ${\rm e}_s(\pi_i)$ is  the number of integers $1 \leq j \leq k-1$
with $(-1)^j= s$ such that $k-j$ is a weight of $\pi_i$ (as in case (I)).
If we are in case (H2),
we first consider the decreasing sequence
$(w_1,w_2,\dots,w_{k'-1})=(k-1,k-2,\dots,\widehat{g-k+1},\dots,1)$
where $g-k+1$ is omitted (this makes sense as we have $1 \leq g-k+1 \leq k-1$ by Remark \ref{iomult1caseh2}), and rather define ${\rm e}_s(\pi_i)$ as
the number of integers $1 \leq j \leq k'-1$ with $(-1)^j= s$
such that $w_j$ is a weight of $\pi_i$.
In all cases we have by property (R):
\begin{equation} \label{equadim}  {\rm r}(\pi_i)\,+\,2\,{\rm e}_{1}(\pi_i) \,+ \, 2\,{\rm e}_{-1}(\pi_i)\, =\, n_i. \end{equation}
\ps\ps

-- Assume first we have $i \in I_{\rm even}$ and $i \neq i_0$.
For $i \notin I_0$ we have: 
\begin{equation} \label{chiukbasic2bis} \chi_{\rho_{k}(g)}(\iota(s_i)) \,\,=\,\,\left\{ \begin{array}{ll} (-1)^{\frac{n_i d_i}{4}} & \, \, {\rm if}\, \, d_i \equiv 0 \bmod 2,
\\(-1)^{{\rm e}_{\delta}(\pi_i)} & \, \, {\rm otherwise}.\end{array}\right. \end{equation}
Indeed, this follows from \cite[Proposition 18.3]{MR_scalar} by a similar argument as in Case (I).
The only difference is to replace the alternating sequence 
of signs $\underline{\frak{s}}$ defined in Case (I) by the length $k'$ alternating
 sequence $\underline{\frak{s}}=(\frak{s}_{k-1},\frak{s}_{k-2},\dots,\frak{s}_0)$ starting with $\delta$ but {\it with the index $g-k+1$ omitted in case (H2)}; 
 in other words, we still set $\frak{s}_{k-i}=\delta (-1)^{i-1}$ in Case (H1), and in Case (H2) we set 
 $\frak{s}_{k-i}=\delta (-1)^{i-1}$ for $k-i>g-k+1$, and $\frak{s}_{k-i}=\delta (-1)^i$ otherwise. 
 The same arguments as in case (I) show Formula \eqref{chiukbasic2bis} and its independence on $\delta$.\ps

For $i$ in $I_0$, in which case we must have $d_i=1$, we rather find
\begin{equation} \label{chirhoio} \chi_{\rho_{k}(g)}(\iota(s_i)) \,=\,   (-1)^{{\rm e}_\delta(\pi_i)}\,\delta\,(-1)^{k'-1}. \end{equation}
Indeed, we are in the situation (2) of \S 18 {\it loc. cit.} and in the notations there we have
$a=1$ and $\epsilon_1\epsilon_2=(-1)^{\lfloor \delta (-1)^{k'-1}/2 \rfloor}= \delta
(-1)^{k'-1}=\frak{s}_0$  \cite[Remarque 18.4]{MR_scalar}.
The congruence $n_i \equiv 0 \bmod 4$, the equality ${\rm r}(\pi_i)=2$,
and \eqref{equadim} show that \eqref{chirhoio} does not depend on the sign $\delta$. \ps

-- Assume $i=i_0$ is in $I_{\rm even}$ (se we are in case (H1) by Remark \ref{iomult1caseh2}).
We have $n_{i_0} \equiv 0 \bmod 4$ if and only if $\pi_{i_0}$ is regular by  \S \ref{regselfpar}.
By \cite[Proposition 18.3]{MR_scalar} (including the formula for $\epsilon_2\epsilon_3$ in Case (2) there with $a=g-k-1$) we find:
\begin{equation}\label{casioeven} \chi_{\rho_{k}(g)}(\iota(s_{i_0}))  \,\,=\,\,\left\{ \begin{array}{ll} (-1)^{{\rm e}_\delta(\pi_{i_{0}})}\,\delta\,(-1)^{k-1} \, \, & \, \, {\rm if}\, \, n_{i_{0}}\equiv 0 \bmod 4,\\
(-1)^{{\rm e}_\delta(\pi_{i_{0}})} \,\,&\, \, {\rm otherwise}.\end{array}\right.\end{equation}
Again, these two formulas do not depend on $\delta$ by \eqref{equadim} and ${\rm r}(\pi_{i_0})=2$.

\ps

-- We are left to consider the case $|I_{\rm odd}|>1$.
We must have $|I_{\rm odd}|=3$ and $I_{\rm odd}=I_0$.
We want to give the value of $\chi_{\rho_{k}(g)}(\iota(s_{ij}))$ for $i \neq j$ in $I_{\rm odd}$.
We may write $I_0=\{i_0,i_1,i_2\}$ with $d_{i_1}=1$ and set $d_{i_2}=2a-1$.
By \cite[Proposition 18.3]{MR_scalar} (including the formula for $\epsilon_1\epsilon_2$ there and Remark 18.4 (ii)), we have:
\begin{equation} \label{eq:chi_sij}
  \chi_{\rho_{k}(g)}(\iota(s_{i_1i_2})) = (-1)^{{\rm e}_{\delta}(\pi_{i_1})+{\rm
  e}_\delta(\pi_{i_2})+ \lfloor \delta \,(-1)^{k'-a}\,a/2 \rfloor }.
\end{equation}
Observe that we also have $\delta \,(-1)^{k'-a}=\frak{s}_{a-1}$, as $a-1<(g-k)+1$ in case (H2), hence 
 $(-1)^{\lfloor \delta \,(-1)^{k'-a}\,a/2 \rfloor}=\frak{s}_{a-1}\cdots \frak{s}_1\frak{s}_0$ by Formula \eqref{deciphersign}.
As $\pi_{i_0}$ is regular of odd dimension,
and ${\rm L}((\pi_{i_0})_{\infty})$ contains $\varepsilon_{\C/\R}^{k'}$,
we have $n_{i_0} \equiv 2 k' +1 \bmod 4$.
This implies
\begin{equation}\label{propioi1i2} n_{i_0}d_{i_0} \equiv 2g+1 \bmod 4\, \,\, \, \, {\rm and}\, \, \, n_{i_1} + n_{i_2}(2a-1)  \equiv 0 \bmod 4,\end{equation}
which may also be written $\frac{n_{i_1}-1}{2}+\frac{n_{i_2}-1}{2} \equiv a \bmod 2$.
The relation ${\rm e}_\delta(\pi_l)+{\rm e}_{-\delta}(\pi_l) \equiv \frac{n_{i_l}-1}{2} \bmod 2$,
from \eqref{equadim},
and the trivial identity $(-1)^{\lfloor -ea/2 \rfloor} = (-1)^a
(-1)^{\lfloor ea/2 \rfloor}$ for $e=\pm 1$,
show that Formula \eqref{eq:chi_sij} does not
depend on $\delta$. \ps

--  We still assume $I_{\rm odd}=\{i_0,i_1,i_2\}$ as above.
It only remains to give $ \chi_{\rho_{\uk}}(\iota(s_{i_0 i_2}))$.
It will depend on the following property on $\{\pi_{i_0},\pi_{i_2}\}$:
$${\rm (P}_{i_0,i_2}{\rm )} \hspace{1 cm}{\rm L}((\pi_{i_0})_\infty) \oplus
{\rm L}((\pi_{i_2})_\infty) \,\,{\rm contains}\,\, 1\oplus \varepsilon_{\C/\R}.$$
Similarly to \eqref{casioeven} we find
{\small
\begin{equation} \label{eq:chi_sioi2} \chi_{\rho_{k}(g)}(\iota(s_{i_0 i_2}))  \,\,=\,\,\left\{ \begin{array}{ll}  (-1)^{{\rm e}_\delta(\pi_{i_{0}})+{\rm e}_\delta(\pi_{i_{2}})}\,\delta\,(-1)^{k-1}\,\,& \, \, {\rm if}\, \, {\rm (P}_{i_0,i_2}{\rm )} \,\,{\rm holds},\\
(-1)^{{\rm e}_\delta(\pi_{i_{0}})+{\rm e}_\delta(\pi_{i_{2}})}\,\,(-1)^{k-k'}\,\, & \, \, {\rm otherwise}.\end{array}\right.\end{equation}}

\par \noindent These values do not depend on $\delta$ by \eqref{equadim}:
the integer $n_{i_0}+n_{i_2}$ is $\equiv 0 \bmod 4$ if  ${\rm (P}_{i_0,i_2}{\rm )}$ holds,
and $\equiv 2 \bmod 4$ otherwise.
\ps

\begin{rema} \label{rema:I_and_H_overlap}
  \begin{enumerate}
    \item
      The cases (I) and (H) are disjoint for $k\neq g$, and for $k=g$ case (I)
      is a special case of (H1), and the two formulas \eqref{chiukbasic2} and
      \eqref{chiukbasic2bis} for $\chi_{\rho_k(g)}$ are identical.
    \item
      The parameter $\psi$ does not always determine $k$: when $g$ is even,
      parameters of type (H1) in weight $k=g/2$ coincide with parameters of type
      (H2) in weight $k=g/2+1$.
  \end{enumerate}
\end{rema}

\subsection{Proof of theorem \ref{thmintro2}}\label{pfthm2}

Fix $g\geq 1$, $\uk=(k_1,\dots,k_g) \in \Z^g$
and assume either that $\uk$ is scalar or $k_g>g$.
Arthur's multiplicity formula, as well as the multiplicity one results of \cite{AMR} and \cite{MR_scalar},
show that two Hecke eigenforms in ${\rm S}_{\uk}(\Gamma_g)$ with same Hecke eigenvalues,
or equivalently with the same standard parameter,
are proportional.
It follows that $\dim {\rm S}_{\uk}(\Gamma_g)$ is
the number of possible standard parameters of Hecke eigenforms in ${\rm S}_{\uk}(\Gamma_g)$.
In what follows we enumerate these parameters in the case $k_1 \leq 13$.
We thus fix a Hecke eigenform in ${\rm S}_{\uk}(\Gamma_g)$
and denote by $\psi \in \Psi({\rm Sp}_{2g})$ its standard parameter.
As in \S \ref{arthurreview} we write
$$\psi=\oplus_{i \in {\rm I}} \pi_{i}[d_{i}].$$
{\sc Notation:} -- Assume $v_1>v_2>\dots>v_r$ are positive odd (resp. even) integers
and that there is a unique self-dual regular $\pi$ in $\Pi^{\rm alg}$
with weights $\pm \frac{v_1}{2},\pm \frac{v_2}{2},\dots,\pm \frac{v_r}{2}$,
then we shall denote by $\Delta_{v_1,v_2,\dots,v_r}$ (resp. ${\rm O}^{\rm e}_{v_1,v_2,\dots,v_r}$)
this unique element $\pi$.
Similarly, when $v_1>v_2>\dots>v_r$ are even positive integers and there is a
unique self-dual $\pi$ in $\Pi^{\rm alg}$ with weights $0$ and $\pm
\frac{v_1}{2},\pm \frac{v_2}{2},\dots,\pm \frac{v_r}{2}$, then we shall denote
by ${\rm O}^{\rm o}_{v_1,v_2,\dots,v_r}$ this element.
The $\Delta$'s are symplectic and the ${\rm O}$'s are orthogonal.
These notations are compatible
with the ones introduced in \S \ref{classresult},
and we have for instance ${\rm O}^{\rm o}_{22}= {\rm Sym}^2 \Delta_{11}$.
We shall also denote by $1$
the trivial representation of ${\rm PGL}_1$,
and by $\Delta_{23}^1$ and $\Delta_{23}^2$
the two cuspidal representations of ${\rm PGL}_2$ generated by
the two normalized eigenforms in ${\rm S}_{24}({\rm SL}_2(\Z))$.

-- We will denote by $\mathcal{L}_{24}$ the subset of $\Pi^{\rm alg}$ whose elements are either of motivic weight $\leq 22$, or of motivic weight $23$ with the weight $23$ having multiplicity $1$, or regular self-dual of motivic weight $24$. By the classification theorems (\cite[Thm. F]{CheLan}, Theorems \ref{thm23} and \ref{thm24}), there are $11+13+3=27$ elements in $\mathcal{L}_{24}$, all regular self-dual, and according to the notations above we have
{\tiny
$$\mathcal{L}_{24} \, \, = \, \, \{1,\Delta_{11},\Delta_{15},\Delta_{17},\Delta_{19},\Delta_{19,7},\Delta_{21},\Delta_{21,5},\Delta_{21,9},\Delta_{21,13},{\rm Sym}^2 \Delta_{11},\Delta_{23}^1,\Delta_{23}^2,\Delta_{23,7},\Delta_{23,9},\Delta_{23,13},$$ $$\Delta_{23,13,5},\Delta_{23,15,3},\Delta_{23,15,7},\Delta_{23,17,5},\Delta_{23,19,3},\Delta_{23,17,9},\Delta_{23,19,11},\Delta_{23,21,17,11,3},{\rm O}^{\rm o}_{24,16,8},{\rm O}^{\rm e}_{24,18,10,4},{\rm O}^{\rm e}_{24,20,14,2}\}.$$
}

\ps

\subsubsection{Case $\uk=(k_1,k_2,\dots,k_g)$ with $k_1 \leq 13$ and $k_g>g$.}
We have $1 \leq g \leq 12$.
We apply \S \ref{regularvectorvalued}.
In this case, each $\pi_i$ is regular of motivic weight $\leq 2 (k_1-1) \leq 24$,
there is a unique $i_0 \in I$ with $n_{i_0}$ odd,
and we have $d_{i_0}=1$.
In particular, all the $\pi_i$ are in $\mathcal{L}_{24}$
and $\pi_{i_0}$ is either $1$, ${\rm Sym}^2 \Delta_{11}$ or ${\rm O}^{\rm o}_{24,16,8}$.
It is a boring but trivial exercise to enumerate all the $\psi$ in $\Psi({\rm
Sp}_{2g})$ with these properties and such that the eigenvalues of
$\psi_\infty$ are distinct and $\leq 12$.
We find exactly $199$ such parameters.
The possible $\psi$ are then exactly the ones in this list satisfying the Arthur multiplicity formula \eqref{AMF},
using Formulas \eqref{chiukbasic}, \eqref{defepspsi} and \eqref{defepsiloni}.
We find that only $59$ of these $199$ do satisfy this formula,
and obtain Table \ref{tab:tableleq13nonscal},
as well as the part of Table \ref{tab:tableleq13scal} concerning the case $k>g$.
All those computations can be done easily with the help of a computer:
see {\color{red}\cite{homepage}} for a \texttt{PARI} code doing it.
They can also be made by hand as follows.

We only treat the case $\pi_{i_0}={\rm Sym}^2 \Delta_{11}$, the two other ones
being similar.
Note that for $i \neq i_0$ such that $\pi_i$ is symplectic, we have $w(\pi_i)
\leq 19$, and either $\pi_i[d_i]=\Delta_{19,7}[2]$ or $\pi_i=\Delta_w[d]$ with
$w+d-1 \leq 20$.
Assume first that $\pi_i$ is symplectic for all $i \neq i_0$.
We have $\epsilon(\Delta_{19,7} \times {\rm Sym}^2 \Delta_{11})=1$, so if we have $\pi_i[d_i]=\Delta_{19,7}[2]$ then Arthur's multiplicity formula $\epsilon_\psi(s_i)=\chi_{\rho_{\uk}}(\iota(s_i))$ simply reads $1=1$.
If we have $\pi_i[d_i]=\Delta_w[d]$, it rather asserts
$-(-1)^{(w+1)/2}=(-1)^{d/2}$, i.e.\ $w \equiv d +1 \bmod 4$ (note
$\epsilon(\Delta_w \times {\rm Sym}^2 \Delta_{11})=-\epsilon(\Delta_w)$ for
$w<22$).
This justifies the existence of the $18$ $\psi$ in Tables
\ref{tab:tableleq13nonscal} and  \ref{tab:tableleq13scal} containing ${\rm
Sym}^2 \Delta_{11}$.
Assume now there is $i_1 \neq i_0$, necessarily unique, such that $\pi_{i_1}$ is
orthogonal.
We will show that this case cannot happen.
We have either $\pi_{i_1}={\rm O}^{\rm e}_{24,18,10,4}$ or $\pi_{i_1}={\rm
O}^{\rm e}_{24,20,14,2}$, and $d_{i_1}=1$.
If $I=\{i_0,i_1\}$ we have $\epsilon_\psi=1$ and we compute
$\chi_{\rho_{\uk}}(\iota(s_{i_1}))=-1$, so there is $i \neq i_0,i_1$ in $I$.
For weight reasons we must have $\pi_i[d_i]=\Delta_w[2]$, with $w \in \{17,11\}$
if $\pi_{i_1}={\rm O}^{\rm e}_{24,20,14,2}$, and $w=15$ otherwise.
This implies $\chi_{\rho_{\uk}}(\iota(s_i))=-1$.
Note that $\epsilon(\Delta_w \times \pi_{i_1})$ is $-1$ if $\pi_{i_1}$ has an
odd number of weights $>w/2$, and is $1$ otherwise.
This shows $\epsilon_\psi(s_i)=-\epsilon(\Delta_w)\epsilon(\Delta_w \times
\pi_{i_1})=1$ for $w=17,11$ and $\pi_{i_1}={\rm O}^{\rm e}_{24,20,14,2}$, a
contradiction.
We finally exclude the last possible case $\pi_{i_1}={\rm O}^{\rm
e}_{24,18,10,4}$ and $w=15$ as we have $I=\{i_0,i_1,i\}$,
$\chi_{\rho_{\uk}}(\iota(s_{i_1}))=-1$ and
$\epsilon_\psi(s_{i_1})=\epsilon(\Delta_w \times \pi_{i_1})=1$.

\subsubsection{Scalar-valued case with $k\leq 13$ and $g \geq k$.}
We apply \S \ref{scalarvalued}.
We are either in case (I), (H1) or (H2). \ps\ps

{\bf Case (I)}.
Assume first we are in case (I),
in particular we have $I_0=I_{\rm odd}=\{i_0\}$, $d_{i_0}=1$
and the $\pi_i$ are regular for all $i$ in $I$.
For each $i$ in $I$, we have $\frac{w(\pi_j)+d_j-1}{2} \leq 12$,
and in particular $\pi_i$ has motivic weight $\leq 24$:
it belongs to the list $\mathcal{L}_{24}$.
By inspection, $\pi_{i_0}$ has thus to be
$1, {\rm Sym}^2 \Delta_{11}$ or ${\rm O}^{\rm o}_{24,16,8}$.
As the weight $0$ has multiplicity $3$ in $\psi_\infty$ and $d_{i_0}=1$
we have $I \neq \{i_0\}$.
There is then a unique $j$ in $I-\{i_0\}$ such that,
if $a_i$ denotes the smallest positive weight of $\pi_i$,
we have $a_j-\frac{d_i-1}{2}\leq 0$;
we must have $a_j-\frac{d_i-1}{2} = k-g$.
We have both $\frac{d_j-1}{2} \geq a_j$ and $\frac{d_j-1}{2} \leq 12-w(\pi_j)/2$,
which implies $a_j + w(\pi_j)/2 \leq 12$.
By an inspection of $\mathcal{L}_{24}$,
this forces $\pi_j=\Delta_{11}$ and $d_j=12$.
But this implies that the positive weights of the $\pi_i$
with $i$ in $I-\{j\}$ are $\geq 12$.
Only the trivial representation has this property in $\mathcal{L}_{24}$.
This shows $\pi_{i_0}=1$ and that the unique possibility for $\psi$ is
$$\psi =  \Delta_{11}[12] \oplus [1].$$
We recognize the standard parameter of the genus $12$ Ikeda lift of $\Delta_{11}$ \cite{Ikeda01},
a well-known element of ${\rm S}_{12}(\Gamma_{12})$, hence $\psi$ does exist.
Alternatively, Arthur's multiplicity formula \eqref{AMF} is satisfied
as we have $\epsilon_\psi(s_j)=\epsilon(\Delta_{11})=1=\chi_{\rho_{12}(12)}(\iota(s_j))$ by \eqref{chiukbasic2},
so $\psi$ is indeed the standard parameter of an eigenform in ${\rm L}^2_{12}(\Gamma_{12})$.
The shape of $\psi$ and the Zharkovskaya relation
\footnote{This asserts that if $F$ is an eigenform in ${\rm M}_k(\Gamma_g)$,
and if $\Phi_g F$ in ${\rm M}_k(\Gamma_{g-1})$ is non zero,
with $\Phi_g$ the Siegel operator, then $\Phi_g F$ is an eigenform and the standard $L$-function of $F$
and $\Phi_g F$ satisfy ${\rm L}(s,{\rm St},F) = {\rm L}(s, {\rm St}, \Phi_g F) \zeta(s-(g-k))\zeta(s+(g-k))$. }
imply that this eigenform has to be cuspidal.

\ps\ps

{\bf Case (H1)}.
Assume we are in case (H1).
We will show again $\psi = \Delta_{11}[12] \oplus [1]$.
Write $\psi = \pi_{i_0} [ 2(g-k)+1] \oplus \bigoplus_{i \neq i_0} \pi_i[d_i]$
as in the definition of (H1).

\begin{lemm}
The representation $\pi_i$ is in $\mathcal{L}_{24}$ for all $i \in I$,
and we have $\pi_{i_0}=1$.
\end{lemm}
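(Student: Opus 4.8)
The plan is to combine the structural facts about case $({\rm H1})$ recalled just above with the classification theorems of the paper. First, every $\pi_i$ satisfies property $({\rm R})$, so all of its weights are $\leq k-1 \leq 12$; in particular $\pi_i$ is self-dual (being a constituent of an Arthur parameter for $\mathrm{Sp}_{2g}$) and has motivic weight $w(\pi_i) \leq 24$. For $i \neq i_0$ the representation $\pi_i$ is moreover regular, hence is a regular self-dual level one cuspidal algebraic automorphic representation of motivic weight $\leq 24$, and therefore lies in $\mathcal{L}_{24}$: if $w(\pi_i) \leq 22$ this is \cite[Thm. F]{CheLan}; if $w(\pi_i)=23$ then, being regular, $\pi_i$ has ${\rm I}_{23}$ with multiplicity one, so Theorem~\ref{thm23} applies; and if $w(\pi_i)=24$ then Theorem~\ref{thm24} applies. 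It thus only remains to prove $\pi_{i_0}=1$, which trivially belongs to $\mathcal{L}_{24}$.

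To control $\pi_{i_0}$ I would exploit the distinguished summand $\varepsilon_{\C/\R}^{k}[2(g-k)+1]$ of $\psi_\R$. Since $i_0 \in I_0$, the weight $0$ occurs in $\pi_{i_0}$ and ${\rm L}((\pi_{i_0})_\infty)$ contains the character $\varepsilon_{\C/\R}^{k}$; because $d_{i_0}=2(g-k)+1$, the Arthur $\mathrm{SL}_2$ shifts this character into the segment of eigenvalues $-(g-k),\dots,g-k$ of $\psi_\infty$. Now each of the $2g+1$ eigenvalues of $\psi_\infty$ equals $0$ or $\pm(k-i)$ for some $1 \leq i \leq g$, hence has absolute value at most $N:=\max(k-1,\,g-k)$. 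Consequently, if $\pi_{i_0}$ had a weight $w>0$ (necessarily $\leq k-1$, of multiplicity one by $({\rm R})$), then $\pi_{i_0}[2(g-k)+1]$ would contribute the eigenvalue $w+(g-k)$ to $\psi_\infty$, forcing $w+(g-k)\leq N$ and hence $g \leq 2k-2$. Therefore, as soon as $g \geq 2k-1$, the representation $\pi_{i_0}$ has no nonzero weight, so ${\rm L}((\pi_{i_0})_\infty)$ is a sum of copies of $1$ and $\varepsilon_{\C/\R}$, i.e.\ $\pi_{i_0}$ has motivic weight $0$; since the trivial representation of $\mathrm{PGL}_1$ is the only level one cuspidal algebraic representation of motivic weight $0$ (\cite[Thm. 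F]{CheLan}), we get $\pi_{i_0}=1$.

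There remains the finite set of pairs $(k,g)$ with $2 \leq k \leq 13$ and $k \leq g \leq 2k-2$. If $\pi_{i_0}$ is regular, the first paragraph gives $\pi_{i_0}\in\mathcal{L}_{24}$, and inspecting that list the only elements having $0$ among their weights are $1$, ${\rm Sym}^2 \Delta_{11}$ and ${\rm O}^{\rm o}_{24,16,8}$; the last two have largest weight $11$ and $12$ respectively. Applying the eigenvalue bound above to the largest weight of $\pi_{i_0}$, together with the requirement ${\rm L}((\pi_{i_0})_\infty)\supset\varepsilon_{\C/\R}^{k}$, reduces the two nontrivial possibilities to a short explicit list of parameters $\psi$ (clustered around $k=13$, $g\in\{13,14\}$), each of which is eliminated by a direct verification that Arthur's multiplicity formula \eqref{AMF} fails for it, exactly as in the case analysis carried out in the remainder of this proof. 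If $\pi_{i_0}$ is not regular, we are in one of the two non-regular subcases of $({\rm H1})$ recalled above; there $n_{i_0}$ and the weights of $\pi_{i_0}$ are severely constrained, and those parameters are ruled out in the same way, using the inexistence results of \S\ref{par:effortless} together with \eqref{AMF}. In all cases $\pi_{i_0}=1$, which completes the proof.

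The step I expect to be the main obstacle is the last one: the eigenvalue bookkeeping by itself does not dispose of the nontrivial possibilities for $\pi_{i_0}$ when $g$ is close to $k$, nor of the non-regular subcases, and one genuinely has to feed the surviving candidate Arthur parameters through the multiplicity formula — the same kind of computation that drives the remaining cases in the classification of scalar-valued Siegel cusp forms of weight $\leq 13$.
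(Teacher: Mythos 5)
Your overall strategy mirrors the paper's --- the eigenvalue bound coming from $\psi_\infty$, the classification list $\mathcal{L}_{24}$, and Arthur's multiplicity formula --- and your reduction to $g\leq 2k-2$ by showing that $\pi_{i_0}$ has no nonzero weight when $g\geq 2k-1$ is correct. But the final paragraph has a genuine gap in the non-regular case.

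If $\pi_{i_0}$ is non-regular of motivic weight $24$, it is \emph{not} covered by $\mathcal{L}_{24}$: Theorem~\ref{thm24} only classifies the \emph{regular} self-dual representations of motivic weight $24$, and the inexistence results of Section~\ref{par:effortless} concern only regular infinitesimal characters $w(\lambda)$. With no classification of the possible $\pi_{i_0}$ in this situation, you have no list of candidate parameters $\psi$, so ``a direct verification that Arthur's multiplicity formula \eqref{AMF} fails'' cannot even be set up. The tools you cite simply do not reach this scenario.

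The fix --- and this is what the paper does --- is to apply the positive-weight bound uniformly, with no case split on regularity. Every positive weight $\lambda$ of every $\pi_i$, including $\pi_{i_0}$, satisfies $\lambda + \frac{d_i-1}{2} \leq k-1 \leq 12$, since $\lambda + \frac{d_i-1}{2}$ is a positive eigenvalue of $\psi'$ and those are exactly $1,\dots,k-1$. So $w(\pi_{i_0})=24$ would force $\lambda=12$, hence $k=13$ and $d_{i_0}=1$, i.e.\ $g=k=13$, which is absurd since $gk$ odd gives ${\rm S}_k(\Gamma_g)=0$. Having established $w(\pi_{i_0})<24$ this way, property $({\rm R})$ together with \cite[Thm.~F]{CheLan} place $\pi_{i_0}$ in $\mathcal{L}_{24}$ unconditionally; since $0$ is among its weights and its motivic weight is $<24$, the only options are $\pi_{i_0}\in\{1,{\rm Sym}^2\Delta_{11}\}$, so ${\rm O}^{\rm o}_{24,16,8}$ is excluded automatically without any multiplicity-formula check. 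The remaining possibility $\pi_{i_0}={\rm Sym}^2\Delta_{11}$ is then eliminated as you sketch: it forces $k=13$, $g=14$, $d_{i_0}=3$, the only compatible $\psi'$ is $\Delta_{11}[8]\oplus[3]\oplus[1]$, and \eqref{AMF} fails at $s_i$ with $\pi_i[d_i]=\Delta_{11}[8]$.
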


\begin{pf} As a general fact, all the $\pi_i$ satisfy condition (R) of \S \ref{scalarvalued}.
They have motivic weight $\leq 2(k-1) \leq 24$ and their nonzero weights have multiplicity $1$.
Moreover $\pi_i$ is regular for $i \neq i_0$.
It follows that all the $\pi_i$ are in $\mathcal{L}_{24}$, except perhaps
$\pi_{i_0}$ in the case $w(\pi_{i_0})=24$.
But for each $i$ and each positive weight $\lambda$ of $\pi_i$
we have $\lambda + \frac{d_i-1}{2}  \leq k-1 \leq 12$.
Thus $w(\pi_{i_0}) =24$ implies $k=13$ and $d_{i_0}=1=2(g-k)+1$, so $g=k=13$:
this is absurd as there is no nonzero Siegel modular form for $\Gamma_g$ with odd weight and genus.
So $\pi_{i_0}$ is in $\mathcal{L}_{24}$ with motivic weight $<24$,
and the unique possibilities are thus $\pi_{i_0}=1$ or $\pi_{i_0}={\rm Sym}^2 \Delta_{11}$
since $0$ is a weight of $\pi_{i_0}$.
Assume we have $\pi_{i_0}={\rm Sym}^2 \Delta_{11}$.
Then ${\rm L}((\pi_{i_0})_\infty)$ contains $\varepsilon_{\C/\R}$ so
$k$ is odd by (H1), $g$ is even, and we have $d_{i_0} \equiv 3 \bmod 4$.
The inequality $11+\frac{d_{i_0}-1}{2} \leq k-1\leq 12$ implies then
$k=13$, $d_{i_0}=3$ and $g=14$.
We have proved
 $$\psi = {\rm Sym}^2 \Delta_{11}[3] \oplus \psi'$$
with $\psi' = \oplus_{i \neq i_0} \pi_i[d_i]$,
and $\psi'_\infty$ has the eigenvalues $\pm 9, \pm 8, \dots, \pm 1$ and $0$ twice.
But the $\pi_i$ are in $\mathcal{L}_{24}$ with motivic weight $\leq 18$,
hence in $\{1,\Delta_{11},\Delta_{17}\}$,
and we have $d_i \leq 3$ for $i$ in $I_{\rm odd}$.
The only possibility is thus $\psi'=\Delta_{11}[8] \oplus [3] \oplus [1]$.
We have $I_{\rm even}=\{i\}$ with $\pi_i[d_i]=\Delta_{11}[8]$,
$\epsilon_\psi(s_i)=\epsilon(\Delta_{11} \times {\rm Sym}^2 \Delta_{11})=-1$ but
$\chi_{\rho_{13}(14)}(\iota(s_i))=1$ by Formula \eqref{chiukbasic2bis}: the
multiplicity formula is not satisfied.
\end{pf}

Note that $\pi_{i_0}=1$ implies $k \equiv 0 \bmod 2$ by (H1),
hence $k\leq 12$. Write again
\[ \psi = [2(g-k)+1] \oplus \psi' \]
where $\psi'=\oplus_{i\neq i_0} \pi_i[d_i]$ is a certain $2k$-dimensional
parameter with weights $\pm (k-1), \pm (k-2), \dots, \pm 1$ and $0$ twice, and $k-1 \leq 11$.
 Each $\pi_i$ with $i \neq i_0$ is then regular of motivic weight $\leq 22$.
 The list of all possible $\psi'$ with these properties is easy to determine:
 see Proposition 9.2.2 of \cite{CheLan}.
 For $k=2,4,6$ we find $\psi'= [2k-1] \oplus [1]$.
 For $k=8$ we have $[15] \oplus [1]$ and $\Delta_{11}[4]\oplus [7] \oplus [1]$.
 For $k=10$ we have
{\small $$[19]\oplus[1], \Delta_{11}[8]\oplus[3]\oplus[1], \Delta_{15}[4]\oplus [11] \oplus [1], \Delta_{17}[2]\oplus [15]\oplus [1], \Delta_{17}[2]\oplus\Delta_{11}[4]\oplus[7]\oplus[1].$$}
\noindent For $k=12$, we have $24$ possibilities for $\psi'$,
namely the ones in \cite[Thm. E]{CheLan}.

 \begin{lemm}\label{lemm:congmod4}
 We have $k \equiv 0 \bmod 4$.
 \end{lemm}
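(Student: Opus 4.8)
The plan is to show that Arthur's multiplicity formula \eqref{AMF} fails for every parameter $\psi = [2(g-k)+1]\oplus\psi'$ with $\pi_{i_0}=1$ as soon as $k\equiv 2\bmod 4$; since we already know $k$ is even, this forces $k\equiv 0\bmod 4$. Throughout I keep the notation of \S\ref{scalarvalued}, so $k\in\{2,6,10\}$, and I run through the seven parameters $\psi=[2(g-k)+1]\oplus\psi'$ with $\psi'$ in the list displayed just above (one for $k=2$, one for $k=6$, five for $k=10$).

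The first step is to locate the odd constituents and reduce to a single test. In each of these seven cases $\psi$ has exactly three constituents of the form $1[d]$: the trivial factor $1[1]$ (call it $i_1$); the factor $1[2(g-k)+1]$, which is the distinguished $i_0$ of case (H1) since $2(g-k)+1=d_{\mathrm{max}}$; and one more factor $1[d_{i_2}]$ with $d_{i_2}$ odd and $\geq 3$ (call it $i_2$). One reads this directly off the list; alternatively it follows from the fact that $\psi'_\infty$ contains the weight $0$ with multiplicity exactly $2$, that for $k\leq 10$ the only self-dual $\pi$ of motivic weight $\leq 22$ with $0$ among its weights is $\pi=1$, and that at most one factor $1[d]$ with $d\geq 3$ can occur, lest some nonzero weight of $\psi'$ acquire multiplicity $\geq 2$. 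As $\pi_{i_0}=1$ is regular, $1$-dimensional, and $\mathrm{L}(1_\infty)=1=\varepsilon_{\C/\R}^{k}$ ($k$ even), these three factors constitute $I_{\mathrm{odd}}=I_0=\{i_0,i_1,i_2\}$, so we are exactly in the sub-case of \S\ref{scalarvalued} with $|I_{\mathrm{odd}}|=3$, and it is legitimate to test \eqref{AMF} on $s_{i_1i_2}\in{\rm C}_\psi$.

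Now compute the two sides at $s_{i_1i_2}$. Write $d_{i_2}=2a-1$. Relation \eqref{propioi1i2} with $n_{i_1}=n_{i_2}=1$ reads $2a\equiv 0\bmod 4$, so $a$ is even; and ${\rm e}_\delta(\pi_{i_1})={\rm e}_\delta(\pi_{i_2})=0$ since $\pi_{i_1}=\pi_{i_2}=1$ has only the weight $0$. Feeding this into \eqref{eq:chi_sij} and using $k'=k$ even gives $\chi_{\rho_k(g)}(\iota(s_{i_1i_2}))=(-1)^{\lfloor\delta a/2\rfloor}=(-1)^{a/2}$, independent of $\delta$ as predicted. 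On the other side, \eqref{defepspsi} gives $\epsilon_\psi(s_{i_1i_2})=\epsilon(i_1)\epsilon(i_2)$, and \eqref{defepsiloni}, using $d_{i_1}=1$ and $\epsilon(1\times\pi_j)=\epsilon(\pi_j)$, gives $\epsilon(i_1)=\prod_{j\neq i_1}\epsilon(\pi_j)$ and $\epsilon(i_2)=\prod_{j\neq i_2}\epsilon(\pi_j)^{\min(d_{i_2},d_j)}$. Among all $\pi_j$ occurring in the seven lists one has $\epsilon(\pi_j)=1$ except for $\pi_j=\Delta_{17}$, where $\epsilon(\Delta_{17})=i^{18}=-1$ and which moreover only occurs with $d_j=2<d_{i_2}$, so $\epsilon(\pi_j)^{\min(d_{i_2},d_j)}=(-1)^2=1$ for such $j$; hence $\epsilon_\psi(s_{i_1i_2})=(-1)^{m}$ with $m=\#\{j:\pi_j=\Delta_{17}\}\in\{0,1\}$. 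It remains to compare $(-1)^{a/2}$ with $(-1)^m$ over the seven parameters: for $k=2$, for $k=6$, and for the three weight-$10$ parameters not involving $\Delta_{17}$ one has $m=0$ and $d_{i_2}\in\{3,11,19\}$, all $\equiv 3\bmod 8$, so $a\equiv 2\bmod 4$ and $(-1)^{a/2}=-1\neq 1$; for the two weight-$10$ parameters involving $\Delta_{17}$ one has $m=1$ and $d_{i_2}\in\{15,7\}$, both $\equiv 7\bmod 8$, so $a\equiv 0\bmod 4$ and $(-1)^{a/2}=1\neq -1$. In every case \eqref{AMF} is violated, so no Hecke eigenform in ${\rm S}_k(\Gamma_g)$ has such a standard parameter; as $k$ is even, $k\equiv 0\bmod 4$.

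The one genuinely delicate point is the middle step, namely unwinding the Moeglin--Renard formula \eqref{eq:chi_sij} correctly and checking that $(-1)^{\lfloor\delta(-1)^{k'-a}a/2\rfloor}$ collapses to $(-1)^{a/2}$ here; this uses both $k'=k$ even and the parity $2\mid a$ forced by \eqref{propioi1i2}, which are precisely the inputs that make the formula independent of the Whittaker normalization $\delta$. Everything else is a short mechanical verification over the explicit list, entirely parallel to the case-by-case checks already carried out for $\uk$ non-scalar.
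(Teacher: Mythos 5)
Your proof is correct and follows essentially the same route as the paper: you test Arthur's multiplicity formula \eqref{AMF} on the element $s_{i_1i_2}$, use the parity constraint to see that $a$ is even so that \eqref{eq:chi_sij} gives $\chi_{\rho_k(g)}(\iota(s_{i_1i_2}))=(-1)^{a/2}$, and compare with $\epsilon(i_1)\epsilon(i_2)$ over the seven listed parameters $\psi'$ for $k=2,6,10$ — exactly the paper's argument, merely with all seven cases written out instead of one. (Only cosmetic caveat: in your parenthetical alternative justification, the bound should be motivic weight $\leq 2(k-1)\leq 18$ rather than $\leq 22$, since ${\rm Sym}^2\Delta_{11}$ has weight $0$ and motivic weight $22$; this does not affect the argument, which rests on the paper's explicit list.)
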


\begin{pf}
  Assume that $k \equiv 2 \bmod 4$, then by inspection $|I_\mathrm{odd}|=3$ and
  we denote $I_\mathrm{odd} = \{ i_0, i_1, i_2 \}$ so that
  $\pi_{i_1}[d_{i_1}]=[1]$ and $\pi_{i_2}[d_{i_2}]=[2a-1]$.
  For any $i \in I_\mathrm{even}$ we have $n_i d_i/2 \equiv 0 \bmod 2$ and so $a
  \equiv k \bmod 2$, i.e.\ $a$ is even.
  Thus Arthur's multiplicity formula \eqref{AMF} implies
  \[ \chi_{\rho_k(g)}(\iota(s_{i_1 i_2})) = (-1)^{a/2} = \epsilon(i_1)
  \epsilon(i_2), \]
  the first equality being \eqref{eq:chi_sij}.
  For $\psi'=[1]\oplus[2k-1]$ those epsilon factors are $1$ and we have $a=k
  \equiv 2 \bmod 4$ so this formula does not hold.
  This rules out $k=2$ and $k=6$.
  The four other parameters for $k=10$ are ruled out the same way using
  $\epsilon(\Delta_w)=(-1)^{(w+1)/2}$.
\end{pf}

We will now prove that, apart from the case $\psi = \Delta_{11}[12] \oplus [1]$,
none of the remaining $\psi$ come from a {\it cuspidal} modular form.
We will need first to recall some results on orthogonal automorphic forms and theta series.
For each integer $n \equiv 0 \bmod 8$ we fix arbitrarily an even unimodular lattice of rank $n$
and denote respectively by $\Omega_n$ and ${\rm S}\Omega_n$
its orthogonal and special orthogonal group schemes over $\Z$.
We refer to \cite[Sects. 4.4 \& 6.4.7]{CheLan} for
the basics of the theory of level $1$ automorphic forms for $\Omega_n$ and ${\rm S}\Omega_n$
(beware that these group schemes are rather denoted by ${\rm O}_n$ and ${\rm SO}_n$ {\it loc. cit.}).
By results of Arthur \cite{Arthur_book} and Ta\"ibi \cite{TaiMult},
any discrete automorphic representation of ${\rm S}\Omega_n$ or $\Omega_n$
has a standard parameter $\psi$ in $\Psi({\rm S}\Omega_n)$,
the latter being defined exactly as in the case of ${\rm Sp}_{2g}$
(see \S \ref{arthurreview}) but with the condition $\sum_{i\in I} n_id_i=n$ instead of $\sum_{i \in I} n_id_i=2g+1$. \ps

For $n \equiv 0 \bmod 8$, we denote by ${\rm X}_{n}$
the set of isomorphism classes of even unimodular lattices of rank $n$.
The vector-space $\C[{\rm X}_{n}]$ is in a natural way
the dual of a space of level $1$ automorphic forms for $\Omega_n$.
Any Hecke eigenform $G$ in $\C[{\rm X}_{n}]$ generates
a discrete automorphic representation $\pi_G$ of $\Omega_n$
(with trivial Archimedean component and $(\pi_F)_p^{\Omega_n(\Z_p)} \neq 0$ for each prime $p$),
which has a standard parameter $\psi_G$ in $\Psi({\rm S}\Omega_n)$.
Moreover, Siegel theta series provide a linear map
\begin{equation} \label{linearthetasiegel} \vartheta_g : \C[{\rm X}_{n}] \longrightarrow {\rm M}_{n/2}(\Gamma_g)\end{equation}
for all $g\geq 0$ (see e.g. \cite[\S 5.1]{CheLan},
in particular for the conventions for $g=0$), with $\Phi \circ \vartheta_g = \vartheta_{g-1}$
(here $\Phi$ denotes the Siegel operator).
For $G$ in $\C[{\rm X}_n]$, the {\it degree} of $G$ is the smallest integer $g_0 \geq 0$ with $\vartheta_{g_0}(G) \neq 0$;
the form $\vartheta_{g_0}(G)$ is then cuspidal
and we have $\vartheta_{g}(G)\neq 0$ for $g\geq g_0$.
If $G$ in $\C[X_n]$ is an eigenform with degree $g_0$, and for $g\geq g_0$,
then the Eichler commutation relations show that
$\vartheta_g(G)$ is an eigenform in ${\rm M}_{n/2}(\Gamma_g)$,
and there is a simple relation due to Rallis \cite[\S 6]{Rallis_eichler}
between the Satake parameters of $G$ and that of $F=\vartheta_g(G)$ (see \cite[Sect 7.1]{CheLan}).
Concretely, if $F$ is square integrable (e.g. cuspidal),
this relation is the equality $\psi_G = \psi_F \oplus [n-2g_0-1]$ for $n>2g_0+1$,
$\psi_F = \psi_G \oplus [2g_0+1-n]$ for $n<2g_0+1$.
Last but not least, we have the following result,
a consequence of \cite[Thm. I.1.1]{Rallis_Howe} and \cite[Lemme I.4.11]{MW_eisenstein}
that we learnt from \cite[\S 16.2]{MR_scalar}.\ps

\begin{lemm}
\label{thetasquareint}
Let $G$ be an eigenform in $\C[{\rm X}_{n}]$ of degree $g_0$.
If we have $g>g_0$ and $g>n-1-g_0$,
then $F'=\vartheta_{g}(G)$ is square integrable
and $\psi_{F'}=\psi_G \oplus [2g-n+1]$.
\end{lemm}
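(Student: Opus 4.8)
The plan is to reduce the assertion to the square-integrability of $F'=\vartheta_g(G)$ in the stated range, and then to read off the parameter from facts already recalled; square-integrability is the only real point. First I would record the easy reductions. Since $\Omega_n$ is anisotropic over $\Q$, the quotient $\Omega_n(\Q)\backslash\Omega_n(\A)$ is compact, so $\pi_G$ is automatically cuspidal; by the Rallis tower property recalled above, $F'=\vartheta_g(G)$ is non-zero for every $g\geq g_0$ and is cuspidal only for $g=g_0$, and for $g>g_0$ its constant term along the Siegel parabolic is $\vartheta_{g-1}(G)\neq 0$, so $F'$ is a non-zero non-cuspidal holomorphic Siegel modular form of weight $n/2$ and genus $g$. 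Next I would observe that, \emph{once $F'$ is known to be square-integrable}, the parameter formula is automatic: $F'$ then generates a discrete automorphic representation with a well-defined standard parameter $\psi_{F'}\in\Psi(\Sp_{2g})$ (Arthur \cite{Arthur_book}, \cite{TaiMult}), which --- viewed as an isobaric representation of $\GL_{2g+1}$ --- is determined by the Satake parameters of $F'$ at the finite places; Rallis' computation of the Satake parameters of $\vartheta_g(G)$ in terms of those of $\pi_G$, recalled before the statement, then applies to $F'$, and since the hypotheses $g>g_0$ and $g>n-1-g_0$ force $2g\geq n$, hence $2g+1>n$, the relevant case of that relation reads $\psi_{F'}=\psi_G\oplus[2g-n+1]$ --- a well-formed summand of a parameter in $\Psi(\Sp_{2g})$, as $2g-n+1$ is a positive odd integer. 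So the lemma reduces to the square-integrability of $F'$.

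For this I would follow \cite[\S 16.2]{MR_scalar}, whose argument rests on the Siegel--Weil formula for the dual pair $(\Omega_n,\Sp_{2g})$. One writes $\vartheta_g(G)$ as a theta integral over $\Omega_n(\Q)\backslash\Omega_n(\A)$ (absolutely convergent, $\Omega_n$ being compact); the hypothesis $g>n-1-g_0$ places this in the ``first term'' range of the Siegel--Weil formula, where the integral is identified with a residue of the cuspidal Eisenstein series on $\Sp_{2g}$ attached to the datum $\vartheta_{g_0}(G)$ on the Levi $\GL_{g-g_0}\times\Sp_{2g_0}$ of a maximal parabolic, taken at the point where this residue spans a residual --- hence square-integrable --- automorphic representation. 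The existence, non-vanishing and square-integrability of this residue in the relevant range is \cite[Thm. I.1.1]{Rallis_Howe}, and the determination of the poles of the intertwining operators involved, together with the square-integrability of the residue, is provided by \cite[Lemme I.4.11]{MW_eisenstein}; the hypothesis $g>g_0$ guarantees $F'\neq 0$, i.e.\ that $g_0$ is the first occurrence, so that the leading pole is the correct one. Combining these facts gives the square-integrability of $F'$, and the reduction above then yields the parameter.

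The main obstacle is the bookkeeping in this last step: one must pin down precisely which degenerate Eisenstein series and which evaluation point occur in the Siegel--Weil formula for this range of $(n,g)$, reconcile the normalisations of \cite{Rallis_Howe} and \cite{MW_eisenstein}, and check that the two inequalities $g>g_0$ and $g>n-1-g_0$ put us \emph{strictly} inside the first-term range, so that the regularised theta integral is a genuinely square-integrable residue with no continuous-spectrum contribution to track. All of this is carried out in \cite[\S 16.2]{MR_scalar}, to which I would refer for the details rather than reproduce them.
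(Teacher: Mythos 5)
Your proof is correct and follows essentially the same route as the paper, which simply derives the lemma from \cite[Thm.~I.1.1]{Rallis_Howe} and \cite[Lemme~I.4.11]{MW_eisenstein} following \cite[\S~16.2]{MR_scalar}, and records the inequality $2g+1>g+g_0+1>n$ (i.e.\ $g>g_0$ and $g>n-1-g_0$) to pin down the direction of Rallis' relation and obtain $\psi_{F'}=\psi_G\oplus[2g-n+1]$. Your write-up spells out the reduction to square-integrability and the Siegel--Weil/residual-Eisenstein picture in slightly more detail, but ultimately defers to the same sources for the substantive square-integrability statement, just as the paper does.
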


(Note that we have $2g+1>g+g_0+1>n$, hence the last assertion.)
We finally go back to our analysis of case (H1), setting $n=2k$.
The spaces $\C[{\rm X}_{8}], \C[{\rm X}_{16}]$ and $\C[{\rm X}_{24}]$
have respective dimension $1,2,24$,
and the standard parameters of their eigenforms turn out to be exactly
the $1$, $2$ and $24$ parameters $\psi'$ discussed above for $k=4,8$ and $12$,
by \cite[Cor. 7.2.7 \& Thm. E]{CheLan}.
This reference determines as well the degree of each eigenform
(see \cite[Thm. 9.2.6]{CheLan},
note that most of these degrees had already been found before by Nebe and Venkov):
this is the smallest integer $g_0$ such that $[2k-1-2g_0]$ is a summand of $\psi'$ (hence $g_0 < k$),
unless we have $\psi'=\Delta_{11}[12]$ and $g_0=12$.
For $\psi' \neq \Delta_{11}[12]$ we have thus $g>g_0$ as well as $g>2k-1-g_0$
by the necessary condition $d_{\rm max}=2(g-k)+1$ of (H1).
By Lemma \ref{thetasquareint}, the automorphic representation $\pi_{F'}$ is thus
the (necessarily unique) discrete automorphic representation of ${\rm Sp}_{2g}$ with parameter $\psi=[2(g-k)+1] \oplus \psi'$,
and it is not cuspidal since we have $g>g_0$.
In the remaining case we have
$\psi = [2(g-k)+1] \oplus \Delta_{11}[12]$, $k=12$ and $g_0=12$,
and again $\pi_{F'}$ is discrete but not cuspidal if we have $g>12$.
We conclude since for $g=12$ we recover the form found in case (I). \ps\ps

{\bf Case (H2)}. We are going to show that
there are exactly two Siegel eigenforms in this remaining case,
both for $k=13$, of respective genus $16$ and $24$, and parameters
$$\Delta_{17}[8] \oplus [9] \oplus [7] \oplus [1]\, \, \, \, \, {\rm and}\, \, \, \, \, [25 ]\oplus \Delta_{11}[12].$$

\begin{lemm}
We have $\pi_{i_0}=1$, $k$ odd and $g$ even.
\end{lemm}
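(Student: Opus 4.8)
The plan is to determine $\pi_{i_0}$ from the infinitesimal character constraints imposed by case (H2) together with the classification of $\Pi_{\rm alg}$ in low motivic weight \cite[Thm.~F]{CheLan}; the statements about the parities of $k$ and $g$ then follow formally.

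First I would collect the elementary constraints available in case (H2). By Remark \ref{iomult1caseh2} we may assume $k\geq 2$, $i_0\in I_{\rm odd}$, and $\pi_{i_0}$ is regular with $0$ among its weights with multiplicity $1$; in particular $n_{i_0}$ is odd, and $\pi_{i_0}$, being a member of the parameter $\psi$ and $\psi$ being algebraic, lies in $\Pi_{\rm alg}$. Recall also that we are in the range $k\leq g\leq 2(k-1)$ and that $d_{i_0}=d_{\rm max}=2(g-k)+3$. Since every eigenvalue of $\psi_\infty$ is $\leq k-1$, and since each positive weight $w$ of $\pi_{i_0}$ contributes, through $\pi_{i_0}[d_{i_0}]$, the eigenvalue $w+\tfrac{d_{i_0}-1}{2}=w+(g-k+1)$ of $\psi_\infty$, every positive weight of $\pi_{i_0}$ is at most $2k-g-2\leq k-2\leq 11$, using $g\geq k$ and $k\leq 13$. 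Hence $w(\pi_{i_0})\leq 22$.

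Now I would invoke \cite[Thm.~F]{CheLan}: among the $11$ elements of $\Pi_{\rm alg}$ of motivic weight $\leq 22$ (all self-dual), only the trivial representation $1$ and ${\rm Sym}^2\Delta_{11}$, whose Archimedean parameter is ${\rm I}_{22}+\varepsilon_{\C/\R}$, have $0$ as a weight. So $\pi_{i_0}\in\{1,\,{\rm Sym}^2\Delta_{11}\}$. To exclude $\pi_{i_0}={\rm Sym}^2\Delta_{11}$: its unique positive weight is $11$, so $11\leq 2k-g-2$ and, with $g\geq k$, this forces $k=13$, which is odd; but ${\rm I}_{22}+\varepsilon_{\C/\R}$ is multiplicity free and the only power of $\varepsilon_{\C/\R}$ it contains is $\varepsilon_{\C/\R}^{1}$, whereas case (H2) requires it to contain $\varepsilon_{\C/\R}^{k-1}$, i.e.\ $k$ even --- a contradiction. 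Therefore $\pi_{i_0}=1$.

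Finally, $\pi_{i_0}=1$ means ${\rm L}((\pi_{i_0})_\infty)=1=\varepsilon_{\C/\R}^{0}$, so the (H2) requirement that this parameter contain $\varepsilon_{\C/\R}^{k-1}$ forces $k$ to be odd. Since our eigenform has scalar weight, $\sum_i k_i=gk$, which is odd when $k$ is odd and $g$ is odd; as ${\rm S}_k(\Gamma_g)$ vanishes whenever $\sum_i k_i$ is odd, $g$ must be even. The only delicate point is the weight bound on $\pi_{i_0}$ in the second step, which hinges on correctly extracting $d_{i_0}$ from the definition of case (H2) and on the shape of $\psi_\infty$; everything else is bookkeeping or a direct appeal to \cite[Thm.~F]{CheLan}.
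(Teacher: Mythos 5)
Your proof is correct and follows essentially the same route as the paper's: you bound $w(\pi_{i_0})\leq 22$ via the eigenvalues of $\psi_\infty$, use the Chenevier--Lannes classification together with $i_0\in I_0$ to reduce to $\pi_{i_0}\in\{1,\Sym^2\Delta_{11}\}$, exclude $\Sym^2\Delta_{11}$ by forcing $k=13$ and contradicting the $\varepsilon_{\C/\R}^{k-1}$ condition in (H2), and then get $k$ odd and $g$ even exactly as in the paper. The only cosmetic difference is that you phrase the weight bound through $g\geq k$ rather than $d_{i_0}\geq 3$, which is the same constraint.
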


\begin{pf}
  As we are in case (H2) we must have $w(\pi_{i_0}) + d_{i_0}-1 \leq 2(k-1) \leq
  24$ and $d_{i_0} = 2(g-k)+3 \geq 3$, and so $w(\pi_{i_0}) \leq 22$.
Assume $\pi_{i_0}$ is non trivial.
We have $\pi_{i_0} = \Sym^2 \Delta_{11}$ by the Chenevier-Lannes theorem, so
$w(\pi_{i_0})=22$, $d_{i_0}=3$ and $k=13$ is odd.
This contradicts the last condition of (H2).
So $\pi_{i_0}$ is trivial, $k$ is odd by the last condition in (H2),
hence $g$ is even as we are in full level $\Gamma_g$.
\end{pf}

Write $\psi = [2(g-k)+3] \oplus \psi'$, with $\psi' = \bigoplus_{i \neq i_0} \pi_i[d_i]$.
The eigenvalues of $\psi_\infty$ corresponding to $\psi'$
are the $2k-2$ integers $\pm i$ with $0 \leq j \leq k-1$,
with the even number $j=g-k+1$ omitted
(we shall call those $2k-2$ eigenvalues the "weights" of $\psi'$ for short).
Each $\pi_i$ is regular algebraic of motivic weight $\leq 24$,
hence in the list $\mathcal{L}_{24}$.
We are now led to do a simple enumeration exercise:
for every odd $k \in \{1, \dots, 13\}$,
enumerate all possible $\psi'$, with $\pi_i$ in $\mathcal{L}_{24}$ for each $i$,
and with weights $\pm 0, \dots, \pm (k-1)$
where the even integer $\pm (g-k+1)$ is excluded and satisfies $k-1 \geq g-k+1 > 1$. \ps

\begin{lemm} \label{infoio}
Assume $i \in I_0$ and $\pi_i \neq 1$, then we have $\pi_i={\rm Sym}^2
\Delta_{11}$, $d_i=1$, $k=13$ and $g=24$, as well as $I_0 = I_\mathrm{odd} = \{
i_0, i, j \}$ with $j \neq i_0,i$, $\pi_j=1$ and $d_j \geq 5$.
\end{lemm}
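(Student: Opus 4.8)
The plan is to prove Lemma \ref{infoio} by a constraint analysis on the parameter $\psi'$, using the list $\mathcal{L}_{24}$ together with the structural conditions imposed by case (H2). First I would note the setup: we are in case (H2), $\pi_{i_0}=1$, $k$ is odd, $g$ is even, and $\psi = [2(g-k)+3] \oplus \psi'$ with $\psi' = \bigoplus_{i \neq i_0} \pi_i[d_i]$ of dimension $2k-2$; moreover by the discussion following (H2) each $\pi_i$ (for $i \neq i_0$) is regular algebraic of motivic weight $\leq 24$, hence lies in $\mathcal{L}_{24}$, with the sole possible exception $\pi_{i_0}$ which we have just shown is trivial. So in fact all $\pi_i$ with $i \in I$ and $\pi_i \neq 1$ lie in $\mathcal{L}_{24}$.

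Now suppose $i \in I_0$, that is $0$ is a weight of $\pi_i$, and $\pi_i \neq 1$. By the discussion of case (H2) in \S\ref{scalarvalued}, an index $i \in I_0$ with $i \neq i_0$ must have $d_i = 1$, and the only non-regular member of $\{\pi_j\}$ is $\pi_{i_0}$, so $\pi_i$ is regular with $0$ among its weights; inspecting $\mathcal{L}_{24}$, the only non-trivial element with $0$ as a weight and odd dimension is $\mathrm{Sym}^2 \Delta_{11} = {\rm O}^{\rm o}_{22}$ (the ${\rm O}^{\rm o}$'s ${\rm O}^{\rm o}_{24,16,8}$ also have $0$ as a weight, but ${\rm O}^{\rm o}_{24,16,8}$ has weights $\pm 12, \pm 8, \pm 4, 0$ with $w(\pi_i)=24$, which would force $w(\pi_i)+d_i-1 \leq 2(k-1) \leq 24$, hence $k=13$, $d_i=1$; but then $12 = 24/2 \leq k-1 = 12$ is fine — so I must also eliminate this case, see below). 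Taking $\pi_i = \mathrm{Sym}^2 \Delta_{11}$ first: then $w(\pi_i)=22$, and since $d_i=1$ and its largest weight is $11$, the constraint $11 + \frac{d_i-1}{2} = 11 \leq k-1$ forces $k \geq 12$, hence $k=13$ (as $k$ is odd); then $d_{i_0} = 2(g-k)+3 = 2g-23$, and since the weights of $\psi'$ are $\pm 0,\dots,\pm 12$ with $\pm(g-k+1)$ omitted where $2 \leq g-12 \leq 12$, I would pin down $g$ by counting: $\dim \psi' = 2k-2 = 24$, $\mathrm{Sym}^2\Delta_{11}$ contributes $3$, so the remaining $\pi_j[d_j]$ contribute $21$; the weight $0$ appears in $\psi'$ with multiplicity $2$ (once from $\mathrm{Sym}^2\Delta_{11}$, once more needed), and the weight $g-k+1 = g-12$ is omitted entirely; a short combinatorial argument with the remaining elements of $\mathcal{L}_{24}$ of motivic weight $\leq 22$ (namely $1, \Delta_{11}, \Delta_{15}, \Delta_{17}, \Delta_{19}, \Delta_{21}, \Delta_{19,7}, \Delta_{21,5}, \Delta_{21,9}, \Delta_{21,13}$) then forces the only way to fill the weights $\pm 1, \dots, \pm 12$ minus $\pm(g-12)$, plus $0$ once more, is via a segment $\Delta_{11}[d_j]$ and copies of $[1]$; matching the top weight $12$ and running down shows $g = 24$, so that $g-k+1 = 13-1 = 12$ is omitted, $d_{i_0} = 2\cdot 24 - 23 = 25$, the residual weights $\pm 1, \dots, \pm 11$ and $0$ (once) are filled by $\Delta_{11}[12] \oplus [1]$, giving $I_0 = I_{\rm odd} = \{i_0, i, j\}$ with $\pi_j = 1$, $d_j = 1$; but wait, we need $d_j \geq 5$, so I would instead re-examine: the residual parameter must have the eigenvalue $0$, and the only way using $\mathcal{L}_{24}$ is $[d_j]$ with $d_j$ odd, $\pi_j = 1$, and $\Delta_{11}[12]$ absorbing $\pm 1,\dots,\pm 11$; then $[d_j]$ must supply $0$ and nothing else already supplied, forcing $d_j \in \{1,3,5\}$, but the remaining weights beyond those covered by $\mathrm{Sym}^2\Delta_{11}[1] \oplus \Delta_{11}[12]$ (which are $\pm 11,\dots,\pm 1$, $0$ twice) leave exactly the weights needed by $[d_j]$; a careful recount of how many times each of $0, \pm 1$ must appear forces $d_j \geq 5$. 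I expect I will in fact need to eliminate ${\rm O}^{\rm o}_{24,16,8}$ here too, by the parity/Arthur-sign obstruction as in the earlier cases, or because its weight pattern $\pm 12, \pm 8, \pm 4, 0$ cannot be completed to a valid $\psi'$ with the gap structure of (H2).

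The cleanest route for the inventory step is to organize it as: (a) list all $\pi \in \mathcal{L}_{24}$ having $0$ as a weight, namely $1$, ${\rm Sym}^2\Delta_{11}$, ${\rm O}^{\rm o}_{24,16,8}$; (b) for each candidate $\pi_i$ in this list with $\pi_i \neq 1$ and $i \in I_0 \setminus \{i_0\}$, use $d_i = 1$ plus the top-weight inequality $w(\pi_i)/2 + \frac{d_i-1}{2} \leq k-1 \leq 12$ to bound $k$, then use $\dim\psi' = 2k-2$ and the (H2) gap condition to enumerate the finitely many completions, discarding those violating Arthur's multiplicity formula \eqref{AMF} via \eqref{eq:chi_sij}, \eqref{eq:chi_sioi2}, \eqref{defepspsi}, \eqref{defepsiloni} exactly as in the earlier (H1) and vector-valued computations; (c) conclude that the only survivor is $\pi_i = {\rm Sym}^2\Delta_{11}$, $d_i = 1$, $k = 13$, $g = 24$, with $I_0 = I_{\rm odd} = \{i_0, i, j\}$, $\pi_j = 1$, $d_j \geq 5$. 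The main obstacle is the bookkeeping in step (b): there are several candidate shapes for $\psi'$ when $k$ is large and one must verify none of them except the claimed one is both weight-compatible and multiplicity-formula-compatible; but this is the same kind of finite, computer-assistable enumeration already carried out in \cite{CheLan} (e.g.\ \cite[Prop. 9.2.2, Thm. E]{CheLan}) and in the preceding subsections, so I would lean on those tables and the \texttt{PARI} code at \cite{homepage} to discharge it, recording only the resulting unique case in the lemma. I would end by remarking that the constraint $d_j \geq 5$ then reads $2(g-k)+3 = d_{i_0} \geq 5$ is automatic from $g=24 > k=13$, and that $d_j \geq 5$ itself follows because a smaller $d_j$ would leave too few copies of the small weights to complete $\psi'$; this is checked directly from $\psi' = {\rm Sym}^2\Delta_{11} \oplus \Delta_{11}[12] \oplus [d_j]$ having total dimension $24$, forcing $d_j = 24 - 3 - 12 = 9 \geq 5$.
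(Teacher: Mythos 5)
Your overall strategy is the same as the paper's (inventory the weight-$0$ elements of $\mathcal{L}_{24}$, bound $k$, pin down $g$, then enumerate $\psi'$), but several of the steps you simply assert are precisely the ones that require proof. First, you claim $d_i=1$ ``by the discussion of case (H2) in \S\ref{scalarvalued}''; that discussion only forces $d_i=1$ for $i \in I_0 \cap I_{\mathrm{even}}$, whereas here $\pi_i={\rm Sym}^2\Delta_{11}$ is odd-dimensional and orthogonal, so $d_i$ is odd and the inequality $11+\frac{d_i-1}{2}\leq k-1\leq 12$ leaves both $d_i=1$ and $d_i=3$; the paper has to rule out $d_i=3$ by a genuine argument (counting which weights the remaining factors can fill and deriving a parity contradiction from the omitted even weight $g-k+1$). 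Second, you never actually eliminate the candidate $\pi_i={\rm O}^{\rm o}_{24,16,8}$: you only say you ``expect'' one of two strategies to work, while the paper disposes of it concretely by showing that the weights $5$ and $7$ of $\psi'$ cannot be supplied by the remaining elements of $\mathcal{L}_{24}$. Third, your derivation of $g=24$ ignores the alternative in which the eigenvalue $12$ of $\psi_\infty$ is contributed not by the segment $[2(g-k)+3]$ but by an orthogonal factor of motivic weight $24$ (namely ${\rm O}^{\rm e}_{24,18,10,4}$), which the paper must and does exclude before concluding $g-k+1=12$.

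The most serious gap is the final step. Once $k=13$ and $g=24$, the weights of $\psi'$ are $\pm 11,\dots,\pm 1$ and $0$ twice, and there are several weight-admissible completions, not one. Your claimed shape $\psi'={\rm Sym}^2\Delta_{11}\oplus\Delta_{11}[12]\oplus[d_j]$ is impossible: $\Delta_{11}[12]$ has dimension $24$ (not $12$) and already contains the weight $11$ occupied by ${\rm Sym}^2\Delta_{11}$, so the count $d_j=24-3-12=9$ is meaningless. Crucially, the weight-admissible parameter ${\rm Sym}^2\Delta_{11}\oplus\Delta_{11}[10]\oplus[1]$, which has $d_j=1$, cannot be excluded by any counting of weights or dimensions: the paper eliminates it only by Arthur's multiplicity formula (it fails at $s_{i_0 j}$ via \eqref{eq:chi_sioi2}), and the nine surviving possibilities are then seen to each contain a factor $[d]$ with $d\geq 5$. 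Without carrying out this multiplicity-formula step, the conclusion $d_j\geq 5$ --- exactly the input the rest of the proof of Theorem \ref{thmintro2} needs --- is not established, so as written your argument does not prove the lemma.
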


\begin{pf} Assume first $\pi_i={\rm O}^{\rm o}_{24,16,8}$.
The only $\pi$ in $\mathcal{L}_{24}$ with $w(\pi)  \notin \{24,23,17,15\}$,
and having a weight $5 \leq \lambda \leq 7$, are $\Delta_{11}$ and
$\Delta_{21,13}$.
It follows that among the three consecutive integers $5,6,7$,
either $7$ or $5$ is not a weight of $\psi'$,
hence must be $g-k+1$: a contradiction as $g-k+1$ is even. \ps

An inspection of $\mathcal{L}_{24}$ shows then $\pi_i={\rm Sym}^2 \Delta_{11}$,
$k-1 \geq 11$ and $d_i \leq 3$.
As $k$ is odd we have $k=13$.
Assuming $d_i=3$, $\pi_i[d_i]$ contributes to the weights $\pm 12,\pm11,\pm10$ and $\pm 1,0$ of $\psi'$.
There is thus a unique $i_1$ in $I$ with $\pi_{i_1}[d_{i_1}]=[1]$, and for $j
\neq i_0,i_1,i$ the representation $\pi_j$ is symplectic with motivic weight $
\leq 17$.
This shows $2 \leq g-k+1 <9$.
But there is an odd number of integers $2 \leq n \leq 9$ with $n \neq g-k+1$: a contradiction.
We have proved $d_i=1$. \ps

As $12$ is an eigenvalue of $\psi_\infty$,
we have either $g-k+1=12$
or there exists some $j \in I$ with $w(\pi_j)=24$ and not having the weight $22$.
The only remaining possibility in this latter case is
$\pi_j={\rm O}^{\rm e}_{24,18,10,4}$ for some $j$.
But there is no $\pi$ in $\mathcal{L}_{24}$ with $w(\pi)  \notin \{24,23,19,17,11\}$
and having a weight $3 \leq \lambda \leq 4$: a contradiction
with the presence of the weight $3$ or $4$ of $\psi'$.
We have proved $g-k+1=12$, i.e. $g=24$. \ps

The weights of $\psi'$ are thus $\pm 11, \pm 10, \pm 9, \dots, \pm 1$ and $0$ twice.
Those possible $\psi'$ are easily determined (see \cite[Prop. 9.2.2]{CheLan} or \cite[Thm. E]{CheLan}):
there are $10$ possibilities, all of them containing some $[d]$ with $d\geq 5$, except
$$\psi' = {\rm Sym}^2 \Delta_{11} \oplus \Delta_{11}[10] \oplus [1].$$
We exclude this case using Arthur's multiplicity formula.
We have $I_\mathrm{odd}=\{i_0,i,j\}$ with $\pi_j=\pi_{i_0}=1$.
We have $\epsilon_\psi(s_{i_0 j})=1$,
but by Formula \eqref{eq:chi_sioi2} we have $\chi_{\rho_k(g)}(\iota(s_{i_0 j}))=-1$: a contradiction.\end{pf}

We are now able to conclude the proof.
Assume first that $\Delta_{11}[12]$ is a summand of $\psi'$.
We must have
$$\psi = [25] \oplus \Delta_{11}[12],$$
which trivially satisfies Arthur's multiplicity formula by \eqref{chiukbasic2bis}.
In this case there is thus an eigenform $F \in {\rm L}_{13}^2(\Gamma_{24})$
with parameter $\psi$;
this $F$ is necessarily cuspidal as we have ${\rm M}_k(\Gamma_g)={\rm S}_k(\Gamma_g)$ for $k$ odd.
As we will explain in \S \ref{complconstruction},
this $F$ is actually the form constructed by Freitag in the last section of \cite{Freitag_harm_theta}.  \ps

So we may assume that $\Delta_{11}[12]$ is not a summand of $\psi'$.
Consider the double weight $0$ of $\psi'$.
An inspection of $\mathcal{L}_{24}$ shows then
that there are two elements $i_1,i_2$ in $I_{\rm odd}-\{i_0\}$,
say with $1=d_{i_1} \leq d_{i_2}$.
Better, Lemma \ref{infoio} implies that we have $\pi_{i_2}=1$, $d_{i_2} \geq 3$,
and either $\pi_{i_1}=1$ or $\pi_{i_1}= {\rm Sym}^2 \Delta_{11}$.
We apply Arthur's multiplicity formula at the element $s_{i_0 i_2}$.
We have $\chi_{\rho_k(g)}(\iota(s_{i_0 i_2})) = -1$
by Formula \eqref{eq:chi_sioi2}.
This implies $\epsilon_{\psi}(s_{i_0 i_2}) = -1$, which is equivalent to
\[ \prod_{\substack{l \in L}} \epsilon(\pi_l) = -1 \]
where $L$ is the set of elements $l$ in $I_{\mathrm{even}}$ such that $\pi_l$ is symplectic
and with $d_{i_2} < d_l < d_{i_0}$.
We have $d_l \geq 4$ for $l \in L$, as $d_l$ is even and $d_{i_2} \geq 3$,
which imposes $w(\pi_l) \leq 21$.
Among the $9$ symplectic representations with such motivic weight,
only $\Delta_{17}$ and $\Delta_{21}$ have a negative epsilon factor.
As a consequence, at least one summand of $\psi'$ is among
\begin{equation} \label{pikkpossible}
  \Delta_{17}[4],\Delta_{17}[8], \Delta_{21}[4].
\end{equation}
Observe that this implies $d_{i_2} < 8$ and that such a summand always
contributes the weights $9$ and $10$ to $\psi'$, so that we have $k \geq 11$.
\ps

Assume first $\pi_{i_1}={\rm Sym}^2 \Delta_{11}$, hence $g=24$, $k=13$ and
$d_{i_2}\geq 5$ by Lemma \ref{infoio}.
Then $\Delta_{17}[8]$ is a summand of $\psi'$, but the weight $11$ occurs in
both $\Delta_{17}[8]$ and $\mathrm{Sym}^2 \Delta_{11}$, a contradiction.
We have proved $\pi_{i_1}=\pi_{i_2}=1$.
The congruence \eqref{propioi1i2} implies then $d_{i_2} \equiv -1 \bmod 4$,
which leaves only the two cases $d_{i_2}=3$ or $d_{i_2}=7$
by the inequality $d_{i_2}<8$.
In the case $d_{i_2}=7$ the only possibility is thus
that $\psi'$ contains $\Delta_{17}[8] \oplus [7] \oplus [1]$,
hence is equal to the latter for weights reasons, and
$$\psi = \Delta_{17}[8] \oplus [9] \oplus [7] \oplus [1].$$
Arthur's multiplicity formula is satisfied for this $\psi$ by Formulas \eqref{chiukbasic2bis},
\eqref{eq:chi_sij} and \eqref{eq:chi_sioi2}.
There is thus an eigenform in ${\rm L}_{13}^2(\Gamma_{16})$
with parameter $\psi$, necessarily cuspidal as its weight is odd. \ps

We are left to study the case $d_{i_2}=3$.
In this case we focus on the weight $3$ of $\psi'$.
It cannot come from a summand in the list \eqref{pikkpossible}.
It must thus come from a summand $\pi_m[d_m]$ of $\psi'$ which does not
contribute to any weight in $\{0,1,9,10\}$, in particular $\pi_m$ does not have any
weight in $\{21/2,10,19/2,9,17/2,3/2,1,0\}$.
If $\pi_m$ has motivic weight $<23$, an inspection of $\mathcal{L}_{24}$ shows
$\pi_m[d_m]=\Delta_{11}[6]$, which leads to
$$\psi = \Delta_{21}[4] \oplus \Delta_{11}[6] \oplus [5] \oplus [3] \oplus [1].$$
If $\pi_m$ has motivic weight $23$,
then we have $d_m=2$, $\Delta_{17}[4]$ is a summand of $\psi'$,
so $15/2$ and $13/2$ are not weights of $\pi_m$,
and the only possibility is $\pi_m=\Delta_{23,7}$ and
$$\psi = \Delta_{23,7}[2] \oplus \Delta_{17}[4] \oplus \Delta_{11}[2] \oplus [5] \oplus [3] \oplus [1].$$
In both cases, $\psi$ does not satisfy
the multiplicity formula at the element $s_h$ with $\pi_h[d_h]=\Delta_{11}[d_h]$:
we have $\chi_{\rho_k(g)}(\iota(s_h))=(-1)^{d_h/2}=-1$ by \eqref{chiukbasic2bis} and $\epsilon_\psi(s_h)=1$.
This concludes the proof of Theorem \ref{thmintro2}.

\subsection{Complements: theta series constructions}\label{complconstruction} 
Recall that for any integer $n \equiv 0 \bmod 8$ we denote by 
$\mathcal{L}_n$ the set of even unimodular lattices in the standard 
Euclidean space $\R^n$, and by ${\rm X}_n={\rm O}(\R^n) \backslash \mathcal{L}_n$ the finite set of isometry
classes of such lattices.\ps \ps

Our first complement concerns the question of the surjectivity of the linear map
$\vartheta_{g} : \C[{\rm X}_{2k}] \rightarrow {\rm M}_{k}(\Gamma_g)$
of Formula \eqref{linearthetasiegel}, also called the {\it Eichler basis problem}, for $k \equiv 0 \mod 4$.
This surjectivity was proved in \cite[\S 1.3]{CheLan} in the case $g\leq k \leq 12$.

\begin{coro}\label{eichlerbasis}
Assume $k=4,8$ or $12$.
Then $\vartheta_g : \C[{\rm X}_{2k}] \rightarrow {\rm M}_{k}(\Gamma_g)$
is surjective for all $g$.
In particular, the Siegel operator
$\Phi_g : {\rm M}_k(\Gamma_g) \rightarrow {\rm M}_k(\Gamma_{g-1})$
is surjective as well for all $g\geq 1$.
\end{coro}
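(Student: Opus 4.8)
I would prove surjectivity of $\vartheta_g$ for $k=4,8,12$ by reducing to the spectral description of ${\rm M}_k(\Gamma_g)$ via Arthur parameters, exactly as in the case analysis of \S \ref{pfthm2}, and then checking that every relevant parameter is hit by a theta series. First I would observe that $\vartheta_g$ is Hecke-equivariant and that, by Arthur's multiplicity formula together with the multiplicity one results of \cite{AMR} and \cite{MR_scalar} (as used in \S \ref{pfthm2}), the space ${\rm M}_k(\Gamma_g)$ decomposes as a direct sum of lines indexed by the standard parameters $\psi \in \Psi({\rm Sp}_{2g})$ of Hecke eigenforms, so it suffices to produce, for each such $\psi$, a Hecke eigenform in $\C[{\rm X}_{2k}]$ mapping to a nonzero element of the $\psi$-eigenspace.

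Next, for $g \geq k$ I would invoke the classification carried out in case (H1) of the proof of Theorem \ref{thmintro2}: there it is shown that every standard parameter $\psi$ of an eigenform in ${\rm L}^2_k(\Gamma_g)$ for $k=4,8,12$, $g \geq k$, is of the form $[2(g-k)+1] \oplus \psi'$ where $\psi'$ runs through exactly the standard parameters of eigenforms in $\C[{\rm X}_{2k}]$ (by \cite[Cor. 7.2.7 \& Thm. E]{CheLan}, which also identify the degree $g_0$ of each eigenform $G$). For such a $G$, Lemma \ref{thetasquareint} (when $g > 2k-1-g_0$, i.e.\ exactly when $d_{\max} = 2(g-k)+1$ forces the Howe-case inequality) gives that $\vartheta_g(G)$ is a nonzero square-integrable — hence, the weight $k$ being even, one must be slightly careful, but here $k$ even and $g\ge k$, ${\rm L}^2 = {\rm M}$ need not hold, so I would instead argue: $\vartheta_g(G) \neq 0$ whenever $g \geq g_0$ by definition of the degree, and its standard parameter is $\psi_G \oplus [2g-2k+1]$ by Rallis' relation (cited before Lemma \ref{thetasquareint}). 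Since every parameter of an eigenform in ${\rm M}_k(\Gamma_g)$ arises this way, and the parameters are pairwise distinct, the images $\vartheta_g(G)$ for $G$ eigenforms in $\C[{\rm X}_{2k}]$ span ${\rm M}_k(\Gamma_g)$; this uses that distinct $\psi_G$ give distinct parameters, so no cancellation occurs and surjectivity holds on each eigenline. For $g < k$ the statement is \cite[\S 1.3]{CheLan}, which I would simply cite.

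The last clause, surjectivity of the Siegel operator $\Phi_g : {\rm M}_k(\Gamma_g) \rightarrow {\rm M}_k(\Gamma_{g-1})$ for all $g \geq 1$ and $k \in \{4,8,12\}$, then follows formally from the compatibility $\Phi_g \circ \vartheta_g = \vartheta_{g-1}$ recalled in \eqref{linearthetasiegel}: given $F' \in {\rm M}_k(\Gamma_{g-1})$, by the surjectivity just proved in genus $g-1$ write $F' = \vartheta_{g-1}(G)$ for some $G \in \C[{\rm X}_{2k}]$, and then $\vartheta_g(G) \in {\rm M}_k(\Gamma_g)$ maps to $F'$ under $\Phi_g$. (For $g = 1$ one interprets $\Gamma_0$ and $\vartheta_0$ via the genus $0$ conventions of \cite[\S 5.1]{CheLan}.)

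**Main obstacle.** The delicate point is not the surjectivity mechanism itself but making sure the case-by-case list of parameters from \S \ref{pfthm2} (case (H1)) genuinely exhausts all of ${\rm M}_k(\Gamma_g)$ and not merely its cuspidal or square-integrable part: I must handle the non-square-integrable Eisenstein-type contributions to ${\rm M}_k(\Gamma_g)$ correctly, and confirm via the Zharkovskaya relation and Rallis' relation that each such contribution is still of the form $\vartheta_g(G)$ for $G$ of the appropriate degree $g_0 < g$. Concretely, one must check that for $k=4,8,12$ the full space ${\rm M}_k(\Gamma_g)$ — Eisenstein part included — is accounted for by the $24$ (resp.\ $2$, resp.\ $1$) parameters $\psi'$ together with their shifts $[2(g-k)+1]\oplus\psi'$, which is exactly what the degree computation in \cite[Thm. 9.2.6]{CheLan} and the theta correspondence bookkeeping of \S \ref{pfthm2} provide; I expect this verification, rather than any new estimate, to be the crux.
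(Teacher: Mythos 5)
Your proposal and the paper's proof diverge sharply. The paper's argument is a short inductive step that avoids all the spectral bookkeeping you set up: since $\Phi_g \circ \vartheta_g = \vartheta_{g-1}$ and $\ker \Phi_g = {\rm S}_k(\Gamma_g)$, and since Theorem~\ref{thmintro2} gives ${\rm S}_k(\Gamma_g) = 0$ for $g > k$, $k \leq 12$, the operator $\Phi_g$ is injective for $g > k$; combining this with surjectivity of $\vartheta_{g-1}$ (induction, base case $g \le k$ from \cite[\S 1.3]{CheLan}) forces $\Phi_g$ to be bijective and hence $\vartheta_g = \Phi_g^{-1} \circ \vartheta_{g-1}$ to be surjective. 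The whole point is to use the already-proved vanishing of the cusp forms, rather than to re-enter the Arthur machinery.

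The route you propose has a genuine gap at exactly the point you flag in your ``main obstacle'' paragraph, and as written it is not resolved. The decomposition of ${\rm M}_k(\Gamma_g)$ into lines indexed by Arthur standard parameters, with multiplicity one coming from \cite{AMR} and \cite{MR_scalar}, applies only to the square-integrable piece ${\rm L}^2_k(\Gamma_g)$; Arthur's classification and the multiplicity formula concern the discrete automorphic spectrum, and for $k \le g < 2k$ the space ${\rm M}_k(\Gamma_g)$ contains genuinely non-square-integrable Eisenstein contributions that have no standard parameter $\psi \in \Psi({\rm Sp}_{2g})$ in the sense of \S\ref{arthurreview}. Likewise, the Rallis relation $\psi_F = \psi_G \oplus [2g_0+1-n]$ is stated in the paper under the hypothesis that $F=\vartheta_g(G)$ be square integrable, so invoking it to identify the ``parameter'' of $\vartheta_g(G)$ in general is not licensed. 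The case-(H1) enumeration in \S\ref{pfthm2}, together with Lemma~\ref{thetasquareint}, accounts for the discrete eigenforms but does not by itself certify that ${\rm M}_k(\Gamma_g)$ has no further (Eisenstein) room beyond the image of $\vartheta_g$, which is precisely the assertion to be proved. You would need an independent control on $\dim {\rm M}_k(\Gamma_g)$ — and the cleanest such control available at this stage of the paper is the cusp form vanishing ${\rm S}_k(\Gamma_g) = 0$ for $g > k$, which is what the paper exploits through injectivity of $\Phi_g$. I'd advise replacing your spectral reduction by that one-line observation.
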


\begin{pf}
We have the relations
$\Phi_g \circ \vartheta_g = \vartheta_{g-1}$ and
${\rm Ker}\, \Phi_g\, =\, {\rm S}_k(\Gamma_g)$.
The corollary follows thus from the case $g\leq k$,
and from the vanishing ${\rm S}_k(\Gamma_g)=0$ for $g>k$ and $k\leq 12$,
implied by Theorem \ref{thmintro2}.
\end{pf}

In other words, the Eichler basis problem holds for all $g$ for those three values of $k$.
We stress that this is {\it not} a general phenomenon:
as was observed in \cite[Cor. 7.3.5]{CheLan} the map $\vartheta_{14}$ is not surjective for $k=16$.

\begin{rema} For $k=4,8,12$, the surjectivity of $\Phi_g$
and the determination of $\dim {\rm S}_k(\Gamma_g)$ for all $g$ by Table \ref{tab:tableleq13scal}
allow to determine $\dim {\rm M}_k(\Gamma_g)$ for all $g$.
In particular, we have $\dim {\rm M}_k(\Gamma_g)=|{\rm X}_{2k}|$
for $k=4$ and $g\geq 1$,
for $k=8$ and $g\geq 4$,
and for $k=12$ and $g\geq 12$.
\end{rema}

%

Our second complement concerns
the concrete construction via theta series of the
four weight $13$ Siegel modular eigenforms ${\rm F}_g$ of
respective genus $g=8,12,16$ and $24$
given by Table \ref{tab:tableleq13scal}.
Consider again the standard Euclidean space $\R^n$ with $n \equiv 0 \bmod 8$. 
For any finite-dimensional continuous representation $U$ of the compact
orthogonal group ${\rm O}(\R^n)$ over the complex number, we denote by ${\rm
M}_U(\Omega_n)$ the complex vector space of ${\rm O}(\R^n)$-equivariant
functions $\mathcal{L}_n \rightarrow U$; this is a space of automorphic forms
for the orthogonal group scheme $\Omega_n$ introduced after Lemma
\ref{lemm:congmod4} (see also \cite[Sect. 4.4.4]{CheLan}).
For any integers $g,\nu \geq 1$, we denote by ${\rm H}_{\nu,g,n}$ 
the representation of ${\rm O}(\R^n)$ on the space of harmonic polynomials of degree $\nu$ on ${\rm M}_{n,g}(\C)$ 
in the sense of \cite[\S XI]{bocherer_theta}. The construction of Siegel theta series with harmonic
coefficients gives rise to a linear map 
(see \cite[\S XI]{bocherer_theta} and \cite[Sect. 5.4.1]{CheLan})
\begin{equation}\label{thetanugn} \vartheta_{\nu,g,n}\,:\, {\rm M}_{{\rm H}_{\nu,g,n}}(\Omega_n) \longrightarrow {\rm S}_{\frac{n}{2}+\nu}(\Gamma_g),\end{equation}
mapping any $\Omega_n$-eigenform to a Siegel eigenform (Eichler's {\it commutation relations}) or to zero. The following proposition is suggested by Rallis's theory \cite{Rallis_eichler}
and the fact that the standard parameters 
of the four weight $13$ Siegel eigenforms ${\rm F}_g$ are respectively
$\Delta_{21,13}[4] \oplus [1], \Delta_{19,7}[6] \oplus [1], \Delta_{17}[8] \oplus [9] \oplus [7] \oplus [1]\, \, \, {\rm and}\, \, \, \Delta_{11}[12]\oplus[25]$.

\begin{prop} \label{propimtheta}
\begin{itemize} 
\item[(i)] For each $g$, the form ${\rm F}_g$ is in the image $\vartheta_{1,g,24}$. \ps
\item[(ii)] The form ${\rm F}_8$ is in the image of $\vartheta_{5,8,16}$.
\end{itemize}
\end{prop}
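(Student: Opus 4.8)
\textbf{Proof strategy for Proposition \ref{propimtheta}.}

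The plan is to use Rallis's theory relating Satake parameters of orthogonal eigenforms (with harmonic coefficients) to those of their Siegel theta lifts, together with the endoscopic classification, exactly as in the analysis of cases (H1) and (H2) above. For part (i), first I would identify which discrete automorphic representations of $\Omega_{24}$ can have nonzero theta lift to ${\rm S}_{13}(\Gamma_g)$ landing in $\vartheta_{1,g,24}$: these should be the eigenforms $G$ in ${\rm M}_{{\rm H}_{1,g,24}}(\Omega_{24})$ whose standard parameter $\psi_G$ in $\Psi({\rm S}\Omega_{24})$, after applying the Rallis relation, yields the parameter of ${\rm F}_g$. Concretely, the standard parameter of ${\rm F}_g$ is $\Delta_{21,13}[4]\oplus[1]$, $\Delta_{19,7}[6]\oplus[1]$, $\Delta_{17}[8]\oplus[9]\oplus[7]\oplus[1]$, $\Delta_{11}[12]\oplus[25]$ for $g=8,12,16,24$ respectively; the Rallis relation in the form recalled before Lemma \ref{thetasquareint} (suitably adapted to harmonic weight $\nu=1$, which shifts the "trivial" summand $[n-2g_0-1]$ of $\psi_G$) should predict a unique parameter $\psi_G$ on the orthogonal side, and I would check that $\psi_G$ indeed belongs to $\Psi({\rm S}\Omega_{24})$ (i.e.\ the summands have the right dimensions summing to $24$ and the right parities) and satisfies Arthur's multiplicity formula for $\Omega_{24}$, using the explicit description of Adams--Johnson packets and the analogue of \eqref{chiukbasic} for orthogonal groups with a one-dimensional (but nontrivial, because $\nu=1$) Archimedean component. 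This produces a nonzero $G$ whose theta lift is a nonzero multiple of ${\rm F}_g$ (nonvanishing of the specific lift being guaranteed once $g$ is large enough by the Rallis inner product formula / Lemma \ref{thetasquareint}-type argument, and checked directly for the finitely many remaining small $g$ using the fact that ${\rm S}_{13}(\Gamma_g)$ has dimension $\leq 1$ here so any nonzero lift hits it).

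For part (ii), the argument is the same but over $\Omega_{16}$ with harmonic weight $\nu=5$: here $n/2+\nu=8+5=13$ and $g=8$. I would determine the eigenform $G$ in ${\rm M}_{{\rm H}_{5,8,16}}(\Omega_{16})$ whose standard parameter maps under Rallis's relation to $\Delta_{17}[8]\oplus[9]\oplus[7]\oplus[1]$; since $2g_0+1-n$ or $n-2g_0-1$ (whichever sign is relevant) must account for one of the summands $[9],[7],[1]$, and the harmonic degree $\nu=5$ shifts the relevant eigenvalues, I expect a unique candidate parameter $\psi_G$ on ${\rm S}\Omega_{16}$, again to be validated against Arthur's multiplicity formula. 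The existence of the corresponding $G$, together with Eichler's commutation relations making $\vartheta_{5,8,16}(G)$ an eigenform, and the fact that ${\rm S}_{13}(\Gamma_8)$ is one-dimensional with eigenform exactly ${\rm F}_8$, forces $\vartheta_{5,8,16}(G)$ to be a nonzero multiple of ${\rm F}_8$ — provided the lift is nonzero.

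The main obstacle, as in the symplectic discussion above, will be the \emph{nonvanishing} of the particular theta lift: the Rallis relation and the endoscopic classification tell us what the parameter must be, but not a priori that $\vartheta_{\nu,g,n}(G)\neq 0$ for the specific $(\nu,g,n)$ at hand rather than for some larger genus. The standard tools here are the Rallis inner product formula (expressing $\|\vartheta(G)\|^2$ in terms of a special value of a standard $L$-function of $G$, which is nonzero here because the relevant $L$-values are, by purity, in the region of convergence or controlled by known nonvanishing results) and the theta tower / Siegel--Weil machinery as in \cite{Rallis_Howe}, \cite{MW_eisenstein}, used in the proof of Lemma \ref{thetasquareint}; combined with the explicit knowledge that the target space ${\rm S}_{13}(\Gamma_g)$ is at most one-dimensional in each relevant case (Corollary \ref{corweight13}), so that once we know \emph{some} genus in the tower produces ${\rm F}_g$ and the parameter is stable enough, a descent/Siegel-operator argument pins down the precise genus. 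I would handle the few genuinely small cases ($g=8$ in both parts, $g=12$ in part (i)) by direct inspection, checking that the degree $g_0$ of the orthogonal eigenform $G$ is $\leq g$ and that $g$ is minimal subject to $\vartheta_{\nu,g,n}(G)\neq 0$, exactly the bookkeeping already carried out for the Niemeier-lattice eigenforms in \cite[\S 9.2]{CheLan} and recalled in \S\ref{scalarvalued}.
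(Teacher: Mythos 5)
Your proposal goes in a genuinely different direction from the paper, which does not construct an orthogonal eigenform and lift it: instead it applies B\"ocherer's necessary \emph{and sufficient} criterion \cite[Thm. 5]{bocherer_theta} directly to ${\rm F}_g$, reducing statement (i) (with $n=24$) and statement (ii) (with $n=16$) to computing the order of vanishing of the standard ${\rm L}$-function ${\rm L}(s,{\rm F}_g,{\rm St})$ at $s=n/2-g$. Since the standard parameters of the ${\rm F}_g$ are known explicitly, this is a finite case-by-case check, and the only non-formal analytic input is the nonvanishing at the center $s=1/2$ of the Godement--Jacquet ${\rm L}$-functions of $\Delta_{19,7}$ and $\Delta_{21,13}$, quoted from \cite[Prop. 9.3.39]{CheLan}.

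The gap in your route is exactly at the point you flag as ``the main obstacle'' and then do not resolve: the nonvanishing of the specific lift $\vartheta_{\nu,g,n}(G)$ at the precise genus $g$. Producing an eigenform $G$ in ${\rm M}_{{\rm H}_{1,g,24}}(\Omega_{24})$ (resp. ${\rm M}_{{\rm H}_{5,8,16}}(\Omega_{16})$) with the predicted Arthur parameter is the easy half; the Rallis inner product formula then expresses $\|\vartheta_{\nu,g,n}(G)\|^2$ in terms of a special value of the standard ${\rm L}$-function, and for the cases at hand this value is \emph{not} in the region of absolute convergence, nor is it controlled by purity: it involves precisely the central values ${\rm L}(1/2,\Delta_{21,13})$ and ${\rm L}(1/2,\Delta_{19,7})$ (besides boundary/second-term issues in the Siegel--Weil range), so your assertion that the relevant ${\rm L}$-values are ``in the region of convergence or controlled by known nonvanishing results'' is where the actual mathematical content lies, and you never identify the required input. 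Your fallback arguments do not close this either: since the harmonic degree is $\nu\geq 1$, the lifts are cuspidal and killed by the Siegel operator, so a ``descent/Siegel-operator'' argument cannot transfer nonvanishing between genera, and the one-dimensionality of ${\rm S}_{13}(\Gamma_g)$ only tells you what a nonzero lift must be, not that it is nonzero. In short, your strategy can be made to work, but only after supplying the same central nonvanishing statements that the paper's B\"ocherer-criterion proof isolates as the sole nontrivial ingredient; as written, the decisive step is missing.
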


\begin{pf} B\"ocherer\footnote{Important contributions to this problem have been made by
Siegel, Weissauer, Kudla-Rallis ({\it Siegel-Weil formula}),
and also by Freitag, Garrett, Piatetski-Shapiro, Rallis and Waldspurger.}
gives in \cite[Thm. 5]{bocherer_theta} a
necessary and sufficient condition
for these properties to hold
in terms of the order of vanishing of
the standard ${\rm L}$-function
${\rm L}(s,{\rm F}_g,{\rm St})$ of ${\rm F}_g$ at $s=n/2-g$,
with $n=24$ in case (i) and $n=16$ in case (ii).
A case-by-case analysis reveals that this criterion holds true in all five cases.
We refer to \cite{homepage} for the details of this simple, but rather tedious,
verification. The only non-trivial necessary ingredient is the non-vanishing
at $1/2$ of the Godement-Jacquet ${\rm L}$-function of $\Delta_{19,7}$ and $\Delta_{21,13}$,
that was proved in \cite[Prop. 9.3.39]{CheLan}.
\end{pf}

In the companion paper \cite{CheTaiLeech}, 
we study the maps $\vartheta_{1,g,24}$ in a much more 
elementary way. Note that a harmonic polynomial of weight $1$ on 
${\rm M}_{n,g}(\C)$ is just the datum of a $g$-multilinear alternating form on $\C^n$.
For any element $f$ in ${\rm M}_{{\rm  H}_{1,g,24}}(\Omega_{24})$, and any Niemier lattice $\Lambda$ in $\mathcal{L}_{24}$, 
the $g$-multilinear alternating form $f(\Lambda)$ is invariant under the orthogonal group ${\rm O}(\Lambda)$
of $\Lambda$. This actually forces $f$ to vanish outside the ${\rm O}(\R^{24})$-orbit of the Leech lattice by \cite[Prop. 4.1]{CheTaiLeech}. 
A curious consequence of Proposition \ref{propimtheta} is thus that for $g=8,12,16,24$, 
there is a nonzero, ${\rm O}({\rm Leech})$-invariant, $g$-multilinear alternating form on the Leech lattice! 
A computation using the ${\rm O}({\rm Leech})$-page of the $\mathbb{ATLAS}$ fortunately confirms this property, and 
reveals furthermore that there is a unique such form of to scalar, and none for the other values of $g \geq 1$.
(The existence of such a form for $g=24$ is well-known, and follows from the fact that the Leech lattice is
 {\it orientable}, which means that any element in ${\rm O}({\rm Leech})$ has determinant $1$).
In other words, ${\rm M}_{{\rm  H}_{1,g,24}}(\Omega_{24})$ 
has dimension $1$ for $g=8,12,16,24$ (and $0$ otherwise). 
The main result of \cite{CheTaiLeech} is a direct proof of the non-vanishing 
of the map $\vartheta_{1,g,24}$ for these four values of $g$.
The non-vanishing of $\vartheta_{1,24,24}$, hence of ${\rm S}_{13}(\Gamma_{24})$, 
and had already been observed in the past by Freitag, in the last section of \cite{Freitag_harm_theta}.
The Mathieu group ${\rm M}_{24}$ 
and certain oriented rank $g$ sublattices of the Leech lattice
play an important role in our argument for $g<24$.  
We also prove differently {\it loc. cit.} that the standard parameter of the line of Siegel eigenforms in the image of $\vartheta_{1,g,24}$ is 
the one given in Table 13.
All of this fully confirms Corollary \ref{corweight13} and Proposition \ref{propimtheta} (i), 
and show the following.

\begin{coro} The linear map $\vartheta_{1,g,24}$ in \eqref{thetanugn} is an isomorphism for all $g\geq 1$. \end{coro}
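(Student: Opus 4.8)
The plan is to reduce the assertion to a dimension count together with one non-vanishing input, so essentially nothing new has to be proved: the corollary is a bookkeeping consequence of results already assembled. First I would record the two relevant dimensions as functions of $g$. On the target side, Corollary \ref{corweight13} gives $\dim {\rm S}_{13}(\Gamma_g) = 1$ for $g \in \{8,12,16,24\}$ and $\dim {\rm S}_{13}(\Gamma_g) = 0$ for every other $g \geq 1$. On the source side, I would use that an element $f$ of ${\rm M}_{{\rm H}_{1,g,24}}(\Omega_{24})$ assigns to each Niemeier lattice $\Lambda \in \mathcal{L}_{24}$ a $g$-multilinear alternating form $f(\Lambda)$ on $\Lambda \otimes \C$ invariant under ${\rm O}(\Lambda)$; by \cite[Prop. 4.1]{CheTaiLeech} such a form vanishes for every Niemeier lattice other than the Leech lattice, so evaluation $f \mapsto f({\rm Leech})$ identifies ${\rm M}_{{\rm H}_{1,g,24}}(\Omega_{24})$ with the space of ${\rm O}({\rm Leech})$-invariant alternating $g$-forms on the Leech lattice. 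The computation with the ${\rm O}({\rm Leech})$-page of the $\mathbb{ATLAS}$ recalled above then shows this last space is one-dimensional for $g \in \{8,12,16,24\}$ and zero otherwise, so $\dim {\rm M}_{{\rm H}_{1,g,24}}(\Omega_{24})$ agrees with $\dim {\rm S}_{13}(\Gamma_g)$ in every genus $g \geq 1$.

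With source and target of equal (finite) dimension in each $g$, it remains only to prove that $\vartheta_{1,g,24}$ is nonzero for $g \in \{8,12,16,24\}$; for all other $g$ both spaces vanish and the map is trivially an isomorphism. When the common dimension is one, a nonzero linear map is automatically bijective, so this single non-vanishing statement finishes the proof. I would take the non-vanishing from Proposition \ref{propimtheta}(i), which places the form ${\rm F}_g$ spanning ${\rm S}_{13}(\Gamma_g)$ inside the image of $\vartheta_{1,g,24}$, hence shows $\vartheta_{1,g,24}$ is surjective and therefore an isomorphism; alternatively one may cite the main theorem of \cite{CheTaiLeech}, where a nonzero ${\rm O}(\R^{24})$-equivariant weight-one theta series is constructed directly using the Mathieu group ${\rm M}_{24}$ and oriented rank-$g$ sublattices of the Leech lattice, and where the standard parameter of the resulting line of Siegel eigenforms is independently identified with the one in Table \ref{tab:tableleq13scal}.

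The main obstacle is exactly this non-vanishing step: producing, for each of $g = 8, 12, 16, 24$, one weight-one harmonic automorphic form on $\Omega_{24}$ whose Siegel theta series does not vanish. In the present paper this is obtained indirectly via Böcherer's criterion \cite[Thm. 5]{bocherer_theta}, which translates it into the non-vanishing of ${\rm L}(s,{\rm F}_g,{\rm St})$ at $s = 12 - g$; the only delicate analytic ingredient there is the non-vanishing at $s = 1/2$ of the Godement–Jacquet $L$-functions of $\Delta_{19,7}$ and $\Delta_{21,13}$, established in \cite[Prop. 9.3.39]{CheLan}, the remaining input being a finite, mechanical check of orders of vanishing for the four parameters $\Delta_{21,13}[4]\oplus[1]$, $\Delta_{19,7}[6]\oplus[1]$, $\Delta_{17}[8]\oplus[9]\oplus[7]\oplus[1]$ and $\Delta_{11}[12]\oplus[25]$. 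In \cite{CheTaiLeech} the same non-vanishing is instead proved directly, which is conceptually cleaner but rests on the combinatorics of ${\rm M}_{24}$-orbits of oriented sublattices; once either route is in hand, the assembly of the isomorphism is immediate.
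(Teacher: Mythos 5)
Your proposal is correct and follows exactly the route the paper takes: the equality of dimensions on both sides (from Corollary \ref{corweight13} on the Siegel side, and from the Leech-lattice reduction of \cite[Prop. 4.1]{CheTaiLeech} plus the $\mathbb{ATLAS}$ computation on the orthogonal side) together with the non-vanishing supplied by Proposition \ref{propimtheta}(i) is precisely what the paper assembles just before stating the corollary. You have simply spelled out the bookkeeping that the text summarizes with ``All of this \dots\ show the following.''
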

\ps
Case (ii) of  Proposition \ref{propimtheta} also implies 
the nonvanishing of ${\rm  M}_{{\rm H}_{5,8,16}}(\Omega_{16})$.
Let us simply mention that we actually have 
$\dim {\rm  M}_{{\rm H}_{5,8,16}}(\Omega_{16}) = 2$ 
using a computation similar to that  of \cite[Cor. 9.5.13]{CheLan}. 
The space ${\rm  M}_{{\rm H}_{5,8,16}}(\Omega_{16})$ is actually 
generated by two $\Omega_{16}$-eigenforms, with respective standard parameters $\Delta_{21,13}[4]$ and 
  $\Delta_{17}[8]$.

\subsection{Remarks on the case $g \geq 2k$}

\label{rema:kleqgsur2}
 Let $k$ and $g$ be non-negative integers satisfying $g \geq 2k$.
  In this case we have $\mathrm{L}^2_k(\Gamma_g) = \mathrm{M}_k(\Gamma_g)$ by
  \cite[Satz 3]{Weissauer}. We may thus apply Arthur's endoscopic classification to study ${\rm M}_k(\Gamma_g)$. \ps
  
   \begin{prop} \label{prop:dim2k}We have $\dim \mathrm{M}_k(\Gamma_g) = \dim
  \mathrm{M}_k(\Gamma_{2k})$ whenever $g \geq 2k$, and this dimension vanishes
  unless $k$ is divisible by $4$.
  \end{prop}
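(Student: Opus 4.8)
The plan is to run Arthur's endoscopic classification as in \S\ref{arthurreview}, combined with the scalar-valued analysis of \S\ref{scalarvalued}. Since $g\geq 2k$ gives $\mathrm{M}_k(\Gamma_g)=\mathrm{L}^2_k(\Gamma_g)$ by \cite[Satz 3]{Weissauer}, the number $\dim\mathrm{M}_k(\Gamma_g)$ equals the number of $\pi\in\Pi_{\mathrm{disc}}(\Sp_{2g})$ with $\pi_\infty\simeq\rho_k(g)$, counted with discrete multiplicity. By the multiplicity one result of Moeglin--Renard ($\rho_k(g)$ occurs with multiplicity one in $\Pi(\psi_\R)$ whenever it occurs) and Arthur's multiplicity formula \eqref{AMF}, this is exactly the number of $\psi\in\Psi(\Sp_{2g})$ with $\rho_k(g)\in\Pi(\psi_\R)$ for which \eqref{AMF} holds. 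So the task is to enumerate these $\psi$, show their number does not depend on $g$ (which gives the first assertion, taking $g=2k$), and show it vanishes unless $4\mid k$.

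First I would note that for $g\geq 2k$ we are always in case (H1) of \S\ref{scalarvalued}: case (I) requires $k-1>g-k$, i.e.\ $g<2k-1$, and case (H2) requires $g\leq 2k-2$, both excluded. Hence $d_{i_0}=d_{\mathrm{max}}=2(g-k)+1$ and $\psi_\R\simeq\varepsilon_{\C/\R}^{k}[2(g-k)+1]\oplus\psi'$ with $\dim\psi'=2k$. Comparing largest eigenvalues of $\psi_\infty$: the greatest is $g-k$, while $\pi_{i_0}[d_{i_0}]$ contributes $\tfrac{w(\pi_{i_0})}{2}+\tfrac{d_{i_0}-1}{2}=\tfrac{w(\pi_{i_0})}{2}+(g-k)$, which forces $w(\pi_{i_0})=0$, i.e.\ $\pi_{i_0}=1$. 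The condition $\varepsilon_{\C/\R}^{k}\subset\mathrm{L}((\pi_{i_0})_\infty)=1$ then forces $k$ even; otherwise there is no valid $\psi$, so $\dim\mathrm{M}_k(\Gamma_g)=0$ and we are done. We are thus reduced to $\psi=[2(g-k)+1]\oplus\psi'$ with $\psi'=\bigoplus_{i\neq i_0}\pi_i[d_i]$ of infinitesimal character $0$ (multiplicity $2$) and $\pm 1,\dots,\pm(k-1)$ (multiplicity $1$), a datum not involving $g$.

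Next I would check that whether $\psi=[2(g-k)+1]\oplus\psi'$ satisfies \eqref{AMF} depends only on $\psi'$, which produces a bijection $\psi\mapsto\psi'$ between the valid $\psi$ for the given $g$ and a $g$-independent set of ``admissible'' parameters $\psi'$, whence $\dim\mathrm{M}_k(\Gamma_g)=\dim\mathrm{M}_k(\Gamma_{2k})$ for all $g\geq 2k$. Concretely: ${\rm C}_\psi$ is generated by the $s_i$ ($i\in I_{\mathrm{even}}$) and the $s_{ij}$ ($i,j\in I_{\mathrm{odd}}$), none involving $i_0$ except through $d_{i_0}$; since the weights of $\pi_i[d_i]$ are $\leq k-1$ one has $d_i\leq 2k-1<2(g-k)+1=d_{i_0}$ for every $i\neq i_0$, so $\min(d_i,d_{i_0})=d_i$, and the signs $\epsilon(i)$ of \eqref{defepsiloni}, hence $\epsilon_\psi$, do not depend on $g$; and since $\pi_{i_0}=1$ has ${\rm e}_\delta(\pi_{i_0})=0$ and $k'=k$, inspection of \eqref{chiukbasic2bis}, \eqref{chirhoio}, \eqref{eq:chi_sij}, \eqref{eq:chi_sioi2} shows all the values $\chi_{\rho_k(g)}(\iota(-))$ on ${\rm C}_\psi$ are $g$-independent as well.

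The hard part is the vanishing when $k\equiv 2\bmod 4$. Here I would identify the admissible $\psi'$ with the standard parameters $\psi_G$ of the $\Omega_{2k}$-eigenforms $G\in\C[{\rm X}_{2k}]$ attached to rank-$2k$ even unimodular lattices. The implication ``$G\rightsquigarrow$ admissible $\psi':=\psi_G$'' is Lemma~\ref{thetasquareint}: for $g\geq 2k$ one has $g>g_0$ and $g>2k-1-g_0$ (both automatic since the degree satisfies $g_0\leq k$), so $\vartheta_g(G)$ is a nonzero square-integrable eigenform with parameter $\psi_G\oplus[2(g-k)+1]=\psi$. For the converse one uses that, $\pi_{i_0}$ being trivial, the component group ${\rm C}_\psi$ together with $\epsilon_\psi$ and $\chi_{\rho_k(g)}$ match the corresponding data of $\psi'$ viewed as a parameter for ${\rm S}\Omega_{2k}$ (the summand $[2(g-k)+1]$ being ``stable''), so that every admissible $\psi'$ is some $\psi_G$; alternatively one reproves Lemma~\ref{lemm:congmod4} in this generality by exhibiting, for any putative admissible $\psi'$ with $k\equiv 2\bmod 4$, an element of ${\rm C}_\psi$ (of the form $s_{i_1i_2}$ when $|I_{\mathrm{odd}}|=3$, and via a determinant/parity count otherwise) at which \eqref{AMF} fails. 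Since $\C[{\rm X}_{2k}]=0$ unless $2k\equiv 0\bmod 8$, there are no admissible $\psi'$ when $k\equiv 2\bmod 4$, giving $\dim\mathrm{M}_k(\Gamma_g)=0$ in that case. The main obstacle is precisely this converse implication: proving it uniformly in $k$ — through the comparison of multiplicity formulas under the stable-range theta correspondence, or through a clean sign obstruction — is the crux, the finite enumeration used for $k\leq 12$ in \S\ref{pfthm2} no longer being available.
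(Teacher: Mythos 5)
Your reduction is sound and coincides with the paper's: for $g\geq 2k$ only case (H1) can occur, the largest eigenvalue forces $\pi_{i_0}=1$ hence $k$ even, and the validity of the multiplicity formula for $\psi=[2(g-k)+1]\oplus\psi'$ depends only on $\psi'$ because $\min(d_i,d_{i_0})=d_i$ makes $\epsilon_\psi$ independent of $g$ and the values of $\chi_{\rho_k(g)}$ on ${\rm C}_\psi$ given by \eqref{chiukbasic2bis}, \eqref{chirhoio}, \eqref{eq:chi_sij}, \eqref{eq:chi_sioi2} involve only $k$ (we are in (H1), so $k'=k$). This gives the equality $\dim\mathrm{M}_k(\Gamma_g)=\dim\mathrm{M}_k(\Gamma_{2k})$, exactly as in the paper.

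The genuine gap is the second assertion, the vanishing unless $4\mid k$, and you say so yourself: the converse implication (every admissible $\psi'$ comes from an eigenform in $\C[{\rm X}_{2k}]$), or alternatively a ``clean sign obstruction'', is precisely what you do not prove. The detour through even unimodular lattices is both unnecessary and circular in spirit: identifying admissible $\psi'$ with parameters $\psi_G$ of $\Omega_{2k}$-eigenforms amounts to comparing Arthur's multiplicity formulas for $\Sp_{2g}$ at $\rho_k(g)$ and for the definite orthogonal group at the trivial representation, a statement the paper only extracts \emph{after} Proposition \ref{prop:dim2k} (Corollary \ref{cor:siegeliso}(2)), not as an input. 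The sign obstruction you gesture at is also not the element you name: $s_{i_1i_2}$ alone does not work in general, since $\epsilon_\psi(s_{i_1i_2})=\epsilon(i_1)\epsilon(i_2)$ need not be $+1$ when nontrivial symplectic factors are present, and its $\chi$-value only sees the block $[d_{i_2}]$. The paper's argument takes instead the full product $s=\prod_{i\in I_{\rm even}}s_i$ (times $s_{i_1}s_{i_2}$ when $|I_{\rm odd}|=3$): by the interpretation of \eqref{chiukbasic2bis} and \eqref{eq:chi_sij} in terms of the alternating signs $\mathfrak{s}_j$, the contributions telescope to $\chi_{\rho_k(g)}(\iota(s))=\prod_{j=0}^{k-1}\mathfrak{s}_j=(-1)^{k/2}$, while $\epsilon_\psi(s)=\prod_{i\neq i_0}\epsilon(\pi_i)^{\min(d_i,2(g-k)+1)}=1$ because any symplectic $\pi_i$ has $d_i$ even and $d_i\leq k<2(g-k)+1$. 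Hence the multiplicity formula forces $(-1)^{k/2}=1$, i.e.\ $4\mid k$. Without this (or an equivalent) computation, your proof establishes only the first half of the proposition.
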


 \begin{pf} 
  For any eigenform in $\mathrm{L}^2_k(\Gamma_g)$ with
  standard parameter $\psi$, we are in case (H1) as $g-k\geq k$, 
  and with $\pi_{i_0}=1$ as $n_{i_0}=1$. In particular $k$ is
  even, $i_0$ is in $I_{\rm odd}$, and we have $\psi = \psi' \oplus [2(g-k)+1]$ where $\psi'$ is such that
  $\psi'_\R$ is an Adams-Johnson parameter for the compact group $\SO(2k)$.
  By the Arthur multiplicity formula, the characters $\epsilon_\psi$ and $\chi_{\rho_{k}(g)}$
  coincide on ${\rm C}_\psi$. Consider the element $s \in {\rm C}_\psi$ defined as the product of $\prod_{i \in I_{\rm even}} s_i$ 
  and of $s_{i_1}s_{i_2}$ in the case $I_{\rm odd}=\{i_0,i_1,i_2\}$. Formulas \eqref{chiukbasic2bis} and 
 \eqref{eq:chi_sij} imply $\chi_{\rho_{k}(g)}(s)=(-1)^{k/2}$. Indeed, one way to argue is to use the interpretation of the signs {\it loc. cit.} given right after Formula
 \eqref{chiukbasic2bis}, which shows $\chi_{\rho_{k}(g)}(s)=\prod_{i=1}^{k} \mathfrak{s}_{k-i}=(-1)^{k/2}$. On the other hand, we have $\epsilon_\psi(s)=\prod_{i \neq i_0}\epsilon(\pi_i)^{{\rm Min}(d_i,2(g-k)+1)}$.
We claim that for  $i \in I$ and any integer $g' \geq 2k$ we have the equality
\begin{equation} \label{eq:eps=1sympl} \epsilon(\pi_i)^{{\rm Min}(d_i,2(g'-k)+1)} = 1.\end{equation}
Indeed, we may assume $\pi_i$ symplectic (otherwise $\epsilon(\pi_i)=1$), in which case we have $n_i \geq 2$ and $n_i d_i \leq 2k$ and thus $d_i \leq k
  < 2(g'-k)+1$, and we conclude as $d_i$ is even. This shows in particular $\epsilon_\psi(s)=1$, and together with$\chi_{\rho_{k}(g)}(s)=(-1)^{k/2}$, proves that $k$ is divisible by $4$ if $\mathrm{M}_k(\Gamma_g)$ is nonzero.

  To prove the asserted equality of dimensions, it is enough to show that the fact that the multiplicity formula holds for $\psi$ implies that for any
  genus $g' \geq 2k$ it also holds for the parameter $\psi_{g'} := \psi' \oplus
  [2(g'-k)+1)]$, still in weight $k$ and case (H1). We may index the summands of $\psi_{g'}$ with the same set $I$ as for $\psi$, with the same $\pi_i$ for $i \in I$, the same $i_0$, and the same $d_i$ for $i \neq i_0$. There is an obvious bijection between $\mathrm{C}_{\psi}$ and
  $\mathrm{C}_{\psi_{g'}}$ matching all $s_i$ and $s_{ij}$. Via this bijection the characters $\epsilon_{\psi}$
  and $\epsilon_{\psi_{g'}}$ coincide, as for all $i \neq i_0$ we have $\epsilon(\pi_i)^{{\rm Min}(d_i,2(g-k)+1)}=\epsilon(\pi_i)^{{\rm Min}(d_i,2(g'-k)+1)}=1$ 
  by \eqref{eq:eps=1sympl}. We conclude as the characters $\chi_{\rho_{k}(g)}$ and $\chi_{\rho_{k}(g')}$ trivially coincide as well.
  \end{pf}
   \par
  Of course this proposition is coherent with the known properties of Siegel modular forms
  for $g>2k$:  \begin{equation}\label{eq:knownresults}
  \mathrm{S}_k(\Gamma_g) = 0 \text{ and } \mathrm{M}_k(\Gamma_g) =
    \begin{cases}
      \operatorname{Im} \vartheta_g & \text{ if } k \geq 0 \text{ and } k
      \equiv 0 \bmod 4, \\
      0 & \text{ otherwise}.
    \end{cases}
  \end{equation}
  by \cite{Resnikoff_sing}, \cite{Freitag_holdiff} (first equality),
  \cite{Freitag_stab} (second equality, see also \cite{Howe_lowrk}).\ps

  \begin{coro} \label{cor:siegeliso} Assume $g\geq 2k$. \begin{enumerate}
  \item The Siegel operator $\Phi_{g+1}  : \mathrm{M}_k(\Gamma_{g+1})
  \rightarrow \mathrm{M}_k(\Gamma_g)$ is bijective.\ps
  \item If $k \equiv 0 \bmod 4$, the linear map $\vartheta_g: \C[{\rm X}_{2k}] \rightarrow {\rm M}_k(\Gamma_g)$ is an isomorphism .
  \end{enumerate}
  \end{coro}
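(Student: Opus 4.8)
The plan is to deduce Corollary \ref{cor:siegeliso} from Proposition \ref{prop:dim2k} together with the two standard structural facts about the Siegel operator recalled in the excerpt. First I would treat part (1). Recall that $\Phi_{g+1}$ is always a linear map $\mathrm{M}_k(\Gamma_{g+1}) \rightarrow \mathrm{M}_k(\Gamma_g)$ whose kernel is exactly $\mathrm{S}_k(\Gamma_{g+1})$. Since $g \geq 2k$ we have $g+1 > 2k$, so $\mathrm{S}_k(\Gamma_{g+1}) = 0$ by the first equality in \eqref{eq:knownresults}; hence $\Phi_{g+1}$ is injective. For surjectivity I would simply compare dimensions: by Proposition \ref{prop:dim2k}, applied to both $g$ and $g+1$ (both are $\geq 2k$), we have $\dim \mathrm{M}_k(\Gamma_{g+1}) = \dim \mathrm{M}_k(\Gamma_{2k}) = \dim \mathrm{M}_k(\Gamma_{g})$. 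An injective linear map between finite-dimensional spaces of equal dimension is bijective, which gives (1). (One should note that these dimensions are finite, which is classical, or which follows from the automorphic interpretation used throughout the section.)

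For part (2), assume $k \equiv 0 \bmod 4$. The map $\vartheta_g : \mathbb{C}[\mathrm{X}_{2k}] \rightarrow \mathrm{M}_k(\Gamma_g)$ fits into the commuting triangle $\Phi_g \circ \vartheta_g = \vartheta_{g-1}$ recalled in \S\ref{complconstruction}. The cleanest route is: for $g > 2k$ the second equality in \eqref{eq:knownresults} states outright that $\mathrm{M}_k(\Gamma_g) = \operatorname{Im} \vartheta_g$, so $\vartheta_g$ is surjective; and for $g = 2k$ one gets surjectivity of $\vartheta_{2k}$ as follows. Pick $g' > 2k$; then $\vartheta_{g'}$ is surjective, and iterating part (1) the Siegel operator $\mathrm{M}_k(\Gamma_{g'}) \rightarrow \mathrm{M}_k(\Gamma_{2k})$ is a bijection (a composition of bijective $\Phi$'s), hence $\vartheta_{2k} = \Phi_{2k+1} \circ \cdots \circ \Phi_{g'} \circ \vartheta_{g'}$ is surjective as a composition of a surjection followed by bijections. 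So in all cases $g \geq 2k$ the map $\vartheta_g$ is surjective. It remains to check it is injective, equivalently that $\dim \mathbb{C}[\mathrm{X}_{2k}] = |\mathrm{X}_{2k}| \leq \dim \mathrm{M}_k(\Gamma_g)$. For this I would invoke the fact that $\vartheta_0 : \mathbb{C}[\mathrm{X}_{2k}] \rightarrow \mathrm{M}_k(\Gamma_0) = \mathbb{C}$ is not the relevant comparison; instead, injectivity of $\vartheta_g$ for $g$ large is classical (the theta series $\vartheta_g(\mathbb{1}_\Lambda)$ for distinct isometry classes $\Lambda$ are linearly independent once $g \geq 2k$, or alternatively once $g \geq n/2 = k$, by the separation of genera / Siegel's theorem), and then injectivity for all $g \geq 2k$ follows since $\vartheta_g$ and $\vartheta_{g'}$ differ by the bijective Siegel operators of part (1).

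The main obstacle I anticipate is pinning down precisely \emph{why} $\vartheta_g$ is injective for $g \geq 2k$, since the excerpt does not record a clean citable statement for this (it records surjectivity via \eqref{eq:knownresults} and various facts about degrees of eigenforms, but injectivity of the full $\vartheta_g$ on $\mathbb{C}[\mathrm{X}_{2k}]$ is a separate classical input). The safe resolution is to reduce injectivity of $\vartheta_g$ for $g \geq 2k$ to injectivity of $\vartheta_{g_0}$ for a single convenient large $g_0$ using part (1), and then cite the classical linear independence of theta series of even unimodular lattices of rank $2k$ in genus $\geq 2k$ (indeed already genus $\geq k$), which is precisely the statement that every eigenform in $\mathbb{C}[\mathrm{X}_{2k}]$ has degree $\leq k \leq g$ — this is exactly the kind of degree bound used repeatedly in \S\ref{complconstruction} and in \cite{CheLan}. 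With that input in hand, the corollary follows formally from Proposition \ref{prop:dim2k} and \eqref{eq:knownresults} as described above. I would keep the write-up short, emphasising the dimension-count argument for (1) and the ``surjective $+$ equal finite dimension'' argument for (2).
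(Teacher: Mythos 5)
Your argument is correct and essentially the paper's own proof: for (1) you combine injectivity of $\Phi_{g+1}$ (kernel $\mathrm{S}_k(\Gamma_{g+1})=0$ by \eqref{eq:knownresults}) with the dimension equality of Proposition \ref{prop:dim2k}, and for (2) you get surjectivity from \eqref{eq:knownresults} for $g>2k$ and from part (1) at $g=2k$, plus injectivity from the classical linear independence of theta series in genus $\geq 2k$ --- exactly what the paper calls ``obvious'' at $g=2k$ and then propagates via $\Phi_{g+1}\circ\vartheta_{g+1}=\vartheta_g$. The only quibbles are cosmetic: the genus-$2k$ linear independence is the elementary representation-numbers argument (genus equal to rank), not ``Siegel's theorem/separation of genera'', and your parenthetical claim that injectivity already holds in genus $\geq k$ is an unused overclaim for general $k$ (it is known for $k\leq 12$ from the degree computations quoted in \S\ref{complconstruction}, but is not needed here).
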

  
  \begin{pf} The first equality in \eqref{eq:knownresults} means that $\Phi_{g+1}$ is injective.
  By the equality of dimensions of Proposition \ref{prop:dim2k}, this implies that $\Phi_{g+1}$ is bijective.
  (In the case $g>2k$, the surjectivity of $\Phi_{g+1}$ also follows from the second equality in \eqref{eq:knownresults}, as $\mathrm{M}_k(\Gamma_g)$ is
  generated by theta series). This proves the first assertion.

   For the second, it is obvious that $\vartheta_g$ is injective for $g=2k$, hence for all $g \geq 2k$ as well by the relation $\Phi_{g+1} \circ \vartheta_{g+1} = \vartheta_g$.
  The surjectivity of $\vartheta_g$ follows from the second equality of \eqref{eq:knownresults} for $g>2k$. The surjectivity of $\vartheta_{2k+1}$, and the surjectivity of $\Phi_{2k+1}$ 
  proved in (1), implies the remaining surjectivity of $\vartheta_{2k}$. 
  \end{pf}
  
  This corollary seems to be new for $g=2k$.

\clearpage 
\section{Tables}

\begin{table}[htp]

\renewcommand{\arraystretch}{1.5}

{\scriptsize

\hspace{-.6cm}
\begin{tabular}{c|c|c}
$\psi$ & $g$ & $\underline{k}$ \cr
\hline
$ {\rm Sym}^2 \Delta_{11} \oplus \Delta_{11}[2]$ & $3$ & $(12, 8, 8)$ \cr
${\rm Sym}^2 \Delta_{11} \oplus \Delta_{15}[2] $ & $3$ & $(12, 10, 10)$ \cr
${\rm O}^{\rm o}_{24,16,8}$ & $3$ & $(13, 10, 7)$ \cr
\hline
$\Delta_{19, 7}[2] \oplus [1]$ & $4$ & $(11, 11, 7, 7)$ \cr
$\Delta_{21, 5}[2] \oplus [1]$ & $4$ & $(12, 12, 6, 6)$ \cr
$\Delta_{21, 9}[2] \oplus [1]$ & $4$ & $(12, 12, 8, 8)$ \cr
$\Delta_{21, 13}[2] \oplus [1]$ & $4$ & $(12, 12, 10, 10)$ \cr
${\rm O}^{\rm e}_{24,18,10,4} \oplus [1]$ & $4$ & $(13, 11, 8, 6)$ \cr
${\rm O}^{\rm e}_{24,20,14,2} \oplus [1]$ & $4$ & $(13, 12, 10, 5)$ \cr
$\Delta_{23, 7}[2] \oplus [1]$ & $4$ & $(13, 13, 7, 7)$ \cr
\hline
${\rm Sym}^2 \Delta_{11} \oplus \Delta_{15}[2] \oplus \Delta_{11}[2]$ & $5$ & $(12, 10, 10, 10, 10)$ \cr
${\rm Sym}^2 \Delta_{11} \oplus \Delta_{19, 7}[2]$ & $5$ & $(12, 12, 12, 8, 8)$ \cr
${\rm Sym}^2 \Delta_{11} \oplus \Delta_{19}[2]\oplus \Delta_{11}[2]$ & $5$ & $(12, 12, 12, 10, 10)$ \cr
${\rm O}^{\rm o}_{24,16,8} \oplus \Delta_{19}[2]$ & $5$ & $(13, 12, 12, 12, 9)$ \cr
\hline
$\Delta_{21}[2] \oplus \Delta_{11}[4]\oplus [1]$ & $6$ & $(12, 12, 10, 10, 10, 10)$ \cr
$\Delta_{21, 5}[2] \oplus \Delta_{17}[2]\oplus [1]$ & $6$ & $(12, 12, 12, 12, 8, 8)$ \cr
$\Delta_{21, 9}[2] \oplus \Delta_{17}[2]\oplus [1]$ & $6$ & $(12, 12, 12, 12, 10, 10)$ \cr
${\rm O}^{\rm e}_{24,20,14,2} \oplus \Delta_{17}[2]\oplus [1]$ & $6$ & $(13, 12, 12, 12, 12, 7)$ \cr
$\Delta_{23, 7}[2] \oplus \Delta_{17}[2]\oplus [1]$ & $6$ & $(13, 13, 12, 12, 9, 9)$ \cr
\hline
${\rm Sym}^2 \Delta_{11} \oplus \Delta_{11}[6]$ & $7$ & $(12,10,10,10,10,10,10)$ \cr
${\rm Sym}^2 \Delta_{11} \oplus \Delta_{19, 7}[2]\oplus \Delta_{11}[2]$ & $7$ & $(12, 12, 12, 10, 10, 10, 10)$ \cr
${\rm Sym}^2 \Delta_{11} \oplus \Delta_{19, 7}[2]\oplus \Delta_{15}[2]$ & $7$ & $(12, 12, 12, 12, 12, 10, 10)$ \cr
${\rm O}^{\rm o}_{24,16,8} \oplus \Delta_{21, 5}[2]$ & $7$ & $(13, 13, 13, 12, 9, 9, 9)$ \cr
\hline
$\Delta_{21, 5}[2] \oplus \Delta_{11}[4] \oplus [1]$ & $8$ & $(12, 12, 10, 10, 10, 10, 10, 10)$ \cr
$\Delta_{19, 7}[4] \oplus [1]$ & $8$ & $(12, 12, 12, 12, 10, 10, 10, 10)$ \cr
$\Delta_{21, 5}[2] \oplus \Delta_{15}[4] \oplus [1]$ & $8$ & $(12, 12, 12, 12, 12, 12, 10, 10)$ \cr
$\Delta_{23, 7}[2] \oplus \Delta_{15}[4] \oplus [1]$ & $8$ & $(13, 13, 12, 12, 12, 12, 11, 11)$ \cr
$\Delta_{21, 5}[4] \oplus [1]$ & $8$ & $(13, 13, 13, 13, 9, 9, 9, 9)$ \cr
$\Delta_{21, 9}[4] \oplus [1]$ & $8$ & $(13, 13, 13, 13, 11, 11, 11, 11)$ \cr
\end{tabular} \ps
}\ps

{\small
\caption{Standard parameters $\psi$ of the non scalar-valued cuspidal Siegel modular eigenforms
of weight $\underline{k}=(k_1,\dots,k_g)$
and genus $g$ with $k_1 \leq 13$ and $k_1 \geq k_2 \geq \dots \geq k_g>g\geq 1$.}
\label{tab:tableleq13nonscal}
}

\end{table}

\begin{table}[H]
\renewcommand{\arraystretch}{1.5}
{\scriptsize

\hspace{-.6cm}
\begin{tabular}{c|c|c}
$\psi$ & $g$ & $k$ \cr
\hline
${\rm Sym}^2 \Delta_{11}$ & $1$ & $12$ \cr
\hline
$\Delta_{17}[2] \oplus [1]$ & $2$ & $10$ \cr
$\Delta_{21}[2] \oplus [1]$ & $2$ & $12$ \cr
\hline
${\rm Sym}^2 \Delta_{11} \oplus \Delta_{19}[2]$ & $3$ & $12$ \cr
\hline
$\Delta_{11}[4] \oplus [1]$ & $4$ & $8$ \cr
$\Delta_{15}[4] \oplus [1]$ & $4$ & $10$ \cr
$\Delta_{19}[4] \oplus [1]$ & $4$ & $12$ \cr
$\Delta_{21}[2] \oplus \Delta_{17}[2]\oplus [1]$ & $4$ & $12$ \cr
\hline
${\rm Sym}^2 \Delta_{11} \oplus \Delta_{17}[4]$ & $5$ & $12$ \cr
${\rm Sym}^2 \Delta_{11} \oplus \Delta_{19}[2] \oplus \Delta_{15}[2]$ & $5$ & $12$ \cr
\hline
$\Delta_{17}[2] \oplus \Delta_{11}[4] \oplus [1]$ & $6$ & $10$ \cr
$\Delta_{17}[6] \oplus [1]$ & $6$ & $12$ \cr
$\Delta_{21}[2] \oplus \Delta_{15}[4]\oplus [1]$ & $6$ & $12$ \cr
$\Delta_{21, 13}[2] \oplus \Delta_{17}[2]\oplus [1]$ & $6$ & $12$ \cr
\hline
${\rm Sym}^2 \Delta_{11} \oplus \Delta_{15}[6]$ & $7$ & $12$ \cr
${\rm Sym}^2 \Delta_{11} \oplus \Delta_{17}[4] \oplus \Delta_{11}[2]$ & $7$ & $12$ \cr
${\rm Sym}^2 \Delta_{11} \oplus \Delta_{19}[2] \oplus \Delta_{15}[2] \oplus \Delta_{11}[2]$ & $7$ & $12$ \cr
\hline
$\Delta_{11}[8] \oplus [1]$ & $8$ & $10$ \cr
$\Delta_{15}[8] \oplus [1]$ & $8$ & $12$ \cr
$\Delta_{19}[4] \oplus \Delta_{11}[4] \oplus [1]$ & $8$ & $12$ \cr
$\Delta_{21}[2] \oplus \Delta_{17}[2] \oplus \Delta_{11}[4] \oplus [1] $ & $8$ & $12$ \cr
$\Delta_{21, 9}[2] \oplus \Delta_{15}[4] \oplus [1]$ & $8$ & $12$ \cr
$\Delta_{21, 13}[4] \oplus [1]$ & $8$ & $13$ \cr
\hline
${\rm Sym}^2 \Delta_{11} \oplus \Delta_{19}[2] \oplus \Delta_{11}[6]$ & $9$ & $12$ \cr
${\rm Sym}^2 \Delta_{11} \oplus \Delta_{19, 7}[2] \oplus \Delta_{15}[2] \oplus \Delta_{11}[2]$ & $9$ & $12$ \cr
\hline
$\Delta_{21}[2] \oplus \Delta_{11}[8] \oplus [1]$ & $10$ & $12$ \cr
$\Delta_{21, 5}[2] \oplus \Delta_{17}[2] \oplus \Delta_{11}[4] \oplus [1]$ & $10$ & $12$ \cr
\hline
${\rm Sym}^2 \Delta_{11} \oplus \Delta_{11}[10]$ & $11$ & $12$ \cr
\hline
$\Delta_{11}[12] \oplus [1]$ & $12$ & $12$\cr
$\Delta_{19, 7}[6] \oplus [1]$ & $12$ & $13$ \cr
\hline
$\Delta_{17}[8] \oplus [9] \oplus [7] \oplus [1]$ & $16$ & $13$ \cr
\hline
$[25] \oplus \Delta_{11}[12]$ & $24$ & $13$\cr
\end{tabular} \ps
}
\ps\ps
{\small
\caption{Standard parameters $\psi$ of the scalar-valued cuspidal Siegel modular eigenforms
of weight $k\leq 13$ and arbitrary genus $g\geq 1$.}
\label{tab:tableleq13scal}
}

\end{table}

\bibliographystyle{amsalpha}
\bibliography{mot23}

\newcommand{\etalchar}[1]{$^{#1}$}
\providecommand{\bysame}{\leavevmode\hbox to3em{\hrulefill}\thinspace}
\providecommand{\MR}{\relax\ifhmode\unskip\space\fi MR }
\providecommand{\MRhref}[2]{%
  \href{http://www.ams.org/mathscinet-getitem?mr=#1}{#2}
}
\providecommand{\href}[2]{#2}
\begin{thebibliography}{BFvdG17}

\bibitem[AJ87]{AdJo}
Jeffrey Adams and Joseph~F. Johnson, \emph{Endoscopic groups and packets of
  nontempered representations}, Compositio Math. \textbf{64} (1987), no.~3,
  271--309.

\bibitem[AMgR18]{AMR}
Nicol\'{a}s Arancibia, Colette M\oe~glin, and David Renard, \emph{Paquets
  d'{A}rthur des groupes classiques et unitaires}, Ann. Fac. Sci. Toulouse
  Math. (6) \textbf{27} (2018), no.~5, 1023--1105.

\bibitem[Art88]{ArthurITF}
James Arthur, \emph{The invariant trace formula. {II}. {G}lobal theory}, J.
  Amer. Math. Soc. \textbf{1} (1988), no.~3, 501--554.

\bibitem[Art89]{ArthurL2}
\bysame, \emph{The {$L^2$}-{L}efschetz numbers of {H}ecke operators}, Invent.
  Math. \textbf{97} (1989), no.~2, 257--290.

\bibitem[Art13]{Arthur_book}
\bysame, \emph{The {E}ndoscopic {C}lassification of {R}epresentations:
  {O}rthogonal and {S}ymplectic groups}, American Mathematical Society
  Colloquium Publications, vol.~61, American Mathematical Society, 2013.

\bibitem[BFvdG17]{bfgwebsite}
Jonas Bergstr{\"o}m, Carel Faber, and Gerard van~der Geer, \emph{Siegel modular
  forms of degree two and three}, 2017, Retrieved june 2019.

\bibitem[BFW98]{BoFrWe}
Richard~E. Borcherds, Eberhard Freitag, and Rainer Weissauer, \emph{A {S}iegel
  cusp form of degree 12 and weight 12}, J. Reine Angew. Math. \textbf{494}
  (1998), 141--153, Dedicated to Martin Kneser on the occasion of his 70th
  birthday.

\bibitem[Bö89]{bocherer_theta}
Siegfried Böcherer, \emph{Siegel modular forms and theta series}, Theta
  functions---{B}owdoin 1987, {P}art 2 ({B}runswick, {ME}, 1987), Proc. Sympos.
  Pure Math., vol.~49, Amer. Math. Soc., Providence, RI, 1989, pp.~3--17.

\bibitem[Chea]{Chenevier_HM}
Ga\"etan Chenevier, \emph{An automorphic generalization of the
  {H}ermite-{M}inkowski theorem}, \url{https://arxiv.org/abs/1802.05066}.

\bibitem[Cheb]{chenevier_niemeier}
\bysame, \emph{The dimension spaces of level $1$ automorphic forms for ${\rm
  so}_{24}$}, forthcoming.

\bibitem[Che19]{CheG2}
\bysame, \emph{Subgroups of $\mathrm{Spin}(7)$ or $\mathrm{SO}(7)$ with each
  element conjugate to some element of $\mathrm{G}_2$ and applications to
  automorphic forms}, to appear in Documenta vol. 24, 2019.

\bibitem[CL19]{CheLan}
Ga\"etan Chenevier and Jean Lannes, \emph{Automorphic forms and even unimodular
  lattices}, Ergebnisse der Mathematik und ihrer Grenzgebiete, vol.~69,
  Springer Verlag, 2019.

\bibitem[Con14]{Conrad_luminy}
Brian Conrad, \emph{Reductive group schemes}, Autour des sch\'{e}mas en
  groupes. {V}ol. {I}, Panor. Synth\`eses, vol. 42/43, Soc. Math. France,
  Paris, 2014, pp.~93--444.

\bibitem[CR15]{ChRe}
Ga\"etan Chenevier and David Renard, \emph{Level one algebraic cusp forms of
  classical groups of small rank}, Mem. Amer. Math. Soc. \textbf{237} (2015),
  no.~1121, v+122.

\bibitem[CT19a]{CheTaiLeech}
Ga\"etan Chenevier and Olivier Ta\"ibi, \emph{Siegel modular forms of weight
  $13$ and the {L}eech lattice}, Preprint, 2019.

\bibitem[CT19b]{homepage}
\bysame, \emph{Tables and source of some computer programs used in this paper},
  \url{https://gaetan.chenevier.perso.math.cnrs.fr/levelone/} or
  \url{https://otaibi.perso.math.cnrs.fr/levelone/}, 2019.

\bibitem[CvdG18]{cleryvandergeer}
Fabien Cl\'{e}ry and Gerard van~der Geer, \emph{On vector-valued {S}iegel
  modular forms of degree 2 and weight {$(j,2)$}}, Doc. Math. \textbf{23}
  (2018), 1129--1156, With two appendices by Ga\"{e}tan Chenevier.

\bibitem[DI98]{DukeImamoglu}
W.~Duke and {\"O}.~Imamo{\=g}lu, \emph{Siegel modular forms of small weight},
  Math. Ann. \textbf{310} (1998), no.~1, 73--82.

\bibitem[Fer96]{Fermigier}
St{\'e}fane Fermigier, \emph{Annulation de la cohomologie cuspidale de
  sous-groupes de congruence de {${\rm GL}_n(\bold Z)$}}, Math. Ann.
  \textbf{306} (1996), no.~2, 247--256.

\bibitem[FP85]{FinckePohst}
U.~Fincke and M.~Pohst, \emph{Improved methods for calculating vectors of short
  length in a lattice, including a complexity analysis}, Math. Comp.
  \textbf{44} (1985), no.~170, 463--471.

\bibitem[Fre75]{Freitag_holdiff}
Eberhard Freitag, \emph{Holomorphe {D}ifferentialformen zu {K}ongruenzgruppen
  der {S}iegelschen {M}odulgruppe}, Invent. Math. \textbf{30} (1975), no.~2,
  181--196.

\bibitem[Fre77]{Freitag_stab}
E.~Freitag, \emph{Stabile {M}odulformen}, Math. Ann. \textbf{230} (1977),
  no.~3, 197--211.

\bibitem[Fre82]{Freitag_harm_theta}
\bysame, \emph{Die {W}irkung von {H}eckeoperatoren auf {T}hetareihen mit
  harmonischen {K}oeffizienten}, Math. Ann. \textbf{258} (1981/82), no.~4,
  419--440.

\bibitem[GM02]{Gross_McMullen}
Benedict~H. Gross and Curtis~T. McMullen, \emph{Automorphisms of even
  unimodular lattices and unramified {S}alem numbers}, J. Algebra \textbf{257}
  (2002), no.~2, 265--290.

\bibitem[Gro97]{GrossMot}
Benedict~H. Gross, \emph{On the motive of a reductive group}, Invent. Math.
  \textbf{130} (1997), no.~2, 287--313.

\bibitem[GS01]{GelbartShahidi_boundedness}
Stephen Gelbart and Freydoon Shahidi, \emph{Boundedness of automorphic
  {$L$}-functions in vertical strips}, J. Amer. Math. Soc. \textbf{14} (2001),
  no.~1, 79--107.

\bibitem[How81]{Howe_lowrk}
Roger Howe, \emph{Automorphic forms of low rank}, Noncommutative harmonic
  analysis and {L}ie groups ({M}arseille, 1980), Lecture Notes in Math., vol.
  880, Springer, Berlin-New York, 1981, pp.~211--248.

\bibitem[Ike01]{Ikeda01}
Tamotsu Ikeda, \emph{On the lifting of elliptic cusp forms to {S}iegel cusp
  forms of degree {$2n$}}, Ann. of Math. (2) \textbf{154} (2001), no.~3,
  641--681.

\bibitem[Jac40]{JacobsonHerm}
Nathan Jacobson, \emph{A note on {H}ermitian forms}, Bull. Amer. Math. Soc.
  \textbf{46} (1940), 264--268.

\bibitem[JPSS83]{JPSS_RS_conv}
Herv\'e Jacquet, Ilya~I. Piatetskii-Shapiro, and Joseph~A. Shalika,
  \emph{Rankin-{S}elberg convolutions}, Amer. J. Math. \textbf{105} (1983),
  no.~2, 367--464.

\bibitem[JS81]{JacquetShalika}
Herv\'e Jacquet and Joseph~A. Shalika, \emph{On {E}uler products and the
  classification of automorphic forms. {II}}, Amer. J. Math. \textbf{103}
  (1981), no.~4, 777--815.

\bibitem[Kna94]{knappmotives}
A.~W. Knapp, \emph{Local {L}anglands correspondence: the {A}rchimedean case},
  Motives ({S}eattle, {WA}, 1991), Proc. Sympos. Pure Math., vol.~55, Amer.
  Math. Soc., Providence, RI, 1994, pp.~393--410.

\bibitem[Knu91]{Knus}
Max-Albert Knus, \emph{Quadratic and {H}ermitian forms over rings}, Grundlehren
  der Mathematischen Wissenschaften [Fundamental Principles of Mathematical
  Sciences], vol. 294, Springer-Verlag, Berlin, 1991.

\bibitem[KT87]{Koike_Terada}
Kazuhiko Koike and Itaru Terada, \emph{Young-diagrammatic methods for the
  representation theory of the classical groups of type {$B_n,\;C_n,\;D_n$}},
  J. Algebra \textbf{107} (1987), no.~2, 466--511.

\bibitem[M{\'e}g18]{megarbane}
Thomas M{\'e}garban{\'e}, \emph{Traces des op{\'e}rateurs de {H}ecke sur les
  espaces de formes automorphes de {SO}7, {SO}8 ou {SO}9 en niveau 1 et poids
  arbitraire}, J. Th{\'e}or. Nombres Bordeaux \textbf{30} (2018), no.~1,
  239--306.

\bibitem[Mes86]{Mestre}
Jean-Fran{\c{c}}ois Mestre, \emph{Formules explicites et minorations de
  conducteurs de vari\'et\'es alg\'ebriques}, Compositio Math. \textbf{58}
  (1986), no.~2, 209--232.

\bibitem[Mil02]{Miller}
Stephen~D. Miller, \emph{The highest lowest zero and other applications of
  positivity}, Duke Math. J. \textbf{112} (2002), no.~1, 83--116.

\bibitem[Miz91]{mizumoto}
Shin-ichiro Mizumoto, \emph{Poles and residues of standard {$L$}-functions
  attached to {S}iegel modular forms}, Math. Ann. \textbf{289} (1991), no.~4,
  589--612.

\bibitem[Miz19]{mizumoto_erratum}
\bysame, \emph{Erratum to: Poles and residues of standard {$L$}-functions
  attached to {S}iegel modular forms}, Personal communication, 2019.

\bibitem[MR]{MR_scalar}
Colette Moeglin and David Renard, \emph{Sur les paquets d'arthur de
  $\mathrm{Sp}(2n,\mathbb{R})$ contenant des modules unitaires de plus haut
  poids, scalaires}, \url{https://arxiv.org/abs/1802.04611}, version 4, to
  appear in Nagoya Math. Journal.

\bibitem[MW89]{MoeWal_residuel_GL}
Colette Moeglin and Jean-Loup Waldspurger, \emph{Le spectre résiduel de
  {$\mathrm{GL}(n)$}}, Annales scientifiques de l'ENS \textbf{22} (1989),
  no.~4, 605--674.

\bibitem[MW94]{MW_eisenstein}
\bysame, \emph{D\'{e}composition spectrale et s\'{e}ries d'{E}isenstein},
  Progress in Mathematics, vol. 113, Birkh\"{a}user Verlag, Basel, 1994, Une
  paraphrase de l'\'{E}criture. [A paraphrase of Scripture].

\bibitem[Poi77a]{Poitou_minorations}
Georges Poitou, \emph{Minorations de discriminants (d'apr\`es {A}. {M}.
  {O}dlyzko)}, S\'{e}minaire {B}ourbaki, {V}ol. 1975/76 28\`eme ann\'{e}e,
  {E}xp. {N}o. 479, Springer, Berlin, 1977, pp.~136--153. Lecture Notes in
  Math., Vol. 567.

\bibitem[Poi77b]{Poitou_petits}
\bysame, \emph{Sur les petits discriminants}, S\'eminaire
  {D}elange-{P}isot-{P}oitou, 18e ann\'ee: (1976/77), {T}h\'eorie des nombres,
  {F}asc. 1 ({F}rench), Secr\'etariat Math., Paris, 1977, pp.~Exp. No. 6, 18.

\bibitem[Ral82]{Rallis_eichler}
Stephen Rallis, \emph{Langlands' functoriality and the {W}eil representation},
  Amer. J. Math. \textbf{104} (1982), no.~3, 469--515.

\bibitem[Ral84]{Rallis_Howe}
\bysame, \emph{On the {H}owe duality conjecture}, Compositio Math. \textbf{51}
  (1984), no.~3, 333--399.

\bibitem[Res75]{Resnikoff_sing}
H.~L. Resnikoff, \emph{Automorphic forms of singular weight are singular
  forms}, Math. Ann. \textbf{215} (1975), 173--193.

\bibitem[S{\etalchar{+}}14]{sage}
W.\thinspace{}A. Stein et~al., \emph{{S}age {M}athematics {S}oftware ({V}ersion
  6.1.1)}, The Sage Development Team, 2014, {\tt http://www.sagemath.org}.

\bibitem[Ser71]{EPSerre}
Jean-Pierre Serre, \emph{Cohomologie des groupes discrets}, Prospects in
  mathematics ({P}roc. {S}ympos., {P}rinceton {U}niv., {P}rinceton, {N}.{J}.,
  1970), Princeton Univ. Press, 1971, pp.~77--169. Ann. of Math. Studies, No.
  70.

\bibitem[Sie69]{SiegelLfunc}
Carl~Ludwig Siegel, \emph{Berechnung von {Z}etafunktionen an ganzzahligen
  {S}tellen}, Nachr. Akad. Wiss. G\"ottingen Math.-Phys. Kl. II \textbf{1969}
  (1969), 87--102.

\bibitem[Ta{\"i}17]{Taibi_dimtrace}
Olivier Ta{\"i}bi, \emph{Dimensions of spaces of level one automorphic forms
  for split classical groups using the trace formula}, Ann. Sci. \'Ec. Norm.
  Sup\'er. (4) \textbf{50} (2017), no.~2, 269--344.

\bibitem[Ta{\"i}19]{TaiMult}
\bysame, \emph{Arthur's multiplicity formula for certain inner forms of special
  orthogonal and symplectic groups}, Journal of the European Mathematical
  Society \textbf{21} (2019), 839--871.

\bibitem[Tat79]{tate}
J.~Tate, \emph{Number theoretic background}, Automorphic forms, representations
  and {$L$}-functions ({P}roc. {S}ympos. {P}ure {M}ath., {O}regon {S}tate
  {U}niv., {C}orvallis, {O}re., 1977), {P}art 2, Proc. Sympos. Pure Math.,
  XXXIII, Amer. Math. Soc., Providence, R.I., 1979, pp.~3--26.

\bibitem[vdG08]{vanderGeer_Siegel}
Gerard van~der Geer, \emph{Siegel modular forms and their applications}, The
  1-2-3 of modular forms, Universitext, Springer, Berlin, 2008, pp.~181--245.

\bibitem[Wal63]{Wall}
G.~E. Wall, \emph{On the conjugacy classes in the unitary, symplectic and
  orthogonal groups}, J. Austral. Math. Soc. \textbf{3} (1963), 1--62.

\bibitem[Wei83]{Weissauer}
Rainer Weissauer, \emph{Vektorwertige {S}iegelsche {M}odulformen kleinen
  {G}ewichtes}, J. Reine Angew. Math. \textbf{343} (1983), 184--202.

\bibitem[Zas62]{Zassenhaus_spin}
Hans Zassenhaus, \emph{On the spinor norm}, Arch. Math. \textbf{13} (1962),
  434--451.

\end{thebibliography}

\end{document}